\def\Bbb{\mathbb}
\def\cal{\mathcal}
\newtheorem{Theorem}{Theorem}[chapter]
\newtheorem{Proposition}{Proposition}[chapter]
\newtheorem{Lemma}{Lemma}[chapter]
\newtheorem{Corollary}{Corollary}[chapter]
\theoremstyle{definition}
\newtheorem{Definition}{Definition}[chapter]
\newtheorem{Remark}{Remark}
\newtheorem{Example}{Example}[chapter]
\newtheorem{Assumptions}{Hypothesis}[chapter]
\def\R{{\mathbb{R}}}
\def\vp{\varphi}
\def\ve{\varepsilon}
\def\ds{\displaystyle}
\def\fd{\mathfrak{d}}
\def\fg{\mathfrak{g}}
\def\fh{\mathfrak{h}}
\def\cH{\mathcal H}
\title {Carleman estimates, observability inequalities and null controllability for interior degenerate non smooth parabolic equations }
\author{{\sc Genni Fragnelli\thanks{Research supported by the GNAMPA project {\sl Equazioni di evoluzione degeneri e singolari: controllo e applicazioni}}}\\
Dipartimento di Matematica\\ Universit\`{a} di Bari "Aldo Moro"\\
Via
E. Orabona 4\\ 70125 Bari - Italy\\ email: genni.fragnelli@uniba.it\\
{\sc Dimitri Mugnai}\\
Dipartimento di Matematica e Informatica\\Universit\`a di
Perugia\\Via Vanvitelli 1, 06123 Perugia - Italy\\ email:
dimitri.mugnai@unipg.it}
\date{}
\begin{document}

\maketitle

\begin{abstract}
We consider a parabolic problem with degeneracy in the interior of
the spatial domain, and we focus on observability results through
Carleman estimates for the associated adjoint problem. The novelties
of the present paper are two. First, the coefficient of the leading operator only
belongs to a Sobolev space. Second, the degeneracy point is allowed to lie
even in the interior of the control region, so that no previous
result can be adapted to this situation; however, different
cases can be handled, and new controllability results are
established as a consequence.
\end{abstract}

\chapter{Introduction}
In the last recent years an increasing interest has been devoted to
degenerate parabolic equations. Indeed, many problems coming from
physics (boundary layer models in \cite{br}, models of Kolmogorov
type in \cite{bz}, \ldots), biology (Wright-Fisher models in
\cite{s} and Fleming-Viot models in \cite{fv}), and economics
(Black-Merton-Scholes equations in \cite{EG}) are described by
degenerate parabolic equations, whose linear prototype is
\begin{equation}\label{0}
\begin{cases}u_t - {\mathcal A}u  =h(t,x), \quad
(t,x) \in (0,T) \times(0,1),\\
u(0,x)=u_0(x)
\end{cases}
\end{equation}
with the associated desired boundary conditions, where ${\mathcal A}u= {\mathcal A}_1u:= (au_x)_x$ or ${\mathcal A}u = {\mathcal A}_2u:= au_{xx}$.

In this paper we concentrate on a special topic related to this
field of research, i.e. Carleman estimates for the adjoint problem
to \eqref{0}. Indeed, they have so many applications that a large
number of papers has been devoted to prove some forms of them and
possibly some applications. For example, it is well known that they
are a fundamental tool to prove observability inequalities, which
lead to global null controllability results for \eqref{0} also in
the non degenerate case: for all $T>0$ and for all initial data
$u_0\in L^2((0,T) \times(0,1))$ there is a suitable control $h\in
L^2((0,T) \times(0,1))$, supported in a subset $\omega$ of $[0,1]$,
such that the solution $u$ of \eqref{0} satisfies $ u(T, x)=0 $ for
all $x \in [0,1]$ (see, for instance, \cite{m0} - \cite{bcg}, \cite{br} - \cite{cfv1},
\cite{fcgb}, \cite{fz}, \cite{fdt}, \cite{f}, \cite{fm}, \cite{LRL}, \cite{lrr},
\cite{lr},
 \cite{Ra} and the references therein).

Moreover, Carleman estimates are also extremely useful for
several other applications, especially for unique continuation
properties (for example, see \cite{esca}, \cite{kt} and \cite{LRL}), for inverse
problems, in parabolic, hyperbolic and fractional settings, e.g. see
\cite{by}, \cite{chue}, \cite{ima}, \cite{salo}, \cite{salo1}, \cite{Wu},
\cite{Ya} and their references.

The common point of all the previous papers dealing with degenerate
equations, is that the function $a$ degenerates at the boundary of
the domain. For example, as $a$, one can consider the double power
function
\[a(x)= x^k(1-x)^\alpha, \quad x \,\in \,[0,1],\] where $k$ and $\alpha$ are positive constants.
For related systems of degenerate equations we refer to \cite{m0},
\cite{m} and \cite{cdt}.

However, the papers cited above deal with a function $a$ that
degenerates at the boundary of the spatial domain. To our best
knowledge, \cite{s1} is the first paper treating the existence of a
solution for the Cauchy problem associated to a parabolic equation
which degenerates in the interior of the spatial domain, while
degenerate parabolic problems modelling biological phenomena and
related optimal control problems are later studied in \cite{ly}
and \cite{bel}. Recently, in \cite{fggr} the authors analyze in
detail the degenerate operator $ {\cal A}$ in the space $L^2 (0,1)$,
with or without weight, proving that it is nonpositive and
selfadjoint, hence it generates a cosine family and, as a
consequence, an analytic semigroup. In \cite{fggr} the
well-posedness of \eqref{0} with Dirichlet boundary conditions is
also treated, but nothing is said about other properties, like
Carleman estimates or controllability results. Indeed, these
arguments are the subject of the recent paper \cite{fm}, where only
the divergence case is considered and the function $a$ is assumed to
be of class $C^1$ far from the degenerate point, which belongs to
the interior of the spatial domain.

In this paper we consider both problems in divergence and in non divergence form with a non smooth coefficient (for the precise assumptions see below), and we
first prove Carleman estimates for the adjoint problem of the
parabolic equation with interior degeneracy
\begin{equation}\label{linear}
\begin{cases}
u_t - \mathcal A u  =h,\quad
(t,x) \in Q_T:=(0,T) \times(0,1),\\
u(t,0)=u(t,1)=0, \\
u(0,x)=u_0(x),
\end{cases}
\end{equation}
that is for solutions of the problem
\begin{equation}\label{01}
\begin{cases}
v_t + \mathcal Av =h, & (t,x) \in Q_T,\\
v(t,1)=v(t,0)=0, &  t \in (0,T).
\end{cases}
\end{equation}
Here $u_0$ belongs to a suitable Hilbert space $X$  ($L^2(0,1)$ in the divergence case and $L^2_{\frac{1}{a}}(0,1)$ in the non divergence case, see the following chapters), and the control $h \in L^2(0,T;X)$
 acts on a nonempty subinterval $\omega$
of $(0,1)$ which is allowed to contain the degenerate point $x_0$.

We underline the fact that in the present paper we consider both
equations {\em in divergence} and {\em in non divergence form}, since the last one {\it cannot} be recast from the equation in
divergence form, in general: for example, the simple equation
\[
u_t=a(x)u_{xx}
\]
can be written in divergence form as
\[
u_t=(au_x)_x-a'u_x,
\]
only if $a'$ does exist; in addition, even if $a'$ exists, considering the well-posedness for the last equation, additional conditions are
necessary: for instance, for the prototype $a(x)= x^K$,
well-posedness is guaranteed if $K \ge 2$ (\cite{mv}). However, in
\cite{cfr} the authors prove that if $a(x)=x^K$ the global null
controllability fails exactly when $K\ge 2$. For this reason, already in
\cite{cfv}, \cite{cfv1} and \cite{f} the authors consider parabolic
problems in non divergence form proving directly that, under suitable
conditions for which well-posedness holds, the problem is still
globally null controllable, that is the solution vanishes
identically at the final time by applying a suitable localized
control. In particular, while in \cite{cfv} or \cite{cfv1} Dirichlet
boundary conditions are considered, in \cite{f} Neumann boundary
conditions are assumed.

\medskip

The question of controllability of partial differential systems with
{\em non smooth} coefficients, i.e. the coefficient $a$ is not of
class $C^1$ (or even with higher regularity, as sometimes it is
required), and its dual counterpart, observability inequalities, is
not fully solved yet. In fact, the presence of a non smooth
coefficient introduces several complications, and, in fact, the
literature in this context is quite poor. We are only aware of the
following few papers in which Carleman estimate are proved always in
the {\em non degenerate case}, but in the case in which the
coefficient of the operator is somehow {\em non smooth}. In
\cite{doubova} and \cite{lrr} the non degenerate coefficient is
actually assumed smooth apart from across an interface where it may
jump, with or without some monotonicity condition (\cite{doubova}
and \cite{lrr}, respectively), while in \cite{bena} the non
degenerate coefficient is assumed to be piecewise smooth. Carleman
estimates for a non degenerate $BV$ coefficient were proved in
\cite{LRBV}, but however, the coefficient was supposed to be of
class $C^1$ in an open subset of $(0,1)$, and then controllability
for \eqref{linear} and semilinear extension are given. Finally, in
\cite{iyinfty} $a$ is supposed to be of class $W^{1,\infty}(0,1)$,
but again it does not degenerate at any point. For completeness, we
also quote \cite{FCZBV}, where boundary controllability result for
non degenerate $BV$ coefficients are proved using Russel's method
and not Carleman estimates.

\medskip

As far as we know, no Carleman estimates for \eqref{01} are known
when $a$ is {\em globally non smooth} and degenerates at an interior
point $x_0$, nor when $a$ is non degenerate and non smooth. For this
reason, the object of this paper is twofold: first, we prove
Carleman estimates in the non degenerate case when $a$ is not
smooth. In particular we treat the case of an absolutely continuous
coefficient, and thus {\em globally} of class $BV$, though with some
restrictions, and the case of a $W^{1,\infty}$ coefficient. This
case was already considered in \cite{iyinfty}, but they proved a
version of Carleman estimates with all positive integrals in the
righ-hand-side, while in our version we include a negative one,
which is needed for the subsequent applications (see Theorem
\ref{mono} and Theorem \ref{mono1}). Second, we prove Carleman
estimates in degenerate non smooth cases. Such estimates are then
used to prove observability inequalities (and hence null
controllability results).

To our best knowledge, this paper is the first one where, in the
case of an absolutely continuous coefficient - which is even allowed
to degenerate - non smoothness is assumed in the whole domain,
though with some restrictions.

\medskip

Concerning the non smooth non degenerate case, in the spatial domain
$(0,1)$ we assume that
\begin{itemize}
\item[$(a_1)$] $a\in W^{1,1}(0,1)$, $a\geq
a_0>0$ in $(0,1)$ and there exist two functions $\fg \in L^1(0,1)$,
$\fh \in W^{1,\infty}(0,1)$ and two strictly positive constants
$\fg_0$, $\fh_0$ such that $\fg(x) \ge \fg_0$ for a.e. $x$ in $[0,1]$ and
\[
-\frac{a'(x)}{2\sqrt{a(x)}}\left(\int_x^1\fg(t) dt + \fh_0 \right)+ \sqrt{a(x)}\fg(x) =\fh(x)\quad \text{for a.e.} \; x \in [0,1],
 \]
 in the divergence case,
 \[
\frac{a'(x)}{2\sqrt{a(x)}}\left(\int_x^1\fg(t) dt + \fh_0 \right)+ \sqrt{a(x)}\fg(x) =\fh(x)\quad \text{for a.e.} \; x \in [0,1],
\]
in the non divergence case; or\\
\item[$(a_2)$] $a\in W^{1,\infty}(0,1)$ and $a\geq
a_0>0$ in $(0,1)$.
\end{itemize}
However, in Chapter \ref{seccarnondeg} we shall present the precise
setting and the related Carleman estimate in a general interval
$(A,B)$, since we shall not use it in the whole $(0,1)$ but in
suitable subintervals.

Concerning the degenerate case, we shall admit two types of
degeneracy for $a$, namely weak and strong degeneracy. More
precisely, we shall handle the two following cases:
\begin{Assumptions}\label{Ass0}
{\bf Weakly degenerate case (WD):} there exists $x_0 \in (0,1)$ such
that $a(x_0)=0$, $a>0$ on $[0, 1]\setminus \{x_0\}$, $a\in
W^{1,1}(0,1)$ and there exists $K \in (0,1)$ such that $(x-x_0)a'
\le K a$ a.e. in $[0,1]$.
\end{Assumptions}

\begin{Assumptions}\label{Ass01}
{\bf Strongly degenerate case (SD):} there exists $x_0 \in (0,1)$
such that $a(x_0)=0$, $a>0$ on $[0, 1]\setminus \{x_0\}$, $a\in
W^{1, \infty}(0,1)$ and there exists $K \in [1,2)$ such that
$(x-x_0)a' \le K a$ a.e. in $[0,1]$.
\end{Assumptions}
Typical examples for weak and strong degeneracies are $a(x)=|x-
x_0|^{\alpha}, \; 0<\alpha<1$ and $a(x)= |x- x_0|^{\alpha}, \;
1\le\alpha<2$, respectively.

For the proof of the related Carleman estimates and observability
inequalities a fundamental r\^{o}le is played by the following
general weighted Hardy-Poincar\'e inequality for functions which may
{\em not be} globally absolutely continuous in the domain, but whose
irregularity point is ``controlled'' by the fact that the weight
degenerates exactly there. Such an inequality, of independent
interest, was proved in \cite[Proposition 2.3]{fm}, and reads as
follows.
\begin{Proposition}[Hardy--Poincar\'{e} inequality]\label{HP}
Assume that $p \in C([0,1])$, $p>0$ on $[0,1]\setminus \{x_0\}$,
$p(x_0)=0$ and there exists $q \in (1,2)$ such that the function
$$
\begin{aligned}
x \mapsto \dfrac{p(x)}{|x-x_0|^{q}} &\mbox { is
nonincreasing on the left of } x=x_0 \\
& \mbox{ and nondecreasing on the right of } x=x_0.
\end{aligned}
$$
Then, there exists a constant $C_{HP}>0$ such that for any function
$w$, locally absolutely continuous on $[0,x_0)\cup (x_0,1]$ and
satisfying
$$
w(0)=w(1)=0 \,\, \mbox{and } \int_0^1 p(x)|w^{\prime}(x)|^2 \,dx <
+\infty \,,
$$ the following inequality holds:
\begin{equation}\label{hardy1}
\int_0^1 \dfrac{p(x)}{(x-x_0)^2}w^2(x)\, dx \leq C_{HP}\, \int_0^1
p(x) |w^{\prime}(x)|^2 \,dx.
\end{equation}
\end{Proposition}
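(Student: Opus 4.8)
The plan is to split the integral at $x_0$ and prove separately the two one-sided estimates
\begin{equation*}
\int_0^{x_0}\frac{p(x)}{(x-x_0)^2}w^2\,dx\le C\int_0^{x_0}p\,|w'|^2\,dx,\qquad
\int_{x_0}^{1}\frac{p(x)}{(x-x_0)^2}w^2\,dx\le C\int_{x_0}^{1}p\,|w'|^2\,dx,
\end{equation*}
with the same constant $C=C(q)$, and then add them; on each side only the outer Dirichlet condition ($w(0)=0$, resp.\ $w(1)=0$) is used, so no matching condition at $x_0$ is needed. By a translation, and a reflection on the left-hand side, both estimates reduce to the following model statement, which I would prove directly: on $(0,\ell]$ let $r$ be continuous and positive with $y\mapsto r(y)/y^{q}$ nondecreasing --- this is exactly what the monotonicity hypothesis on $p(x)/|x-x_0|^{q}$ becomes --- and let $f$ be locally absolutely continuous on $(0,\ell]$ with $f(\ell)=0$ and $\int_0^{\ell}r\,|f'|^{2}<\infty$; then $\int_0^{\ell}\frac{r(y)}{y^{2}}f^{2}\,dy\le C(q)\int_0^{\ell}r\,|f'|^{2}\,dy$.

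For the model statement, fix $\delta\in(0,\ell)$ and set $g(y):=r(y)/y^{q}$, which is continuous and nondecreasing on $[\delta,\ell]$, so $dg$ is a nonnegative measure, and note that $r(y)/y^{2}=y^{q-2}g(y)$. Since $q>1$ we have $y^{q-2}=\frac{d}{dy}\bigl(\frac{y^{q-1}}{q-1}\bigr)$, so a Lebesgue--Stieltjes integration by parts, together with the product rule $d(gf^{2})=f^{2}\,dg+2g\,ff'\,dy$ (legitimate because $f^{2}$ is absolutely continuous and $g$ is continuous of bounded variation on $[\delta,\ell]$), gives, using $y^{q-1}g=r/y$,
\begin{equation*}
\int_{\delta}^{\ell}\frac{r(y)}{y^{2}}f^{2}\,dy
=\Bigl[\frac{y^{q-1}}{q-1}\,g\,f^{2}\Bigr]_{\delta}^{\ell}
-\frac{1}{q-1}\int_{\delta}^{\ell}y^{q-1}f^{2}\,dg
-\frac{2}{q-1}\int_{\delta}^{\ell}\frac{r(y)}{y}\,ff'\,dy .
\end{equation*}
The boundary term at $\ell$ vanishes since $f(\ell)=0$, the one at $\delta$ equals $-\frac{r(\delta)}{(q-1)\delta}f(\delta)^{2}\le 0$, and the $dg$-integral is $\ge0$; hence the first two terms on the right can be dropped. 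Writing $I_{\delta}:=\int_{\delta}^{\ell}\frac{r}{y^{2}}f^{2}\,dy$ (finite because $r$ is bounded and $f$ is absolutely continuous on $[\delta,\ell]$) and $J:=\int_0^{\ell}r\,|f'|^{2}\,dy$, Cauchy--Schwarz on the last term yields $I_{\delta}\le\frac{2}{q-1}I_{\delta}^{1/2}J^{1/2}$, hence $I_{\delta}\le\frac{4}{(q-1)^{2}}J$. Letting $\delta\downarrow0$ and using monotone convergence gives the model inequality with $C(q)=4/(q-1)^{2}$, and summing the two one-sided estimates gives \eqref{hardy1} with $C_{HP}=4/(q-1)^{2}$.

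The main obstacle is that $w$ may fail to be bounded as $x\to x_0$ --- this is genuinely possible precisely because $q>1$, so that $1/p$ need not be integrable near $x_0$ --- which prevents a direct integration by parts on $(0,x_0)$ or $(x_0,1)$. This is circumvented by the cut-off at $y=\delta$: the corresponding boundary term has the favourable sign and the resulting bound on $I_\delta$ is uniform in $\delta$, so monotone convergence closes the argument (and incidentally forces $\int_0^1\frac{p}{(x-x_0)^2}w^2\,dx<\infty$ a posteriori, so there is no circularity). The remaining points are routine: the weight $p(x)/|x-x_0|^{q}$ is only monotone, not differentiable, but it enters the computation solely through the sign of the measure $dg\ge0$; and although $w$ is assumed merely locally absolutely continuous on $[0,x_0)$ and on $(x_0,1]$, the integration by parts is performed on intervals bounded away from $x_0$, where local absolute continuity is all that is used.
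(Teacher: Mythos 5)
Your proof is correct, and it follows essentially the same route as the one the paper relies on (the statement is imported from \cite[Proposition 2.3]{fm} rather than proved in the text): split at $x_0$, integrate by parts against $(x-x_0)^{q-1}/(q-1)$ so that the monotone factor $p(x)/|x-x_0|^{q}$ contributes only a sign-definite Stieltjes measure, truncate at distance $\delta$ from $x_0$ where the boundary term has the favourable sign, and absorb by Cauchy--Schwarz. Your explicit constant $C_{HP}=4/(q-1)^2$ and the fact that the hypothesis $q<2$ is never used are both consistent with the paper's remark that the proposition ``is valid without requiring $q<2$.''
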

Actually, such a proposition is valid without requiring $q<2$.

Applying the Carleman estimate (and other tools) to any solution $v$
of the adjoint problem \eqref{01}, we derive the observability
inequalities
\[
\int_0^1v^2(0,x) dx \le C\int_0^T\int_{\omega} v^2(t,x)dxdt,
\]
in the divergence case and
\[
\int_0^1v^2(0,x)\frac{1}{a} dx \le C\int_0^T\int_{\omega} v^2(t,x)\frac{1}{a}dxdt,
\]
in the non divergence one. The proof of these last inequalities are
obtained by studying some auxiliary problems, introduced with
suitable cut-off functions and reflections (see Lemmas \ref{lemma3},
\ref{lemma3'} and \ref{lemma31}), and is the content of the long
Chapter \ref{secobserv}, where, using a standard technique in this
framework, one can also prove null controllability results for
\eqref{linear}.

Finally, such results are extended to the semilinear problem
\begin{equation}
    \label{nlo}
    \begin{cases}
    u_t - \mathcal Au  + f(t,x,u)  =h(t,x) \chi_{\omega}(x), & \ (t,x) \in (0,T) \times (0,1), \\
    u(t,1)=u(t,0)=0, & \  t \in (0,T),\\
     u(0,x)=u_0(x) , & \  x \in (0,1),
    \end{cases}
\end{equation}
in the weakly degenerate case using the fixed point method developed
in \cite{fz} for nondegenerate problems. We note that, as in the
nondegenerate case, our method relies on a compactness result for
which the fact that $\displaystyle\frac{1}{a} \in L^1(0,1)$ is an
essential assumption, and it forces us to consider only the weakly
degenerate case. However, in the complete linear case, i.e
$f(t,x,u)= c(t,x)u(t,x)$, the null controllability result holds
also for the strongly degenerate case, since in this case it is a
consequence of the results proved for \eqref{linear}, see Corollary
\ref{cor_c}.

\medskip

The paper is organized as follows. First of all, we underline the
fact that all chapters, except for the final Chapters
\ref{secsemilinear} and \ref{sec7}, are divided into two subsections
that deal with the divergence case and the non divergence one
separately. In Chapter \ref{sec2} we give the precise setting for
the weakly and the strongly degenerate cases and some general tools
we shall use several times. In Chapter \ref{seccarnondeg} we prove
Carleman estimates for the adjoint problem of \eqref{linear} with a
non smooth non degenerate coefficient. In Chapter \ref{Carleman
estimate} we provided one of the main results of this paper, i.e.
Carleman estimates in the degenerate (non smooth) case. In Chapter
\ref{secobserv} we apply the previous Carleman estimates to prove
observability inequalities which, together with Caccioppoli type
inequalities, let us derive new null controllability results for
degenerate problems. In particular, in the divergence case, we
handle both the cases in which the degeneracy point is {\em inside}
or {\em outside} the control region $\omega$; on the contrary, in
the non divergence case we consider only the case of a degeneracy
point being {\em outside} $\omega$ (see Comment 2 in Chapter
\ref{sec7} for the reason of this fact). In Chapter \ref{secsemilinear} we extend the previous results to complete
linear and semilinear problems. Finally, in Chapter \ref{sec7} we
conclude the paper with some general remarks, which we consider
fundamental.

\chapter{Mathematical tools and preliminary results}\label{sec2}
We begin this chapter with a lemma that is crucial for the rest of the paper:
\begin{Lemma}[\cite{fm}, Lemma 2.1] \label{rem}
Assume that Hypothesis $\ref{Ass0}$ or $\ref{Ass01}$ is satisfied.
\begin{enumerate}
\item Then for all $\gamma \ge K$ the map
$$
\begin{aligned}
& x \mapsto \dfrac{|x-x_0|^\gamma}{a} \mbox { is nonincreasing on
the left of } x=x_0 \\
& \mbox{and nondecreasing on the right of }
x=x_0,\\
&\mbox{ so that }\lim_{x\to x_0}\dfrac{|x-x_0|^\gamma}{a}=0 \mbox{
for all }\gamma>K.
\end{aligned}
$$
\item If $K<1$, then
    $\displaystyle\frac{1}{a} \in L^{1}(0,1)$.\\
\item If $K \in[1,2)$, then $\displaystyle \frac{1}{\sqrt{a}} \in
    L^{1}(0,1)$ and $\displaystyle \frac{1}{a}\not \in L^1(0,1)$.
\end{enumerate}
\end{Lemma}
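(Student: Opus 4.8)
The plan is to reduce items 2 and 3 to the monotonicity statement of item 1, and to prove item 1 by a direct one-sided derivative computation. First I would fix $\gamma \ge K$ and set $g(x) := |x-x_0|^\gamma/a(x)$. Recall that in both Hypotheses \ref{Ass0} and \ref{Ass01} one has $a \in W^{1,1}(0,1) \hookrightarrow C([0,1])$ and $a>0$ away from $x_0$, so on every compact subinterval of $[0,x_0)$ or of $(x_0,1]$ the coefficient $a$ is absolutely continuous and bounded below by a positive constant; hence $g$ is locally absolutely continuous on $[0,x_0)\cup(x_0,1]$, and it suffices to control the sign of $g'$ a.e. there. A direct computation gives, for a.e.\ $x>x_0$,
\[
g'(x) = \frac{(x-x_0)^{\gamma-1}}{a(x)^2}\bigl(\gamma\,a(x) - (x-x_0)a'(x)\bigr) \ge 0,
\]
since $(x-x_0)a' \le Ka \le \gamma a$ by hypothesis and $a\ge 0$; and, for a.e.\ $x<x_0$,
\[
g'(x) = \frac{(x_0-x)^{\gamma-1}}{a(x)^2}\bigl(-\gamma\,a(x) - (x_0-x)a'(x)\bigr) \le 0,
\]
because $(x_0-x)a'(x) = -(x-x_0)a'(x) \ge -Ka(x) \ge -\gamma a(x)$. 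This proves that $g$ is nonincreasing on the left of $x_0$ and nondecreasing on the right. For the limit, I would take $\gamma>K$ and write $g(x) = |x-x_0|^{\gamma-K}\cdot\bigl(|x-x_0|^{K}/a(x)\bigr)$; the second factor is monotone near $x_0$ by what was just proved, hence bounded on a one-sided neighbourhood of $x_0$ by its value at a fixed interior point, so $g(x)=O(|x-x_0|^{\gamma-K})\to 0$ as $x\to x_0$.

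For item 2, apply the monotonicity with $\gamma=K<1$: there are $\delta>0$ and $C>0$ with $|x-x_0|^{K}/a(x)\le C$ for $0<|x-x_0|\le\delta$, i.e.\ $1/a(x)\le C|x-x_0|^{-K}$ there, which is integrable near $x_0$ since $K<1$; and $1/a$ is bounded away from $x_0$ by continuity and strict positivity of $a$ on compact subsets not containing $x_0$. Hence $1/a\in L^1(0,1)$. For item 3 the very same argument with $\gamma=K\in[1,2)$ gives $1/\sqrt{a(x)}\le\sqrt{C}\,|x-x_0|^{-K/2}$ near $x_0$ with $K/2<1$, so $1/\sqrt a\in L^1(0,1)$. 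Finally, to obtain $1/a\notin L^1(0,1)$ I would use that in the strongly degenerate case $a\in W^{1,\infty}(0,1)$ is Lipschitz, say with constant $L$; since $a(x_0)=0$ this yields $a(x)\le L|x-x_0|$, hence $1/a(x)\ge L^{-1}|x-x_0|^{-1}$, whose integral diverges on every neighbourhood of $x_0$.

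The only genuinely delicate point is the passage from ``$g'$ has a sign a.e.'' to ``$g$ is monotone'': this requires $g$ to be locally absolutely continuous on each side of $x_0$, which is why one must first note that $a$ is bounded below by a positive constant on compacta avoiding $x_0$ (true by continuity and $a>0$ off $x_0$). Note also that, although $(x-x_0)^{\gamma-1}$ may blow up at $x_0$ when $\gamma<1$, the derivative $g'$ is only ever evaluated on intervals bounded away from $x_0$, so no singularity is encountered; everything else is elementary.
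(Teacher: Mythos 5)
Your proof is correct. Note that the paper does not prove this lemma at all: it is imported verbatim from \cite{fm} (Lemma 2.1), so there is no internal argument to compare against. What you have written is the standard argument one would expect behind that citation: on each side of $x_0$ the function $|x-x_0|^\gamma/a$ is locally absolutely continuous because $a\in W^{1,1}(0,1)\subset C([0,1])$ is bounded below on compacta avoiding $x_0$, the quotient rule then reduces the monotonicity to the structural inequality $(x-x_0)a'\le Ka\le\gamma a$, and the integrability claims in items 2 and 3 follow from the resulting pointwise bounds $1/a\le C|x-x_0|^{-K}$ and, in the strongly degenerate case, from the Lipschitz bound $a(x)\le L|x-x_0|$ giving the divergence of $\int 1/a$. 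You correctly isolate the only delicate point, namely that a.e.\ sign of the derivative yields monotonicity only once local absolute continuity is established; the rest is routine and complete.
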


\begin{Remark}\label{nol1}
We underline the fact that if $\displaystyle \frac{1}{a} \in
L^{1}(0,1)$, then $\displaystyle \frac{1}{\sqrt{a}} \in L^{1}(0,1)$.
On the contrary, if $a\in W^{1,\infty}([0,1])$ and $\displaystyle
\frac{1}{\sqrt{a}} \in L^{1}(0,1)$, then $\displaystyle \frac{1}{a}
\not \in L^1(0,1)$ (see \cite[Remark 2]{fm}).
\end{Remark}

\section{Well-posedness in the divergence case}

In order to study the well-posedness of problem \eqref{linear}, we
introduce the operator
\[
{\cal A}_1u:=(au_x)_x
\]
and we consider two different classes of weighted Hilbert spaces,
which are suitable to study two different situations, namely the
{\em weakly degenerate} (WD) and the {\em strongly degenerate} (SD)
cases. We remark that we shall use the standard notation $H$ for
Sobolev spaces with non degenerate weights and the calligraphic
notation ${\cal H}$ for spaces with degenerate weights.

{\bf CASE (WD):} if Hypothesis \ref{Ass0} holds, we consider
\[
\begin{aligned}
{\cal H}^1_a(0,1):=\big\{&u \text{ is absolutely continuous in }
[0,1],
\\ & \sqrt{a} u' \in  L^2(0,1) \text{ and } u(0)=u(1)=0
\big\},
\end{aligned}
\]
and
\[
\label{Ha2} \qquad {\cal H}^2_a(0,1) :=  \big\{ u \in {\cal
H}^1_a(0,1) |\,au' \in H^1(0,1)\big\};
\]

{\bf CASE (SD):} if Hypothesis \ref{Ass01} holds, we consider
\[
\begin{aligned}
{\cal H}^1_a(0,1):=\big\{ u \in L^2(0,1) \ \mid  \,&u \text{ locally
absolutely continuous in } [0,x_0) \cup (x_0,1], \\ & \sqrt{a} u'
\in L^2(0,1) \text{ and } u(0)= u(1)=0 \big\}
\end{aligned}
\]
and
\[
\label{Ha2S} \qquad {\cal H}^2_a(0,1) :=  \big\{ u \in {\cal
H}^1_a(0,1) |\,au' \in H^1(0,1)\big\}.
\]
In both cases we consider the norms
\[
\|u\|^2_{{\cal H}^1_a(0,1)}:= \|u\|^2_{L^2(0,1)} +
\|\sqrt{a}u'\|^2_{L^2(0,1)},\] and
\[\|u\|^2_{{\cal H}^2_a(0,1)} :=
\|u\|^2_{{\cal H}^1_a(0,1)} + \|(au')'\|^2_{L^2(0,1)}
\]
and we set
\[
D({\cal A}_1)={\cal H}^2_a(0,1).
\]

Thanks to lemma \ref{rem} one can prove the following
characterizations for the (SD) case which are already given in
\cite[Propositions 2.1 and 2.2]{fm}.
\begin{Proposition}[\cite{fm}, Proposition 2.1]\label{characterization}
Let
\[
\begin{aligned} X:=\Big\{ u \in L^2(0,1)\,| \ &u \text{ locally
absolutely continuous in } [0,1]\setminus \{x_0\},  \\ &\sqrt{a}u'
\in L^2(0,1), au \in H^1_0(0,1) \;\text{and} \\&
(au)(x_0)=u(0)=u(1)=0\Big\}.
\end{aligned}
\]
Then, under Hypothesis $\ref{Ass01}$ we have
\[
{\cal H}^1_a(0,1)=X.
\]
\end{Proposition}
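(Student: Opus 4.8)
The plan is to prove the two set inclusions $X\subseteq {\cal H}^1_a(0,1)$ and ${\cal H}^1_a(0,1)\subseteq X$ separately, the first being essentially a tautology and the second carrying all the content. For $X\subseteq {\cal H}^1_a(0,1)$, observe that any $u\in X$ automatically lies in $L^2(0,1)$, is locally absolutely continuous on $[0,1]\setminus\{x_0\}=[0,x_0)\cup(x_0,1]$, satisfies $\sqrt a\,u'\in L^2(0,1)$ and $u(0)=u(1)=0$; these are precisely the conditions defining ${\cal H}^1_a(0,1)$ in case (SD), so nothing is to be done here and the extra requirements $au\in H^1_0(0,1)$ and $(au)(x_0)=0$ are simply forgotten.

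For the reverse inclusion, take $u\in {\cal H}^1_a(0,1)$; since local absolute continuity off $x_0$, membership in $L^2$, $\sqrt a\,u'\in L^2$ and the boundary conditions at $0$ and $1$ are already built into the definition of ${\cal H}^1_a(0,1)$, it suffices to show $au\in H^1_0(0,1)$ and $(au)(x_0)=0$. First I would check that on each of $[0,x_0)$ and $(x_0,1]$ the product $au$ is locally absolutely continuous (as a product of the Lipschitz function $a$ with $u$) with $(au)'=a'u+au'$, and that $(au)'\in L^2(0,1)$: indeed $a'\in L^\infty(0,1)$ and $u\in L^2(0,1)$ give $a'u\in L^2(0,1)$, while writing $au'=\sqrt a\,(\sqrt a\,u')$ and using that $a\in W^{1,\infty}(0,1)$ implies $\sqrt a\in L^\infty(0,1)$ gives $au'\in L^2(0,1)$. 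Since $(au)'$ is then $L^1$ on each side of $x_0$, the one-sided limits $\ell^\pm:=\lim_{x\to x_0^\pm}(au)(x)$ exist and are finite, and $au$ extends continuously up to $x=0$ and $x=1$ with values $a(0)u(0)=0$ and $a(1)u(1)=0$.

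The crucial step — and the only genuine obstacle — is to show $\ell^-=\ell^+=0$, which is where the strong degeneracy is used. Suppose $\ell^-\neq 0$; then $|u(x)|\ge |\ell^-|/(2a(x))$ for $x$ close to $x_0$ on the left, hence $|u(x)|^2\ge |\ell^-|^2/(4a(x)^2)$ on such an interval, on which we may also assume $a\le 1$ (as $a(x)\to 0$ when $x\to x_0$), so that $1/a^2\ge 1/a$ there. Since $1/a\notin L^1(0,1)$ in case (SD) by Lemma~\ref{rem}(3), and the only possible singularity of $1/a$ is at $x_0$ (because $a\in W^{1,\infty}(0,1)$ is continuous and strictly positive away from $x_0$), the integral of $1/a^2$ near $x_0^-$ diverges, so $u\notin L^2(0,1)$, a contradiction; thus $\ell^-=0$, and symmetrically $\ell^+=0$. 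Consequently $au$ is continuous on $[0,1]$, belongs to $H^1(0,x_0)$ and to $H^1(x_0,1)$, and is continuous across $x_0$, hence $au\in H^1(0,1)$ by the standard gluing argument; together with $(au)(0)=(au)(1)=0$ this gives $au\in H^1_0(0,1)$, while $(au)(x_0)=\ell^-=0$. Therefore $u\in X$, and the two inclusions complete the proof. Everything other than the vanishing $(au)(x_0)=0$ is routine bookkeeping about products of a Lipschitz function with $L^2$ data; the delicate point rests entirely on the characterization of strong degeneracy through $1/a\notin L^1$, which prevents an $L^2$ function from compensating a nonzero trace of $au$ at the degeneracy point.
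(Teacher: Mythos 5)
Your two-inclusion decomposition is the right one, and the overall strategy --- reduce everything to showing that the one-sided limits of $au$ at $x_0$ vanish, and derive a contradiction with $u\in L^2(0,1)$ from the non-integrability of $1/a$ --- is exactly the argument behind this statement (the paper does not reprove it but imports it from \cite{fm}, Proposition 2.1, whose proof is of this same type). The bookkeeping is fine: $a'u\in L^2(0,1)$ because $a\in W^{1,\infty}(0,1)$ under Hypothesis \ref{Ass01}, $au'=\sqrt a\,(\sqrt a\,u')\in L^2(0,1)$, hence $au$ belongs to $H^1$ on each side of $x_0$ with finite one-sided limits $\ell^\pm$, and the gluing of the two $H^1$ pieces across $x_0$ once $\ell^-=\ell^+=0$ is established is standard.

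The one step whose justification does not hold as written is the deduction that $\int_{x_0-\delta}^{x_0}a^{-1}\,dx=+\infty$. From ``$1/a\notin L^1(0,1)$ and the only singularity is at $x_0$'' you can only conclude that $1/a$ fails to be integrable on \emph{at least one} side of $x_0$; a priori it could be integrable on the left and not on the right, in which case your contradiction would not constrain $\ell^-$ at all. The repair is immediate and is in fact the mechanism of Remark \ref{nol1}: since $a\in W^{1,\infty}(0,1)$ and $a(x_0)=0$, one has $a(x)=\left|\int_{x_0}^x a'(s)\,ds\right|\le \|a'\|_{L^\infty(0,1)}|x-x_0|$, hence $1/a(x)\ge c/|x-x_0|$ and the integral of $1/a$ (a fortiori of $1/a^2$) diverges on \emph{each} side of $x_0$ separately. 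With that substitution the contradiction $u\notin L^2(0,1)$ goes through for both $\ell^-$ and $\ell^+$, and the rest of your proof is complete.
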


Using the previous result, one can prove the following additional
characterization.
\begin{Proposition}[\cite{fm}, Proposition 2.2]\label{domain}
Let
\[
\begin{aligned}
D:=\Big\{ u \in L^2(0,1)\,| \ & \ u \text{ locally absolutely
continuous in } [0,1]\setminus \{x_0\}, \\ & au \in H^1_0(0,1), a u'
\in H^1(0,1),  au \text{ is continuous at } x_0 \;\text{and}\\&
(au)(x_0)=(au')(x_0)=u(0)=u(1)=0\Big\}.
\end{aligned}
\]
Then, under Hypothesis $\ref{Ass01}$ we have
\[
{\cal H}^2_a(0,1)=D({\cal A}_1)=D.
\]
\end{Proposition}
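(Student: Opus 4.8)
The plan is to prove the double equality ${\cal H}^2_a(0,1)=D({\cal A}_1)=D$ by showing the two set inclusions $D \subseteq {\cal H}^2_a(0,1)$ and ${\cal H}^2_a(0,1) \subseteq D$, recalling that $D({\cal A}_1)$ was \emph{defined} to equal ${\cal H}^2_a(0,1)$, so only one genuine equality is at stake. The main tool will be Proposition \ref{characterization}, which identifies ${\cal H}^1_a(0,1)$ with the space $X$, together with Lemma \ref{rem}, which (in the (SD) range $K\in[1,2)$) gives $1/\sqrt a \in L^1(0,1)$ and the monotonicity/limit properties of $|x-x_0|^\gamma/a$.

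First I would prove $D \subseteq {\cal H}^2_a(0,1)$. Take $u \in D$. The conditions $au \in H^1_0(0,1)$, $(au)(x_0)=u(0)=u(1)=0$ and $\sqrt a u' \in L^2$ are exactly what is needed (via Proposition \ref{characterization}) to get $u \in {\cal H}^1_a(0,1)=X$; here one must check $\sqrt a u' \in L^2$, which should follow from $au' \in H^1(0,1)\hookrightarrow L^\infty$ and $1/\sqrt a \in L^1 \subseteq L^2_{\mathrm{loc}}$ away from $x_0$, with the behaviour near $x_0$ controlled by writing $\sqrt a u' = (1/\sqrt a)\,(au')$ and using that $au'$ is continuous with $(au')(x_0)=0$ so that $|au'(x)| \lesssim |x-x_0|$ near $x_0$, hence $|\sqrt a u'| \lesssim |x-x_0|/\sqrt a$, which is bounded by Lemma \ref{rem}(1) applied with a suitable $\gamma$. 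Then $au' \in H^1(0,1)$ directly gives $(au')' \in L^2(0,1)$, so $u \in {\cal H}^2_a(0,1)$.

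Next, the reverse inclusion ${\cal H}^2_a(0,1) \subseteq D$. Let $u \in {\cal H}^2_a(0,1)$, so $u \in {\cal H}^1_a(0,1)=X$ and $au' \in H^1(0,1)$. From $X$ we already have local absolute continuity on $[0,1]\setminus\{x_0\}$, $au \in H^1_0(0,1)$, $(au)(x_0)=u(0)=u(1)=0$. It remains to establish that $au'$ is continuous at $x_0$ and that $(au')(x_0)=0$. Continuity of $au'$ at $x_0$ is automatic since $au' \in H^1(0,1)\hookrightarrow C([0,1])$, so the function $au'$ has a well-defined value $L:=(au')(x_0)$. The crux is to show $L=0$. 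The argument I expect is the one I regard as the main obstacle: suppose $L\neq 0$; then near $x_0$ one has $u'(x) \sim L/a(x)$, and since in the (SD) case $1/a \notin L^1(0,1)$ (Lemma \ref{rem}(3)), $u'$ would fail to be integrable across $x_0$, contradicting $au \in H^1_0(0,1)$ together with $\sqrt a u' \in L^2$. More carefully, one integrates: $a(x)u(x) - a(y)u(y) = \int_y^x (au')'(s)\,ds$, and letting $y\to x_0^-$, $x \to x_0^+$ using $(au)(x_0^\pm)=0$ and continuity of $(au')'$'s antiderivative, one gets a relation forcing the jump of the primitive to vanish; alternatively, one shows $L/a \in L^1$ near $x_0$ is needed for $u$ to be in $L^2$, which fails unless $L=0$. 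Once $(au')(x_0)=0$ is in hand, all defining conditions of $D$ hold, giving ${\cal H}^2_a(0,1)\subseteq D$ and completing the proof.

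The step I expect to be delicate is precisely pinning down $(au')(x_0)=0$ rigorously from the membership conditions, since it is a statement about the trace at the degeneracy point and requires combining $au' \in H^1$, $au \in H^1_0$, $\sqrt a u' \in L^2$, and the non-integrability $1/a \notin L^1$ in a careful limiting argument; everything else is a bookkeeping combination of the characterizations already proved in \cite{fm}.
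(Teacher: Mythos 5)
Your overall strategy is the right one, and it coincides with the paper's: Proposition \ref{domain} is quoted from \cite{fm}, but the paper proves the exactly analogous non-divergence statement (Proposition \ref{domain1}) by precisely the argument you describe. Since $D({\cal A}_1)={\cal H}^2_a(0,1)$ by definition, only the equality with $D$ is at stake, and the decisive step is the one you single out: $au'\in H^1(0,1)\subset C([0,1])$ gives a well-defined value $L=(au')(x_0)$, and if $L\neq 0$ then $a(u')^2=(au')^2/a\geq c^2/a$ near $x_0$, which is not integrable because $\tfrac1a\notin L^1(0,1)$ in the (SD) case (Lemma \ref{rem}.3), contradicting $\sqrt a\,u'\in L^2(0,1)$, which holds since $u\in{\cal H}^1_a(0,1)$. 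So the skeleton and the key mechanism are correct.

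Two of your supporting estimates need repair, though. First, in the inclusion $D\subseteq{\cal H}^2_a(0,1)$ you claim $|au'(x)|\lesssim|x-x_0|$ from continuity of $au'$ and $(au')(x_0)=0$; continuity only gives $o(1)$, and $au'\in H^1(0,1)$ gives $|au'(x)|\le\|(au')'\|_{L^2}\,|x-x_0|^{1/2}$. Consequently $a(u')^2\lesssim |x-x_0|/a$, which is \emph{not} bounded when $K>1$ (take $a=|x-x_0|^{3/2}$), so "bounded by Lemma \ref{rem}.1" is not the right conclusion. What saves the inclusion is integrability: Lemma \ref{rem}.1 with $\gamma=K$ yields $1/a\le C|x-x_0|^{-K}$ near $x_0$, hence $\int |x-x_0|/a\,dx<\infty$ because $K<2$, and $\sqrt a\,u'\in L^2(0,1)$ follows; this is the only condition of $X$ in Proposition \ref{characterization} not already contained in the definition of $D$. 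Second, your "alternative" route to $L=0$ through $u\in L^2(0,1)$ genuinely fails for part of the admissible range: for $K=1$ the divergence of $\int 1/a$ is only logarithmic, so $u$ blowing up like $\int_x^{x_0}\tfrac{1}{a}$ is perfectly compatible with $u\in L^2(0,1)$, and no contradiction arises. Only the contradiction with $\sqrt a\,u'\in L^2(0,1)$ (your first variant, which you phrase imprecisely as "$u'$ would fail to be integrable") closes the argument for all $K\in[1,2)$; keep that one and discard the rest.
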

 Now, let us go back to problem
\eqref{linear}, recalling the following
\begin{Definition}
If $u_0 \in L^2(0,1)$ and $h\in L^2(Q_T):= L^2(0,T; L^2(0,1))$, a function $u$ is said to
be a (weak) solution of \eqref{linear} if
\[
u \in C([0, T]; L^2(0,1)) \cap L^2(0, T; {\cal H}^1_a(0,1))
\]
and
\[
\begin{aligned}
&\int_0^1u(T,x)\varphi(T,x)\, dx - \int_0^1 u_0(x) \varphi(0,x)\, dx
- \int_{Q_T}u\varphi_t \,dxdt =
\\&- \int_{Q_T} au_x
\varphi_x\,dxdt + \int_{Q_T} h\varphi \chi_\omega\,dx dt
\end{aligned}
\]
for all $\varphi \in H^1(0, T; L^2(0,1)) \cap L^2(0, T; {\cal
H}^1_a(0,1))$.
\end{Definition}
As proved in \cite{fggr} (see Theorems $2.2$, $2.7$ and $4.1$),
problem \eqref{linear} is well-posed in the sense of the following
theorem:
\begin{Theorem}\label{th-parabolic} Assume Hypothesis $\ref{Ass0}$ or $\ref{Ass01}$. For
all $h \in  L^2(Q_T)$ and $u_0 \in L^2(0,1)$, there exists a unique
weak solution  $u \in C([0,T]; L^2(0,1)) \cap L^2 (0,T; {\cal
H}^1_a(0,1))$ of \eqref{linear} and there exists a universal
positive constant $C$ such that
\begin{equation}\label{stima}
\sup_{t \in [0,T]} \|u(t)\|^2_{L^2(0,1)}+\int_0^T\|u(t)\|^2_{{\cal
H}^1_a(0,1)} dt \le C(\|u_0\|^2_{L^2(0,1)}+\|h\|^2_{L^2(Q_T)}).
\end{equation}
Moreover, if $u_0 \in {\cal H}^1_a(0,1)$, then
\begin{equation}\label{regularity}
u \in H^1(0,T; L^2(0,1))\cap C([0,T]; {\cal H}^1_a(0,1)) \cap
L^2(0,T; {\cal H}^2_a(0,1)),
\end{equation}
 and there exists a universal positive constant $C$ such
that
\begin{equation}\label{stima1}
\begin{aligned}
\sup_{t \in [0,T]}\left(\|u(t)\|^2_{{\cal H}^1_a(0,1)} \right)&+
\int_0^{T} \left(\left\|u_t\right\|^2_{L^2(0,1)} +
\left\|(au_x)_x\right\|^2_{L^2(0,1)}\right)dt\\&\le C
\left(\|u_0\|^2_{{\cal H}^1_a(0,1)} + \|h\|^2_{L^2(Q_T)}\right).
\end{aligned}
\end{equation}
In addition, $\mathcal A_1$ generates an analytic contraction
semigroup of angle $\pi/2$.
\end{Theorem}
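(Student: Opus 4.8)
The plan is to realise $\mathcal A_1$ as the nonpositive self-adjoint operator associated with a symmetric, continuous and coercive bilinear form on $\mathcal H^1_a(0,1)$, and then to read off semigroup generation, well-posedness and the energy estimates from the standard theory of analytic semigroups generated by such operators; this is essentially the path of \cite{fggr}. On $\mathcal H^1_a(0,1)$ I would set $\mathfrak a(u,v):=\int_0^1 a\,u'v'\,dx$, which is clearly symmetric, bilinear and continuous for $\|\cdot\|_{\mathcal H^1_a(0,1)}$. The point where Hypothesis \ref{Ass0} (resp. \ref{Ass01}) enters is coercivity: by Lemma \ref{rem}(1) applied with $\gamma=2$ (admissible because $K<2$ in both cases) one gets $(x-x_0)^2\le C\,a(x)$ on $[0,1]$, so every $u\in\mathcal H^1_a(0,1)$ satisfies $\int_0^1(x-x_0)^2|u'|^2\,dx\le C\int_0^1 a|u'|^2\,dx<\infty$ and is admissible in Proposition \ref{HP} with $p(x)=(x-x_0)^2$ and any $q\in(1,2)$; hence $\|u\|_{L^2(0,1)}^2\le C_{HP}\int_0^1(x-x_0)^2|u'|^2\,dx\le C\,\mathfrak a(u,u)$. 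Consequently $u\mapsto\|\sqrt a\,u'\|_{L^2(0,1)}$ is an equivalent norm on $\mathcal H^1_a(0,1)$, which is thus a separable Hilbert space densely embedded in $L^2(0,1)$, and $\mathcal H^2_a(0,1)$ is dense in $L^2(0,1)$ since it contains $C^\infty_c((0,1)\setminus\{x_0\})$.

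Next I would let $(B,D(B))$ be the operator associated with $-\mathfrak a$ — namely $u\in D(B)$ iff $\mathfrak a(u,v)=-\int_0^1 fv\,dx$ for some $f\in L^2(0,1)$ and all $v\in\mathcal H^1_a(0,1)$, in which case $Bu:=f$ — and identify it with $-\mathcal A_1$, i.e. with $u\mapsto-(au')'$. By the representation theorem for symmetric, continuous, coercive forms, $B$ is self-adjoint, $B\le0$ and $0\in\rho(B)$. For the identification $D(B)=\mathcal H^2_a(0,1)$: if $u\in\mathcal H^2_a(0,1)$ then $au'\in H^1(0,1)$ and, integrating by parts separately on $(0,x_0)$ and $(x_0,1)$ against $v\in\mathcal H^1_a(0,1)$, one obtains $\mathfrak a(u,v)=-\int_0^1(au')'v\,dx+\bigl[(au')(x_0^-)-(au')(x_0^+)\bigr]v(x_0)$, the contributions at $0$ and $1$ vanishing because $v(0)=v(1)=0$; in case (WD) $u$ is absolutely continuous and $au'\in H^1(0,1)$, so the bracket vanishes, whereas in case (SD) Proposition \ref{domain} gives $(au')(x_0)=0$ and Proposition \ref{characterization} makes sense of $v(x_0)$, so the bracket again vanishes. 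Conversely, if $u\in D(B)$, testing against $v\in C^\infty_c$ supported on one side of $x_0$ shows that $au'$ coincides with an $H^1$ function on each of $(0,x_0)$ and $(x_0,1)$ with $(au')'=Bu\in L^2(0,1)$, and plugging this back into $\mathfrak a(u,v)=-\int_0^1(Bu)v\,dx$ for general $v$ forces the vanishing of the traces at $0,1$ and of the jump at $x_0$; using Lemma \ref{rem} (which yields $|x-x_0|^\gamma/a\to0$) together with Propositions \ref{characterization}--\ref{domain} one concludes $u\in\mathcal H^2_a(0,1)=D(\mathcal A_1)$. Thus $\mathcal A_1=B$ is self-adjoint and $\langle\mathcal A_1u,u\rangle=-\mathfrak a(u,u)\le0$.

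The remaining conclusions are then standard. Since $\mathcal A_1$ is self-adjoint with $\mathcal A_1\le0$, the spectral theorem shows that $e^{t\mathcal A_1}$ extends to a bounded analytic semigroup of angle $\pi/2$ with $\|e^{z\mathcal A_1}\|\le1$ for $\operatorname{Re}z>0$; in particular it is a contraction semigroup, the last assertion of the theorem. For $u_0\in L^2(0,1)$ and $h\in L^2(Q_T)$, the function $u(t):=e^{t\mathcal A_1}u_0+\int_0^t e^{(t-s)\mathcal A_1}\bigl(h(s)\chi_\omega\bigr)\,ds$ belongs to $C([0,T];L^2(0,1))\cap L^2(0,T;\mathcal H^1_a(0,1))$, and a Faedo--Galerkin approximation in the eigenbasis of $-\mathcal A_1$ (or a density argument) shows it is the unique weak solution in the sense recalled above, uniqueness coming from the energy identity for the difference of two solutions. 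Multiplying the equation by $u$ and integrating (using the identification above, so that no interface term at $x_0$ appears) gives
\[
\tfrac12\tfrac{d}{dt}\|u\|_{L^2(0,1)}^2+\int_0^1 a\,u_x^2\,dx=\int_0^1 hu\chi_\omega\,dx\le\tfrac12\|h\|_{L^2(0,1)}^2+\tfrac12\|u\|_{L^2(0,1)}^2,
\]
whence \eqref{stima} by Gronwall's lemma after integration in time (the computation being justified at the Galerkin level and passed to the limit). If moreover $u_0\in\mathcal H^1_a(0,1)=D\bigl((-\mathcal A_1)^{1/2}\bigr)$, the regularity theory for analytic semigroups yields \eqref{regularity}; then multiplying the equation by $u_t$ and integrating gives
\[
\|u_t\|_{L^2(0,1)}^2+\tfrac12\tfrac{d}{dt}\int_0^1 a\,u_x^2\,dx=\int_0^1 hu_t\chi_\omega\,dx\le\tfrac12\|h\|_{L^2(0,1)}^2+\tfrac12\|u_t\|_{L^2(0,1)}^2,
\]
which after integration in time controls $\sup_{t\in[0,T]}\|u(t)\|_{\mathcal H^1_a(0,1)}^2$ and $\int_0^T\|u_t\|_{L^2(0,1)}^2\,dt$, while $\|(au_x)_x\|_{L^2(0,1)}=\|u_t-h\chi_\omega\|_{L^2(0,1)}\le\|u_t\|_{L^2(0,1)}+\|h\|_{L^2(0,1)}$ from the equation itself provides the missing term in \eqref{stima1}.

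The one genuinely delicate point is the identification $D(\mathcal A_1)=\mathcal H^2_a(0,1)$ and, above all, the vanishing of the interface contribution at the degeneracy point $x_0$ in the integration by parts (equivalently, that the form $\mathfrak a$ decouples across $x_0$ on its natural domain): this rests entirely on the fine description of the weighted Sobolev spaces near $x_0$ provided by Lemma \ref{rem} and Propositions \ref{characterization}--\ref{domain}, and, in the strongly degenerate case, on the Hardy--Poincar\'e inequality of Proposition \ref{HP} for the coercivity step. Once $\mathcal A_1$ is known to be a nonpositive self-adjoint operator, everything else above is routine.
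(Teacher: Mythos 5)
The paper gives no proof of this theorem at all --- it is imported verbatim from \cite{fggr} --- but your argument (the form $\int_0^1 a u'v'\,dx$ on ${\cal H}^1_a(0,1)$, coercivity via Proposition \ref{HP} with $p(x)=(x-x_0)^2$ after noting $(x-x_0)^2\le Ca$ from Lemma \ref{rem}, self-adjointness and nonpositivity yielding the analytic contraction semigroup of angle $\pi/2$, and the two energy multiplications by $u$ and $u_t$ for \eqref{stima} and \eqref{stima1}) is correct and coincides with the strategy of the cited reference and with the paper's own proof of the non-divergence counterpart, Theorem \ref{theorem_nondivergence}. The only step you state loosely is the vanishing of the interface contribution at $x_0$ in the strongly degenerate case: there $v(x_0)$ need not exist (since $1/a\notin L^1$, elements of ${\cal H}^1_a$ may blow up logarithmically at $x_0$), so one must show directly that $(au')(x)v(x)\to 0$ using $(au')(x_0)=0$, the H\"older continuity of $au'\in H^1(0,1)$ and the at most logarithmic growth of $v$ --- which is precisely the content of Propositions \ref{characterization} and \ref{domain} that you correctly single out as the crux.
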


\section{Well-posedness in the non divergence case}
We start proving some preliminary results concerning the
well-posedness of problem \eqref{linear} in the non divergence case. For this, we consider
the operator
\[
\mathcal A_2u:=au_{xx},
\]
which is related to the following weighted Hilbert spaces:
\[L^2_{\frac{1}{a}}(0,1) :=\left\{ u \in L^2(0,1) \
\mid \int_0^1 \frac{u^2}{a} dx <\infty \right\},
\]
\[
\cH^1_{\frac{1}{a}}(0,1) :=L^2_{\frac{1}{a}}(0,1)\cap H^1_0(0,1),
\]
and
\[
\cH^2_{\frac{1}{a}}(0,1) :=\Big\{ u \in \cH^1_{\frac{1}{a}}(0,1) \,
\big| \, u'\in H^1(0,1)\Big\},
\]
endowed with the associated norms
\[ \|u\|_{L^2_{\frac{1}{a}}(0,1)}^2:= \int_0^1
\frac{u^2}{a} dx, \quad \forall\, u\in L^2_{\frac{1}{a}}(0,1),\]
\[
\|u\|^2_{\cH^1_{\frac{1}{a}}}:=\|u\|_{L^2_{\frac{1}{a}}(0,1)}^2 +
\|u'\|^2_{L^2(0,1)}, \quad \forall\, u\in
\cH^1_{\frac{1}{a}}(0,1),\]
 and
\[
\|u\|_{\cH^2_{\frac{1}{a}}(0,1)}^2
:=\|u\|_{\cH^1_{\frac{1}{a}}(0,1)}^2 +
\|au''\|^2_{L^2_{\frac{1}{a}}(0,1)},\quad \forall\,u\in
\cH^2_{\frac{1}{a}}(0,1).
\]
Indeed, it is a trivial fact that, if $u'\in H^1(0,1)$, then $au''
\in L^2_{\frac{1}{a}}(0,1)$, so that the norm for
$\cH^2_{\frac{1}{a}}(0,1)$ is well defined, and we can also write in
a more appealing way
\[
\cH^2_{\frac{1}{a}}(0,1) :=\Big\{ u \in \cH^1_{\frac{1}{a}}(0,1) \,
\big| \, u'\in H^1(0,1) \mbox{ and }au'' \in
L^2_{\frac{1}{a}}(0,1)\Big\}.
\]
Finally, we take
\[
D(\mathcal A_2)=\cH^2_{\frac{1}{a}}(0,1).
\]

Using Lemma \ref{rem}, also the following characterization in the
(WD) case is straightforward:
\begin{Proposition}[\cite{fggr}, Corollary 3.1]\label{h=h}
Assume Hypothesis $\ref{Ass0}$. Then,
$\cH^1_{\frac{1}{a}}(0,1)$ and $H^1_0(0,1)$ coincide algebraically.
Moreover the two norms are equivalent. As a consequence,
$\cH^2_{\frac{1}{a}}(0,1)=H^2(0,1)\cap H^1_0(0,1)$.
\end{Proposition}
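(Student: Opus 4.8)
The plan is to reduce the whole statement to the Hardy--Poincar\'e inequality of Proposition \ref{HP}. By definition $\cH^1_{\frac1a}(0,1)=L^2_{\frac1a}(0,1)\cap H^1_0(0,1)\subseteq H^1_0(0,1)$, so the only nontrivial point is the reverse inclusion together with the quantitative bound $\int_0^1 u^2/a\,dx\le C\|u'\|_{L^2(0,1)}^2$ for every $u\in H^1_0(0,1)$; once this is in hand, the norm equivalence and the assertion about $\cH^2_{\frac1a}$ follow formally. The idea (and the point one has to see) is that Proposition \ref{HP} should be applied \emph{not} with $p=a$ but with the auxiliary weight $p(x):=\dfrac{(x-x_0)^2}{a(x)}$ for $x\neq x_0$, extended by $p(x_0):=0$, so that $p(x)/(x-x_0)^2=1/a(x)$ and the left-hand side of \eqref{hardy1} becomes exactly $\int_0^1 u^2/a\,dx$.

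Concretely, I would first check that $p$ and a suitable exponent $q$ are admissible in Proposition \ref{HP}. Since $a\in W^{1,1}(0,1)\subset C([0,1])$ and $a>0$ on $[0,1]\setminus\{x_0\}$, $p$ is continuous and strictly positive there, while continuity at $x_0$ with value $0$ is precisely the limit in Lemma \ref{rem}(1) applied with $\gamma=2>K$ (recall $K\in(0,1)$ under Hypothesis \ref{Ass0}); hence $p\in C([0,1])$ and $M:=\|p\|_{L^\infty(0,1)}<\infty$. Next, set $q:=2-K$, which lies in $(1,2)$ because $0<K<1$; then
\[
\frac{p(x)}{|x-x_0|^q}=\frac{|x-x_0|^{2-q}}{a(x)}=\frac{|x-x_0|^{K}}{a(x)},
\]
which, by Lemma \ref{rem}(1) with $\gamma=K$, is nonincreasing on the left of $x_0$ and nondecreasing on its right. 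Now apply Proposition \ref{HP} with $w=u\in H^1_0(0,1)$: $u$ is absolutely continuous on $[0,1]$, hence locally absolutely continuous on $[0,x_0)\cup(x_0,1]$, $u(0)=u(1)=0$, and $\int_0^1 p|u'|^2\,dx\le M\|u'\|_{L^2(0,1)}^2<\infty$, so \eqref{hardy1} yields
\[
\int_0^1\frac{u^2}{a}\,dx\le C_{HP}\int_0^1 p(x)|u'(x)|^2\,dx\le C_{HP}\,M\,\|u'\|_{L^2(0,1)}^2<\infty .
\]
This gives $H^1_0(0,1)\subseteq\cH^1_{\frac1a}(0,1)$, hence the algebraic identity, and the bound $\|u\|_{\cH^1_{\frac1a}}^2\le(C_{HP}M+1)\|u\|_{H^1_0}^2$; conversely $\|u\|_{L^2}^2=\int_0^1(u^2/a)\,a\,dx\le\|a\|_{L^\infty}\|u\|_{L^2_{\frac1a}}^2$ and $\|u'\|_{L^2}^2\le\|u\|_{\cH^1_{\frac1a}}^2$, so $\|u\|_{H^1_0}^2\le(\|a\|_{L^\infty}+1)\|u\|_{\cH^1_{\frac1a}}^2$, which is the equivalence of norms.

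For the consequence, since $\cH^1_{\frac1a}(0,1)=H^1_0(0,1)$, the definition $\cH^2_{\frac1a}(0,1)=\{u\in\cH^1_{\frac1a}(0,1):u'\in H^1(0,1)\}$ rewrites as $\{u\in H^1_0(0,1):u'\in H^1(0,1)\}=H^2(0,1)\cap H^1_0(0,1)$, and the corresponding norms are equivalent because $\|au''\|_{L^2_{\frac1a}}^2=\int_0^1 a|u''|^2\,dx\le\|a\|_{L^\infty}\|u''\|_{L^2}^2$ combined with the $\cH^1$-equivalence already proved. I expect essentially no obstacle beyond the bookkeeping: the one step that requires a choice is picking the weight $p=(x-x_0)^2/a$ and the exponent $q=2-K$ and reading off the monotonicity and the vanishing at $x_0$ from Lemma \ref{rem}; everything else is a direct estimate using that $a$ is bounded and bounded below away from $x_0$.
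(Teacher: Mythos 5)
Your argument is correct: the weight $p(x)=(x-x_0)^2/a(x)$ with $q=2-K\in(1,2)$ does satisfy all the hypotheses of Proposition \ref{HP} (continuity at $x_0$ from Lemma \ref{rem}(1) with $\gamma=2>K$, monotonicity of $p/|x-x_0|^q=|x-x_0|^K/a$ from Lemma \ref{rem}(1) with $\gamma=K$), and every $u\in H^1_0(0,1)$ is admissible as $w$, so the inequality $\int_0^1 u^2/a\,dx\le C_{HP}\|p\|_\infty\|u'\|_{L^2}^2$ and the two-sided norm comparison follow as you write. The paper does not reprove this statement — it is quoted from \cite{fggr} and flagged as ``straightforward using Lemma \ref{rem}'' — and the most economical route it is pointing at is different from yours: in the (WD) case Lemma \ref{rem}(2) gives $1/a\in L^1(0,1)$, and since $|u(x)|=|\int_0^x u'|\le\|u'\|_{L^2}$ for $u\in H^1_0(0,1)$, one gets $\int_0^1 u^2/a\le\|1/a\|_{L^1}\|u'\|_{L^2}^2$ with no Hardy--Poincar\'e machinery at all. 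Your route is heavier but equally valid, and it is in fact the exact computation the authors themselves perform later (proof of Lemma \ref{obser.regular1}, weakly degenerate case, and Corollary \ref{cor_c}), so nothing is lost; the elementary route buys brevity, yours buys a constant expressed through $C_{HP}$ and generalizes to situations where $1/a\notin L^1$ but $(x-x_0)^2/a$ is still controlled. One small caveat: your closing claim that the $\cH^2_{\frac1a}$ and $H^2\cap H^1_0$ \emph{norms} are equivalent is only half-proved — you show $\|au''\|_{L^2_{1/a}}\le\|a\|_\infty^{1/2}\|u''\|_{L^2}$, but the reverse bound fails pointwise since $a$ vanishes at $x_0$; fortunately the proposition only asserts the set identity for $\cH^2_{\frac1a}$, which your argument does establish.
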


Hence, in the (WD) case, $C^\infty_c(0, 1)$ is dense in
$\cH^1_{\frac{1}{a}}(0, 1)$. 

We also have the following characterization for the (SD) case:
\begin{Proposition}[\cite{fggr}, Propositions 3.6]\label{domain3}
Suppose that Hypothesis $\ref{Ass01}$ holds and set
\[
\begin{aligned} X:=\{ u \in \cH^1_{\frac{1}{a}}(0,1) \ \mid u(x_0)=0
\}.
\end{aligned}
\]
Then
\[
\cH^1_{\frac{1}{a}}(0,1)=X,
\]
and, for all $u\in X$, $\|u\|_{\cH^1_{\frac{1}{a}(0,1)}}$ is
equivalent to $\left(\int_0^1(u')^2dx\right)^{\frac{1}{2}}$.
\end{Proposition}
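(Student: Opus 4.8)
\emph{Proof plan.} The inclusion $X\subseteq\cH^1_{\frac1a}(0,1)$ is built into the definition of $X$, so the first assertion reduces to showing that \emph{every} $u\in\cH^1_{\frac1a}(0,1)=L^2_{\frac1a}(0,1)\cap H^1_0(0,1)$ satisfies $u(x_0)=0$. The idea is to exploit that, in the strongly degenerate regime, $\tfrac1a\notin L^1(0,1)$ (Lemma \ref{rem}(3)), while $u$, belonging to $H^1_0(0,1)$, is continuous on $[0,1]$, so $u(x_0)$ is well defined. I would argue by contradiction: if $u(x_0)=c\neq0$, continuity gives a small $\delta>0$ with $|u|\ge|c|/2$ on $I_\delta:=(x_0-\delta,x_0+\delta)\subseteq(0,1)$, hence $\int_0^1 u^2/a\,dx\ge\frac{c^2}{4}\int_{I_\delta}\frac{dx}{a}$. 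Since $a$ is continuous and strictly positive (so $\tfrac1a$ is bounded) on the compact set $[0,1]\setminus I_\delta$, the divergence of $\int_0^1\frac{dx}{a}$ forces $\int_{I_\delta}\frac{dx}{a}=+\infty$, contradicting $u\in L^2_{\frac1a}(0,1)$. This yields $\cH^1_{\frac1a}(0,1)=X$.

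For the norm equivalence, the bound $\bigl(\int_0^1(u')^2\,dx\bigr)^{1/2}\le\|u\|_{\cH^1_{\frac1a}(0,1)}$ is immediate from the definition of the norm, so the real content is a Hardy-type inequality: there is $C>0$ with $\int_0^1 u^2/a\,dx\le C\int_0^1(u')^2\,dx$ for all $u\in X$. The plan is to use the now-available vanishing $u(x_0)=0$ together with Cauchy--Schwarz: $u(x)^2=\bigl(\int_{x_0}^x u'(s)\,ds\bigr)^2\le|x-x_0|\int_0^1(u'(s))^2\,ds$ for every $x\in[0,1]$, whence $\int_0^1\frac{u^2}{a}\,dx\le\Bigl(\int_0^1\frac{|x-x_0|}{a(x)}\,dx\Bigr)\int_0^1(u')^2\,ds$. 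It then remains to check $\int_0^1\frac{|x-x_0|}{a(x)}\,dx<\infty$: by Lemma \ref{rem}(1) with $\gamma=K$, the map $|x-x_0|^K/a(x)$ is monotone on each side of $x_0$, hence bounded near $x_0$, i.e. $|x-x_0|^K\le C_0\,a(x)$ there, so $\frac{|x-x_0|}{a(x)}\le C_0|x-x_0|^{1-K}$, which is integrable precisely because $K<2$; away from $x_0$ the integrand is continuous, hence bounded. Combining, $\|u\|^2_{\cH^1_{\frac1a}(0,1)}\le(1+C)\int_0^1(u')^2\,dx$, which closes the equivalence.

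The step I expect to be the crux is this Hardy inequality for $u\in X$. One should notice that the general weighted Hardy--Poincar\'e inequality of Proposition \ref{HP} is \emph{not} directly applicable in the strongly degenerate range $K\in[1,2)$: with the natural weight $p(x)=(x-x_0)^2/a(x)$ (which is continuous with $p(x_0)=0$, bounded, and for which $p/|x-x_0|^2=1/a$) its monotonicity exponent $q$ would be forced to satisfy $q\le 2-K\le1$, incompatible with $q>1$. This is exactly why the elementary one-sided estimate above must be run by hand; the hypothesis $K<2$ enters decisively through the local integrability of $|x-x_0|/a(x)\sim|x-x_0|^{1-K}$, and the vanishing at $x_0$ — secured in the first part from $\tfrac1a\notin L^1(0,1)$ — is what legitimizes writing $u(x)=\int_{x_0}^x u'$. (At $K=2$ both the integrability and the well-posedness break down, consistently with the known loss of null controllability for $a(x)=|x-x_0|^K$, $K\ge2$.)
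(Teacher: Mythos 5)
Your argument is correct. Note, however, that the paper does not prove this proposition at all: it is imported verbatim from \cite{fggr} (Proposition 3.6 there), and the only internal content is the remark immediately following the statement, which checks that the hypotheses of the cited result (namely $\frac{1}{a}\notin L^1(0,1)$ and $\frac{1}{a}\le C|x-x_0|^{-2}$) are consequences of Hypothesis \ref{Ass01} via Lemma \ref{rem} and Remark \ref{nol1}. What you have done is supply a self-contained proof built on exactly those two ingredients: the non-integrability of $\frac{1}{a}$ forces $u(x_0)=0$ for any continuous $u\in L^2_{\frac{1}{a}}(0,1)$, and the global bound $|x-x_0|^K\le C_0\,a(x)$ from Lemma \ref{rem}(1) (taking the monotone quotient at the endpoints) gives $\int_0^1\frac{|x-x_0|}{a}\,dx<\infty$ since $1-K>-1$, which turns the Cauchy--Schwarz estimate $u(x)^2\le|x-x_0|\int_0^1(u')^2$ into the needed Hardy inequality. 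Your second step is in effect an independent proof of Lemma \ref{hardy}, which the paper also imports from \cite{fggr} rather than proving. Your observation that Proposition \ref{HP} is not directly applicable with the weight $p=(x-x_0)^2/a$ in the range $K\in[1,2)$ (since the admissible monotonicity exponent would be $q\le 2-K\le 1$) is accurate and consistent with the paper's own practice, which reserves Proposition \ref{HP} for the weakly degenerate case and falls back on Lemma \ref{hardy} in the strongly degenerate one. No gaps.
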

We remark that \cite[Propositions 3.6]{fggr} was proved assuming
that
\begin{center}
{\it "there exists $x_0 \in (0,1)$ such that $a(x_0)=0$, $a>0$ on
$[0, 1]\setminus \{x_0\}$, $a\in W^{1, \infty}(0,1)$,
$\displaystyle\frac{1}{a} \not \in L^1(0,1)$ and there exists $C>0$
such that $ \frac{1}{a(x)} \le \frac{C}{|x-x_0|^2},$ for all $x \in
[0,1]\setminus\{x_0\}$"}.
\end{center}
However, the last assumption is clearly satisfied under Hypothesis
\ref{Ass01}, thanks to Lemma \ref{rem}.1, Lemma \ref{rem}.2 and
Remark \ref{nol1}.

We shall also need the following characterization:
\begin{Proposition}\label{domain1}
Suppose that Hypothesis $\ref{Ass01}$ holds and set
\[
D:=\{ u \in \cH^2_{\frac{1}{a}}(0,1)\ \mid  \ au' \in H^1(0,1)
\text{ and }u(x_0)=(au')(x_0)=0\}.
\]
Then $D(\mathcal A_2)=\cH^2_{\frac{1}{a}}(0,1)=D$.
\end{Proposition}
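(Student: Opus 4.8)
The plan is to prove the two inclusions $\cH^2_{\frac{1}{a}}(0,1) \subseteq D$ and $D \subseteq \cH^2_{\frac{1}{a}}(0,1)$ separately; the second is immediate since $D$ is, by definition, a subset of $\cH^2_{\frac{1}{a}}(0,1)$, and the identity $D(\mathcal A_2) = \cH^2_{\frac{1}{a}}(0,1)$ is simply the definition of the domain, so all the work lies in the first inclusion, namely in checking that the three extra conditions defining $D$ are automatic for every $u\in\cH^2_{\frac{1}{a}}(0,1)$.

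For $\cH^2_{\frac{1}{a}}(0,1) \subseteq D$, I would take $u \in \cH^2_{\frac{1}{a}}(0,1)$, so that by definition $u \in \cH^1_{\frac{1}{a}}(0,1)$ and $u' \in H^1(0,1)$. First, Proposition \ref{domain3} gives $\cH^1_{\frac{1}{a}}(0,1) = X$, hence $u(x_0) = 0$ (and, since $u \in H^1_0(0,1)$, also $u(0)=u(1)=0$). Next I would verify $au' \in H^1(0,1)$: since $a \in W^{1,\infty}(0,1)$ by Hypothesis \ref{Ass01} and $u' \in H^1(0,1)$, the product rule for Sobolev functions gives $(au')' = a'u' + au''$, where $a'u' \in L^\infty(0,1) \subset L^2(0,1)$ --- here one uses the one-dimensional embedding $H^1(0,1) \hookrightarrow C([0,1])$ to bound $u'$ --- and $au'' \in L^2(0,1)$ because $a \in L^\infty(0,1)$ and $u'' \in L^2(0,1)$; moreover $au' \in L^2(0,1)$, so indeed $au' \in H^1(0,1)$. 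Finally, the same embedding shows $u'$ has a continuous, finite-valued representative on $[0,1]$, whence $(au')(x_0) = a(x_0)u'(x_0) = 0$ since $a(x_0)=0$. Thus $u \in D$, which together with the trivial reverse inclusion completes the proof.

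The argument only assembles standard one-dimensional Sobolev facts (the embedding $H^1 \hookrightarrow C$, the product rule with a $W^{1,\infty}$ factor) together with the characterization of $\cH^1_{\frac{1}{a}}(0,1)$ from Proposition \ref{domain3}, so I do not expect a real obstacle. The only point deserving attention is the boundary condition $(au')(x_0)=0$: it is here that one genuinely uses both $a(x_0)=0$ and the boundedness of $u'$ near $x_0$, and it is precisely the latter that fails in the divergence case --- which is why the analogous Proposition \ref{domain} there needs a more delicate argument, and why in the present non-divergence setting the hypothesis $a\in W^{1,\infty}$ (rather than merely $W^{1,1}$) is genuinely used.
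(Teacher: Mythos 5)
Your proof is correct, and the overall skeleton (trivial inclusion $D\subseteq \cH^2_{\frac{1}{a}}(0,1)$, then $u(x_0)=0$ from Proposition \ref{domain3}, then $au'\in H^1(0,1)$ from the product rule with $a\in W^{1,\infty}(0,1)$ and $u'\in H^1(0,1)$) matches the paper's. Where you genuinely diverge is the last and most delicate point, $(au')(x_0)=0$. You observe that $u'\in H^1(0,1)\hookrightarrow C([0,1])$ has a bounded continuous representative on all of $[0,1]$, so $(au')(x_0)=a(x_0)u'(x_0)=0$ directly from $a(x_0)=0$; this is a valid and more elementary argument, since the definition of $\cH^2_{\frac{1}{a}}(0,1)$ does impose $u'\in H^1$ globally. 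The paper instead establishes only that $au'\in H^1(0,1)\subset C([0,1])$ has a limit $L$ at $x_0$ and rules out $L\neq 0$ by an integrability contradiction: $|au'|\geq c$ near $x_0$ would force $a(u')^2\geq c^2/a$, which is not integrable because $\frac{1}{a}\notin L^1(0,1)$ in the (SD) case (Lemma \ref{rem}.3), contradicting $\sqrt{a}\,u'\in L^2(0,1)$. Your route buys simplicity; the paper's buys robustness, since it only needs continuity of the product $au'$ rather than of $u'$ itself, which is exactly the situation in the divergence-form analogue (Proposition \ref{domain}), where $u$ is merely locally absolutely continuous away from $x_0$ — your closing remark correctly identifies this as the reason the divergence case is harder. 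Both arguments are sound here.
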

\begin{proof}
Since it is clear that $D \subseteq
\cH^2_{\frac{1}{a}}(0,1)$, we take $u \in
\cH^2_{\frac{1}{a}}(0,1)$ and we prove that $u \in D$.

By Proposition \ref{domain3}, $u(x_0)=0$, so that it is sufficient
to prove that $au' \in H^1(0,1)$ and $(au')(x_0)=0$. Since $u'\in
H^1(0,1)$ and $a \in W^{1, \infty}(0,1)$, we immediately have that
$au'\in L^2(0,1)$. Moreover, $(au')'=a'u'+ au'' \in L^2(0,1)$ since
$au''\in L^2_{1/a}(0,1) \subset L^2(0,1)$, and thus $au' \in
H^1(0,1)\subset C([0,1])$. Thus there exists $\lim_{x \to
x_0}(au')(x) = (au')(x_0)=L \in \R$. Assume by contradiction that $L
\neq 0$, then there exists $c>0$ such that
\[
|(au')(x)| \ge c
\]
for all $x$ in a neighborhood of $x_0$. Thus
\[
|\left(a(u')^{2}\right)(x)| \ge \frac{c^2}{a(x)},
\]
for all $x$ in a neighborhood of $x_0$, $ x \neq x_0$. But
$\displaystyle\frac{1}{a} \not \in L^1(0,1)$, thus we would have $\sqrt{a}u' \not
\in L^2(0,1)$, while $\sqrt{a}u' \in L^2(0,1)$, since $a$ is bounded
and $u'\in L^2(0,1)$. Hence $L=0$, that is $(au')(x_0)=0$.
\end{proof}

For the rest of the paper, a crucial tool is also the following Green
formula:
\begin{Lemma}\label{green}
For all $(u,v)\in \cH^2_{\frac{1}{a}}(0,1)\times
\cH^1_{\frac{1}{a}}(0,1)$ one has
\begin{equation}\label{greenformula}
\int_0^1u'' v\, dx= - \int_0^1 u'v'\, dx.
\end{equation}
\end{Lemma}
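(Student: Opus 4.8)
The plan is to reduce the identity to the classical integration-by-parts formula for products of absolutely continuous functions on $[0,1]$, so that no degenerate-weight phenomenon actually intervenes.

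First I would record the regularity at our disposal. Since $u\in\cH^2_{\frac{1}{a}}(0,1)\subset\cH^1_{\frac{1}{a}}(0,1)\subset H^1_0(0,1)$, the function $u$ has a representative absolutely continuous on $[0,1]$ with $u(0)=u(1)=0$; moreover $u'\in H^1(0,1)$, so $u'$ itself admits an absolutely continuous representative on $[0,1]$ whose classical derivative equals the weak derivative $u''\in L^2(0,1)$ a.e., and in particular $u'\in C([0,1])$. Likewise $v\in\cH^1_{\frac{1}{a}}(0,1)\subset H^1_0(0,1)$, so $v$ is absolutely continuous on $[0,1]$ with $v(0)=v(1)=0$ and $v'\in L^2(0,1)$. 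This already shows that both integrals in \eqref{greenformula} are well defined, since $u''v\in L^1(0,1)$ (product of two $L^2$ functions) and $u'v'\in L^1(0,1)$ ($u'$ bounded, $v'\in L^2$).

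Next I would set $w:=u'v$. As a product of two functions absolutely continuous on the compact interval $[0,1]$, $w$ is absolutely continuous on $[0,1]$, and the Leibniz rule gives $w'=u''v+u'v'$ a.e.\ in $(0,1)$, with $w'\in L^1(0,1)$ by the previous step. Applying the fundamental theorem of calculus for absolutely continuous functions,
\[
\int_0^1\big(u''v+u'v'\big)\,dx=\int_0^1 w'\,dx=w(1)-w(0)=u'(1)v(1)-u'(0)v(0).
\]
Since $v(0)=v(1)=0$, the right-hand side vanishes, and rearranging the remaining terms yields \eqref{greenformula}.

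There is essentially no serious obstacle here: the only point requiring a little care is to invoke the correct membership — namely that $v$ vanishes at the endpoints because $v\in H^1_0(0,1)$ — rather than any cancellation at the degeneracy point $x_0$, which plays no role in this lemma (the strong hypothesis enters only indirectly, through the fact that $u'\in H^1(0,1)$ is part of the definition of $\cH^2_{\frac{1}{a}}(0,1)$).
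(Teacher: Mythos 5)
Your proof is correct and is exactly the argument the paper has in mind: the paper dismisses the lemma as "trivial, since $u'\in H^1(0,1)$ and $v\in H^1_0(0,1)$", and your write-up simply fills in the standard details (absolute continuity of $u'$ and $v$, the Leibniz rule, and the vanishing boundary terms from $v(0)=v(1)=0$). No difference in approach, only in the level of detail.
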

\begin{proof}
It is trivial, since $u'\in H^1(0,1)$ and $v\in H^1_0(0,1)$.
\end{proof}

Finally, we will use the following
\begin{Lemma}[\cite{fggr}, Lemma 3.7]\label{hardy}
Assume Hypothesis $\ref{Ass01}$. Then, there exists a positive
constant $C$ such that
\[
\int_0^1 v^2\frac{1}{a} dx \le C\int_0^1 (v')^2dx
\]
for all $v \in \cH^1_{\frac{1}{a}}(0,1)$.
\end{Lemma}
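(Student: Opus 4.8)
The plan is to reduce the weighted estimate to the classical one‑dimensional Hardy inequality with weight $(x-x_0)^{-2}$, exploiting that $1/a$ is pointwise dominated by a multiple of $(x-x_0)^{-2}$ and that every function in $\cH^1_{\frac1a}(0,1)$ vanishes at $x_0$.

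\medskip

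First I would establish the pointwise bound
\[
\frac{1}{a(x)}\le \frac{C_0}{(x-x_0)^2}\qquad\text{for all } x\in[0,1]\setminus\{x_0\},
\]
for some $C_0>0$. Indeed, by Lemma \ref{rem}.1 applied with $\gamma=2>K$ (admissible since $K<2$), one has $\lim_{x\to x_0}(x-x_0)^2/a(x)=0$; hence the function $x\mapsto (x-x_0)^2/a(x)$, set equal to $0$ at $x=x_0$, is continuous on all of $[0,1]$ — it is plainly continuous away from $x_0$, where $a$ is continuous and strictly positive — and therefore bounded on the compact interval $[0,1]$. This gives the displayed inequality.

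\medskip

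Next, let $v\in\cH^1_{\frac1a}(0,1)=L^2_{\frac1a}(0,1)\cap H^1_0(0,1)$; in particular $v\in H^1_0(0,1)\subset C([0,1])$, and by Proposition \ref{domain3} we have $v(x_0)=0$. I would then invoke the classical Hardy inequality
\[
\int_0^L \frac{g(y)^2}{y^2}\,dy\le 4\int_0^L g'(y)^2\,dy,
\]
valid for every $g\in H^1(0,L)$ with $g(0)=0$ (it follows by integrating $g^2/y^2$ by parts, the boundary contribution at $y=0$ vanishing since $g(y)^2/y\le\int_0^y g'(s)^2\,ds\to 0$, followed by Cauchy--Schwarz). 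Applying it on $(x_0,1)$ to $y\mapsto v(x_0+y)$ and on $(0,x_0)$ to $y\mapsto v(x_0-y)$ — in both cases the value at the relevant endpoint is $v(x_0)=0$ — and adding the two resulting inequalities yields
\[
\int_0^1 \frac{v(x)^2}{(x-x_0)^2}\,dx\le 4\int_0^1 v'(x)^2\,dx.
\]
Combining this with the pointwise bound of the previous step produces
\[
\int_0^1 \frac{v^2}{a}\,dx\le C_0\int_0^1 \frac{v^2}{(x-x_0)^2}\,dx\le 4C_0\int_0^1 (v')^2\,dx,
\]
which is the claim with $C:=4C_0$.

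\medskip

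I do not expect a serious obstacle: the argument is an assembly of standard pieces, and the only points deserving attention are the justification of the plain Hardy inequality (especially the vanishing of the boundary term at the degeneracy) and the elementary bookkeeping of the reflection on $(0,x_0)$ — note that only $v(x_0)=0$, and not $v(0)=v(1)=0$, is actually used. It is worth remarking that one cannot shortcut this through Proposition \ref{HP}: applying it directly would require an exponent $q$ with $q\le 2-K\le 1$, incompatible with $q>1$, so going via the domination of $1/a$ by $(x-x_0)^{-2}$ together with the classical Hardy inequality is the natural route.
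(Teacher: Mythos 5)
Your proof is correct. Note, however, that the paper itself offers no proof of this lemma: it is imported verbatim from \cite{fggr} (Lemma 3.7), so there is nothing internal to compare against. Your argument is the natural self-contained one, and every ingredient you use is already available in the text: the pointwise bound $1/a(x)\le C_0/(x-x_0)^2$ is exactly the property the authors observe to follow from Lemma \ref{rem}.1 (see their discussion after Proposition \ref{domain3}), the vanishing $v(x_0)=0$ is the content of Proposition \ref{domain3}, and the truncated integration by parts correctly justifies the classical Hardy inequality, including the finiteness of $\int_0^1 v^2(x-x_0)^{-2}dx$ via the uniform-in-$\varepsilon$ bound. Your closing remark is also on point and consistent with how the paper itself operates: in Lemma \ref{obser.regular1} the authors apply Proposition \ref{HP} with $p=(x-x_0)^2/a$ and $q=2-K$ only in the weakly degenerate case ($K<1$, so $q>1$), and fall back on the present lemma precisely in the strongly degenerate case where $q=2-K\le 1$ fails the hypothesis. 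One cosmetic suggestion: since the norm-equivalence part of Proposition \ref{domain3} is essentially the inequality you are proving, you may wish to note that the only piece you borrow from it, namely $v(x_0)=0$, can be derived directly (a continuous $v$ with $v(x_0)\neq 0$ would force $v^2/a\ge c/a$ near $x_0$, contradicting $v\in L^2_{\frac1a}(0,1)$ since $1/a\notin L^1$ by Lemma \ref{rem}.3), so no circularity is involved.
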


We also recall the following definition:
\begin{Definition}
Assume that $u_0 \in L^2_{\frac{1}{a}}(0,1)$ and $h\in
L^2_{\frac{1}{a}}(Q_T):= L^2(0,T; L^2_{\frac{1}{a}}(0,1))$. A function $u$ is said to be a (weak)
solution of \eqref{linear} if
\[u \in C([0, T]; L^2_{\frac{1}{a}}(0,1)) \cap L^2(0, T;
\cH^1_{\frac{1}{a}}(0,1))\] and satisfies
\[
\begin{aligned}
&\int_0^1 \frac{ u(T,x)\varphi(T,x)}{a(x)} dx - \int_0^1
\frac{u_0(x) \varphi(0,x)}{a(x)} dx -\int_{Q_T}
\frac{\varphi_t (t,x)u(t,x)}{a(x)}dxdt =
\\&- \int_{Q_T} u_x(t,x)
\varphi_x(t,x) dxdt + \int_{Q_T} h(t,x) \frac{\varphi(t,x)
}{a(x)}dx dt
\end{aligned}
\]
for all $\varphi \in H^1(0, T; L^2_{\frac{1}{a}}(0,1)) \cap L^2(0,
T; \cH^1_{\frac{1}{a}}(0,1))$.
\end{Definition}

Problem \eqref{linear} is well-posed in the sense of the following
theorem:
\begin{Theorem}\label{theorem_nondivergence}
Assume Hypothesis $\ref{Ass0}$ or $\ref{Ass01}$. Then, the operator
$\mathcal A_2:D(\mathcal A_2)\to L^2_{\frac{1}{a}}(0, 1)$ is self--adjoint, nonpositive on
$L^2_{\frac{1}{a}}(0,1)$ and it generates an analytic contraction
semigroup of angle $\pi/2$. Moreover, for all $h \in
L^2_{{\frac{1}{a}}}(Q_T)$ and $u_0 \in L^2_{{\frac{1}{a}}}(0,1)$,
there exists a unique solution $u \in C\big([0,T];
L^2_{\frac{1}{a}}(0,1)\big) \cap L^2 \big(0,T;
\cH^1_{\frac{1}{a}}(0,1)\big)$ of \eqref{linear} such that
\begin{equation}\label{stima2}
\sup_{t \in [0,T]}
\|u(t)\|^2_{L^2_{\frac{1}{a}}(0,1)}+\int_0^T\|u(t)\|^2_{\cH^1_{\frac{1}{a}}
(0,1)} dt \le
C_T\left(\|u_0\|^2_{L^2_{\frac{1}{a}}(0,1)}+\|h\|^2_{L^2_{\frac{1}{a}}(Q_T)}\right),
\end{equation}
for some positive constant $C_T$. In addition, if $h \in W^{1,1}(0,T; L^2_{\frac{1}{a}}(0,1))$ and $u_0 \in
\cH^1_{\frac{1}{a}}(0,1)$, then
\begin{equation}\label{regularity1}
u\in C^1\big([0,T]; L^2_{\frac{1}{a}}(0,1)\big) \cap C\big([0,T];
D(\mathcal A_2)\big),
\end{equation}
and there exists a positive constant $C$ such that
\begin{equation}\label{stima3}
\begin{aligned}
\sup_{t \in [0,T]}\left(\|u(t)\|^2_{\cH^1_{\frac{1}{a}}(0,1)}
\right)&+ \int_0^{T}
\left(\left\|u_t\right\|^2_{L^2_{\frac{1}{a}}(0,1)} +
\left\|au_{xx}\right\|^2_{L^2_{\frac{1}{a}}(0,1)}\right)dt\\&\le C
\left(\|u_0\|^2_{\cH^1_{\frac{1}{a}}(0,1)} +
\|h\|^2_{L^2(Q_T)}\right).
\end{aligned}\end{equation}
\end{Theorem}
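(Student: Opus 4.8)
The plan is to deduce everything from abstract semigroup theory applied to $\mathcal A_2$, together with energy estimates, in complete analogy with the divergence case (Theorem \ref{th-parabolic}). \textbf{Operator properties.} The self-adjointness and nonpositivity of $\mathcal A_2$, hence the generation of an analytic contraction semigroup of angle $\pi/2$, are essentially contained in \cite{fggr}, whose standing assumptions are implied by Hypothesis \ref{Ass0} or \ref{Ass01} via Lemma \ref{rem} and Remark \ref{nol1}, exactly as noted after Proposition \ref{domain3}. For completeness, here is a direct route. Nonpositivity and symmetry follow at once from Green's formula (Lemma \ref{green}): for $u,v\in \cH^2_{\frac{1}{a}}(0,1)$,
\[
\langle \mathcal A_2 u,v\rangle_{L^2_{\frac{1}{a}}(0,1)}=\int_0^1 u_{xx}v\,dx=-\int_0^1 u_x v_x\,dx=\langle u,\mathcal A_2 v\rangle_{L^2_{\frac{1}{a}}(0,1)},
\]
so in particular $\langle \mathcal A_2 u,u\rangle_{L^2_{\frac{1}{a}}(0,1)}=-\int_0^1 (u_x)^2\,dx\le 0$. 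To obtain self-adjointness it suffices to prove that $I-\mathcal A_2$ is onto $L^2_{\frac{1}{a}}(0,1)$: given $f\in L^2_{\frac{1}{a}}(0,1)$, apply the Lax--Milgram theorem to the bilinear form $B(u,v)=\int_0^1\frac{uv}{a}\,dx+\int_0^1 u_xv_x\,dx$ on $\cH^1_{\frac{1}{a}}(0,1)$, which is continuous and coercive since $B(u,u)=\|u\|^2_{\cH^1_{\frac{1}{a}}(0,1)}$, and to the bounded functional $v\mapsto\int_0^1\frac{fv}{a}\,dx$; the unique solution $u\in \cH^1_{\frac{1}{a}}(0,1)$ of $B(u,v)=\int_0^1\frac{fv}{a}\,dx$ for all $v$ satisfies $au_{xx}=u-f$ on $(0,x_0)\cup(x_0,1)$, hence $au_{xx}\in L^2_{\frac{1}{a}}(0,1)$, and one upgrades this to $u_x\in H^1(0,1)$, i.e. $u\in \cH^2_{\frac{1}{a}}(0,1)=D(\mathcal A_2)$, using $u(x_0)=0$ (Proposition \ref{domain3}), the pointwise bound $\frac{1}{a(x)}\le \frac{C}{|x-x_0|^2}$ coming from Lemma \ref{rem} and Remark \ref{nol1}, and the characterization in Proposition \ref{domain1}. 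Finally, a nonpositive self-adjoint operator generates, by the spectral theorem, an analytic semigroup of angle $\pi/2$ which is contractive.

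\textbf{Well-posedness and \eqref{stima2}.} For $u_0\in L^2_{\frac{1}{a}}(0,1)$ and $h\in L^2_{\frac{1}{a}}(Q_T)$, the function $u(t)=e^{t\mathcal A_2}u_0+\int_0^t e^{(t-s)\mathcal A_2}h(s)\,ds$ belongs to $C([0,T];L^2_{\frac{1}{a}}(0,1))\cap L^2(0,T;\cH^1_{\frac{1}{a}}(0,1))$ and is the unique weak solution of \eqref{linear} (uniqueness being a consequence of the energy identity below applied with $u_0=0$, $h=0$). Contractivity of the semigroup yields $\sup_{[0,T]}\|u(t)\|_{L^2_{\frac{1}{a}}(0,1)}\le\|u_0\|_{L^2_{\frac{1}{a}}(0,1)}+\int_0^T\|h(s)\|_{L^2_{\frac{1}{a}}(0,1)}\,ds$. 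Testing the equation with $u$ — legitimate first for smooth data and then by density, using that $C^\infty_c$ functions (respectively, functions vanishing at $x_0$) are dense in $\cH^1_{\frac{1}{a}}(0,1)$, cf. Propositions \ref{h=h} and \ref{domain3} — and invoking Lemma \ref{green}, we get the energy identity
\[
\tfrac12\tfrac{d}{dt}\|u(t)\|^2_{L^2_{\frac{1}{a}}(0,1)}+\int_0^1 (u_x)^2\,dx=\langle h(t),u(t)\rangle_{L^2_{\frac{1}{a}}(0,1)};
\]
integrating on $(0,T)$, using Cauchy--Schwarz and the supremum bound, we control $\int_0^T\int_0^1 (u_x)^2\,dx\,dt$, and then Lemma \ref{hardy} turns this into a bound for $\int_0^T\|u(t)\|^2_{L^2_{\frac{1}{a}}(0,1)}\,dt$; adding up, \eqref{stima2} follows.

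\textbf{Regularity and \eqref{stima3}.} Since $\cH^1_{\frac{1}{a}}(0,1)$ is the form domain of $\mathcal A_2$ (the form norm coincides with $\|\cdot\|_{\cH^1_{\frac{1}{a}}(0,1)}$), for $u_0\in\cH^1_{\frac{1}{a}}(0,1)$ and $h\in W^{1,1}(0,T;L^2_{\frac{1}{a}}(0,1))$ the standard regularity theory for analytic semigroups — the $W^{1,1}$-in-time assumption taking care of the Duhamel term — yields \eqref{regularity1}. For the estimate, we test the equation with $u_t$ in $L^2_{\frac{1}{a}}(0,1)$: by Lemma \ref{green}, $\langle\mathcal A_2 u,u_t\rangle_{L^2_{\frac{1}{a}}(0,1)}=\int_0^1 u_{xx}u_t\,dx=-\tfrac12\tfrac{d}{dt}\int_0^1(u_x)^2\,dx$, whence
\[
\|u_t(t)\|^2_{L^2_{\frac{1}{a}}(0,1)}+\tfrac12\tfrac{d}{dt}\int_0^1 (u_x)^2\,dx=\langle h(t),u_t(t)\rangle_{L^2_{\frac{1}{a}}(0,1)}\le\tfrac12\|u_t(t)\|^2_{L^2_{\frac{1}{a}}(0,1)}+\tfrac12\|h(t)\|^2_{L^2_{\frac{1}{a}}(0,1)};
\]
integrating in time bounds $\sup_{[0,T]}\int_0^1 (u_x)^2\,dx$ and $\int_0^T\|u_t\|^2_{L^2_{\frac{1}{a}}(0,1)}\,dt$, then the identity $au_{xx}=u_t-h$ controls $\int_0^T\|au_{xx}\|^2_{L^2_{\frac{1}{a}}(0,1)}\,dt$, and Lemma \ref{hardy} once more upgrades the control of $\int_0^1 (u_x)^2\,dx$ to that of $\|u\|^2_{\cH^1_{\frac{1}{a}}(0,1)}$; this gives \eqref{stima3}.

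\textbf{Main difficulty.} The genuinely delicate step is the domain identification in the first part, that is, showing that the variational solution of $(I-\mathcal A_2)u=f$ is not merely an element of $\cH^1_{\frac{1}{a}}(0,1)$ with $au_{xx}\in L^2_{\frac{1}{a}}(0,1)$, but actually lies in $D(\mathcal A_2)=\cH^2_{\frac{1}{a}}(0,1)$; this is precisely where the control on the degeneracy (Hypothesis \ref{Ass0} or \ref{Ass01}) enters, through Lemma \ref{rem}, Remark \ref{nol1} and Propositions \ref{domain3} and \ref{domain1}, and it is the content of the corresponding analysis in \cite{fggr}. A second, routine but not entirely cosmetic, point is the rigorous justification of the energy identities above, namely the admissibility of the test functions $u$ and $u_t$ and the behaviour of the boundary terms at $x=x_0$ in Lemma \ref{green}, which is handled by approximation with smooth data together with the density statements recalled above.
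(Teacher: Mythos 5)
Your proposal is correct and follows essentially the same route as the paper: nonpositivity and symmetry via Green's formula (Lemma \ref{green}); self-adjointness via Lax--Milgram (the paper packages this as self-adjointness of the bounded, symmetric, injective solution operator $F$ and then invokes \cite[Proposition A.8.2]{taylor} for $F^{-1}$, whereas you prove surjectivity of $I-\mathcal A_2$ directly, which is why you must carry out explicitly the elliptic-regularity upgrade into $D(\mathcal A_2)$ that you rightly single out as the delicate step); and the two energy estimates obtained by testing with $u/a$ and with $u_t/a$ (the paper tests the second one with $-u_{xx}$, which is the same computation). One caveat: at two points you invoke Lemma \ref{hardy} to pass from $\int_0^1(u_x)^2\,dx$ to control of $\|u\|^2_{L^2_{\frac{1}{a}}(0,1)}$, but that lemma is stated only under Hypothesis \ref{Ass01}; in the weakly degenerate case you should instead use the bound on $\sup_{t}\|u(t)\|^2_{L^2_{\frac{1}{a}}(0,1)}$ that your first energy identity already provides (as the paper does), or Proposition \ref{h=h}.
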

\begin{proof}
In the (WD) case the existence part is proved in \cite[Theorems 3.3 and 4.3]{fggr}.
For the (SD) case, under different assumptions on the domain of the
operator $\mathcal A_2u:=au_{xx}$, it was proved in \cite[Theorems
3.4 and 4.3]{fggr}, but here we must prove the theorem again, since
the domain of $\mathcal A_2$ is different. 

First, $D(\mathcal A_2)$ being dense in $L^2_{\frac{1}{a}}(0,1)$, in order to
show that $\mathcal A_2$ generates an analytic semigroup, it is sufficient to
prove that $\mathcal A_2$ is nonpositive and self-adjoint, hence $m-$dissipative by \cite[Corollary 2.4.8]{ch}.

Thus: $\mathcal A_2$ is nonpositive, since by \eqref{greenformula}, it follows
that, for any $u\in D(\mathcal A_2)$
\[
\langle \mathcal A_2u, u \rangle_{L^2_{\frac{1}{a}}(0,1)} = \int_0^1 u'' u\, dx
= -\int_0^1 (u')^2 dx \le 0.
\]
Let us show that $ \mathcal A_2$ is self-adjoint. First of all,
observe that $\cH^1_{\frac{1}{a}}(0,1)$ is equipped with the natural
inner product
\[
(u, v)_1:= \int_0^1 \left(\frac{u v}{a} + u'v'\right) dx
\]
for any $u, v \in \cH^1_{\frac{1}{a}}(0,1)$ and thanks to Lemma \ref{hardy}, the norm  $\sqrt{(u, u)_1}$ is equivalent to $\|u'\|_{L^2(0,1)},$ for all $u \in \cH^1_{\frac{1}{a}}(0,1).$

Now, consider the function $F:
L^2_{\frac{1}{a}}(0,1) \to L^2_{\frac{1}{a}}(0,1)$ defined as $ F(f) :=
u \in \cH^1_{\frac{1}{a}}(0,1)$ where $u$ is the unique solution of
\[
\int_0^1 u'v' dx = \int_0^1 \frac{f}{a} v dx
\]
for all $u \in \cH^1_{\frac{1}{a}}(0,1)$. Note that $F$ is well-defined by the
Lax--Milgram Theorem, which also implies that $F$ is continuous. Now, easy calculations show that $F$ is symmetric and injective. Hence, $F$ is self-adjoint. As a consequence, $\mathcal A_2= F^{-1}: D(\mathcal A_2) \rightarrow L^2_{\frac{1}{a}}(0,1)$ is self-adjoint by \cite[Proposition A.8.2]{taylor}.

At this point, since $\mathcal A_2$ is a nonpositive, self--adjoint operator on a Hilbert
space, it is well known that $(\mathcal A_2, D(\mathcal A_2))$ generates a cosine family
and an analytic contractive semigroup of angle $\displaystyle\frac{\pi}{2}$ on
$L^2_{\frac{1}{a}}(0,1)$ (see \cite[Example 3.14.16 and
3.7.5]{abhn}) or \cite[Theorem 6.12]{g}).

Finally, let us prove \eqref{stima2}--\eqref{stima3}. First, being
$\mathcal A_2$ the generator of a strongly continuous semigroup on
$L^2_{\frac{1}{a}}(0,1)$, if $u_0\in L^2_{\frac{1}{a}}(0,1)$, then
the solution $u$ of \eqref{linear} belongs to $C\big([0,T];
L^2_{\frac{1}{a}}(0,1)\big) \cap L^2 \big(0,T;
\cH^1_{\frac{1}{a}}(0,1)\big)$, while, if $u_0\in D(\mathcal A_2)$ and $h \in W^{1,1}(0,T; L^2_{\frac{1}{a}}(0,1))$, then $u\in
C^1\big([0,T]; L^2_{\frac{1}{a}}(0,1)\big) \cap C\big([0,T];
\cH^2_{\frac{1}{a}}(0,1)\big)$ by \cite[Lemma 4.1.5 and Proposition 4.1.6]{ch}.

Now, we shall prove \eqref{stima3}.

First, take $u_0\in D(\mathcal A_2)$ and multiply the equation by $u/a$; by the
Cauchy--Schwarz inequality we obtain for every $t\in (0,T]$,
\begin{equation}\label{derivo}
\frac{1}{2}\frac{d}{dt}\|u(t)\|^2_{L^2_{\frac{1}{a}}(0,1)}+
\|u_x(t)\|^2_{L^2(0,1)}\leq
\frac{1}{2}\|u(t)\|^2_{L^2_{\frac{1}{a}}(0,1)}+\frac{1}{2}
\|h(t)\|^2_{L^2_{\frac{1}{a}}(0,1)},
\end{equation}
from which we easily get
\begin{equation}\label{sottoderivo}
\|u(t)\|^2_{L^2_{\frac{1}{a}}(0,1)}\leq
e^T\left(\|u(0)\|^2_{L^2_{\frac{1}{a}}(0,1)}+\|h\|_{L^2_{\frac{1}{a}}(Q_T)}^2
\right)
\end{equation}
for every $t\leq T$. Integrating \eqref{derivo}, from
\eqref{sottoderivo} we also find
\begin{equation}\label{sottosotto}
\int_0^T\|u_x(t)\|^2_{L^2(0,1)}dt\leq
C_T\left(\|u(0)\|^2_{L^2_{\frac{1}{a}}(0,1)}+\|h\|_{L^2_{\frac{1}{a}}(Q_T)}^2
\right)
\end{equation}
for every $t\leq T$ and some universal constant $C_T>0$. Thus, by
\eqref{sottoderivo} and \eqref{sottosotto}, \eqref{stima2} follows
if $u_0\in D(\mathcal A_2)$. Since $D(\mathcal A_2)$ is dense in $L^2_{\frac{1}{a}}(0,1)$,
the same inequality holds if $u_0\in L^2_{\frac{1}{a}}(0,1)$.

Now, we multiply the equation by $-u_{xx}$, we integrate on $(0,1)$
and, using the Cauchy--Schwarz inequality, we easily get
\[
\frac{d}{dt}\|u_x(t)\|^{2}_{L^2(0,1)}+\|au_{xx}(t)\|^2_{L^2_{\frac{1}{a}}(0,1)}\leq
\|h\|_{L^2_{\frac{1}{a}}(0,1)}^2
\]
for every $t\leq T$, so that we find $C_T'>0$ such that
\begin{equation}\label{mah}
\|u_x(t)\|^{2}_{L^2(0,1)}+\int_0^T\|au_{xx}(t)\|^2_{L^2_{\frac{1}{a}}(0,1)}dt
\leq
C_T'\left(\|u_x(0)\|_{L^2(0,1)}+\|h\|_{L^2_{\frac{1}{a}}(Q_T)}^2\right)
\end{equation}
for every $t\leq T$.

Finally, from $u_t=au_{xx}+h$, squaring and integrating, using the
fact that $a^2\leq ca$ for some $c>0$, we find
\[
\int_0^T\|u_t(t)\|_{L^2(0,1)}^2\leq
C\left(\int_0^T\|au_{xx}\|^2_{L^2_{\frac{1}{a}}(0,1)}+\|h\|_{L^2_{\frac{1}{a}}(Q_T)}^2
\right),
\]
and together with \eqref{mah} we find
\begin{equation}\label{allafine}
\int_0^T\|u_t(t)\|_{L^2(0,1)}^2\leq
C\left(\|u_x(0)\|_{L^2(0,1)}+\|h\|_{L^2_{\frac{1}{a}}(Q_T)}^2\right).
\end{equation}

In conclusion, \eqref{sottoderivo}, \eqref{mah} and \eqref{allafine}
give \eqref{stima3}. Clearly,
\eqref{regularity1} and \eqref{stima3} hold also if
$u_0\in \cH^1_{\frac{1}{a}}(0,1)$,  since $\cH^2_{\frac{1}{a}}(0,1)$
is dense in $ \cH^1_{\frac{1}{a}}(0,1)$.
\end{proof}

\chapter{Carleman estimate for non degenerate parabolic
problems with non smooth coefficient}\label{seccarnondeg}
\section{Preliminaries}
In this chapter we prove Carleman estimates in the non degenerate
case, but in the case in which the coefficient of the operator is
{\em globally non} smooth, in the stream of \cite{fm}, thus
improving \cite{bena}, \cite{doubova}, \cite{iyinfty}, \cite{LRBV}
and \cite{lrr}.

Fix two real numbers $A<B$ and consider the problem
\begin{equation}\label{1}
\begin{cases}
v_t + \mathcal A v =h, & (t,x) \in (0,T) \times (A,B),\\
v(t,A)=v(t,B)=0, &  t \in (0,T).\\
\end{cases}
\end{equation}
Here we suppose that in a case $a$ is of class $W^{1,1}(A,B)\subset
BV(A,B)$, but no additional smoothness condition is required in some
subsets, as in the previous related papers, and in the other case we
assume that $a$ is of class $W^{1,\infty}(A,B)$. More precisely, we
assume to deal with a non degenerate problem with a coefficient $a$
satisfying one of the two conditions below:
\begin{Assumptions}\label{ipoadebole}$\ $\\
\begin{itemize}
\item[$(a_1)$] $a\in W^{1,1}(A,B)$, $a\geq
a_0>0$ in $(A,B)$ and there exist two functions $\fg \in L^1(A,B)$,
$\fh \in W^{1,\infty}(A,B)$ and two strictly positive constants
$\fg_0$, $\fh_0$ such that $\fg(x) \ge \fg_0$ for a.e. $x$ in $[A,B]$ and
\[-\frac{a'(x)}{2\sqrt{a(x)}}\left(\int_x^B\fg(t) dt + \fh_0 \right)+ \sqrt{a(x)}\fg(x) =\fh(x)\quad \text{for a.e.} \; x \in [A,B],\]
in the divergence case,
\[\frac{a'(x)}{2\sqrt{a(x)}}\left(\int_x^B\fg(t) dt + \fh_0 \right)+ \sqrt{a(x)}\fg(x) =\fh(x)\quad \text{for a.e.} \; x \in [A,B],\]
in the non divergence one,
or\\
\item[$(a_2)$] $a\in W^{1,\infty}(A,B)$ and $a\geq
a_0>0$ in $(A,B)$.
\end{itemize}
\end{Assumptions}

\begin{Remark}
Of course, the first equality in $(a_1)$ can be written as
$$
-\left[\sqrt{a(x)}\left(\int_x^B\fg(t) dt + \fh_0\right)\right]'=\fh(x),
$$
and the second one as
\[
-a(x)\left(\frac{\int_x^B\fg(t) dt + \fh_0}{\sqrt{a(x)}}\right)'=\fh(x).
\]
\end{Remark}

\begin{Example}
Let us fix $(A,B)=(0,1)$. In the divergence case, if $a(x)=2-\sqrt{1-x}$, we can choose $\fh_0=1$, $\fh=0$ and
\[
\fg(x)=\frac{\sqrt{2}}{4\sqrt{1-x}}a^{-3/2}\geq \frac{1}{8}:=\fg_0;
\]
in the non divergence case, if $a(x)=\sqrt{2-x}$, we can choose $\fh_0=1$, $\fh=0$ and
\[
\fg(x)=\frac{1}{4a^{3/2}}  \geq \frac{1}{8\sqrt{2}}:=\fg_0.
\]
\end{Example}

Now, let us introduce the function $\Phi(t,x): =\Theta(t)\psi(x)$,
where
\begin{equation}\label{theta}
\displaystyle \Theta(t) := \frac{1}{[t(T-t)]^4}
\end{equation} and
\begin{equation}\label{c_1nd}
\psi(x):=\begin{cases} \displaystyle - r\left[\int_A^x
\frac{1}{\sqrt{a(t)}} \int_t^B
\fg(s) dsdt + \int_A^x \frac{\fh_0}{\sqrt{a(t)}}dt\right] -\mathfrak{c}, &\text{ if } (a_1) \text{ holds,}\\
\displaystyle  e^{r\zeta(x)}-\mathfrak{c}, &\text{ if } (a_2) \text{
holds.}\end{cases}
\end{equation}
Here
 \[
\zeta(x)=\mathfrak{d}\int_x^B\frac{1}{a(t)}dt,
\]
where $\fd=\|a'\|_{L^\infty(A,B)}$,  $r>0$ and $\mathfrak{c}>0$ is
chosen in the second case in such a way that $\displaystyle
\max_{[A,B]} \psi<0$.

\begin{Remark}
Hypothesis \ref{ipoadebole} lets us treat {\em non smooth} coefficients in the whole spatial domain. To our best knowledge, this is the first case in which such a situation is studied, and for this we need a technical assumption, precisely represented by our hypothesis. However, we believe that, since non smooth coefficients are present, some conditions must be imposed, otherwise it would be impossible to differentiate and obtain the desired Carleman estimates.
\end{Remark}

\section{The divergence case.}\label{subsubdiv}

Our related Carleman estimate for the problem in divergence form is the following:
\begin{Theorem}\label{mono}
Assume Hypothesis $\ref{ipoadebole}$. Then, there
exist three positive constants $C$, $s_0$ and $r$ such that every
solution $v$ of \eqref{1} in
\[
\mathcal{V}_1:=L^2\big(0, T; H^2_a(A,B)\big) \cap H^1\big(0,
T;H^1_a(A,B)\big)
\]
satisfies, for all $s \ge s_0$,
\begin{equation}\label{3}
\begin{aligned}
&\int_0^T\int_A^B \left(s\Theta (v_x)^2 + s^3 \Theta^3
 v^2\right)e^{2s\Phi}dxdt\\
&\le C\left(\int_0^T\int_A^B h^{2}e^{2s\Phi}dxdt - sr \int_0^{T}
\left[a^{3/2}e^{2s\Phi}\Theta \left(\int_x^B \fg(\tau) d\tau + \fh_0
\right) (v_x)^2\right]^{x=B}_{x=A}dt\right),
\end{aligned}
\end{equation}
if $(a_1)$ holds and
\begin{equation}\label{2}
\begin{aligned}
&\int_0^T\int_A^B \left(s\Theta e^{r\zeta}(v_x)^2 + s^3 \Theta^3
e^{3r\zeta} v^2\right)e^{2s\Phi}dxdt\\
&\le C\left(\int_0^T\int_A^B h^{2}e^{2s\Phi}dxdt -
sr\int_0^T\left[ae^{2s\Phi}\Theta e^{r\zeta}(v_x)^2
\right]_{x=A}^{x=B}dt\right),
\end{aligned}
\end{equation}
if $(a_2)$ is in force.
\end{Theorem}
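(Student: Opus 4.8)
**The plan is to adapt the classical Fursikov–Imanuvilov Carleman machinery to the non-smooth coefficient $a$, following the scheme of \cite{fm} but paying careful attention to the fact that $a$ is only in $W^{1,1}$ (case $(a_1)$) or $W^{1,\infty}$ (case $(a_2)$).** First I would perform the usual change of unknown $w := e^{s\Phi}v$, so that $w$ vanishes at $t=0,T$ (because $\Theta$ blows up there) and at $x=A,B$. Writing $Lv := v_t + (av_x)_x$, one computes $e^{s\Phi}L(e^{-s\Phi}w) = L_s^+ w + L_s^- w$, where $L_s^+ w := (aw_x)_x + s^2\Theta^2 a (\psi')^2 w + s\Theta_t \psi w$ collects the "self-adjoint" terms and $L_s^- w := w_t + 2s\Theta a \psi' w_x + 2s\Theta (a\psi')' w$ the "skew-adjoint" ones (up to the precise bookkeeping of which first-order term goes where — this is exactly the place where one must be careful, since $(a\psi')'$ must make sense). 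From $\|L_s^+ w\|^2 + \|L_s^- w\|^2 + 2\langle L_s^+ w, L_s^- w\rangle = \|e^{s\Phi}h\|^2$ one expands the cross term $\langle L_s^+ w, L_s^- w\rangle$ as a sum of double integrals, integrating by parts in $x$ and $t$, and the goal is to show that the distributed part of this cross term dominates $s\int\int \Theta a (w_x)^2 + s^3\int\int \Theta^3 a^2(\psi')^4 w^2$ up to lower-order terms and the boundary terms at $x=A,B$.

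**The crucial point is the choice of $\psi$.** In case $(a_1)$, $\psi' = -r a^{-1/2}\bigl(\int_x^B\fg\,ds + \fh_0\bigr)$, so that $a\psi' = -r\sqrt a\bigl(\int_x^B\fg + \fh_0\bigr)$ and, by the Remark in the excerpt, $(a\psi')' = r\fh \in L^\infty$ — this is precisely why Hypothesis \ref{ipoadebole}$(a_1)$ is imposed: it guarantees that the first-order coefficient $(a\psi')'$ in $L_s^-$ is bounded, even though $a'$ alone need not be. Moreover $a(\psi')^2 = r^2(\int_x^B\fg+\fh_0)^2 \ge r^2\fh_0^2 > 0$ is bounded below, which is what lets the $s^3\Theta^3$ term be estimated from below by $s^3\int\int\Theta^3 w^2$ after absorbing constants. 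In case $(a_2)$, $\psi = e^{r\zeta}-\fc$ with $\zeta = \fd\int_x^B a^{-1}$, so $\psi' = -r\fd\, a^{-1}e^{r\zeta}$, $a\psi' = -r\fd\, e^{r\zeta}$, and $(a\psi')' = r^2\fd^2 a^{-1}e^{r\zeta}$; here $a\in W^{1,\infty}$ already makes everything differentiable, and the exponential weight is what produces, for $r$ large, the extra positive multiple of $s\int\int\Theta e^{r\zeta}(w_x)^2$ needed to dominate the "bad" terms coming from $a'$ (this is the standard reason one uses $e^{r\zeta}$ rather than $\zeta$ itself). In both cases, after choosing $r$ large enough and then $s_0$ large enough, the leading-order distributed terms survive with a good sign, the lower-order distributed terms are absorbed into the left-hand side, and one is left with boundary terms.

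**Next I would handle the boundary terms at $x=A$ and $x=B$.** The integrations by parts in $x$ produce, among others, the term $-sr\int_0^T\bigl[a^{3/2}e^{2s\Phi}\Theta(\int_x^B\fg+\fh_0)(v_x)^2\bigr]_{x=A}^{x=B}dt$ in case $(a_1)$ (respectively $-sr\int_0^T[a e^{2s\Phi}\Theta e^{r\zeta}(v_x)^2]_{x=A}^{x=B}dt$ in case $(a_2)$) coming from the $(aw_x)_x \cdot 2s\Theta a\psi' w_x$ pairing, after using $w_x = e^{s\Phi}(v_x + s\Phi_x v)$ and $v(t,A)=v(t,B)=0$. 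The terms with $w^2$ at the boundary vanish since $w(t,A)=w(t,B)=0$; the terms $\Theta w w_x$ at the boundary also vanish for the same reason; what remains is precisely the displayed boundary term, which is kept explicitly on the right-hand side of \eqref{3} and \eqref{2} rather than being discarded (it is not sign-definite, and in the subsequent applications it will be controlled because $(A,B)$ is a subinterval where $v$ is cut off). I would present the passage back from $w$ to $v$ at the end: since $w = e^{s\Phi}v$, $(w_x)^2 = e^{2s\Phi}(v_x + s\Theta\psi' v)^2$, so $\int\int\Theta(w_x)^2$ differs from $\int\int\Theta e^{2s\Phi}(v_x)^2$ by a term controlled by $s^3\int\int\Theta^3 e^{2s\Phi}v^2$ (using $|\psi'|\le C$ on $[A,B]$), which is already on the left, and similarly $\int\int\Theta^3 w^2 = \int\int\Theta^3 e^{2s\Phi}v^2$; thus the estimate in $w$ transfers to the stated estimate in $v$. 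Finally the density of $\mathcal V_1$-solutions, or rather the regularity in \eqref{regularity}/\eqref{stima1}, justifies all the integrations by parts.

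**The main obstacle** I anticipate is the rigorous treatment of the cross-term integrations by parts when $a$ is only $W^{1,1}$: one cannot freely write $a''$ or even split $(a\psi')'$ into $a'\psi' + a\psi''$ termwise, so the algebra must be organized so that only the combinations $a\psi'$, $(a\psi')'$, $a(\psi')^2$ — all of which are well-defined and (at worst) $L^\infty$ or $L^1$ by Hypothesis \ref{ipoadebole}$(a_1)$ — ever appear differentiated, never $a$ by itself. A second, more routine but lengthy obstacle is the careful absorption argument: one must verify that every lower-order distributed term (those with fewer powers of $s$ or $\Theta$, or with the "wrong" localization) can indeed be dominated by $\varepsilon$ times the two leading terms plus $C_\varepsilon \int\int h^2 e^{2s\Phi}$, which forces the two-step largeness choice "first $r$, then $s_0$" and is the reason the constants $r$, $s_0$, $C$ appear in that order in the statement.
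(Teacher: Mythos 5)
Your proposal follows essentially the same route as the paper: conjugation $w=e^{s\Phi}v$, the splitting $L_s=L_s^++L_s^-$, expansion of the cross term into distributed and boundary contributions, the observation that under $(a_1)$ the combination $(a\psi')'=r\fh$ is the only derivative of $a$ that ever needs to make sense (while $a(\psi')^2\ge r^2\fh_0^2$ gives the sign of the $s^3$-term), the use of $e^{r\zeta}$ under $(a_2)$ to beat the $a'$-terms for $r$ large, the identification of the single surviving boundary term $-s\int_0^T[\Phi_x a^2(w_x)^2]_{x=A}^{x=B}dt$, and the final return from $w$ to $v$. Up to the exact bookkeeping of which first-order and zeroth-order pieces go into $L_s^+$ versus $L_s^-$ (which you explicitly leave open, and which in the paper reads $L_s^-w=w_t-2sa\Phi_xw_x-s(a\Phi_x)_xw$), this is the paper's own argument.
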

Here the non degenerate Sobolev spaces are
defined as
\[
\begin{aligned}
 H^1_a (A,B):=\big\{&u \text{ is absolutely
continuous in } [A,B],
\\ & \sqrt{a} u' \in  L^2(A,B) \text{ and } u(A)=u(B)=0
\big\},
\end{aligned}
\]
and
\[ H^2_a(A,B) :=  \big\{ u \in H^1_a(A,B) |\,au' \in
H^1(A,B)\big\},
\]
with the related norms.

Observe that, since the function $a$ is non degenerate, $H^1_a(A,B)$ and $H^2_a(A,B)$ coincide with
$H^1_0(A,B)$ and $H^2(A,B)\cap H^1_0(A,B)$, respectively.

\begin{Remark}\label{RemCarleman}
Obviously, in \eqref{2} we can delete all factors $e^{r\zeta}$ and
$e^{3r\zeta}$, since $\zeta$ is non negative and bounded. Indeed, in
Chapter \ref{secobserv}, we will use such a version. However, we
think that inequality \eqref{2} is more interesting due to the
presence of the weights.
\end{Remark}

Let us proceed with the proof of Theorem \ref{mono}. For $s> 0$, define the function
\[
w(t,x) := e^{s \Phi (t,x)}v(t,x),
\]
where $v$ is any solution of \eqref{1} in $\mathcal{V}_1$; observe
that, since $v\in\mathcal{V}_1$ and $\psi<0$, then $w\in\mathcal{V}_1$.
Of course, $w$ satisfies
\begin{equation}\label{1'nd}
\begin{cases}
(e^{-s\Phi}w)_t + \left(a(e^{-s\Phi}w)_x \right) _x  =h, & (t,x) \in (0,T) \times (A,B),\\
w(t,A)=w(t,B)=0, &  t \in (0,T),\\ w(T^-,x)= w(0^+, x)= 0, & x \in
(A,B).
\end{cases}
\end{equation}
The previous problem can be recast as follows. Set
\[
Lv:= v_t + (av_x)_x \quad \text{and} \quad
L_sw= e^{s\Phi}L(e^{-s\Phi}w), \quad s  > 0.
\]
Then \eqref{1'nd} becomes
\begin{equation}\label{elles}
\begin{cases}
L_sw= e^{s\Phi}h,\\
w(t,A)=w(t,B)=0, & t \in (0,T),\\
w(T^-,x)= w(0^+, x)= 0, & x \in (A,B).
\end{cases}
\end{equation}
Computing $L_sw$, one has
\[
\begin{aligned}
L_sw
=L^+_sw + L^-_sw,
\end{aligned}
\]
where
\[
L^+_sw := (aw_x)_x
 - s \Phi_t w + s^2a (\Phi_x)^2 w,
\]
and
\[
L^-_sw := w_t -2sa\Phi_x w_x -
 s(a\Phi_x)_xw.
\]
Moreover,
\begin{equation}\label{stimettand}
\begin{aligned}
2\langle L^+_sw, L^-_sw\rangle &\le 2\langle L^+_sw, L^-_sw\rangle+
\|L^+_sw \|_{L^2(\tilde Q_T)}^2 + \|L^-_sw\|_{L^2(\tilde Q_T)}^2\\
& =\| L_sw\|_{L^2(\tilde Q_T)}^2= \|he^{s\Phi}\|_{L^2(\tilde
Q_T)}^2,
\end{aligned}
\end{equation}
where $\langle\cdot, \cdot \rangle$ denotes the usual scalar product
in $L^2(\tilde Q_T)$ and $\tilde Q_T=(0,T)\times (A,B)$. As usual,
we will separate the scalar product $\langle L^+_sw, L^-_sw\rangle$
in distributed terms and boundary terms.

The following lemma is the crucial starting point, which will be
used also in the degenerate cases; for this reason, some comments
refer to the degenerate situation.
\begin{Lemma}\label{lemma1nd}
The following identity holds:
\begin{equation}\label{D&BTnd}
\left.
\begin{aligned}
&\langle L^+_sw,L^-_sw\rangle \\
&= \frac{s}{2} \int_0^T \int_A^B \Phi_{tt} w^2dxdt+ s^3
\int_0^T \int_A^B\big(2a \Phi_{xx} + a'\Phi_x\big)a(\Phi_x)^2w^2dxdt\\
&- 2s^2 \int_0^T \int_A^Ba \Phi_x \Phi_{tx}w^2dxdt  +s
\int_0^T \int_A^B(2a \Phi_{xx} + a'\Phi_x)a(w_x)^2 dxdt\\
&+ s\int_0^T \int_A^B a(a\Phi_x)_{xx} w w_xdxdt
\end{aligned}\right\}\;\text{\{D.T.\}}
\end{equation}
\begin{equation}\nonumber
 \text{\{B.T.\}}\;\left\{
\begin{aligned}
& + \int_0^T[aw_xw_t]_{x=A}^{x=B} dt- \frac{s}{2}
\int_A^B[w^2\Phi_t]_{t=0}^{t=T}dx+ \frac{s^2}{2}\int_A^B
[a(\Phi_x)^2 w^2]_{t=0}^{t=T}dt\\
&-\frac{1}{2} \int_A^B [a(w_x)^2]_{t=0}^{t=T}dx
+\int_0^T[-sa(a\Phi_x)_xw w_x]_{x=A}^{x=B}dt\\
&+ \int_0^T[-s\Phi_x a^2(w_x)^2 +s^2a\Phi_t \Phi_x w^2 - s^3
a^2(\Phi_x)^3w^2 ]_{x=A}^{x=B}dt.
\end{aligned}\right.
\end{equation}
\end{Lemma}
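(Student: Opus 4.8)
The plan is to prove the identity \eqref{D\&BTnd} by expanding the scalar product $\langle L_s^+w, L_s^-w\rangle$ over $\tilde Q_T = (0,T)\times(A,B)$ term by term, performing integrations by parts to move derivatives off $w_t$ and $w_x$ and collecting the results into distributed terms (quadratic in $w$, $w_x$ with coefficients built from $\Phi$ and $a$) and boundary terms (on $\{t=0,T\}$ and $\{x=A,B\}$). Writing $L_s^+w = (aw_x)_x - s\Phi_t w + s^2 a(\Phi_x)^2 w$ and $L_s^-w = w_t - 2sa\Phi_x w_x - s(a\Phi_x)_x w$, I would organize the computation as a $3\times 3$ table of products, labelled by which summand of $L_s^+$ meets which summand of $L_s^-$, and treat the nine resulting integrals in turn.

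First I would handle the three ``self-type'' pairings that produce no $s$-growth mismatch: $\langle (aw_x)_x, w_t\rangle$, which after integration by parts in $x$ gives $-\frac12\frac{d}{dt}\int a(w_x)^2$ plus the boundary term $\int_0^T[aw_xw_t]_{x=A}^{x=B}dt$, and then integrating in $t$ yields $-\frac12\int_A^B[a(w_x)^2]_{t=0}^{t=T}dx$; the pairing $\langle -s\Phi_t w, -2sa\Phi_x w_x\rangle = 2s^2\int a\Phi_t\Phi_x ww_x$, which upon integrating by parts in $x$ produces the distributed term $-2s^2\int a\Phi_x\Phi_{tx}w^2$ together with $-s^2\int(a\Phi_x)_x\Phi_t w^2$ and a boundary term; and $\langle s^2a(\Phi_x)^2w, -s(a\Phi_x)_xw\rangle$ plus $\langle (aw_x)_x, -s(a\Phi_x)_xw\rangle$ and $\langle -2sa\Phi_x w_x, s^2 a(\Phi_x)^2 w\rangle$, which combine (the cubic-in-$s$ terms) after one more integration by parts in $x$ into $s^3\int(2a\Phi_{xx}+a'\Phi_x)a(\Phi_x)^2 w^2$ plus the boundary term $\int_0^T[-s^3a^2(\Phi_x)^3w^2]_{x=A}^{x=B}dt$. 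The remaining pairings—$\langle (aw_x)_x, -2sa\Phi_x w_x\rangle$, $\langle -s\Phi_t w, w_t\rangle$, $\langle -s\Phi_t w, -s(a\Phi_x)_x w\rangle$, $\langle s^2 a(\Phi_x)^2 w, w_t\rangle$—are dealt with similarly: the first gives $s\int(2a\Phi_{xx}+a'\Phi_x)a(w_x)^2$ plus the boundary term $-s\int\Phi_x a^2(w_x)^2$ after writing $\partial_x(w_x^2)$ and integrating by parts; the second and fourth give the time-derivative terms $\frac{s}{2}\int\Phi_{tt}w^2 - \frac{s}{2}\int_A^B[w^2\Phi_t]_{t=0}^{t=T}dx$ and $\frac{s^2}{2}\int_A^B[a(\Phi_x)^2w^2]_{t=0}^{t=T}dx$; and the $s^2$-order zeroth-derivative terms collect into the surviving $s^2$-boundary term $\int_0^T[s^2a\Phi_t\Phi_x w^2]_{x=A}^{x=B}dt$. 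Finally I would need to account carefully for the cross-term $s\int a(a\Phi_x)_{xx}ww_x$: this arises from integrating $-s\int(a\Phi_x)_x w\cdot(aw_x)_x$ by parts once more in $x$, and it is the one genuinely ``mixed'' distributed term that does not reduce to a pure $w^2$ or $w_x^2$ integral.

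The main obstacle is bookkeeping rather than any single hard estimate: one must track every boundary contribution generated by each integration by parts, being careful that no derivative of the (only $W^{1,1}$ or $W^{1,\infty}$, hence non-smooth) coefficient $a$ is taken beyond first order—note that $\Phi_x$, $\Phi_{xx}$ and $(a\Phi_x)_x$, $(a\Phi_x)_{xx}$ involve only $a$, $a'$ and the explicitly chosen $\psi$, so in the divergence case the construction of $\psi$ in \eqref{c_1nd} is exactly what keeps all these quantities well-defined (this is where Hypothesis \ref{ipoadebole} enters). Since the lemma asserts only an algebraic identity valid for $w\in\mathcal{V}_1$ (so that all the integrations by parts are legitimate, the boundary values at $t=0,T$ vanishing by \eqref{elles}), no inequality or sign analysis is required here; I would simply verify that, after summing the nine pairings and grouping, every term matches one appearing in \eqref{D\&BTnd}, with the distributed terms being precisely the five listed in the $\{$D.T.$\}$ brace and the boundary terms the seven listed in the $\{$B.T.$\}$ brace.
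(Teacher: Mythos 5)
Your proposal is correct and follows essentially the same route as the paper: the paper also expands $\langle L^+_sw,L^-_sw\rangle$ into the pairwise products (grouped as $\int L^+_sw\,w_t$, $\int L^+_sw\,(-2sa\Phi_xw_x)$ and $\int L^+_sw\,(-s(a\Phi_x)_xw)$ rather than as a $3\times3$ table, which is only a bookkeeping difference), integrates by parts in $x$ and $t$, and recombines the cubic terms via $[a^2(\Phi_x)^3]_x=[a(\Phi_x)^2]_xa\Phi_x+a(\Phi_x)^2(a\Phi_x)_x$ exactly as you describe. The only point you treat more lightly is that the paper flags these computations as formal and defers the rigorous justification of the integrations by parts (needed chiefly when the same identity is reused in the degenerate case) to its Appendix, but for $w\in\mathcal{V}_1$ in the non-degenerate setting your remark that they are legitimate is adequate.
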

\begin{proof}
It {\em formally} reminds the proof of \cite[Lemma 3.4]{acf} in
$(0,1)$, but therein all the calculations were immediately motivated
due to the choice of the domain of the operator: in particular, $a$
was assumed to be of class $C^1$ with the unique possible exception
of the degeneracy point $x=0$, where Dirichlet boundary conditions
were imposed in the (WD) case and the condition $(au_x)(0)=0$ was
assumed in the (SD) case, thus making all integration by parts
possible.

Now integrations by parts are not immediately justified, since, at
least in the (SD) case - or if $(a_1)$ holds -, the unknown function
is {\sl not} in a Sobolev space of the whole interval $(A,B)$, and
so different motivations are necessary; moreover, the boundary
condition for the (SD) case chosen in \cite{acf} corresponds exactly
to the one which characterizes the domain of the operator, and of
course this fact makes life easier.

Here, we start noticing that all integrals appearing in $\langle
L^+_sw,L^-_sw\rangle$ are well defined both in the non degenerate
case and in the degenerate case by Lemma \ref{rem}, as simple
calculations show, recalling that $w=e^{s\Phi}v$. Then, in the
following, we perform {\em formal} calculations, that we will
justify accurately in Appendix \ref{secA}.

Let us start with
\begin{equation}\label{!nd}
\begin{aligned}
&\int_0^T \int_A^BL^+_sw w_t  dxdt= \int_0^T \int_A^B \{ (aw_x)_x
 - s \Phi_t w + s^2a(\Phi_x)^2 w\}w_t dxdt\\
 &=\int_0^T[aw_xw_t]_{x=A}^{x=B}dt -
 \int_0^T\frac{1}{2}\frac{d}{dt}\left(\int_A^Ba(w_x)^2dx\right)dt\\
& - \frac{s}{2}\int_A^B dx \int_0^T\Phi_t(w^2)_tdt +
\frac{s^2}{2}\int_A^B dx \int_0^T a(\Phi_x)^2(w^2)_tdt\\
&= \int_0^T[aw_xw_t]_{x=A}^{x=B}dt - \frac{s}{2}
\int_A^B[w^2\Phi_t]_{t=0}^{t=T}dx+ \frac{s^2}{2}\int_A^B
[a(\Phi_x)^2 w^2]_{t=0}^{t=T}dt\\
& -\frac{1}{2}\int_A^B[a(w_x)^2]_{t=0}^{t=T}dx+
\frac{s}{2}\int_0^T \int_A^B\Phi_{tt}w^2dxdt\\
& - s^2\int_0^T \int_A^Ba \Phi_x \Phi_{xt}w^2 dxdt.
\end{aligned}
\end{equation}

In addition, we have
\begin{equation}\label{!1nd}
\begin{aligned}
&\int_0^T \int_A^BL^+_sw (-2sa\Phi_xw_x)  dxdt= -s\int_0^T
\int_A^B\Phi_x\left[(aw_x)^2\right]_x  dxdt \\
&+ s^2\int_0^T \int_A^Ba \Phi_t \Phi_x \left(w^2\right)_x
dxdt-s^3 \int_0^T \int_A^Ba^2(\Phi_x)^3 \left(w^2\right)_x  dxdt\\
&=\int_0^T[-s\Phi_x(aw_x)^2 + s^2a\Phi_t\Phi_xw^2 - s^3a^2(\Phi_x)^3w^2]_{x=A}^{x=B}dt\\
&+s\int_0^T\int_A^B \Phi_{xx} (aw_x)^2dxdt  -s^2 \int_0^T\int_A^B (a\Phi_x)_x \Phi_t w^2\\
&-s^2 \int_0^T\int_A^B a \Phi_x\Phi_{tx}w^2  dxdt+
s^3\int_0^T\int_A^B[a^2(\Phi_x)^3]_x w^2 dxdt.
\end{aligned}
\end{equation}

Finally,
\begin{equation}\label{!2nd}
\begin{aligned}
&\int_0^T \int_A^BL^+_sw (-s(a\Phi_x)_xw) dxdt =\int_0^T[-saw_x
w(a \Phi_x)_x ]_{x=A}^{x=B}dt\\
&+s\int_0^T\int_A^B a (a \Phi_x)_{xx} ww_x dxdt+s \int_0^T\int_A^Ba(a\Phi_x)_x (w_x)^2dxdt\\
& + s^2\int_0^T\int_A^B(a\Phi_x)_x \Phi_t w^2  dxdt- s^3
 \int_0^T \int_A^B a (\Phi_x)^2 (a \Phi_x)_x w^2 dxdt.
\end{aligned}
\end{equation}
Adding \eqref{!nd}--\eqref{!2nd}, writing $[a^2(\Phi_x)^3]_x
=[a(\Phi_x)^2]_xa\Phi_x+a(\Phi_x)^2(a\Phi_x)_x$, \eqref{D&BTnd}
follows immediately.
\end{proof}

Now, the crucial step is to prove the following estimates:
\begin{Lemma}\label{lemma2ndWD}
Assume that Hypothesis $\ref{ipoadebole}.(a_1)$ holds. Then there
exist two positive constants $s_0$  and $C$ such that for all $s \ge
s_{0}$ the distributed terms of \eqref{D&BTnd} satisfy the estimate
\[
\begin{aligned}
& \frac{s}{2} \int_0^T \int_A^B \Phi_{tt} w^2dxdt+ s^3
\int_0^T \int_A^B\big(2a \Phi_{xx} + a'\Phi_x\big)a(\Phi_x)^2w^2dxdt\\
&- 2s^2 \int_0^T \int_A^Ba \Phi_x \Phi_{tx}w^2dxdt  +s
\int_0^T \int_A^B(2a \Phi_{xx} + a'\Phi_x)a(w_x)^2 dxdt \\
&+ s\int_0^T \int_A^B a(a\Phi_x)_{xx} w w_xdxdt\\
&\ge Cs\int_0^T\int_A^B \Theta (w_x)^2 dxdt + Cs^3
\int_0^T\int_A^B\Theta^3 w^2 dxdt.
\end{aligned}
\]
\end{Lemma}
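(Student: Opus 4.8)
The idea is to compute each of the five distributed terms of \eqref{D&BTnd} explicitly in terms of $\Theta$, $\psi$ and the data $a,\fg,\fh,\fh_0$, to exhibit two \emph{positive} terms --- one of size $s\Theta$ against $(w_x)^2$ and one of size $s^3\Theta^3$ against $w^2$ --- and to absorb the remaining three terms by taking $s$ large. Throughout, recall that $\Phi=\Theta\psi$ with, in case $(a_1)$, $\psi'(x)=-rP(x)/\sqrt{a(x)}$, where $P(x):=\int_x^B\fg(t)\,dt+\fh_0$, so that $\fh_0\le P\le\fh_0+\|\fg\|_{L^1}$, $P'=-\fg\le-\fg_0$ a.e., and $a\Phi_x=-r\Theta\sqrt a\,P$.

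First I would record two algebraic identities. Using the reformulation of $(a_1)$ in the Remark, namely $(\sqrt a\,P)'=-\fh$, one gets
\[
(a\Phi_x)_x=-r\Theta(\sqrt a\,P)'=r\Theta\fh ,\qquad (a\Phi_x)_{xx}=r\Theta\fh' ,
\]
and, differentiating $a\psi'=-r\sqrt a\,P$ and substituting $\fh=-\tfrac{a'}{2\sqrt a}P-\sqrt a\,P'$, one obtains the cancellation
\[
2a\Phi_{xx}+a'\Phi_x=\Theta\bigl(2a\psi''+a'\psi'\bigr)=-2r\Theta\sqrt a\,P'=2r\Theta\sqrt a\,\fg .
\]
\textbf{This last identity is the heart of the matter}, and the one place where Hypothesis $\ref{ipoadebole}.(a_1)$ is indispensable: although $a'$ belongs only to $L^1$ and is not pointwise controlled, it \emph{cancels} in the combination $2a\Phi_{xx}+a'\Phi_x$, which is left equal to the strictly positive, bounded quantity $2r\Theta\sqrt a\,\fg\ge 2r\sqrt{a_0}\,\fg_0\,\Theta$. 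This is the main obstacle, and it is removed in one step.

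With these identities the two good terms are
\[
s\!\int_0^T\!\!\int_A^B(2a\Phi_{xx}+a'\Phi_x)\,a(w_x)^2\,dxdt=2rs\!\int_0^T\!\!\int_A^B\Theta\,a^{3/2}\fg\,(w_x)^2\,dxdt\ge 2r a_0^{3/2}\fg_0\,s\!\int_0^T\!\!\int_A^B\Theta(w_x)^2\,dxdt ,
\]
and, since $a(\Phi_x)^2=r^2\Theta^2P^2$,
\[
s^3\!\int_0^T\!\!\int_A^B(2a\Phi_{xx}+a'\Phi_x)\,a(\Phi_x)^2w^2\,dxdt=2r^3 s^3\!\int_0^T\!\!\int_A^B\Theta^3\sqrt a\,\fg\,P^2 w^2\,dxdt\ge 2r^3\sqrt{a_0}\,\fg_0\,\fh_0^2\,s^3\!\int_0^T\!\!\int_A^B\Theta^3 w^2\,dxdt .
\]

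It remains to estimate the three residual terms, which by the computations above are $\tfrac s2\!\int\!\Theta''\psi\,w^2$, $-2s^2\!\int\!\Theta\Theta'\,a(\psi')^2\,w^2$ and $rs\!\int\!\Theta\,a\fh'\,w w_x$. Here I would use that $\psi$, $a(\psi')^2=r^2P^2$ and $a\fh'$ are bounded, together with the elementary bounds $|\Theta'|\le C_T\Theta^{5/4}$, $|\Theta''|\le C_T\Theta^{3/2}$ and $\Theta\ge\Theta_{\min}:=(4/T^2)^4>0$. Then the first term is $\le Cs\!\int\!\Theta^{3/2}w^2$ and the second $\le Cs^2\!\int\!\Theta^{9/4}w^2$; since pointwise $s\Theta^{3/2}\le s^3\Theta^3/(s_0^2\Theta_{\min}^{3/2})$ and $s^2\Theta^{9/4}\le s^3\Theta^3/(s_0\Theta_{\min}^{3/4})$, both are absorbed into a fraction of the good $s^3\Theta^3$ term once $s_0$ is large. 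For the last term, Young's inequality gives $rs\Theta|a\fh'|\,|w|\,|w_x|\le \ve s\Theta(w_x)^2+C_\ve s\Theta w^2$; choosing $\ve$ small absorbs the first piece into the good gradient term, while $C_\ve s\Theta w^2\le C_\ve s^3\Theta^3 w^2/(s_0^2\Theta_{\min}^2)$ goes into the good $w^2$ term for $s_0$ large. Collecting all contributions yields the stated inequality for suitable $C>0$ and $s_0$. (All the manipulations above are pointwise algebra and integrations; their rigorous justification, given that $a\in W^{1,1}$ only, is the point deferred to Appendix \ref{secA}.)
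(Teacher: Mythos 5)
Your proposal is correct and follows essentially the same route as the paper: the identity $2a\Phi_{xx}+a'\Phi_x=2r\Theta\sqrt{a}\,\fg$ (which the paper states implicitly when writing the distributed terms in the form \eqref{02ndWD}) isolates the same two positive leading terms, and the three residual terms are absorbed exactly as in the paper via the bounds $|\ddot\Theta|\le c\Theta^{3/2}$, $|\Theta\dot\Theta|\le c\Theta^{9/4}$, the lower bound on $\Theta$, and Young's inequality on the $ww_x$ term. Your explicit derivation of the cancellation of $a'$ is a nice clarification of why Hypothesis \ref{ipoadebole}.$(a_1)$ is formulated as it is, but it is not a different argument.
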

\begin{proof}
Using the definition of $\Phi$, the distributed terms of $\int_0^T
\int_0^1L^+_s w L^-_s w dxdt$ take the form
\begin{equation}\label{02ndWD}
\begin{aligned}
&\frac{s}{2} \int_0^T \int_A^B \ddot{\Theta} \psi w^2 dxdt +2
s^3r^3\int_0^T \int_A^B\Theta^3\sqrt{a}\fg \left( \int_x^B
\fg(\tau)d\tau + \fh_0\right)^2w^2
dxdt\\
& -2s^2r^2\int_0^T
\int_A^B\Theta\dot{\Theta} \left( \int_x^B \fg(\tau)d\tau + \fh_0\right)^2w^2 dxdt\\
& -sr \int_0^T \int_A^B\Theta a
\left(\frac{a'}{2\sqrt{a}}\left(\int_x^B \fg (\tau) d\tau +
\fh_0\right) -\sqrt{a} g\right)_x w w_x dxdt\\
&+ 2s r \int_0^T \int_A^B\Theta a\sqrt{a}\fg(w_x)^2 dxdt.
\end{aligned}
\end{equation}

Hence, since, by Hypothesis \ref{ipoadebole}.$(a_1)$, $\fg \ge
\fg_0$ and $a \ge a_0$, we can estimate \eqref{02ndWD} from below in
the following way:
\[
\begin{aligned}
&\eqref{02ndWD}\ge\frac{s}{2}\int_0^T \int_A^B \ddot{\Theta}\psi
w^2dxdt+
2s^3r^3\sqrt{a_0}\fg_0 \fh_0^2\int_0^T \int_A^B\Theta^3 w^2 dxdt\\
&-2s^2r^2\int_0^T
\int_A^B\Theta\dot{\Theta} \left( \int_x^B\fg(\tau)d\tau + \fh_0\right)^2w^2 dxdt\\
& +2 sr\fg_0 a_0\sqrt{a_0}\int_0^T \int_A^B\Theta (w_x)^2 dxdt +sr
\int_0^T \int_A^B\Theta a \fh' w w_x dxdt.
\end{aligned}
\]

Observing that
\begin{equation}\label{magtheta}
|\Theta \dot{\Theta}| \le c \Theta^{9/4}, \Theta^{\mu} \le c \Theta ^\nu \mbox{ if } 0<\mu<\nu \mbox{ and
} |\ddot{\Theta}| \le c\Theta ^{3/2}\leq c\Theta^3
\end{equation}
for some positive constants $c$, we conclude that, for $s$ large
enough,
\[
\begin{aligned}
&\left|-2s^2r^2\int_0^T \int_A^B\Theta\dot{\Theta} \left(
\int_x^B\fg(\tau)d\tau  + \fh_0\right)^2w^2 \right| \\&\le 2r^2
s^2c\left(  \int_A^B\fg(\tau)d\tau  + \fh_0\right)^2\int_0^T
\int_A^B\Theta^3w^2 dxdt
\\
&\le \frac{C}{6}s^3\int_0^T \int_A^B\Theta^3 w^2 dxdt,
\end{aligned}
\]
for some $C>0$ and $s\geq \displaystyle \frac{12r^2c\left(
\displaystyle\int_A^B \fg(\tau)d\tau  + \fh_0\right)^2}{C}$.
Moreover,  we have
\[
\begin{aligned}
\left|\frac{s}{2} \int_0^T \int_A^B \ddot{\Theta}\psi w^2dxdt\right| &\leq s c \max_{[A,B]} |\psi|\int_0^T \int_A^B\Theta^3w^2 dxdt\\
& \leq \frac{C}{6}s^3\int_0^T \int_A^B\Theta^3w^2dxdt
\end{aligned}
\]
for $s\geq \displaystyle \sqrt{\frac{6c\max_{[A,B]} |\psi|}{C}}$ and
\[
\begin{aligned}
& \left|sr\int_0^T \int_A^B\Theta a \fh' w w_x dxdt\right| \le \frac{1}{\ve} sr \int_0^T \int_A^B\Theta a^2 |\fh'|^2 w^2dxdt\\
&
 +\ve sr\int_0^T \int_A^B\Theta  (w_x)^2 dxdt\\
& \le \frac{1}{\ve} sr c \max_{[A,B]}a^2\|\fh'\|^2_{L^\infty(A,B)}\int_0^T \int_A^B\Theta^3 w^2dxdt +\ve sr \int_0^T \int_A^B\Theta (w_x)^2dxdt \\
& \le\frac{C}{6}s^3\int_0^T \int_A^B\Theta^3w^2dxdt + \ve s
r\int_0^T \int_A^B\Theta (w_x)^2dxdt,
\end{aligned}
\]
for $s \geq \displaystyle
\sqrt{\frac{6\ve^{-1}rc\max_{[A,B]}a^2\|\fh'\|_{L^\infty(A,B)}^2}{C}}$.
In conclusion, by the previous inequalities, we find
\[
\begin{aligned}
\eqref{02ndWD}&\ge
s^3\left(2r^3\sqrt{a_0}\fg_0 \fh_0^2 -\frac{C}{2}\right)\int_0^T \int_A^B\Theta^3 w^2 dxdt\\
 &+ sr\left(2\fg_0 a_0\sqrt{a_0}- \ve\right)\int_0^T
\int_A^B\Theta (w_x)^2 dxdt.
\end{aligned}
\]

Finally, choosing $\ve = \displaystyle \fg_0 a_0 \sqrt{a_0}$ and $r$
such that
\[
2r^3\sqrt{a_0}\fg_0 \fh_0^2 -\frac{C}{2}>0,
\]
the claim follows.
\end{proof}
The counterpart of the previous inequality in the $W^{1,\infty}$
case is the following
\begin{Lemma}\label{lemma2nd}
Assume that Hypothesis $\ref{ipoadebole}.(a_2)$ holds. Then there
exist two positive constants $s_0$  and $C$ such that for all $s \ge
s_{0}$ the distributed terms of \eqref{D&BTnd} satisfy the estimate
\[
\begin{aligned}
& \frac{s}{2} \int_0^T \int_A^B \Phi_{tt} w^2dxdt+ s^3
\int_0^T \int_A^B\big(2a \Phi_{xx} + a'\Phi_x\big)a(\Phi_x)^2w^2dxdt\\
&- 2s^2 \int_0^T \int_A^Ba \Phi_x \Phi_{tx}w^2dxdt  +s
\int_0^T \int_A^B(2a \Phi_{xx} + a'\Phi_x)a(w_x)^2 dxdt \\
&+ s\int_0^T \int_A^B a(a\Phi_x)_{xx} w w_xdxdt\\
&\ge Cs\int_0^T\int_A^B \Theta e^{r\zeta}(w_x)^2 dxdt + Cs^3
\int_0^T\int_A^B\Theta^3 e^{3r\zeta} w^2 dxdt.
\end{aligned}
\]
\end{Lemma}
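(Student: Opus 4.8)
The plan is to run the proof of Lemma~\ref{lemma2ndWD} again, this time with the choice of $\psi$ dictated by $(a_2)$, namely $\psi(x)=e^{r\zeta(x)}-\mathfrak{c}$ with $\zeta(x)=\fd\int_x^B a^{-1}(t)\,dt$ and $\fd=\|a'\|_{L^\infty(A,B)}$. First I would record the derivatives of $\psi$: since $\zeta'=-\fd/a$, one gets $\psi'=-\dfrac{r\fd}{a}e^{r\zeta}$, hence $a\psi'=-r\fd\,e^{r\zeta}$, $(a\psi')'=\dfrac{r^2\fd^2}{a}e^{r\zeta}$, $(a\psi')''=-\dfrac{r^2\fd^2}{a^2}(a'+r\fd)e^{r\zeta}$, and $2a\psi''+a'\psi'=2(a\psi')'-a'\psi'=\dfrac{r\fd}{a}(a'+2r\fd)e^{r\zeta}$. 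The key point, exactly as in the $(a_1)$ case, is that $\fd=\|a'\|_{L^\infty}$ forces $0<(2r-1)\fd\le a'+2r\fd$ for $r>1/2$, while $|a'+r\fd|\le(r+1)\fd$; moreover, all the quantities $a\psi'$, $(a\psi')'$, $(a\psi')''$, $2a\psi''+a'\psi'$ and $a(\psi')^2$ are bounded above and below by positive constants times $e^{r\zeta}$, because $a_0\le a\le\|a\|_\infty$ and $\zeta$ is bounded. Here is where the extra regularity of $(a_2)$ is used: it is precisely what makes the second-order quantity $(a\psi')''$ bounded without any assumption on $a''$ --- in the $(a_1)$ case this role was played instead by the structural identity forcing $\fh\in W^{1,\infty}$.

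Then I would insert $\Phi=\Theta\psi$ into the distributed terms of \eqref{D&BTnd}. By the computation above, the term $s\int_0^T\int_A^B(2a\Phi_{xx}+a'\Phi_x)a(w_x)^2\,dxdt$ equals $sr\fd\int_0^T\int_A^B\Theta e^{r\zeta}(a'+2r\fd)(w_x)^2\,dxdt\ge sr(2r-1)\fd^2\int_0^T\int_A^B\Theta e^{r\zeta}(w_x)^2\,dxdt$, and similarly $s^3\int_0^T\int_A^B(2a\Phi_{xx}+a'\Phi_x)a(\Phi_x)^2w^2\,dxdt$ equals $s^3r^3\fd^3\int_0^T\int_A^B\dfrac{\Theta^3 e^{3r\zeta}}{a^2}(a'+2r\fd)w^2\,dxdt\ge\dfrac{s^3r^3(2r-1)\fd^4}{\|a\|_\infty^2}\int_0^T\int_A^B\Theta^3 e^{3r\zeta}w^2\,dxdt$; these are exactly the two terms on the right-hand side of the claim. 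The three remaining distributed terms are $\frac s2\int_0^T\int_A^B\ddot\Theta\,\psi\,w^2$, $-2s^2\int_0^T\int_A^B a(\psi')^2\Theta\dot\Theta\,w^2$ and $s\int_0^T\int_A^B a(a\Phi_x)_{xx}\,w\,w_x$. Using \eqref{magtheta}, the boundedness of $\psi$, of $a(\psi')^2$ and of $\Theta^{-1}a(a\Phi_x)_{xx}$, together with $e^{3r\zeta}\ge1$, the first two are bounded in absolute value by $Cs^2\int_0^T\int_A^B\Theta^3 e^{3r\zeta}w^2$, while for the last one the inequality $\Theta|w\,w_x|\le\frac\ve2\Theta(w_x)^2+\frac1{2\ve}\Theta w^2$ followed by $\Theta\le C\Theta^3$ gives the bound $\ve Cs\int_0^T\int_A^B\Theta e^{r\zeta}(w_x)^2+C_\ve s\int_0^T\int_A^B\Theta^3 e^{3r\zeta}w^2$.

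It then suffices to choose the parameters in the correct order, exactly as in Lemma~\ref{lemma2ndWD}: fix $r>1/2$ so that the leading coefficients $(2r-1)\fd^2$ and $(2r-1)\fd^4/\|a\|_\infty^2$ are strictly positive, next pick $\ve$ small compared with $(2r-1)\fd^2$ to absorb the gradient remainder into the good $(w_x)^2$ term, and finally take $s_0$ large enough that the remaining $s^2$- and $s$-order contributions are dominated by the $s^3$-term, which yields the claim. I do not expect any essential difficulty here: the only delicate point is the bookkeeping, since $\mathfrak{c}$ --- and hence $\psi$ itself --- depends on $r$, so the choices must be made in the order $r$, then $\ve$, then $s_0$; and, as already for \eqref{D&BTnd}, the formal integrations by parts used above are to be justified rigorously in Appendix~\ref{secA}.
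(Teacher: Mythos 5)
Your proposal is correct and follows essentially the same route as the paper: the same derivatives of $\psi=e^{r\zeta}-\mathfrak{c}$, the same key positivity $2r\fd\pm a'\ge(2r-1)\fd>0$ coming from $\fd=\|a'\|_{L^\infty(A,B)}$, the same use of \eqref{magtheta} and Young's inequality to absorb the $\ddot\Theta$, $\Theta\dot\Theta$ and cross terms, and the same order of choice of $r$, $\ve$, $s_0$. The only cosmetic differences are that the paper takes $r>1$ rather than $r>1/2$ and writes the factor as $2r\fd-a'$ where your computation gives $2r\fd+a'$; since $|a'|\le\fd$ both versions yield the identical lower bound, so nothing of substance changes.
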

\begin{proof}
We proceed as in the proof of the previous Lemma. In this case the
distributed terms of $\int_0^T \int_0^1L^+_s w L^-_s w dxdt$ take
the form
\begin{equation}\label{02nd}
\begin{aligned}
&\frac{s}{2} \int_0^T \int_A^B \ddot{\Theta} \psi w^2 dxdt +
s^3r^3\fd^3\int_0^T \int_A^B\Theta^3\frac{e^{3r\zeta}}{a^2}[2r\fd-
a']w^2
dxdt\\
& -2s^2r^2\int_0^T
\int_A^B\Theta\dot{\Theta}\frac{e^{2r\zeta}}{a}w^2 dxd+ s r
\fd\int_0^T
\int_A^B\Theta e^{r\zeta}[2r\fd -a'](w_x)^2 dxdt\\
& - sr^2\fd^2 \int_0^T \int_A^B \frac{1}{a}\Theta e^{r\zeta} [r\fd +
a']w w_xdxdt.
\end{aligned}
\end{equation}
By Hypothesis \ref{ipoadebole}.$(a_2)$, choosing $r >1$, one has
\[
\fd^3(2r\fd-a')\ge \|a'\|_{L^\infty(A,B)}^4 \ \mbox{ and } \
\fd(2r\fd-a')\ge \|a'\|_{L^\infty(A,B)}^2,
\]
thus
\begin{equation}\label{Capparend}
\begin{aligned}
&\eqref{02nd}\ge\frac{s}{2}\int_0^T \int_A^B \ddot{\Theta}\psi
w^2dxdt+
\frac{s^3r^3\|a'\|_{L^\infty(A,B)}^4}{\max a^2}\int_0^T \int_A^B\Theta^3 e^{3r\zeta}w^2 dxdt\\
&-\frac{2s^2r^2}{a_0}\int_0^T
\int_A^B|\Theta\dot{\Theta}|e^{2r\zeta}w^2 dxdt + sr
\|a'\|_{L^\infty(A,B)}^2\int_0^T \int_A^B\Theta e^{r\zeta}(w_x)^2
dxdt\\& - sr^2\fd^2 \int_0^T \int_A^B \frac{1}{a}\Theta e^{r\zeta}
[r\fd + a']w w_xdxdt.
\end{aligned}
\end{equation}

Using the estimates in \eqref{magtheta}, we conclude that, for $s$
large enough,
\begin{equation}\label{ziand}
\begin{aligned}
&\left|-\frac{2s^2r^2}{a_0}\int_0^T
\int_A^B|\Theta\dot{\Theta}|e^{2r\zeta}w^2 dxdt \right| \le
\frac{2r^2c}{a_0\min e^{r\zeta}} s^2\int_0^T \int_A^B\Theta^3
e^{3r\zeta}w^2 dxdt
\\
&\le \frac{C}{6}s^3\int_0^T \int_A^B\Theta^3 e^{3r\zeta}w^2 dxdt,
\end{aligned}
\end{equation}
for some $C>0$ and $s\geq \displaystyle \frac{12r^2c}{Ca_0\min
e^{r\zeta}}$. Moreover, we have
\begin{equation}\label{ziond}
\begin{aligned}
\left|\frac{s}{2} \int_0^T \int_A^B \ddot{\Theta}\psi w^2dxdt\right| &\leq\frac{s}{2} c \max_{[A,B]} |\psi|\int_0^T \int_A^B\Theta^3w^2 dxdt\\
& \leq\frac{cs\max_{[A,B]} |\psi|}{\min e^{3r\zeta}}\int_0^T \int_A^B\Theta^3e^{3r\zeta}w^2dxdt\\
& \leq \frac{C}{6}s^3\int_0^T \int_A^B\Theta^3e^{3r\zeta}w^2dxdt
\end{aligned}
\end{equation}
for $s\geq \displaystyle \sqrt{\frac{6c\max_{[A,B]} |\psi|}{C\min
e^{3r\zeta}}}$ and
\[
\begin{aligned}
&\left|- sr^2\fd^2 \int_0^T \int_A^B \frac{1}{a}\Theta e^{r\zeta} [r\fd + a']w w_xdxdt\right|\\
&  \le  2sr^3\|a'\|_{L^{\infty}(A,B)}^3\frac{1}{a_0} \int_0^T \int_A^B \Theta e^{r\zeta} |w w_x|dxdt\\
&  \le sr^3\frac{ \|a'\|_{L^{\infty}(A,B)}^3}{\ve a_0} \int_0^T \int_A^B \Theta e^{r\zeta} w^2dxdt \\
&+  sr^3\frac{ \ve \|a'\|_{L^{\infty}(A,B)}^3}{a_0} \int_0^T \int_A^B \Theta e^{r\zeta} (w_x)^2dxdt\\
& \le s r^3 c\frac{ \|a'\|_{L^{\infty}(A,B)}^3}{\min e^{2r\zeta}a_0\ve} \int_0^T \int_A^B \Theta^3 e^{3r\zeta} w^2dxdt + \\
& sr^3\frac{ \ve \|a'\|_{L^{\infty}(A,B)}^3}{a_0} \int_0^T \int_A^B \Theta e^{r\zeta} (w_x)^2dxdt\\
& \le \frac{C}{6}s^3\int_0^T
\int_A^B\Theta^3e^{3r\zeta}w^2dxdt+sr^3\frac{ \ve
\|a'\|_{L^{\infty}(A,B)}^3}{a_0} \int_0^T \int_A^B \Theta e^{r\zeta}
(w_x)^2dxdt
\end{aligned}
\]
for $s\geq \displaystyle \sqrt{\frac{6r^3 c\frac{
\|a'\|_{L^{\infty}(A,B)}^3}{\min e^{2r\zeta}a_0\ve}}{C}}$.

In conclusion, by the previous inequalities, we obtain
\[
\begin{aligned}
\eqref{02nd}&\ge
\left(\frac{r^3\|a'\|_{L^\infty(A,B)}^4}{\max a^2}-  \frac{C}{2}\right)s^3\int_0^T \int_A^B\Theta^3 e^{3r\zeta}w^2 dxdt\\
& + sr\|a'\|_{L^\infty(A,B)}^2 \left(1- r^2\frac{ \ve
\|a'\|_{L^{\infty}(A,B)}}{a_0} \right)\int_0^T \int_A^B\Theta
e^{r\zeta}(w_x)^2 dxdt.
\end{aligned}
\]

Finally, choosing $\ve=
\displaystyle\frac{a_0}{2r^2\|a'\|_{L^\infty(A,B)}} $ and $r>1$ such
that
\[
\frac{r^3\|a'\|_{L^\infty(A,B)}^4}{\max a^2}-  \frac{C}{2}>0,
\]
the claim follows.

\end{proof}

Concerning the boundary terms in \eqref{D&BTnd}, we have
\begin{Lemma}\label{lemma4nd}
The boundary terms in \eqref{D&BTnd} reduce to
\[
sr\int_0^T\left[a^{3/2}\Theta \left(\int_x^B \fg(\tau) d\tau + \fh_0
\right) (w_x)^2\right]^{x=B}_{x=A}dt
\]
if $(a_1)$ holds and
\[
sr\|a'\|_{L^\infty(A,B)}\int_0^T[a\Theta e^{r\zeta}
(w_x)^2]^{x=B}_{x=A}dt
\]
if $(a_2)$ holds.
\end{Lemma}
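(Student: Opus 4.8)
The plan is to run through the six boundary contributions collected under \{B.T.\} in \eqref{D&BTnd} and to show that, once the boundary and ``initial/final'' conditions for $w$ are used, all of them vanish except one, which then yields the two explicit expressions in the statement. Recall that $w$ solves \eqref{elles}, so $w(t,A)=w(t,B)=0$ for every $t\in(0,T)$ and $w(0^+,x)=w(T^-,x)=0$ for every $x\in(A,B)$; moreover $w=e^{s\Phi}v$ with $\Phi=\Theta(t)\psi(x)$, $\Theta(t)=[t(T-t)]^{-4}$ and $\psi<0$ on $[A,B]$ (indeed $\psi\le-\fc$ under $(a_1)$, while $\fc$ is chosen so that $\max_{[A,B]}\psi<0$ under $(a_2)$), so that, setting $c_0:=-\max_{[A,B]}\psi>0$, one has $e^{s\Phi}\le e^{-sc_0\Theta(t)}$.

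First I would eliminate the contributions carrying a bare factor $w$ or $w^2$ at $x=A$ or $x=B$. Since $w(\cdot,A)\equiv w(\cdot,B)\equiv 0$ we also get $w_t(\cdot,A)\equiv w_t(\cdot,B)\equiv 0$, hence $\int_0^T[aw_xw_t]_{x=A}^{x=B}\,dt=0$; and the factor $w$, respectively $w^2$, forces $\int_0^T[-sa(a\Phi_x)_xww_x]_{x=A}^{x=B}\,dt=0$ and $\int_0^T[s^2a\Phi_t\Phi_xw^2-s^3a^2(\Phi_x)^3w^2]_{x=A}^{x=B}\,dt=0$. Next I would treat the time-boundary terms: $-\frac{s}{2}\int_A^B[w^2\Phi_t]_{t=0}^{t=T}\,dx$ and $\frac{s^2}{2}\int_A^B[a(\Phi_x)^2w^2]_{t=0}^{t=T}\,dx$ vanish at once from $w(0^+,x)=w(T^-,x)=0$, while for $-\demi\int_A^B[a(w_x)^2]_{t=0}^{t=T}\,dx$ one needs $w_x\to0$ as $t\to0^+$ and $t\to T^-$. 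This follows because $e^{s\Phi}\le e^{-sc_0\Theta(t)}$ decays faster than any power of $\Theta(t)$ and of its derivatives as $t\to0^+,T^-$, so that $w_x=e^{s\Phi}\big(v_x+s\Theta\psi'v\big)$, and hence $a(w_x)^2$, tends to $0$ for each $x\in(A,B)$, the membership $v\in\mathcal V_1$ (which gives $v(t,\cdot)\in H^2(A,B)$ for a.e. $t$) providing the traces of $v_x$ used along the way. The rigorous justification of the existence of all these traces and of the limits is the object of Appendix \ref{secA}.

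After these cancellations the only surviving boundary term is $-s\int_0^T[\Phi_xa^2(w_x)^2]_{x=A}^{x=B}\,dt$, and it only remains to evaluate $\Phi_x=\Theta\psi'$ at $x=A,B$ from the explicit choice of $\psi$ in \eqref{c_1nd}. If $(a_1)$ holds, differentiating gives $\psi'(x)=-\dfrac{r}{\sqrt{a(x)}}\Big(\int_x^B\fg(s)\,ds+\fh_0\Big)$, hence $-s\Phi_xa^2(w_x)^2=sr\,\Theta\,a^{3/2}\Big(\int_x^B\fg(\tau)\,d\tau+\fh_0\Big)(w_x)^2$, which is the first claimed expression. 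If $(a_2)$ holds, then $\zeta'(x)=-\fd/a(x)$ with $\fd=\|a'\|_{L^\infty(A,B)}$, so $\psi'(x)=r\zeta'(x)e^{r\zeta(x)}=-\dfrac{r\fd}{a(x)}e^{r\zeta(x)}$ and $-s\Phi_xa^2(w_x)^2=sr\fd\,\Theta\,a\,e^{r\zeta}(w_x)^2=sr\|a'\|_{L^\infty(A,B)}\,\Theta\,a\,e^{r\zeta}(w_x)^2$, which is the second one.

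I expect the only genuinely delicate step to be the vanishing of $a(w_x)^2$ at $t=0$ and $t=T$, and more generally the verification that all the boundary integrals make classical sense: this rests on the competition between the blow-up of $\Theta$ and the regularity $v\in\mathcal V_1$ rather than on any algebraic trick, and it is precisely for this reason that the careful bookkeeping is postponed to Appendix \ref{secA}. Everything else is a direct substitution of the chosen weights $\psi$.
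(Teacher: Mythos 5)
Your proposal is correct and follows essentially the same route as the paper: kill the spatial boundary terms carrying a factor $w$ or $w^2$ and the term with $w_t$ via the Dirichlet conditions, kill the time-boundary terms via the superpolynomial decay of $e^{2s\Phi}$ at $t=0,T$ (using $\psi<0$ and $w=e^{s\Phi}v$ with $v\in\mathcal V_1$), and then evaluate the sole survivor $-s\int_0^T[\Phi_xa^2(w_x)^2]_{x=A}^{x=B}\,dt$ by substituting the explicit $\psi'$ in each case. The only cosmetic discrepancy is that the trace justifications are carried out directly in the proof of this lemma (via $w\in C([0,T];H^1(A,B))$ and $w(t,\cdot)\in H^2(A,B)$) rather than in Appendix \ref{secA}, which concerns the integration-by-parts identity of Lemma \ref{LEMMA1ND1}.
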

\begin{proof}
First of all, since $w \in \mathcal{V}_1$, then $ w\in C\big([0,
T];H^1(A,B) \big)$. Thus $w(0, x)$, $w(T,x)$, $w_x(0,x)$, $w_x(T,x)$, $(w_x)^2(t,B)$ and $(w_x)^2(t,A)$  are indeed well defined. Moreover, we have that $w_t
(t,A)$ and $w_t(t,B)$ make sense and are actually 0.

But also $w_x(t,A)$ and $w_x(t,B)$ are well defined. In fact
$w(t,\cdot)\in H^2(A,B)$ and $w_x(t,\cdot)\in W^{1,2}(A,B)\subset
C([A,B])$. Thus $ \int_0^T[a w_xw_t]_{x=A}^{x=B}dt$ is well defined
and actually equals $0$, as we get using the  boundary conditions on
$w$. Thus, using the boundary conditions of $w=e^{s\Phi}v$ with
$v\in \mathcal{V}_1$, we get
\[
\begin{aligned}
& [w^2\Phi_t]^{t=T}_{t=0}=[e^{2s\Phi}v^2\Phi_t]^{t=T}_{t=0}=0,\\
& [a(\Phi_x)^2w^2]^{t=T}_{t=0}=
\begin{cases}
[\Theta^2r^2\left( \int_x^B \fg(\tau) d\tau
+\fh_0\right)^2e^{2s\Phi}v^2]^{t=T}_{t=0}=0, & \text{if $(a_1)$ holds},\\
[\Theta^2e^{2r\zeta}r^2e^{2s\Phi}v^2]^{t=T}_{t=0}=0, & \text{if
$(a_2)$ holds},
 \end{cases}
 \\
& [a(w_x)^2]^{t=T}_{t=0}=
\begin{cases}
\left[ae^{2s\Phi}\left(-\frac{sr}{\sqrt{a}}\Theta \left(\int_x^B
\fg(\tau) d\tau + \fh_0 \right)v+v_x\right)^2\right]^{t=T}_{t=0}=0,
&
\text{if $(a_1)$ holds},\\
\displaystyle\left[ae^{2s\Phi}\left(-\frac{sr\|a'\|_{L^\infty(A,B)}\Theta
e^{r\zeta}}{a}v+v_x\right)^2\right]^{t=T}_{t=0}=0,& \text{if $(a_2)$
holds}.
\end{cases}
\end{aligned}
\]
Finally, all integrals involving $[w]^{x=B}_{x=A}$ are obviously 0,
so that the boundary terms in \eqref{D&BTnd} reduce to
\[
\begin{aligned}
& -s\int_0^T[\Phi_x a^2(w_x)^2]^{x=B}_{x=A}dt\\
&=
\begin{cases}
sr\int_0^T\left[a^{3/2}\Theta \left(\int_x^B \fg(\tau) d\tau + \fh_0
\right) (w_x)^2\right]^{x=B}_{x=A}dt , & \text{if $(a_1)$ holds},\\
sr\|a'\|_{L^\infty(A,B)}\int_0^T[a\Theta e^{r\zeta}
(w_x)^2]^{x=B}_{x=A}dt, & \text{if $(a_2)$ holds}.
\end{cases}
\end{aligned}
\]
\end{proof}

>From Lemmas \ref{lemma1nd} -
 \ref{lemma4nd}, we deduce immediately that
there exist two positive constants $C$ and $s_0$, such that all
solutions $w$ of \eqref{1'nd} satisfy, for all $s \ge s_0$,
\begin{equation}\label{D&BT1ndWD}
\begin{aligned}
\int_0^T \int_A^BL^+_s w L^-_s w dxdt &\ge Cs\int_0^T\int_A^B \Theta
(w_x)^2 dxdt\\
&+ Cs^3 \int_0^T\int_A^B\Theta^3
w^2 dxdt\\
&+sr\int_0^T\left[a^{3/2}\Theta \left(\int_x^B \fg(\tau) d\tau +
\fh_0 \right) (w_x)^2\right]^{x=B}_{x=A}dt,
\end{aligned}
\end{equation}
if $(a_1)$ holds, and
\begin{equation}\label{D&BT1nd}
\begin{aligned}
\int_0^T \int_A^BL^+_s w L^-_s w dxdt &\ge Cs\int_0^T\int_A^B \Theta
e^{r\zeta}(w_x)^2 dxdt\\
&+ Cs^3 \int_0^T\int_A^B\Theta^3 e^{3r\zeta}
w^2 dxdt\\
&+ sr\|a'\|_{L^\infty((A,B)}\int_0^T[a\Theta e^{r\zeta}
(w_x)^2]^{x=B}_{x=A}dt,
\end{aligned}
\end{equation}
if $(a_2)$ holds.

Thus, a straightforward consequence of \eqref{stimettand},
\eqref{D&BT1ndWD} and \eqref{D&BT1nd} is the next result.
\begin{Proposition}\label{Carlemannd}
Assume Hypothesis $\ref{ipoadebole}$. Then, there
exist three positive constants $C$, $s_0$ and $r$ such that all
solutions $w$ of \eqref{1'nd} in $\mathcal{V}_1$ satisfy, for all $s
\ge s_0$,
    \[
\begin{aligned}
   & s\int_0^T\int_A^B \Theta(w_x)^2dxdt  + s^3
\int_0^T\int_A^B\Theta^3  w^2 dxdt\\
&\le
    C\left(\int_0^T\int_A^B h^2 e^{2s\Phi}dxdt- sr \int_0^{T}
   \left[a^{3/2}\Theta \left(\int_x^B \fg(\tau) d\tau +
\fh_0 \right) (w_x)^2\right]^{x=B}_{x=A}dt
    \right),
    \end{aligned}
    \]
if $(a_1)$ holds and
\[
\begin{aligned}
   & s\int_0^T\int_A^B \Theta e^{r\zeta}(w_x)^2dxdt  + s^3
\int_0^T\int_A^B\Theta^3 e^{3r\zeta} w^2 dxdt\\
&\le
    C\left(\int_0^T\int_A^B h^2 e^{2s\Phi}dxdt- sr \int_0^{T}
    \left[a\Theta e^{r\zeta}
(w_{x})^{2}\right]_{x=A}^{x=B}dt
    \right),
    \end{aligned}
    \]
if $(a_2)$ does.
\end{Proposition}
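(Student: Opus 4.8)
The plan is to read off the estimate by assembling four ingredients that are all available at this point: the algebraic energy identity of Lemma~\ref{lemma1nd}, which splits $\langle L^+_sw,L^-_sw\rangle$ into a distributed part $\{\text{D.T.}\}$ and a boundary part $\{\text{B.T.}\}$; the lower bounds on $\{\text{D.T.}\}$ proved in Lemma~\ref{lemma2ndWD} under Hypothesis~\ref{ipoadebole}.$(a_1)$ and in Lemma~\ref{lemma2nd} under Hypothesis~\ref{ipoadebole}.$(a_2)$; the explicit evaluation of $\{\text{B.T.}\}$ carried out in Lemma~\ref{lemma4nd}; and the trivial inequality~\eqref{stimettand}, which asserts $2\langle L^+_sw,L^-_sw\rangle\le\int_0^T\!\!\int_A^B h^2e^{2s\Phi}\,dxdt$.

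First I would fix a solution $w$ of \eqref{1'nd} in $\mathcal{V}_1$ and, assuming Hypothesis~\ref{ipoadebole}.$(a_1)$, combine the identity \eqref{D&BTnd} with the lower bound of Lemma~\ref{lemma2ndWD} and the boundary computation of Lemma~\ref{lemma4nd}: this produces positive constants $C$, $s_0$, $r$ such that, for every $s\ge s_0$, estimate \eqref{D&BT1ndWD} holds, and the identical argument with Lemma~\ref{lemma2nd} in place of Lemma~\ref{lemma2ndWD} gives \eqref{D&BT1nd} under Hypothesis~\ref{ipoadebole}.$(a_2)$. I would then insert \eqref{D&BT1ndWD} into \eqref{stimettand}, obtaining for $s\ge s_0$
\[
\begin{aligned}
&2Cs\int_0^T\!\!\int_A^B \Theta (w_x)^2\,dxdt + 2Cs^3\int_0^T\!\!\int_A^B \Theta^3 w^2\,dxdt \\
&\qquad + 2sr\int_0^T\Bigl[a^{3/2}\Theta\Bigl(\int_x^B\fg(\tau)\,d\tau+\fh_0\Bigr)(w_x)^2\Bigr]_{x=A}^{x=B}\!dt \ \le\ \int_0^T\!\!\int_A^B h^2e^{2s\Phi}\,dxdt ,
\end{aligned}
\]
and, dividing by $2C$, transferring the boundary term to the right-hand side, and using $\int_0^T\!\!\int_A^B h^2e^{2s\Phi}\,dxdt\ge 0$ to enlarge the coefficient $\tfrac1{2C}$ of the source integral to $\tfrac1C$, I would arrive precisely at the first inequality of the statement (with constant $1/C$). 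The case $(a_2)$ is settled in the same way starting from \eqref{D&BT1nd}, the only differences being the weights $e^{r\zeta}$, $e^{3r\zeta}$ and the boundary term $sr\|a'\|_{L^\infty(A,B)}\int_0^T[a\Theta e^{r\zeta}(w_x)^2]_{x=A}^{x=B}\,dt$.

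I do not expect any real obstacle here: every step is either a citation of an already established lemma or an elementary rearrangement. The only point requiring a little attention is the bookkeeping of the three constants $C$, $s_0$, $r$, which must be taken as suitable combinations of those delivered by Lemmas~\ref{lemma2ndWD}--\ref{lemma4nd}, together with the harmless use of the sign of the source integral to reconcile the factor $\tfrac12$. All the substantive difficulties of the Carleman argument have already been overcome before this proposition: the absorption of the cross term $sr\int_0^T\!\!\int_A^B\Theta a\,\fh'\,w w_x\,dxdt$ into the good terms via Hypothesis~\ref{ipoadebole}.$(a_1)$ inside Lemma~\ref{lemma2ndWD}, and the rigorous justification of the integrations by parts of Lemma~\ref{lemma1nd}, deferred to the Appendix.
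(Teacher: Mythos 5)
Your proposal is correct and follows exactly the paper's own route: Proposition~\ref{Carlemannd} is stated there as ``a straightforward consequence of \eqref{stimettand}, \eqref{D&BT1ndWD} and \eqref{D&BT1nd}'', which are themselves obtained by combining Lemmas~\ref{lemma1nd}--\ref{lemma4nd}, precisely the assembly you describe. The bookkeeping of constants, including the use of the nonnegativity of $\int_0^T\!\!\int_A^B h^2e^{2s\Phi}\,dxdt$ to reconcile the factor $\tfrac12$ from \eqref{stimettand}, is handled correctly.
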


Now, we are ready to conclude the
\begin{proof}[Proof of Theorem \ref{mono}]
Recalling the definition of $w$, we have $v= e^{-s\Phi}w$ and
$v_{x}= -s\Theta \psi'e^{-s\Phi}w + e^{-s\Phi}w_{x}$. Thus, recalling that $\psi'$ is bounded, since $a$ is non degenerate, we have that, if $(a_1)$ holds, there exist some $c>0$ such that
\[
\begin{aligned}
\left(s\Theta (v_x)^2 + s^3 \Theta^3  v^2\right)e^{2s\Phi}&\leq c\left[ s\Theta(s^2\Theta^2e^{-2s\Phi}w^2+e^{-2s\Phi}(w_x)^2)+s^3 \Theta^3e^{-2s\Phi}w^2\right]e^{2s\Phi}\\
& \leq c \left(s^3\Theta^3 w^2+s\Theta(w_x)^2\right).
\end{aligned}
\]
Analogously, if $(a_2)$ holds, we have
\[
\begin{aligned}
\left(s\Theta e^{r\zeta}(v_x)^2 + s^3 \Theta^3
e^{3r\zeta} v^2\right)e^{2s\Phi}&\leq c\left[ s\Theta e^{r\zeta}(s^2\Theta^2w^2+(w_x)^2)+s^3 \Theta^3 e^{3r\zeta}w^2\right] \\
& \leq c\left[s\Theta e^{r\zeta}(w_x)^2 + s^3 \Theta^3
e^{3r\zeta} w^2\right],
\end{aligned}
\]
since $\zeta>0$. Therefore, there exists a constant $C>0$ such that
\[
\begin{aligned}
&\int_0^T\int_A^B \left(s\Theta (v_x)^2 + s^3 \Theta^3
 v^2\right)e^{2s\Phi}dxdt\\
&\leq C \int_0^T\int_A^B \left(s\Theta  (w_x)^2dxdt+s^3\Theta^3
w^2\right)dxdt,
\end{aligned}
\]
if $(a_1)$ holds, and
\[
\begin{aligned}
&\int_0^T\int_A^B \left(s\Theta e^{r\zeta}(v_x)^2 + s^3 \Theta^3
e^{3r\zeta} v^2\right)e^{2s\Phi}dxdt\\
&\leq C \int_0^T\int_A^B \left(s\Theta
e^{r\zeta}(w_x)^2dxdt+s^3\Theta^3 e^{3r\zeta}w^2\right)dxdt,
\end{aligned}
\]
if $(a_2)$ holds. By Proposition \ref{Carlemannd}, Theorem
\ref{mono} follows at once.
\end{proof}

\section{The non divergence case}
For the problem in non divergence form Theorem \ref{mono} becomes
\begin{Theorem}\label{mono1}
Assume Hypothesis $\ref{ipoadebole}$. Then, there
exist three positive constants $C$, $s_0$ and $r$ such that every
solution $v \in \mathcal{V}_2:=L^2\big(0, T; H^2_{\frac{1}{a}}(A,B)\big) \cap
H^1\big(0, T;H^1_{\frac{1}{a}}(A,B)\big)$ of
\begin{equation}\label{1nd}
\begin{cases}
v_t + av_{xx} =h, & (t,x) \in (0,T) \times (A,B),\\
v(t,A)=v(t,B)=0, &  t \in (0,T),\\
\end{cases}
\end{equation}
satisfies, for all $s \ge s_0$,
\[
\begin{aligned}
&\int_0^T\int_A^B \left(s\Theta (v_x)^2 + s^3 \Theta^3
 v^2\right)e^{2s\Phi}dxdt\\
&\le C\left(\int_0^T\int_A^B h^{2}e^{2s\Phi}dxdt -
sr\int_0^T\Theta(t)\left[\sqrt{a}\left(\int_x^B \fg(\tau) d\tau +
\fh_0 \right)(v_x)^2 e^{2s\Phi}\right]_{x=A}^{x=B}dt\right),
\end{aligned}
\]
Hence, by the definitions of $Z$,
if $(a_1)$ holds and \eqref{2}
if $(a_2)$ is in force.
\end{Theorem}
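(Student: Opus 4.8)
The plan is to follow the scheme used for Theorem \ref{mono}, now in the weighted space $L^2_{\frac{1}{a}}$ that is natural for $\mathcal A_2v=av_{xx}$, and to treat the two hypotheses separately. Under $(a_2)$ I would avoid any new computation and reduce to Theorem \ref{mono}: since $a\in W^{1,\infty}(A,B)$ and, in the non-degenerate setting, $\mathcal V_2=\mathcal V_1$, every $v\in\mathcal V_2$ has $av_x\in H^1(A,B)$ for a.e. $t$, so $v_t+av_{xx}=h$ is the same as $v_t+(av_x)_x=\tilde h$ with $\tilde h:=h+a'v_x$, and $\tilde h\,e^{s\Phi}\in L^2(\tilde Q_T)$ because $a'\in L^\infty$ and $v_x\in L^2(\tilde Q_T)$. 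Then Theorem \ref{mono}.$(a_2)$ applied to $v$ gives \eqref{2} with $\tilde h$ in place of $h$; using $\tilde h^2\le 2h^2+2\|a'\|_{L^\infty}^2(v_x)^2$ and the fact that $\Theta$ and $e^{r\zeta}$ are bounded below one has $\int_0^T\int_A^B(v_x)^2e^{2s\Phi}\,dx\,dt\le\frac{c}{s}\,s\int_0^T\int_A^B\Theta e^{r\zeta}(v_x)^2e^{2s\Phi}\,dx\,dt$, so the extra contribution is absorbed by the left-hand side of \eqref{2} for $s$ large, which yields \eqref{2} for $v$.

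Under $(a_1)$ this reduction fails, since $a'v_x$ need not be square integrable, and I would argue directly, mirroring the proof of Theorem \ref{mono}. Put $w:=e^{s\Phi}v\in\mathcal V_2$ (using $\psi<0$), write $L_sw=e^{s\Phi}L(e^{-s\Phi}w)$ with $Lv=v_t+av_{xx}$, and split $L_sw=L^+_sw+L^-_sw$ with
\[
L^+_sw:=aw_{xx}-s\Phi_tw+s^2a(\Phi_x)^2w,\qquad L^-_sw:=w_t-2sa\Phi_xw_x-sa\Phi_{xx}w,
\]
which is the decomposition into the symmetric and the skew-symmetric part relative to the inner product $\langle\cdot,\cdot\rangle_{\frac1a}$ of $L^2_{\frac{1}{a}}(\tilde Q_T)$, so that $2\langle L^+_sw,L^-_sw\rangle_{\frac1a}\le\|L_sw\|^2_{\frac1a}=\int_0^T\int_A^B\frac{h^2}{a}e^{2s\Phi}\,dx\,dt$. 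The core consists in: (i) expanding $\langle L^+_sw,L^-_sw\rangle_{\frac1a}$ by integrations by parts — all legitimate here because $a$ is non-degenerate, $\psi\in C^2([A,B])$ and $w(t,\cdot)\in H^2(A,B)$, so traces of $w$ and $w_x$ exist — to get an identity of the form of Lemma \ref{lemma1nd}, the $\frac1a$-weight now generating bounded lower-order factors $\frac1a,\frac{a'}a,\dots$; (ii) bounding the distributed terms from below as in Lemma \ref{lemma2ndWD}, the key algebraic point being that the $(a_1)$-relation \emph{in non-divergence form} is tuned exactly so that $a\psi''=r\fh$ and $2a\psi''+a'\psi'=2r\sqrt a\,\fg$, which makes the coefficients of $s\int\Theta(w_x)^2$ and of $s^3\int\Theta^3w^2$ strictly positive (recall $\fg\ge\fg_0>0$, $a\ge a_0>0$), while the $ww_x$ cross term carries only the bounded factor $\Theta\fh'$ and everything else is absorbed for $s$ large via $|\Theta\dot\Theta|\le c\Theta^{9/4}$, $|\ddot\Theta|\le c\Theta^{3/2}$ and Young's inequality; (iii) checking that every boundary term vanishes by $w(t,A)=w(t,B)=0$, $w(0^+,\cdot)=w(T^-,\cdot)=0$, $w_t(t,A)=w_t(t,B)=0$, except $-s\int_0^T[a\Phi_x(w_x)^2]^{x=B}_{x=A}dt=sr\int_0^T\Theta\big[\sqrt a\big(\int_x^B\fg+\fh_0\big)(w_x)^2\big]^{x=B}_{x=A}dt$. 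Combining (i)--(iii) gives a Carleman estimate for $w$; passing back to $v=e^{-s\Phi}w$ through $v_x=e^{-s\Phi}(w_x-s\Phi_xw)$, using the boundedness of $\psi'$ and $(w_x)^2=e^{2s\Phi}(v_x)^2$ at $x=A,B$ (where $v=0$), and absorbing $\frac1a$ into the constant, gives the stated inequality.

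The main obstacle is twofold. In the $(a_1)$ case it is the bookkeeping of the numerous lower-order terms created by the $\frac1a$-weight, absent in Theorem \ref{mono}, together with the verification that the sign chosen in hypothesis $(a_1)$ is precisely the one producing the positive combination $2a\psi''+a'\psi'=2r\sqrt a\,\fg$. More structurally, in the $(a_2)$ case a \emph{direct} repetition of the argument would, on integrating by parts the term $\langle aw_{xx},-sa\Phi_{xx}w\rangle_{\frac1a}$, produce the factor $(a\Phi_{xx})'$, hence $a''$, which does not exist for $a\in W^{1,\infty}$; in divergence form only $(a\Phi_x)_x$ and $(a\Phi_x)_{xx}$ — hence at worst $a'$ — appear, and this is exactly why in that case one should reduce to Theorem \ref{mono} rather than redo the estimate from scratch.
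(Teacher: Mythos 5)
Your proposal is correct and follows essentially the same route as the paper: for $(a_1)$ a direct argument in the weighted space $L^2_{\frac{1}{a}}$ with the same decomposition $L^{\pm}_sw$, the same key identities $a\psi''=r\fh$ and $a\psi''+(a\psi')'=2r\sqrt{a}\,\fg$ driving the distributed-term bound, and the single surviving boundary term $-s\int_0^T[a\Phi_x(w_x)^2]_{x=A}^{x=B}\,dt$; for $(a_2)$ the reduction to Theorem \ref{mono} via $\bar h=h+a'v_x$ with absorption of $2\|a'\|_{L^\infty}^2\int (v_x)^2e^{2s\Phi}$ for $s$ large. Your structural remark that a direct $(a_2)$ computation would require $(a\Phi_{xx})'$, hence $a''$, correctly identifies why the paper also chooses the reduction there.
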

Here $ H^1_{\frac{1}{a}}(A,B)$ and  $H^2_{\frac{1}{a}}(A,B)$ are
formally defined as in the degenerate case.

\begin{Remark}\label{RemCarleman1}
Remark \ref{RemCarleman} still holds also for the previous theorem.
\end{Remark}

\subsection{Proof of Theorem $\ref{mono1}$ when $(a_1)$ holds:}
We proceed as in Chapter \ref{subsubdiv}. For $s> 0$, define the function
\[
w(t,x) := e^{s \Phi (t,x)}v(t,x),
\]
where $v$ is any solution of \eqref{1} in $\mathcal{V}_2$; observe
that, since $v\in\mathcal{V}_2$ and $\psi<0$, then $w\in\mathcal{V}_2$.
Of course, $w$ satisfies
\begin{equation}\label{1'nd1}
\begin{cases}
(e^{-s\Phi}w)_t + a(e^{-s\Phi}w)_{xx} =h, & (t,x) \in (0,T) \times (A,B),\\
w(t,A)=w(t,B)=0, &  t \in (0,T),\\ w(T^-,x)= w(0^+, x)= 0, & x \in
(A,B).
\end{cases}
\end{equation}
Setting
\[
Lv:= v_t + av_{xx} \quad \text{and} \quad
L_sw= e^{s\Phi}L(e^{-s\Phi}w), \quad s  > 0,
\]
then \eqref{1'nd1} becomes
\begin{equation}\label{elles1}
\begin{cases}
L_sw= e^{s\Phi}h,\\
w(t,A)=w(t,B)=0, & t \in (0,T),\\
w(T^-,x)= w(0^+, x)= 0, & x \in (A,B).
\end{cases}
\end{equation}
Computing $L_sw$, one has
\[
L_sw =L^+_sw + L^-_sw,
\]
where
\[
L^+_sw := aw_{xx}
 - s \Phi_t w + s^2a (\Phi_x)^2 w,
\]
and
\[
L^-_sw := w_t -2sa\Phi_x w_x -
 sa\Phi_{xx}w.
\]
Moreover, similarly to \eqref{stimettand}, we have
\begin{equation}\label{stimettanda}
2\langle L^+_sw, L^-_sw\rangle_{L^2_{\frac{1}{a}}(\tilde Q_T)} \le
\|he^{s\Phi}\|_{L^2_{\frac{1}{a}}(\tilde Q_T)}^2,
\end{equation}
where $\langle\cdot, \cdot \rangle_{L^2_{\frac{1}{a}}(\tilde Q_T)}$
denotes the usual scalar product in $L^2_{\frac{1}{a}}(\tilde Q_T)$
and $\tilde Q_T=(0,T)\times (A,B)$. As usual, separating the scalar
product $\langle L^+_sw, L^-_sw\rangle_{L^2_{\frac{1}{a}}(\tilde
Q_T)}$ in distributed terms and boundary terms, we obtain
\begin{Lemma}\label{LEMMA1ND1}
The following identity holds:
\begin{equation}\label{D&BTnd1}
\begin{aligned}
\left.
\begin{aligned}
<L^+_sw,L^-_sw>_{L^2_{\frac{1}{a}}(\tilde Q_T)} \;&=\; s\int_{\tilde
Q_T}(a\Phi_{xx}+(a\Phi_x)_x)(w_x)^2dxdt\\
&+ s^3 \int_{\tilde Q_T}(\Phi_x)^2(a\Phi_{xx}+(a\Phi_x)_x)w^2dxdt\\
& -2 s^2\int_{\tilde Q_T} \Phi_x\Phi_{xt}w^2dxdt
+\frac{s}{2}\int_{\tilde Q_T} \frac{\Phi_{tt}}{a}w^2dxdt\\
&+s\int_{\tilde Q_T}(a\Phi_{xx})_xw w_xdxdt\end{aligned}
\right\}\;\text{\{D.T.\}}\\
\text{\{B.T.\}}
\left\{
\begin{aligned}
 -\frac{1}{2}\int_A^B\Big[(w_x)^2\Big]_0^Tdx
+\int_0^T\Big[w_xw_t\Big]_A^Bdt \\
- s \int_0^T\Big[ a \Phi_x(w_x)^2\Big]_A^Bdt- s\int_0^T\!\!\Big[ a\Phi_{xx}ww_x\Big]_A^Bdt\\
 +\frac{1}{2} s\int_A^B\!\!\Big[\left(s (\Phi_x)^2 -
\frac{\Phi_t}{a}\right)w^2\Big]_0^Tdx \\
 - s^2 \int_0^T\Big[\big(s a (\Phi_x)^3-
\Phi_x\Phi_t\big)w^2\Big]_A^Bdt.
\end{aligned}\right.
\end{aligned}
\end{equation}
\end{Lemma}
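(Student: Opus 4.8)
The plan is to obtain the identity \eqref{D&BTnd1} by a direct computation, following the same scheme as in the proof of Lemma \ref{lemma1nd} but carrying along the weight $1/a$ coming from the inner product of $L^2_{\frac{1}{a}}(\tilde Q_T)$. Splitting $L^-_sw=w_t-2sa\Phi_x w_x-sa\Phi_{xx}w$, I would write
\[
\begin{aligned}
\langle L^+_sw,L^-_sw\rangle_{L^2_{\frac{1}{a}}(\tilde Q_T)}
&=\int_{\tilde Q_T}\frac{L^+_sw\,w_t}{a}\,dxdt-2s\int_{\tilde Q_T}L^+_sw\,\Phi_x w_x\,dxdt\\
&\quad-s\int_{\tilde Q_T}L^+_sw\,\Phi_{xx}w\,dxdt,
\end{aligned}
\]
observing that in the last two integrals the weight $1/a$ has already cancelled the factor $a$ multiplying the corresponding term of $L^-_sw$, and that in the first integral it cancels the $a$ in front of $w_{xx}$ inside $L^+_sw$. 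Two structural facts drive the computation: first, $1/a$ is independent of $t$, so every integration by parts in time is performed exactly as in the non degenerate unweighted case (with $\Phi_t=\dot\Theta\psi$, $\Phi_{tt}=\ddot\Theta\psi$), the weight surviving only through the terms $\Phi_{tt}/a$ and $\Phi_t/a$; second, since the leading coefficient $a$ is cancelled everywhere, no factor $a^2$ appears, contrary to the divergence case.

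Then I would carry out the three integrals, which are the exact analogues of \eqref{!nd}--\eqref{!2nd}. In the first one I integrate $\int_{\tilde Q_T}w_{xx}w_t$ by parts once in $x$ and once in $t$ (this produces $\int_0^T[w_xw_t]_A^B\,dt$ and $-\demi\int_A^B[(w_x)^2]_0^T\,dx$), and the two remaining pieces, written as $ww_t=\demi(w^2)_t$, by parts in $t$, which gives $\demi s\int_{\tilde Q_T}\frac{\Phi_{tt}}{a}w^2-s^2\int_{\tilde Q_T}\Phi_x\Phi_{xt}w^2$ together with the boundary term $\demi s\int_A^B[(s(\Phi_x)^2-\frac{\Phi_t}{a})w^2]_0^T\,dx$. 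In the second, after the cancellation of $a$, I write $w_{xx}w_x=\demi((w_x)^2)_x$ and $ww_x=\demi(w^2)_x$, integrate by parts in $x$ and use $(a(\Phi_x)^3)_x=(\Phi_x)^2(a\Phi_x)_x+2a(\Phi_x)^2\Phi_{xx}$; this yields $s\int(a\Phi_x)_x(w_x)^2$, an extra $-s^2\int\Phi_x\Phi_{xt}w^2$, the term $-s^2\int\Phi_t\Phi_{xx}w^2$, the cubic terms $s^3\int(\Phi_x)^2(a\Phi_x)_x w^2+2s^3\int a(\Phi_x)^2\Phi_{xx}w^2$, and the lateral boundary contributions $-s\int_0^T[a\Phi_x(w_x)^2]_A^B\,dt-s^2\int_0^T[(sa(\Phi_x)^3-\Phi_x\Phi_t)w^2]_A^B\,dt$. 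In the third, $\int aw_{xx}\Phi_{xx}w$ is integrated by parts in $x$, giving $s\int(a\Phi_{xx})_x ww_x+s\int a\Phi_{xx}(w_x)^2-s\int_0^T[a\Phi_{xx}ww_x]_A^B\,dt$, together with the algebraic terms $s^2\int\Phi_t\Phi_{xx}w^2$ and $-s^3\int a(\Phi_x)^2\Phi_{xx}w^2$. Summing the three contributions, the two occurrences of $s^2\int\Phi_t\Phi_{xx}w^2$ cancel, the coefficient of $(w_x)^2$ collapses to $a\Phi_{xx}+(a\Phi_x)_x$, and the cubic $w^2$ terms combine into $(\Phi_x)^2(a\Phi_{xx}+(a\Phi_x)_x)$, which is exactly the right-hand side of \eqref{D&BTnd1}.

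The main obstacle, as in Lemma \ref{lemma1nd}, is not the bookkeeping but the legitimacy of the integrations by parts: under $(a_1)$ the coefficient is only of class $W^{1,1}(A,B)$, whereas $L^+_sw$ and several distributed terms involve $w_{xx}$, $\Phi_{xx}$ and $(a\Phi_{xx})_x$, i.e. up to two derivatives of $a$. The key remark---to be used exactly as there---is that the structural equation in Hypothesis \ref{ipoadebole}.$(a_1)$ (non divergence version) yields $\psi''=\frac{r}{a}\bigl(\frac{a'}{2\sqrt a}(\int_x^B\fg+\fh_0)+\sqrt a\,\fg\bigr)=\frac{r\fh}{a}$, so that $a\Phi_{xx}=r\Theta\fh$ and $(a\Phi_{xx})_x=r\Theta\fh'$ are perfectly meaningful because $\fh\in W^{1,\infty}$; moreover every integrand appearing is genuinely integrable since $a\ge a_0>0$ on $(A,B)$ and $v\in\mathcal V_2$ forces $w(t,\cdot)\in H^2(A,B)$, $w\in C([0,T];H^1(A,B))$, $w_x(t,\cdot)\in C([A,B])$ with $w(t,A)=w(t,B)=w_t(t,A)=w_t(t,B)=0$---the same trace bookkeeping as in Lemma \ref{lemma4nd}. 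A completely rigorous derivation, obtained via a regularization argument, is postponed to Appendix \ref{secA}, adapting to the present non divergence setting the argument written there for \eqref{D&BTnd}.
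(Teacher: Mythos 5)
Your computation is correct: the three-way splitting of $L^-_sw$, the cancellation of the weight $1/a$ against the leading coefficient, the integrations by parts in $t$ and $x$, the identity $(a(\Phi_x)^3)_x=(\Phi_x)^2(a\Phi_x)_x+2a(\Phi_x)^2\Phi_{xx}$, and the cancellation of the two $s^2\int\Phi_t\Phi_{xx}w^2$ terms all reproduce exactly the distributed and boundary terms of \eqref{D&BTnd1}. This is the same approach as the paper, which simply refers the computation to \cite[Lemma 3.8]{cfr} and, as you do, justifies the integrations by parts via the definition of $H^2_{\frac{1}{a}}(A,B)$ and the regularity of $\fg$ and $\fh$ (in particular $a\Phi_{xx}=r\Theta\fh$ with $\fh\in W^{1,\infty}$); you have merely written out in full what the paper outsources.
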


\begin{proof} It is an adaptation of the proof of \cite[Lemma
3.8]{cfr} to which we refer. Let us simply remark that in our case
all integrals and integrations by parts are justified by the
definition of $H^2_{\frac{1}{a}}(A,B)$ and by the regularity of the
functions $\fg$ and $\fh$.
\end{proof}

Now, the crucial step is to
prove the following estimates:
\begin{Lemma}\label{lemma2ndWD1}
Assume that Hypothesis $\ref{ipoadebole}$.$(a_1)$ holds. Then there
exist two positive constants $s_0$  and $C$ such that for all $s \ge
s_{0}$ the distributed terms of \eqref{D&BTnd1} satisfy the estimate
\[
\begin{aligned}
& s\int_0^T\int_A^B(a\Phi_{xx}+(a\Phi_x)_x)(w_x)^2dxdt + s^3
\int_0^T\int_A^B(\Phi_x)^2(a\Phi_{xx}+(a\Phi_x)_x)w^2dxdt
\\
& -2 s^2\int_0^T\int_A^B \Phi_x\Phi_{xt}w^2dxdt
 +\frac{s}{2}\int_0^T\int_A^B\frac{\Phi_{tt}}{a}w^2dxdt
+ s\int_{\tilde Q_T}(a\Phi_{xx})_xw w_xdxdt\\
&\ge Cs\int_0^T\int_A^B \Theta (w_x)^2 dxdt + Cs^3
\int_0^T\int_A^B\Theta^3 w^2 dxdt.
\end{aligned}
\]
\end{Lemma}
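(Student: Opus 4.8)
The plan is to mimic the proof of Lemma \ref{lemma2ndWD}, the only structural changes being that in the non divergence identity \eqref{D&BTnd1} the factor $\frac1a$ enters the $\Phi_{tt}$-term and the combination $a\Phi_{xx}+(a\Phi_x)_x$ replaces $2a\Phi_{xx}+a'\Phi_x$. First I would compute the derivatives of $\Phi=\Theta\psi$, with $\psi$ given by the first branch of \eqref{c_1nd}. Setting $P(x):=\int_x^B\fg(s)\,ds+\fh_0$ one has $\psi'=-rP/\sqrt a$, and a direct differentiation combined with the non divergence form of Hypothesis \ref{ipoadebole}.$(a_1)$ gives
\[
\psi''=\frac ra\Bigl(\sqrt a\,\fg+\frac{a'P}{2\sqrt a}\Bigr)=\frac{r\fh}{a},
\]
so that $a\psi''=r\fh$ is bounded and $(a\Phi_{xx})_x=\Theta(a\psi'')_x=r\Theta\fh'$ is well defined since $\fh\in W^{1,\infty}(A,B)$. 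Using $a\psi''=r\fh$ once more,
\[
a\Phi_{xx}+(a\Phi_x)_x=2a\Theta\psi''+\Theta a'\psi'=\Theta(2r\fh+a'\psi')=2r\Theta\sqrt a\,\fg,
\]
again by $(a_1)$, while $(\Phi_x)^2=\Theta^2r^2P^2/a$.

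With these identities the two ``good'' terms are immediate: since $a\ge a_0$ and $\fg\ge\fg_0$, the first distributed term satisfies $s\int(a\Phi_{xx}+(a\Phi_x)_x)(w_x)^2\ge 2r\sqrt{a_0}\,\fg_0\,s\int\Theta(w_x)^2$, while the second one, because $P\ge\fh_0$ and $a\le\max_{[A,B]}a$, is bounded below by $2r^3\fh_0^2\fg_0(\max_{[A,B]}a)^{-1/2}\,s^3\int\Theta^3w^2$. It then remains to absorb the three error terms. The term $-2s^2\int\Phi_x\Phi_{xt}w^2=-2s^2\int\Theta\dot\Theta(\psi')^2w^2$ and the term $\frac s2\int\frac{\Phi_{tt}}aw^2=\frac s2\int\frac{\ddot\Theta\psi}aw^2$ are, by \eqref{magtheta} together with the boundedness of $\psi'$, $\psi$ (the coefficient being non degenerate) and of $1/a$, an $O(s^2)$, respectively an $O(s)$, multiple of $\int\Theta^3w^2$, hence absorbed by $\frac C6 s^3\int\Theta^3w^2$ for $s$ large. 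For $s\int(a\Phi_{xx})_xww_x=sr\int\Theta\fh'ww_x$ I would apply a weighted Young inequality to bound it by $\frac{sr}{\ve}\|\fh'\|_{L^\infty(A,B)}^2\int\Theta w^2+sr\ve\int\Theta(w_x)^2$ and then use $\int\Theta w^2\le c\int\Theta^3w^2$; the first piece goes into $\frac C6 s^3\int\Theta^3w^2$ for $s$ large, and the second into the good $(w_x)^2$-term for $\ve$ small. Collecting all estimates, and finally choosing $\ve$ comparable to $\sqrt{a_0}\fg_0$ and $r$ large enough that $2r^3\fh_0^2\fg_0(\max_{[A,B]}a)^{-1/2}-C/2>0$, yields the claim.

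The computations are routine once the explicit $\psi$ is inserted; the only genuinely delicate point is the algebraic collapse of $a\Phi_{xx}+(a\Phi_x)_x$ to $2r\Theta\sqrt a\,\fg$ and of $(a\Phi_{xx})_x$ to $r\Theta\fh'$, which is exactly where the non divergence version of $(a_1)$ — with its sign on the $a'$-term opposite to the divergence case — and the extra regularity $\fh\in W^{1,\infty}$ are used; without such a structural hypothesis these manipulations of the non smooth $a$ would simply not be available.
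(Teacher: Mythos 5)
Your proposal is correct and follows essentially the same route as the paper's proof: you insert the explicit $\psi$ from \eqref{c_1nd}, use the non divergence identity in Hypothesis \ref{ipoadebole}.$(a_1)$ to collapse $a\Phi_{xx}+(a\Phi_x)_x$ to $2r\Theta\sqrt{a}\,\fg$ and $(a\Phi_{xx})_x$ to $r\Theta\fh'$, bound the two good terms below via $\fg\ge\fg_0$, $a\ge a_0$ and $P\ge\fh_0$, and absorb the three remainders with \eqref{magtheta} and a weighted Young inequality exactly as in the paper, concluding with the same choices of $\ve$ and $r$. The only (harmless) difference is that you make explicit the algebraic cancellation that the paper leaves implicit when writing \eqref{02ndWD1}.
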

\begin{proof}
Using the definition of $\Phi$, the distributed terms of
$\displaystyle\int_{Q_T} \frac{1}{a}L^+_s w L^-_s w dxdt$
take the form
\begin{equation}\label{02ndWD1}
\begin{aligned}
&\frac{s}{2} \int_0^T \int_A^B\frac{1}{a} \ddot{\Theta} \psi w^2
dxdt + 2s^3r^3\int_0^T \int_A^B\frac{1}{\sqrt{a}}\Theta^3\fg \left(
\int_x^B \fg(\tau)d\tau + \fh_0\right)^2w^2
dxdt\\
& -2s^2r^2\int_0^T \int_A^B\frac{1}{a}\Theta\dot{\Theta} \left(
\int_x^B \fg(\tau)d\tau + \fh_0\right)^2w^2 dxdt+ 2s r \int_0^T
\int_A^B\Theta \sqrt{a}\fg(w_x)^2 dxdt\\
& +sr \int_0^T \int_A^B\Theta \fh' ww_x dxdt.
\end{aligned}
\end{equation}

Hence, since, by Hypothesis \ref{ipoadebole}.$(a_1)$, $\fg \ge
\fg_0$ and $a \ge a_0$, we can estimate \eqref{02ndWD1} from below
in the following way:
\[
\begin{aligned}
&\eqref{02ndWD1}\ge\frac{s}{2}\int_0^T \int_A^B
\frac{1}{a}\ddot{\Theta}\psi w^2dxdt+
2s^3r^3\frac{1}{\max_{[A,B]}\sqrt{a}}\fg_0 \fh_0^2\int_0^T \int_A^B\Theta^3 w^2 dxdt\\
&-2s^2r^2 \frac{1}{a_0}\int_0^T \int_A^B\Theta|\dot{\Theta}| \left(
\int_x^B\fg(\tau)d\tau + \fh_0\right)^2w^2 dxdt +2
sr\fg_0\sqrt{a_0}\int_0^T
\int_A^B\Theta (w_x)^2 dxdt\\
& +sr \int_0^T \int_A^B\Theta \fh' ww_x dxdt.
\end{aligned}
\]
By the estimates in \eqref{magtheta}, we conclude that, for $s$ large
enough,
\[
\begin{aligned}
&2s^2r^2\frac{1}{a_0}\int_0^T \int_A^B\Theta |\dot{\Theta}|
\left(  \int_x^B\fg(\tau)d\tau  + \fh_0\right)^2w^2 \\
&\le 2r^2 s^2\frac{1}{a_0}c\left(  \int_A^B\fg(\tau)d\tau  +
\fh_0\right)^2\int_0^T \int_A^B\Theta^3w^2 dxdt
\\
&\le \frac{C}{6}s^3\int_0^T \int_A^B\Theta^3 w^2 dxdt,
\end{aligned}
\]
for some $C>0$ and $s\geq \displaystyle \frac{12r^2c\left(
\displaystyle\int_A^B \fg(\tau)d\tau  + \fh_0\right)^2}{Ca_0}$.
Moreover,  we have
\[
\begin{aligned}
\left|\frac{s}{2} \int_0^T \int_A^B \frac{1}{a}\ddot{\Theta}\psi w^2dxdt\right| &\leq s c \frac{1}{a_0}\max_{[A,B]} |\psi|\int_0^T \int_A^B\Theta^3w^2 dxdt\\
& \leq \frac{C}{6}s^3\int_0^T \int_A^B\Theta^3w^2dxdt
\end{aligned}
\]
for $s\geq \displaystyle \sqrt{\frac{6c\max_{[A,B]} |\psi|}{Ca_0}}$,
and
\[
\begin{aligned}
& \left|sr \int_0^T \int_A^B\Theta \fh' ww_x dxdt\right| \le \frac{1}{\ve} sr \int_0^T \int_A^B\Theta |\fh'|^2 w^2dxdt +\ve sr\int_0^T \int_A^B\Theta  (w_x)^2 dxdt\\
& \le \frac{1}{\ve} sr c\|\fh'\|^2_{L^\infty(A,B)}\int_0^T \int_A^B\Theta^3 w^2dxdt +\ve sr \int_0^T \int_A^B\Theta (w_x)^2dxdt \\
& \le\frac{C}{6}s^3\int_0^T \int_A^B\Theta^3w^2dxdt + \ve s
r\int_0^T \int_A^B\Theta (w_x)^2dxdt
\end{aligned}
\]
for $s \geq \displaystyle
\sqrt{\frac{6\ve^{-1}rc\|\fh'\|_{L^\infty(A,B)}^2}{C}}$. In
conclusion, by the previous inequalities, we find
\[
\begin{aligned}
\eqref{02ndWD1}&\ge
s^3\left(2r^3\frac{1}{\max_{[A,B]}\sqrt{a}}\fg_0 \fh_0^2 -\frac{C}{2}\right)\int_0^T \int_A^B\Theta^3 w^2 dxdt\\
 &+ sr\left(2\fg_0 \sqrt{a_0}- \ve\right)\int_0^T
\int_A^B\Theta (w_x)^2 dxdt
\end{aligned}
\]
for some $C>0$ and $s$ large enough.

Finally, choosing $\ve = \displaystyle\fg_0 \sqrt{a_0}$ and $r$ such
that
\[
2r^3\frac{1}{\max_{[A,B]}\sqrt{a}}\fg_0 \fh_0^2 -\frac{C}{2}>0,
\]
the claim follows.
\end{proof}
Concerning the boundary terms in \eqref{D&BTnd1}, we have
\begin{Lemma}\label{lemma4nd1}
The boundary terms in \eqref{D&BTnd1} reduce to
\[
sr\int_0^T\Theta(t)\left[\sqrt{a}\left(\int_x^B \fg(\tau) d\tau +
\fh_0 \right)(w_x)^2\right]_{x=A}^{x=B}dt.
\]

\begin{proof}
Using the definition of $\Phi$ we have that the boundary terms
become
\begin{equation}\label{btnd}
\begin{aligned}
 \big\{B.T.\big\}\; &=\;
-\frac{1}{2}\int_A^B\Big[(w_x)^2\Big]_{t=0}^{t=T}dx
+\int_0^T\Big[w_xw_t\Big]^{x=B}_{x=A}dt\\
&+\frac{1}{2}\int_A^B\Big[\Big(s^2\Theta^2(\psi')^2
-\frac{s}{a}\dot{\Theta}\psi\Big)w^2\Big]_{t=0}^{t=T}dx
-s\int_0^T\Theta(t)\Big[a\psi'(w_x)^2\Big]^{x=B}_{x=A}dt
\\&-s\int_0^T\Theta(t)\Big[a \psi '' w w_x\Big]^{x=B}_{x=A}dt
-s^3\int_0^T\Theta^3(t)\Big[a (\psi')^3 w^2\Big]^{x=B}_{x=A}dt\\&
+s^2\int_0^T\Theta(t)\dot \Theta (t) \Big[\psi
 \psi'w^2\Big]^{x=B}_{x=A}dt.
\end{aligned}
\end{equation}
Since $w \in \mathcal{V}_2$,  $w(0, x)$, $w(T,x)$, $w_x(0,x)$,
$w_x(T,x)$ and $\int_A^B \big[w_x^2\big]_{t=0}^{t=T}dx$ are well defined,
using the boundary conditions and the definition of $w$ itself, we
get
\[
\int_A^B
\Big[-\frac{1}{2}(w_x)^2+\frac{1}{2}\Big(s^2\Theta^2(\psi')^2
-\frac{s}{a}\dot{\Theta}\psi\Big)w^2\Big]_{t=0}^{t=T}dx =0.
\]

Moreover, since $w \in \mathcal{V}_2$, we have that
 $w_t (t,A)$ and $w_t(t,B)$ make sense. Therefore, also $w_x(t,A)$ and $w_x(t,B)$
are well defined, since  $w(t,\cdot)\in H^2_{\frac{1}{a}}(A,B)$.
Thus $ \int_0^T[ w_xw_t]_{x=A}^{x=B}dt$ is well defined and actually
equals $0$. Indeed, by the boundary conditions, we find
\[
|w_t(t,x)|\leq \left(\int_A^Bw_{tx}(t,y)^2dy\right)^{1/2}
\max\{\sqrt{x-A},\sqrt{B-x}\}\to 0
\]
as $x\to A$ or $x\to B$, the integral being finite.

Now, $w(t, A)$ and $w(t,B)$ being well defined, by the boundary
conditions on $w$, the other terms of \eqref{D&BTnd1} reduce to
\[
-s\int_0^T\Theta(t)\Big[a\psi'(w_x)^2\Big]_{x=A}^{x=B}dt
=sr\int_0^T\Theta(t)\left[\sqrt{a}\left(\int_x^B \fg(\tau) d\tau +
\fh_0 \right)(w_x)^2\right]_{x=A}^{x=B}dt.
\]
\end{proof}
\end{Lemma}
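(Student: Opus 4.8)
The plan is to follow the scheme of the divergence-case Lemma~\ref{lemma4nd}: substitute $\Phi=\Theta\psi$ into the list $\{B.T.\}$ displayed in \eqref{D&BTnd1}, and then discard every boundary term that vanishes, either because $w$ and its first derivatives decay super-exponentially as $t\to0^+$ and $t\to T^-$, or because of the lateral boundary conditions $w(t,A)=w(t,B)=0$. What will survive is a single lateral term, which will produce the announced expression once the explicit form of $\psi'$ is inserted.

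First I would fix the functional framework. Since $v\in\mathcal{V}_2$ and $\psi<0$, the function $w=e^{s\Phi}v$ again belongs to $\mathcal{V}_2$, so $w\in C\big([0,T];H^2_{\frac{1}{a}}(A,B)\big)$ and $w_t\in L^2\big(0,T;H^1_{\frac{1}{a}}(A,B)\big)$; in particular $w(0,\cdot),w(T,\cdot),w_x(0,\cdot),w_x(T,\cdot)$ and the traces $(w_x)^2(t,A),(w_x)^2(t,B)$ are well defined, so each bracket in $\{B.T.\}$ makes sense. For the temporal brackets $\int_A^B\big[\,\cdot\,\big]_{t=0}^{t=T}\,dx$, I would use that $\Theta(t)\to+\infty$ as $t\to0^+$ or $t\to T^-$ while $\psi\le\max_{[A,B]}\psi<0$, so that $e^{s\Phi(t,x)}=e^{s\Theta(t)\psi(x)}$ tends to $0$ faster than any power of $\Theta(t)$ blows up; since $w_x=e^{s\Phi}\big(s\Theta\psi'\,v+v_x\big)$ and $w^2\Phi_t=e^{2s\Phi}\dot\Theta\,\psi\,v^2$ with $v,v_x$ bounded in $L^2(A,B)$ near $t=0,T$, this gives $\int_A^B\big[(w_x)^2\big]_{t=0}^{t=T}\,dx=0$ and $\int_A^B\big[\big(s(\Phi_x)^2-\frac{\Phi_t}{a}\big)w^2\big]_{t=0}^{t=T}\,dx=0$. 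The lateral brackets carrying $w^2$ or $ww_x$ at $x=A,B$, namely $\big[a\Phi_{xx}ww_x\big]_A^B$ and $\big[\big(sa(\Phi_x)^3-\Phi_x\Phi_t\big)w^2\big]_A^B$, vanish by $w(t,A)=w(t,B)=0$.

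The genuinely delicate term, and the one I expect to be the main obstacle, is $\int_0^T\big[w_xw_t\big]_A^B\,dt$, since $w_t$ need not be continuous up to $x=A,B$. Here I would differentiate $w(t,A)=w(t,B)=0$ in time to obtain $w_t(t,A)=w_t(t,B)=0$ in the trace sense, and then estimate, for a.e.\ $t$,
\[
|w_t(t,x)|=\Big|\int_A^x w_{tx}(t,y)\,dy\Big|\le\sqrt{x-A}\;\|w_{tx}(t,\cdot)\|_{L^2(A,B)},
\]
with $t\mapsto\|w_{tx}(t,\cdot)\|_{L^2(A,B)}$ in $L^2(0,T)$ because $w\in H^1\big(0,T;H^1_{\frac{1}{a}}(A,B)\big)$; since $w_x(t,\cdot)\in H^1(A,B)\hookrightarrow C([A,B])$ stays bounded near $x=A$ and $x=B$, the product $w_xw_t$ tends to $0$ as $x\to A$ (resp.\ $x\to B$) for a.e.\ $t$, and dominated convergence yields $\int_0^T\big[w_xw_t\big]_A^B\,dt=0$.

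After these reductions $\{B.T.\}$ collapses to $-s\int_0^T\big[a\Phi_x(w_x)^2\big]_A^B\,dt=-s\int_0^T\Theta(t)\big[a\psi'(w_x)^2\big]_A^B\,dt$. It then only remains to insert $\psi'$: under $(a_1)$, the definition \eqref{c_1nd} gives $\psi'(x)=-\dfrac{r}{\sqrt{a(x)}}\Big(\int_x^B\fg(\tau)\,d\tau+\fh_0\Big)$, hence $-s\Theta\,a\psi'=sr\Theta\sqrt{a}\Big(\int_x^B\fg(\tau)\,d\tau+\fh_0\Big)$, so that
\[
\{B.T.\}=sr\int_0^T\Theta(t)\Big[\sqrt{a}\Big(\int_x^B\fg(\tau)\,d\tau+\fh_0\Big)(w_x)^2\Big]_{x=A}^{x=B}\,dt,
\]
which is the claimed identity.
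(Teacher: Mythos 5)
Your proposal is correct and follows essentially the same route as the paper's proof: expand the boundary terms with $\Phi=\Theta\psi$, kill the temporal brackets via the vanishing of $w$ at $t=0^+,T^-$ (driven by $\Theta\to+\infty$ and $\psi<0$), kill the lateral brackets containing $w^2$ or $ww_x$ via the Dirichlet conditions, handle $\int_0^T[w_xw_t]_A^B\,dt$ through exactly the same trace estimate $|w_t(t,x)|\le \|w_{tx}(t,\cdot)\|_{L^2(A,B)}\max\{\sqrt{x-A},\sqrt{B-x}\}$, and finally substitute $\psi'=-\frac{r}{\sqrt{a}}\big(\int_x^B\fg(\tau)\,d\tau+\fh_0\big)$ into the surviving term $-s\int_0^T\Theta\big[a\psi'(w_x)^2\big]_A^B\,dt$. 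The only difference is presentational: you spell out the super-exponential decay of $e^{s\Phi}$ in slightly more detail than the paper does.
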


\subsection{Proof of Theorem $\ref{mono1}$ when $(a_2)$ holds:}

Now, assume that Hypothesis \ref{ipoadebole}.$(a_2)$ holds. Then
inequality \eqref{2} in the non divergence case is a simple
consequence of Theorem \ref{mono}:\\
rewrite the equation of
    \eqref{1} as $ v_t + (av_x)_x = \bar{h}, $ where $\bar{h}
    := h + a'v_x$. Then, applying Theorem \ref{mono}, there exists
    two positive constants $C$ and $s_0 >0$, such that
 for all $s \ge s_0$,
\begin{equation}\label{sd}
\begin{aligned}
&\int_0^T\int_A^B \left(s\Theta e^{r\zeta}(v_x)^2 + s^3 \Theta^3
e^{3r\zeta} v^2\right)e^{2s\Phi}dxdt\\
&\le C\left(\int_0^T\int_A^B {\bar h}^{2}e^{2s\Phi}dxdt -
sr\int_0^T\left[a\Theta e^{r\zeta}(v_x)^2e^{2s\Phi}
\right]_{x=A}^{x=B}dt\right).
\end{aligned}
\end{equation}
Using the definition of $\bar{h}$, the term $\int_{Q_T}\bar{h}^2
e^{2s\Phi(t,x)}dxdt$ can be estimated in the following way
\begin{equation}\label{sd1}
\begin{aligned}
\int_0^T\int_A^B \bar{h}^2e^{2s\Phi}dxdt &\le
2\int_0^T\int_A^B h^2e^{2s\Phi}dxdt
+2\|a'\|_{L^\infty(\tilde Q_T)}^2\int_0^T\int_A^B
e^{2s\Phi}(v_x)^2dxdt\\
& \le  2\int_0^T\int_A^B h^2e^{2s\Phi}dxdt
+2\|a'\|_{L^\infty(\tilde Q_T)}^2c\int_0^T\int_A^B \Theta e^{r\zeta}
e^{2s\Phi}(v_x)^2dxdt,
\end{aligned}
\end{equation}
where $c:= \max_{[0, T]}(t(T-t))^4= \displaystyle\left(\frac{T}{2}\right)^8$. Thus, by \eqref{sd} and \eqref{sd1}, one has
\[
\begin{aligned}
    &\int_0^T\int_A^B \left(s\Theta e^{r\zeta}(v_x)^2 - 2\|a'\|_{L^\infty(\tilde Q_T)}^2c \Theta e^{r\zeta}
   (v_x)^2+ s^3 \Theta^3
e^{3r\zeta} v^2\right)e^{2s\Phi}dxdt\\
&\le C\left(\int_0^T\int_A^B h^{2}e^{2s\Phi}dxdt -
sr\int_0^T\left[a\Theta e^{r\zeta}(v_x)^2e^{2s\Phi}
\right]_{x=A}^{x=B}dt\right).
    \end{aligned}
    \]
    Now, let $s_1>0$ be such that $\displaystyle \frac{s_1}{2} \ge 2\|a'\|_{L^\infty(\tilde Q_T)}^2c$. Then, for all $s \ge s_1$
    \[
     \begin{aligned}
   & \int_0^T\int_A^B \left(s\Theta e^{r\zeta}(v_x)^2 - 2\|a'\|_{L^\infty(\tilde Q_T)}^2c \Theta e^{r\zeta}
   (v_x)^2 \right)e^{2s\Phi}dxdt\\
    &\ge \frac{s}{2} \int_0^T\int_A^B \Theta e^{r\zeta}(v_x)^2e^{2s\Phi}dxdt.
    \end{aligned}  \]
    Hence the claim follows for all $s \ge \max\{s_0, s_1\}$.

\chapter{Carleman estimate for degenerate non smooth parabolic
problems}\label{Carleman estimate}

In this chapter we prove crucial estimates of Carleman type in
presence of a degenerate coefficient. Such inequalities will be
used, for example, to prove observability inequalities for the
adjoint problem of \eqref{linear} in both the weakly and the
strongly degenerate cases.

\section{Carleman estimate for the problem in divergence form}
Let us consider again problem \eqref{1} in divergence form, where now $a$ satisfies one
of the assumptions describing the (WD) or the (SD) case, which we
briefly recollect in the following
\begin{Assumptions}\label{Ass02}
The function $a$ satisfies Hypothesis $\ref{Ass0}$ or Hypothesis
$\ref{Ass01}$. Moreover,
if Hypothesis $\ref{Ass01}$ holds with $K > \displaystyle
\frac{4}{3}$, we suppose that there exists a constant $\vartheta \in
(0, K]$ such that the function
\begin{equation}\label{dainfinito}
\begin{array}{ll}
x \mapsto \dfrac{a(x)}{|x-x_0|^{\vartheta}} &
\begin{cases}
& \mbox{ is nonincreasing on the left of $x=x_0$,}\\
& \mbox{ is nondecreasing on the right of $x=x_0$}.
\end{cases}
\end{array}
\end{equation}
In addition, when $K > \displaystyle\frac{3}{2}$, the previous map is bounded below away from $0$ and there exists a constant $\Sigma>0$ such that
\begin{equation}\label{Sigma}
|a'(x)|\leq \Sigma |x-x_0|^{2\vartheta-3} \mbox{ for a.e. }x\in
[0,1].
\end{equation}
\end{Assumptions}

\begin{Remark}\label{domfurbo}
Condition \eqref{dainfinito} is more general than the corresponding
one for $x_0=0$ required in \cite{acf} for the (SD) case. Indeed, in
this paper we require it only in the sub-case $\displaystyle K>\frac{4}{3}$ of the
(SD) case. On the other hand, let us note that requiring
\eqref{dainfinito}, also with $x_0=0$ as in \cite{acf}, together
with Hypothesis \ref{Ass0}, implies $\vartheta a\leq (x-x_0)a'\leq
Ka$ in $(0,1)$, so that $a'$ is automatically bounded away from $0$
far from $x_0$. Similar situations were considered in \cite{bena},
\cite{doubova}, \cite{LRBV}, or \cite{lrr}, where a certain
regularity was assumed somewhere, even in the non degenerate non
smooth case.

To our best knowledge, this paper is the first one where non
smoothness is assumed globally in the case of an absolutely
continuous coefficient, besides degenerate.
\end{Remark}

\begin{Remark}\label{remark5} The additional requirements for the sub-case $K>3/2$ are technical
ones, which are used just to guarantee the convergence of some
integrals (see the Appendix). Of course,
the prototype $a(x)=|x-x_0|^K$ satisfies such a condition with
$\vartheta=K$.
\end{Remark}

Now, in order to state our Carleman estimate in presence of a
degenerate non smooth coefficient, we start similarly to the
previous chapter; but, being such an inspiration only formal, the
result is completely different. In particular the sign of the
boundary term will have a different and crucial role in the two
cases.

Let us introduce the function $\varphi(t,x): =\Theta(t)\psi(x)$,
where $\Theta$ is defined as in \eqref{theta} and
\begin{equation}\label{c_1}
\psi(x) := c_1\left[\int_{x_0}^x \frac{y-x_0}{a(y)}dy- c_2\right],
\end{equation}
with $c_2> \displaystyle \max\left\{\frac{(1-x_0)^2}{a(1)(2-K)},
\frac{x_0^2}{a(0)(2-K)}\right\}$ and $c_1>0$. A more precise
restriction on $c_1$ will be needed later for the observability inequalities of Chapter \ref{secobserv}. Observe that $\Theta (t)
\rightarrow + \infty \, \text{ as } t \rightarrow 0^+, T^-$, and by
Lemma \ref{rem} we have that, if $x>x_0$,
\begin{equation}\label{defpsi}
\begin{aligned}
\psi(x)&\leq c_1 \left[ \int_{x_0}^x
\frac{(y-x_0)^K}{a(y)}\frac{1}{(y-x_0)^{K-1}}dy- c_2\right]\\
&\leq c_1\left[
\frac{(x-x_0)^K}{a(x)}\frac{(x-x_0)^{2-K}}{2-K}-c_2\right]\\
&\leq c_1 \left[
\frac{(1-x_0)^K}{a(1)}\frac{(1-x_0)^{2-K}}{2-K}-c_2\right]\\
&=c_1\left[ \frac{(1-x_0)^2}{(2-K)a(1)}-c_2\right]<0.
\end{aligned}
\end{equation}
In the same way one can treat the case $x\in[0,x_0)$, so that
\[
\psi(x)<0 \quad \mbox{ for every }x\in[0,1].
\]
Moreover, it is also easy to see that $\psi \ge -c_1c_2$.

\medskip
Our main result is the following.
\begin{Theorem}\label{Cor1}
Assume Hypothesis  $\ref{Ass02}$. Then, there exist
two positive constants $C$ and $s_0$ such that every solution $v$ of
\eqref{1} in divergence form in
\begin{equation}\label{v}
\mathcal{S}_1:=L^2\big(0, T; {\cal H}^2_a(0,1)\big) \cap H^1\big(0,
T;{\cal H}^1_a(0,1)\big)
\end{equation}
satisfies, for all $s \ge s_0$,
\[
\begin{aligned}
&\int_{Q_T} \left(s\Theta a(v_x)^2 + s^3 \Theta^3
\frac{(x-x_0)^2}{a} v^2\right)e^{2s\varphi}dxdt\\
&\le C\left(\int_{Q_T} h^{2}e^{2s\varphi}dxdt +
sc_1\int_0^T\left[a\Theta e^{2s \varphi}(x-x_0)(v_x)^2
dt\right]_{x=0}^{x=1}\right),
\end{aligned}
\]
where $c_{1}$ is the constant introduced in \eqref{c_1}.
\end{Theorem}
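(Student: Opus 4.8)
The plan is to follow the conjugation scheme of Chapter \ref{subsubdiv} with the degenerate weight $\varphi=\Theta\psi$ of \eqref{c_1} in place of the non-degenerate one, systematically exploiting the algebraic identity $a\psi'=c_1(x-x_0)$. First I would set $w:=e^{s\varphi}v$; since $\psi<0$, the exponential factor and the derivatives of $\varphi$ that occur are all controlled, so $w$ belongs to the same space $\mathcal{S}_1$ as $v$, and, writing $Lv:=v_t+(av_x)_x$ and $L_sw:=e^{s\varphi}L(e^{-s\varphi}w)=L_s^+w+L_s^-w$ exactly as in \eqref{elles}--\eqref{D&BTnd} with $\varphi$ replacing $\Phi$, one has $2\langle L_s^+w,L_s^-w\rangle_{L^2(Q_T)}\le\|L_sw\|^2_{L^2(Q_T)}=\|he^{s\varphi}\|^2_{L^2(Q_T)}$. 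The identity of Lemma \ref{lemma1nd} then splits $\langle L_s^+w,L_s^-w\rangle$ into distributed and boundary terms. The point that makes this chapter genuinely different from Chapter \ref{seccarnondeg} is that $a$ is globally non-smooth and, in the (SD) case, $w(t,\cdot)$ is only locally absolutely continuous off $x_0$, so the integrations by parts leading to Lemma \ref{lemma1nd} are merely formal and must be justified; I would defer this to Appendix \ref{secA}, where Lemma \ref{rem}, the space characterizations (Propositions \ref{characterization}--\ref{domain}) and the additional structure conditions \eqref{dainfinito}--\eqref{Sigma} of Hypothesis \ref{Ass02} for $K$ close to $2$ are used to show that every integral converges and that no boundary contribution survives at the interior degeneracy point $x_0$.

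Next I would bound the distributed terms from below. The choice of $\psi$ pays off immediately: since $a\varphi_x=c_1\Theta(x-x_0)$, one gets $(a\varphi_x)_x=c_1\Theta$ and $(a\varphi_x)_{xx}=0$, so the awkward cross term $s\int a(a\varphi_x)_{xx}ww_x$ in \eqref{D&BTnd} drops out identically; moreover $2a\varphi_{xx}+a'\varphi_x=2(a\varphi_x)_x-a'\varphi_x=c_1\Theta\big(2-(x-x_0)a'/a\big)\ge c_1(2-K)\Theta>0$ by the degeneracy bound $(x-x_0)a'\le Ka$ and $K<2$. Using also $a(\varphi_x)^2=c_1^2\Theta^2(x-x_0)^2/a$, the two leading terms give
\[
s\!\int_{Q_T}(2a\varphi_{xx}+a'\varphi_x)a(w_x)^2\ \ge\ c_1(2-K)\,s\!\int_{Q_T}\Theta\,a(w_x)^2,
\]
\[
s^3\!\int_{Q_T}(2a\varphi_{xx}+a'\varphi_x)a(\varphi_x)^2w^2\ \ge\ c_1^3(2-K)\,s^3\!\int_{Q_T}\Theta^3\frac{(x-x_0)^2}{a}\,w^2,
\]
which are exactly the two positive quantities we need. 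The remaining distributed terms are the time-derivative errors $\frac{s}{2}\int\varphi_{tt}w^2=\frac{s}{2}\int\ddot\Theta\psi\,w^2$ and $-2s^2\int a\varphi_x\varphi_{tx}w^2=-2c_1^2s^2\int\Theta\dot\Theta\frac{(x-x_0)^2}{a}w^2$. I would absorb them for $s$ large: the second one directly into the $s^3$--term via $|\Theta\dot\Theta|\le c\Theta^{9/4}$ and $\Theta$ bounded below; the first one by splitting $[0,1]$ into the region far from $x_0$, where $(x-x_0)^2/a$ is bounded below (Lemma \ref{rem}.1 with $\gamma=2>K$) and $|\ddot\Theta|\le c\Theta^{3/2}$ together with the boundedness of $\psi$ give absorption into the $s^3$--term, and a neighbourhood of $x_0$, where the weighted Hardy--Poincar\'e inequality of Proposition \ref{HP} (with weight $p=(x-x_0)^2$, together with $(x-x_0)^2\le\|(x-x_0)^2/a\|_{L^\infty}\,a$) must be invoked to transfer the remainder onto the gradient term. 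This fixes $C$, $s_0$ (depending also on $c_1$, $c_2$), and yields, for all $s\ge s_0$,
\[
\{\text{D.T.}\}\ \ge\ C\,s\!\int_{Q_T}\Theta\,a(w_x)^2 + C\,s^3\!\int_{Q_T}\Theta^3\frac{(x-x_0)^2}{a}\,w^2.
\]

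Then I would dispose of the boundary terms. At $t=0,T$ everything vanishes because $\Theta\to+\infty$ forces $e^{s\varphi}\to0$ faster than any polynomial while $w\in C([0,T];H^1)$; at $x=0,1$ the Dirichlet conditions $v(t,0)=v(t,1)=0$ give $w=0$ and $w_t=0$ there and, thanks to $(a\varphi_x)_x=c_1\Theta$, kill every boundary term that carries a factor $w$. The only survivor is $-s\int_0^T[\varphi_x a^2(w_x)^2]_{x=0}^{x=1}dt$, which, since $\varphi_x a^2=c_1\Theta(x-x_0)a$ and (again because $v$ vanishes there) $w_x=e^{s\varphi}v_x$ at $x=0,1$, equals $-sc_1\int_0^T[a\Theta e^{2s\varphi}(x-x_0)(v_x)^2]_{x=0}^{x=1}dt$; moving it to the right-hand side of $2(\{\text{D.T.}\}+\{\text{B.T.}\})\le\|he^{s\varphi}\|^2$ produces exactly the boundary term in the statement. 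Contrary to the non-degenerate case, this term has no favourable sign (expanding the bracket it is in fact $\ge0$), so it cannot be dropped; keeping track of its sign is what later forces a separate treatment of the (WD) and (SD) cases in Chapter \ref{secobserv}. Finally, undoing the conjugation with $v=e^{-s\varphi}w$, $v_x=e^{-s\varphi}(w_x-s\Theta\psi'\,w)$ and $a(\psi')^2=c_1^2(x-x_0)^2/a$ gives $a(v_x)^2e^{2s\varphi}\le2a(w_x)^2+2c_1^2s^2\Theta^2\frac{(x-x_0)^2}{a}w^2$ and $\frac{(x-x_0)^2}{a}v^2e^{2s\varphi}=\frac{(x-x_0)^2}{a}w^2$, so the left-hand side of the claimed estimate is dominated by the two positive terms above; combining everything proves Theorem \ref{Cor1}.

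The step I expect to be the main obstacle is twofold. First, the rigorous justification — carried out in Appendix \ref{secA} — that in the degenerate non-smooth setting the Carleman identity of Lemma \ref{lemma1nd} holds verbatim, with all integrals finite and no spurious contribution at $x_0$: this is exactly what forces the technical conditions \eqref{dainfinito}--\eqref{Sigma} in Hypothesis \ref{Ass02} when $K$ approaches $2$. Second, the absorption near $x_0$ of the time-derivative error $\frac{s}{2}\int\ddot\Theta\psi\,w^2$, where the degenerate weight $(x-x_0)^2/a$ itself vanishes, so that only the Hardy--Poincar\'e inequality of Proposition \ref{HP} (together with Lemma \ref{rem}) makes the estimate close.
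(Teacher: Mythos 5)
Your overall architecture matches the paper's: conjugation $w=e^{s\varphi}v$, the identity of Lemma \ref{lemma1nd} with $\Phi$ replaced by $\varphi$, the observation that $a\varphi_x=c_1\Theta(x-x_0)$ makes $(a\varphi_x)_{xx}=0$ and turns the two leading distributed terms into positive multiples of $s\int\Theta a(w_x)^2$ and $s^3\int\Theta^3\frac{(x-x_0)^2}{a}w^2$ via $(x-x_0)a'\le Ka$, the reduction of the boundary terms to $-sc_1\int_0^T[\Theta a(x-x_0)(w_x)^2]_{x=0}^{x=1}dt$, and the final back-substitution. The rigorous justification of the integrations by parts is indeed deferred to Appendix \ref{secA}, as you anticipate.

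There is, however, a genuine gap in your absorption of the error term $\frac{s}{2}\int_{Q_T}\ddot\Theta\psi w^2$, which you yourself flag as the main obstacle but then propose to close by a mechanism that does not work. Near $x_0$ you invoke Proposition \ref{HP} with $p=(x-x_0)^2$, which at each fixed $t$ yields $\int_0^1 w^2\,dx\le C\int_0^1 a(w_x)^2\,dx$; multiplying by $s\Theta^{3/2}(t)$ produces $Cs\Theta^{3/2}\int a(w_x)^2$, which exceeds the available positive term $Cs\Theta\int a(w_x)^2$ by the factor $\Theta^{1/2}\to+\infty$ as $t\to0^+,T^-$, so nothing is absorbed. (A domain splitting does not help either: Proposition \ref{HP} is a global inequality on $[0,1]$ requiring $w(0)=w(1)=0$ and cannot be applied on a neighbourhood of $x_0$ alone.) The paper closes this step differently, by the simultaneous interpolation in time \emph{and} space of \eqref{sopra}:
\[
\Theta^{3/2}w^2=\Bigl(\Theta\,\tfrac{a^{1/3}}{|x-x_0|^{2/3}}\,w^2\Bigr)^{3/4}\Bigl(\Theta^3\,\tfrac{|x-x_0|^2}{a}\,w^2\Bigr)^{1/4},
\]
followed by Young's inequality with a small parameter $\ve$, so that one piece carries the weight $\Theta$ and is converted into $\ve\int\Theta a(w_x)^2$ by Proposition \ref{HP} applied to $p=|x-x_0|^{4/3}$ (if $K\le\frac43$) or $p=(a|x-x_0|^4)^{1/3}$ (if $K>\frac43$), while the other piece already matches $\Theta^3\frac{(x-x_0)^2}{a}w^2$. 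It is precisely this choice of $p$ that forces the case distinction at $K=\frac43$ and the monotonicity hypothesis \eqref{dainfinito} of Hypothesis \ref{Ass02}; your proposal relegates \eqref{dainfinito}--\eqref{Sigma} entirely to the Appendix, whereas \eqref{dainfinito} is in fact needed in the distributed-term estimate itself. Everything else in your outline is sound.
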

\subsection{Proof of Theorem $\ref{Cor1}$}

We start as in the proof of Theorem \ref{mono}: for $s> 0$,
define the function
\[
w(t,x) := e^{s \varphi (t,x)}v(t,x),
\]
where $v$ is any solution of \eqref{1} in $\mathcal{S}_1$, so that
$w\in\mathcal{S}_1$ and $w$ satisfies \eqref{1'nd}, which we re--write
as \eqref{elles}, with $\Phi$ replaced by $\vp$. Moreover, Lemma
\ref{lemma1nd} still holds also in this case, again with $\Phi$
replaced by $\vp$. Thus we start with the analogue of Lemma
\ref{lemma2nd}, which now gives the following estimate:
\begin{Lemma}\label{lemma2}
Assume Hypothesis $\ref{Ass02}$. Then there exist two positive
constants $s_0$  and $C$ such that for all $s \ge s_{0}$ the
distributed terms of \eqref{D&BTnd} satisfy the estimate
\[
\begin{aligned}
& s^3 \int_{Q_T}\big([a^2(\varphi_x)^3]_x-a (\varphi_x)^2 (a \varphi_x)_x\big)w^2dxdt\\
&+\frac{s}{2} \int_{Q_T} \varphi_{tt} w^2dxdt- 2s^2 \int_{Q_T}a \varphi_x \varphi_{tx}w^2dxdt \\
& +s \int_{Q_T}(\varphi_{xx} a^2 + a(a\varphi_x)_x)(w_x)^2
dxdt\\
& \ge \frac{C}{2}s\int_{Q_T} \Theta a(w_x)^2 dxdt +
\frac{C^3}{2}s^3 \int_{Q_T}\Theta^3 \frac{(x-x_0)^2}{a} w^2
dxdt.
\end{aligned}
\]
\end{Lemma}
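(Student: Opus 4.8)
Here is how I would approach the proof of Lemma~\ref{lemma2}.

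The plan is to follow the scheme of Lemma~\ref{lemma2nd}: compute the distributed terms of the identity \eqref{D&BTnd} of Lemma~\ref{lemma1nd} (with $\Phi$ replaced by $\vp$) using the explicit form $\vp=\Theta\psi$, extract the two dominant positive contributions, and absorb the rest for $s$ large. The algebraic heart of the matter is that $\psi'(x)=c_1(x-x_0)/a(x)$, so $a\psi'=c_1(x-x_0)$ is affine — hence $C^\infty$ — even though $a$ is merely Sobolev; therefore $(a\vp_x)_x=c_1\Theta$ and, crucially, $(a\vp_x)_{xx}=0$, so that the term $s\int_{Q_T}a(a\vp_x)_{xx}ww_x\,dxdt$ — the one demanding $C^2$-type regularity of $a$ in the classical theory — vanishes identically. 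Moreover $2a\vp_{xx}+a'\vp_x=c_1\Theta\bigl(2-(x-x_0)a'/a\bigr)$, and the degeneracy condition $(x-x_0)a'\le Ka$ with $K<2$ gives $2a\vp_{xx}+a'\vp_x\ge c_1(2-K)\Theta>0$.

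With this, the term $s\int_{Q_T}(\vp_{xx}a^2+a(a\vp_x)_x)(w_x)^2\,dxdt=s\int_{Q_T}a(2a\vp_{xx}+a'\vp_x)(w_x)^2\,dxdt$ is bounded below by $c_1(2-K)s\int_{Q_T}\Theta a(w_x)^2\,dxdt$; and since $[a^2\vp_x^3]_x-a\vp_x^2(a\vp_x)_x=(2a\vp_{xx}+a'\vp_x)\,a\vp_x^2$ with $a\vp_x^2=c_1^2\Theta^2(x-x_0)^2/a$, the term $s^3\int_{Q_T}\bigl([a^2\vp_x^3]_x-a\vp_x^2(a\vp_x)_x\bigr)w^2\,dxdt$ is bounded below by $c_1^3(2-K)s^3\int_{Q_T}\Theta^3\frac{(x-x_0)^2}{a}w^2\,dxdt$. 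These are exactly the two positive terms in the statement, up to fixing a suitable constant $C$.

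There remain the two lower-order terms $-2s^2\int_{Q_T}a\vp_x\vp_{tx}w^2\,dxdt=-2c_1^2s^2\int_{Q_T}\Theta\dot\Theta\,\frac{(x-x_0)^2}{a}w^2\,dxdt$ and $\frac s2\int_{Q_T}\vp_{tt}w^2\,dxdt=\frac s2\int_{Q_T}\ddot\Theta\,\psi\,w^2\,dxdt$. By the bounds \eqref{magtheta} (so $|\Theta\dot\Theta|\le c\Theta^{9/4}\le c\Theta^3$, since $\Theta$ is bounded away from $0$) the first is at most $Cs^2\int_{Q_T}\Theta^3\frac{(x-x_0)^2}{a}w^2\,dxdt$, hence is absorbed into the $s^3$-term once $s$ is large. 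The term with $\vp_{tt}$ is the delicate one: $\psi$ is bounded ($-c_1c_2\le\psi<0$) but the degenerate weight $\frac{(x-x_0)^2}{a}$ is absent, and it vanishes at $x_0$, so no direct comparison works near the degeneracy point. I would bound $|\ddot\Theta|\le c\Theta^{3/2}$ and split, by Young's inequality, $\Theta^{3/2}w^2\le\frac1{2\mu}\Theta^3\frac{(x-x_0)^2}{a}w^2+\frac\mu2\frac{a}{(x-x_0)^2}w^2$; the first piece is absorbed into the $s^3$-term for $s$ large (any fixed $\mu$), while for the second I invoke the Hardy--Poincar\'e inequality of Proposition~\ref{HP}, in the form $\int_0^1\frac{a}{(x-x_0)^2}w^2\,dx\le C_{HP}\int_0^1 a(w_x)^2\,dx$, turning it into $\frac{\mu C_{HP}}2\, s\int_{Q_T}a(w_x)^2\,dxdt$, which — $\mu$ being a fixed small constant and the good gradient term carrying the factor $s\Theta$ with $\Theta$ bounded below — is at most half of $c_1(2-K)s\int_{Q_T}\Theta a(w_x)^2\,dxdt$ for $s$ large. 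Collecting the estimates yields the asserted inequality with suitable $C,s_0>0$.

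The main obstacle is precisely this last step. Because the degeneracy sits in the interior and $w=e^{s\vp}v$ need not be a Sobolev function across $x_0$, the applicability of Proposition~\ref{HP} with $p=a$ is not automatic and the integrals it controls need not even be finite a priori; this is exactly where the extra structure of Hypothesis~\ref{Ass02} — condition \eqref{dainfinito}, and \eqref{Sigma} when $K>3/2$ — enters, and where the convergence checks relegated to the Appendix are needed (in the weakly degenerate case, where $1/a\in L^1(0,1)$, one argues instead through this integrability and Proposition~\ref{HP} with weight $(x-x_0)^2/a$). None of this is visible in the non-degenerate Lemma~\ref{lemma2nd}, where the corresponding $\vp_{tt}$-term was absorbed trivially because there the good zeroth-order weight was bounded below.
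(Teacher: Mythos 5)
Your identification of the algebraic structure is exactly right and matches the paper: $a\vp_x=c_1\Theta(x-x_0)$, $(a\vp_x)_{xx}=0$, the lower bounds $2a\vp_{xx}+a'\vp_x\ge c_1(2-K)\Theta$ and $[a^2\vp_x^3]_x-a\vp_x^2(a\vp_x)_x\ge c_1^3(2-K)\Theta^3(x-x_0)^2/a$ coming from $(x-x_0)a'\le Ka$, and the absorption of the $s^2$-term via \eqref{magtheta}. The gap is in the one step you yourself flag as delicate, and it is not a mere technicality that the Appendix repairs. Your Young splitting
\[
\Theta^{3/2}w^2\le \frac{1}{2\mu}\,\Theta^3\frac{(x-x_0)^2}{a}w^2+\frac{\mu}{2}\,\frac{a}{(x-x_0)^2}\,w^2
\]
produces the quantity $\int_0^1\frac{a}{(x-x_0)^2}w^2\,dx$, and this fails on two counts. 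First, in the weakly degenerate case $w$ is absolutely continuous with $w(x_0)\neq 0$ in general, and $\frac{a}{(x-x_0)^2}\sim|x-x_0|^{\alpha-2}\notin L^1$ near $x_0$ for $\alpha<1$, so the integral is generically $+\infty$; no hypothesis of the paper restores finiteness. Second, even where finite, Proposition \ref{HP} with $p=a$ is inapplicable: it requires $a/|x-x_0|^q$ to be nonincreasing on the left and nondecreasing on the right of $x_0$ for some $q\in(1,2)$, i.e.\ a \emph{lower} bound $qa\le (x-x_0)a'$ on the growth of $a$, whereas the standing assumption $(x-x_0)a'\le Ka$ (Lemma \ref{rem}) gives the \emph{reverse} monotonicity, and \eqref{dainfinito} is only assumed for $K>\frac43$ and only with some $\vartheta\in(0,K]$ that need not exceed $1$.

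The paper's fix is a different Young splitting, with exponents $\frac34$ and $\frac14$ rather than $\frac12$ and $\frac12$ (see \eqref{sopra}): one writes $\Theta^{3/2}w^2=\bigl(\Theta\,\frac{a^{1/3}}{|x-x_0|^{2/3}}w^2\bigr)^{3/4}\bigl(\Theta^3\frac{(x-x_0)^2}{a}w^2\bigr)^{1/4}$, so that the ``bad'' factor carries the weight $\frac{a^{1/3}}{|x-x_0|^{2/3}}=\frac{p(x)}{(x-x_0)^2}$ with $p=(a|x-x_0|^4)^{1/3}$ (or, when $K\le\frac43$, with $p=|x-x_0|^{4/3}$ after bounding $a^{1/3}$ by its maximum). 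These choices of $p$ \emph{do} satisfy the monotonicity hypothesis of Proposition \ref{HP} — using \eqref{dainfinito} with $q=\frac{4+\vartheta}{3}\in(1,2)$ in the first case, and trivially in the second — and satisfy $p\le C a$ by Lemma \ref{rem}, which is what converts $C_{HP}\int p(w_x)^2$ into the good term $\int a(w_x)^2$ (see \eqref{hpapplbis} and \eqref{hpappl}). In short: the exponent $\frac34/\frac14$ is chosen precisely so that the resulting Hardy weight is admissible under the actual hypotheses; with your $\frac12/\frac12$ split the needed Hardy inequality is false. The rest of your argument, including the absorption bookkeeping in $\mu$, $\ve$ and $s$, would then go through as you describe.
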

\begin{proof}
Using the definition of $\varphi$ and recalling that
\begin{equation}\label{afix}
 a\vp_x=c_1\Theta(x-x_0) \mbox{ and }(a \varphi_x)_x=c_1\Theta,
\end{equation}
so that $(a \varphi_x)_{xx}=0$, the distributed terms of $\int_0^T
\int_0^1L^+_s w L^-_s w dxdt$ take the form
\begin{equation}\label{02}
\begin{aligned}
&\frac{s}{2}\int_{Q_T} \ddot{\Theta} \psi w^2 dxdt +
s^3c_1^3\int_0^T
\int_0^1\Theta^3\left[\left(\frac{(x-x_0)^3}{a}\right)_x+\frac{(x-x_0)^2}{a}\right]w^2
dxdt\\
& -2s^2c_1^2\int_0^T
\int_0^1\Theta\dot{\Theta}\frac{(x-x_0)^2}{a}w^2 dxdt\\
&+ s c_1\int_0^T
\int_0^1\Theta\left[a\left(\frac{x-x_0}{a}\right)_x+1\right]a(w_x)^2
dxdt.
\end{aligned}
\end{equation}

Now, by Lemma \ref{rem}, we immediately have that
$\displaystyle\left(\frac{(x-x_0)^3}{a}\right)_x\geq0$. Moreover,
\[
a\left(\frac{x-x_0}{a}\right)_x=\frac{a-(x-x_0)a'}{a}\geq 1-K \
\mbox{ for every }x\in (0,1).
\]
Hence, in the (WD) case it is immediately positive, while in the
(SD) case we have
\[
a\left(\frac{x-x_0}{a}\right)_x+1\geq 2-K>0 \ \mbox{ for every }x\in
(0,1).
\]

Hence, we can estimate \eqref{02} from below in the following way:
\begin{equation}\label{Cappare}
\begin{aligned}
&\ge\frac{s}{2}\int_0^T \int_0^1 \ddot{\Theta}\psi w^2dxdt+
s^3c_1^3\int_0^T \int_0^1\Theta^3 \frac{(x-x_0)^2}{a}w^2 dxdt\\
&-2s^2c_1^2\int_0^T \int_0^1\Theta\dot{\Theta}\frac{(x-x_0)^2}{a}w^2
dxdt + s\bar C\int_{Q_T}\Theta a(w_x)^2 dxdt
\end{aligned}
\end{equation}
for some universal positive constant $\bar C>0$.

Using \eqref{magtheta}, we conclude that, for
$s$ large enough,
\begin{equation}\label{c18}
\begin{aligned}
&\left|-2s^2c_1^2\int_0^T
\int_0^1\Theta\dot{\Theta}\frac{(x-x_0)^2}{a}w^2 dxdt \right| \le
2cc_1^2 s^2\int_0^T \int_0^1\Theta^3 \frac{(x-x_0)^2}{a}w^2 dxdt
\\
&\le \frac{c_1^3}{4}s^3\int_0^T \int_0^1\Theta^3 \frac{(x-
x_0)^2}{a}w^2 dxdt,
\end{aligned}
\end{equation}
as soon as $s\geq \displaystyle\frac{8c}{c_1}$.

Moreover, $\Theta$ being convex and since \eqref{magtheta} holds, by the very definition of $\psi$, we
have
\begin{equation}\label{zio}
\frac{s}{2}\int_{Q_T} \ddot{\Theta}\psi w^2dxdt \geq -
s\frac{c_1c_2}{2}c\int_0^T \int_0^1\Theta ^{3/2}w^2 dxdt,
\end{equation}
since $\displaystyle\int_{x_0}^x\frac{y-x_0}{a(y)}dy\geq0$ for every
$x\in [0,1]$. Hence, it remains to bound the term $\int_0^T
\int_0^1\Theta ^{3/2}w^2 dxdt$. Using the Young inequality, we find
\begin{equation}\label{sopra}
\begin{aligned}
&s\frac{c_1c_2}{2}c\int_0^1
\Theta^{3/2}w^2 dx \\
&=s\frac{c_1c_2}{2}c\int_0^1\left(\Theta\frac{a^{1/3}}{|x-x_0|^{2/3}}w^2\right)^{3/4}\left(\Theta^3
\frac{|x-x_0|^2}{a} w^2\right)^{1/4} \\
&\le s\ve\frac{3 c_1c_2}{8}c \int_0^1
\Theta\frac{a^{1/3}}{|x-x_0|^{2/3}}w^2dx + s\frac{c_1c_2}{8\ve^3}c
\int_0^1\Theta^3 \frac{|x-x_0|^2}{a} w^2 dx.
\end{aligned}
\end{equation}

Now, we concentrate on the integral $\displaystyle \int_0^1
\frac{a^{1/3}}{|x-x_0|^{2/3}}w^2dx$.

If $K \le \displaystyle \frac{4}{3}$ (where $K$ is the constant
appearing in Hypothesis $\ref{Ass0}$ or \ref{Ass01}), we introduce
the function $p(x) = |x-x_0|^{4/3}$. Obviously, there exists $ q \in
\left(1, \displaystyle\frac{4}{3}\right)$ such that the function
$\displaystyle x\mapsto\frac{p(x)}{|x-x_0|^q}$ is nonincreasing on
the left of $x=x_0$ and nondecreasing on the right of $x=x_0$. Thus,
using the Hardy-Poincar\'{e} inequality (see Proposition \ref{HP})
and Lemma \ref{rem}, one has
\begin{equation}\label{hpapplbis}
\begin{aligned}
\int_0^1 \frac{a^{1/3}}{|x-x_0|^{2/3}}w^2dx & \le \max_{[0,1]}
a^{1/3}\int_0^1 \frac{1}{|x-x_0|^{2/3}}w^2dx \\
&= \max_{ [0,1]} a^{1/3}\int_0^1  \frac{p}{(x-x_0)^2} w^2 dx\\
&\le \max_{[0,1]} a^{1/3}C_{HP}\int_0^1 p (w_x)^2 dx \\
& = \max_{[0,1]} a^{1/3} C_{HP}\int_0^1  a \frac
{|x-x_0|^{4/3}}{a} (w_x)^2 dx\\
&\le \max_{ [0,1]} a^{1/3} C_{HP}C_2 \int_0^1 a (w_x)^2 dx,
\end{aligned}
\end{equation}
where $C_{HP}$ is the Hardy-Poincar\'{e} constant and $C_2:=
\max\left\{\displaystyle\frac{x_0^{4/3}}{a(0)},\displaystyle\frac{(1-x_0)^{4/3}}{a(1)}\right\}$.
In this way, we find
\begin{equation}\label{kminore}
\begin{aligned}
\frac{s}{2}\int_{Q_T} \ddot{\Theta}\psi w^2dxdt \geq & -
s\ve\frac{3 c_1c_2}{8}c \max_{ [0,1]} a^{1/3} C_{HP}C_2 \int_0^1 a
(w_x)^2 dx\\
&-s\frac{c_1c_2}{8\ve^{1/3}}c \int_0^1\Theta^3 \frac{|x-x_0|^2}{a}
w^2 dx.
\end{aligned}
\end{equation}

If $K>4/3$ consider the function $p(x) = (a(x)|x-x_0|^4)^{1/3}$. It
is clear that, setting
\[
C_1:=
\max\left\{\left(\frac{x_0^2}{a(0)}\right)^{2/3},\left(\frac{(1-x_0)^2}{a(1)}\right)^{2/3}\right\},
\]
by Lemma \ref{rem} we have
\[
p(x)=  a(x) \left(\frac{(x-x_0)^2}{a(x)}\right)^{2/3}\le C_1 a(x)
\]
and $\displaystyle \frac{a^{1/3}}{|x-x_0|^{2/3}}=
\frac{p(x)}{(x-x_0)^2}$. Moreover, using \eqref{dainfinito}, one has
that the function $\displaystyle x\mapsto \frac{p(x)}{|x-x_0|^q}$ is
nonincreasing on the left of $x=x_0$ and nondecreasing on the right
of $x=x_0$ for $\displaystyle q=\frac{4+\vartheta}{3}\in(1,2)$.
Thus, the Hardy-Poincar\'{e} inequality (see Proposition \ref{HP})
implies
\begin{equation}\label{hpappl}
\begin{aligned}
\int_0^1 \frac{a^{1/3}}{|x-x_0|^{2/3}}w^2dx &= \int_0^1
\frac{p}{(x-x_0)^2} w^2 dx \le C_{HP}\int_0^1  p (w_x)^2 dx
\\&\le C_{HP}C_1 \int_0^1 a (w_x)^2 dx,
\end{aligned}
\end{equation}
where $C_{HP}$ and $C_1$ are the Hardy-Poincar\'{e} constant and the
constant introduced before, respectively.

Using the estimates above, from \eqref{zio} we finally obtain
\begin{equation}\label{kmaggiore}
\begin{aligned}
\frac{s}{2}\int_{Q_T} \ddot{\Theta}\psi w^2dxdt &\geq
-s\ve\frac{3 c_1c_2}{8}cC_{HP}C_1\int_{Q_T} \Theta a
(w_x)^2 dxdt \\
&- s\frac{c_1c_2}{8\ve^3}c\int_0^T \int_0^1\Theta^3
\frac{(x-x_0)^2}{a}w^2 dxdt.
\end{aligned}
\end{equation}

Thus, in every case, we can choose $\ve$ so small and $s$ so large
that, by \eqref{Cappare}, \eqref{c18}, \eqref{zio}, \eqref{kminore}
and \eqref{kmaggiore}, we can estimate the distributed terms from
below with
\[
\begin{aligned}
s\frac{\bar C}{2} \int_{Q_T} \Theta a (w_x)^2 dxdt +
\frac{c_1^3}{2}s^3 \int_0^1\Theta^3 \frac{(x-x_0)^2}{a}w^2 dxdt.
\end{aligned}
\]
\end{proof}

As for the boundary terms, similarly to Lemma \ref{lemma4nd}, we
have the following calculation, whose proof parallels the one of
Lemma \ref{lemma4nd} and is thus omitted.
\begin{Lemma}\label{lemma4}
The boundary terms reduce to
\[
- sc_1 \int_0^{T} \left[ \Theta a(x-x_0)(w_{x})^2
\right]_{x=0}^{x=1}dt.
\]
\end{Lemma}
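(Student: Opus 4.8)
\end{Lemma}
\begin{proof}
The plan is to argue exactly as in the proof of Lemma \ref{lemma4nd}, the only novelties being the explicit form of $\varphi$ recorded in \eqref{afix} and the presence of the interior degeneracy point $x_0$. The latter is harmless here: the boundary terms in \eqref{D&BTnd} (with $\Phi$ replaced by $\varphi$) are evaluated only at the endpoints $x=0$, $x=1$ and at the times $t=0$, $t=T$, i.e. away from $x_0$, where $a$ is non-degenerate. First I would dispose of the temporal contributions $-\frac{s}{2}\int_0^1[w^2\varphi_t]_{t=0}^{t=T}dx$, $\frac{s^2}{2}\int_0^1[a(\varphi_x)^2w^2]_{t=0}^{t=T}dx$ and $-\frac{1}{2}\int_0^1[a(w_x)^2]_{t=0}^{t=T}dx$. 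Since $v\in\mathcal{S}_1$, we have $v\in C([0,T];{\cal H}^1_a(0,1))$, so $t\mapsto\|v(t)\|_{{\cal H}^1_a(0,1)}$ is bounded on $[0,T]$; since $\psi$ is continuous and strictly negative on $[0,1]$ by \eqref{defpsi}, the constant $\delta:=-\max_{[0,1]}\psi$ is positive, whence $e^{2s\varphi(t,x)}\le e^{-2s\delta\Theta(t)}$; and by Lemma \ref{rem} the function $(x-x_0)^2/a$ is bounded on $[0,1]$. Writing $w=e^{s\varphi}v$ and $w_x=e^{s\varphi}(v_x+s\Theta\psi'v)$ with $\psi'=c_1(x-x_0)/a$, one checks that each of $\int_0^1 w^2|\varphi_t|\,dx$, $\int_0^1 a(\varphi_x)^2w^2\,dx$ and $\int_0^1 a(w_x)^2\,dx$ is bounded by $e^{-2s\delta\Theta(t)}$ times a fixed power of $\Theta(t)$ times $\|v(t)\|^2_{{\cal H}^1_a(0,1)}$, hence tends to $0$ as $t\to0^+$ and as $t\to T^-$; therefore all three temporal boundary terms vanish.

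For the spatial terms at $x=0$ and $x=1$, I would use that, $a$ being non-degenerate (and of class $W^{1,1}$) near $x=0$ and near $x=1$, there $v(t,\cdot)$ — hence $w(t,\cdot)$ — has a continuous derivative, so $(w_x)^2(t,0)$ and $(w_x)^2(t,1)$ are well defined; since moreover $v(t,0)=v(t,1)=0$, they equal $e^{2s\varphi(t,0)}(v_x(t,0))^2$ and $e^{2s\varphi(t,1)}(v_x(t,1))^2$, which are integrable on $(0,T)$ because $v\in L^2(0,T;{\cal H}^2_a(0,1))$ and the weights are bounded. From the boundary conditions $w(t,0)=w(t,1)=0$ one gets $w_t(t,0)=w_t(t,1)=0$, so $\int_0^T[aw_xw_t]_{x=0}^{x=1}dt=0$; and every remaining spatial term in \eqref{D&BTnd} other than $-s\int_0^T[a^2\varphi_x(w_x)^2]_{x=0}^{x=1}dt$ carries a factor $w$ or $w^2$ and hence vanishes at $x=0,1$. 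Finally, by \eqref{afix} one has $a\varphi_x=c_1\Theta(x-x_0)$, so $a^2\varphi_x=a\,(a\varphi_x)=c_1\Theta a(x-x_0)$, and therefore
\[
-s\int_0^T\big[a^2\varphi_x(w_x)^2\big]_{x=0}^{x=1}dt=-sc_1\int_0^T\big[\Theta a(x-x_0)(w_x)^2\big]_{x=0}^{x=1}dt,
\]
which is the claimed identity.

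The argument is essentially bookkeeping; the one step I would expect to be the main (if modest) obstacle is the vanishing of the temporal boundary terms, that is, the fact that $w(t,\cdot)\to0$ in ${\cal H}^1_a(0,1)$ as $t\to0^+,T^-$. This is forced by the two ingredients isolated above: the uniform exponential decay $e^{2s\varphi}\le e^{-2s\delta\Theta}$ with $\delta>0$, which beats every polynomial blow-up of $\Theta$, and the boundedness of $(x-x_0)^2/a$ on $[0,1]$ from Lemma \ref{rem}, which controls $a(\psi')^2=c_1^2(x-x_0)^2/a$ and hence the term $s\Theta\psi'v$ appearing in $w_x$. The absolute continuity in $t$ needed to run the integrations by parts that produced the temporal boundary terms is, as throughout this chapter, of the kind justified in Appendix \ref{secA}.
\end{proof}
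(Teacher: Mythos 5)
Your argument is correct and follows essentially the same route as the paper, which omits the proof of Lemma \ref{lemma4} precisely because it parallels that of Lemma \ref{lemma4nd}: the temporal boundary terms vanish because $w=e^{s\varphi}v$ with $\psi<0$ (your quantitative justification via $e^{2s\varphi}\le e^{-2s\delta\Theta}$ and the boundedness of $(x-x_0)^2/a$ from Lemma \ref{rem} is exactly the reason the paper's conditions $w(T^-,\cdot)=w(0^+,\cdot)=0$ hold), the spatial terms carrying $w$, $w^2$ or $w_t$ die by the Dirichlet conditions, and the survivor $-s\int_0^T[a^2\varphi_x(w_x)^2]_{x=0}^{x=1}dt$ is rewritten via \eqref{afix}. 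Nothing is missing.
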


From Lemma \ref{lemma2}, and Lemma \ref{lemma4}, we deduce
immediately that there exist two positive constants $C$ and $s_0$,
such that all solutions $w$ of \eqref{1} satisfy, for all $s \ge
s_0$,
\begin{equation}\label{D&BT1}
\begin{aligned}
\int_0^T \int_0^1L^+_s w L^-_s w dxdt &\ge Cs\int_{Q_T} \Theta
a(w_x)^2 dxdt\\
&+ Cs^3 \int_{Q_T}\Theta^3 \frac{(x-x_0)^2}{a}
w^2 dxdt\\
&- sc_1 \int_0^{T} \left[ \Theta a(x-x_0)(w_{x})^2
\right]_{x=0}^{x=1}dt.
\end{aligned}
\end{equation}

Again, we immediately find
\begin{Proposition}\label{Carleman}
Assume Hypothesis $\ref{Ass02}$. Then, there exist two
positive constants $C$ and $s_0$, such that all solutions $w$ of
\eqref{1} in $\mathcal{S}_1$ satisfy, for all $s \ge s_0$,
\[
\begin{aligned}
   & s\int_{Q_T} \Theta a(w_x)^2dxdt  + s^3
\int_{Q_T}\Theta^3 \frac{(x-x_0)^2}{a} w^2 dxdt\\&\le
    C\left(\int_{Q_T} h^2 e^{2s\varphi}dxdt+ sc_1\int_0^{T}
    \left[a\Theta (x-x_0)
(w_{x})^{2}\right]_{x=0}^{x=1}dt
    \right).
    \end{aligned}
    \]
\end{Proposition}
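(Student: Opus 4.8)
\emph{Proof plan.} The plan is to mimic the structure already used for the non--degenerate estimate in Proposition \ref{Carlemannd}, now with $\varphi$ in place of $\Phi$. The starting point is the decomposition $L_sw = L^+_sw + L^-_sw$ together with the elementary bound
\[
2\langle L^+_sw,L^-_sw\rangle_{L^2(Q_T)}\le 2\langle L^+_sw,L^-_sw\rangle_{L^2(Q_T)}+\|L^+_sw\|^2_{L^2(Q_T)}+\|L^-_sw\|^2_{L^2(Q_T)}=\|L_sw\|^2_{L^2(Q_T)}=\|he^{s\varphi}\|^2_{L^2(Q_T)},
\]
which is the exact analogue of \eqref{stimettand}; here one uses that $w=e^{s\varphi}v$ solves \eqref{1'nd} (rewritten as \eqref{elles}, with $\Phi$ replaced by $\varphi$), which follows from $v\in\mathcal{S}_1$ and $\psi<0$, so that $w$ inherits the regularity and the lateral boundary conditions of $v$, while the vanishing at $t=0,T$ comes from $\Theta(t)\to+\infty$.

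Next I would expand the left--hand side. Since Lemma \ref{lemma1nd} is proved for a weight of the product form $\Theta(t)\psi(x)$ with the relevant boundary behaviour, it applies verbatim with $\varphi$ in place of $\Phi$, expressing $\langle L^+_sw,L^-_sw\rangle$ as a sum of five distributed integrals plus a collection of boundary integrals. For the distributed part I would invoke Lemma \ref{lemma2}: exploiting the identities $a\varphi_x=c_1\Theta(x-x_0)$, $(a\varphi_x)_x=c_1\Theta$, $(a\varphi_x)_{xx}=0$, the sign of $a\big(\tfrac{x-x_0}{a}\big)_x$ (nonnegative in (WD), $\ge 2-K>0$ in (SD)) and of $\big(\tfrac{(x-x_0)^3}{a}\big)_x\ge 0$ coming from Lemma \ref{rem}, the decay estimates \eqref{magtheta} for $\Theta$, and the Hardy--Poincar\'e inequality of Proposition \ref{HP} applied to a suitable power weight $p$ (one choice when $K\le 4/3$, another when $K>4/3$), one absorbs all the unfavourable terms and obtains the lower bound $Cs\int_{Q_T}\Theta a(w_x)^2\,dxdt+Cs^3\int_{Q_T}\Theta^3\tfrac{(x-x_0)^2}{a}w^2\,dxdt$. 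For the boundary part I would invoke Lemma \ref{lemma4}: all terms containing $w$ or $w_x$ at $t=0,T$, and $w$ at $x=0,1$, vanish by the boundary conditions on $w=e^{s\varphi}v$, leaving only $-sc_1\int_0^T[\Theta a(x-x_0)(w_x)^2]_{x=0}^{x=1}\,dt$.

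Inserting these two ingredients into the expansion yields exactly \eqref{D&BT1}. Combining \eqref{D&BT1} with the elementary bound above, dividing by $2$ and relabelling the constant $C$, I obtain
\[
s\int_{Q_T}\Theta a(w_x)^2\,dxdt+s^3\int_{Q_T}\Theta^3\frac{(x-x_0)^2}{a}w^2\,dxdt\le C\left(\int_{Q_T}h^2e^{2s\varphi}\,dxdt+sc_1\int_0^T\left[a\Theta(x-x_0)(w_x)^2\right]_{x=0}^{x=1}dt\right),
\]
which is the claim; note that the boundary integral is placed on the right--hand side, its sign being uncontrolled in general. This is precisely the point where the degenerate estimate differs structurally from the non--degenerate one, and it is why the sign of that term will play a crucial and different role in the two cases when deriving the observability inequalities of Chapter \ref{secobserv}.

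The one genuine difficulty is not in this assembly but upstream: justifying rigorously that all the integrations by parts performed in Lemma \ref{lemma1nd} are legitimate even though, in the (SD) case (or when $(a_1)$ is in force), $w(t,\cdot)$ need not lie in a Sobolev space over the whole interval $(0,1)$, its only irregularity being at $x_0$, where however $a$ vanishes and controls it. This is exactly what the characterisations of $\mathcal H^1_a(0,1)$ and $\mathcal H^2_a(0,1)$ (Propositions \ref{characterization}, \ref{domain}) together with the vanishing rates of Lemma \ref{rem} are designed for; the detailed verification is deferred to the Appendix, where also the extra technical requirements of Hypothesis \ref{Ass02} in the sub-case $K>3/2$ enter, to guarantee the convergence of the relevant boundary and distributed integrals.
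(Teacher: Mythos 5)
Your proposal is correct and follows precisely the route taken in the paper: the elementary inequality $2\langle L^+_sw,L^-_sw\rangle\le\|he^{s\varphi}\|^2_{L^2(Q_T)}$ (the analogue of \eqref{stimettand}), the expansion of Lemma \ref{lemma1nd} with $\Phi$ replaced by $\varphi$, the lower bound of Lemma \ref{lemma2} for the distributed terms and the reduction of Lemma \ref{lemma4} for the boundary terms, assembled into \eqref{D&BT1} and then rearranged. Your closing remarks on the uncontrolled sign of the boundary term and on deferring the justification of the integrations by parts to the Appendix also match the paper's treatment, so there is nothing to add.
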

Recalling the definition of $w$ and starting as in the end of the
proof of Theorem \ref{mono}, from Proposition \ref{Carleman} we
immediately obtain Theorem \ref{Cor1}.

\section{Carleman estimate for the problem in non divergence form}

Now, we consider the parabolic problem in non divergence form
\begin{equation}\label{1a}
\begin{cases}
v_t + av_{xx} =h & (t,x) \in Q_T,\\
v(t,0)=v(t,1)=0 &  t \in (0,T),\\
\end{cases}
\end{equation}
where $a$ satisfies one of the assumptions describing the (WD) or
the (SD) case, plus an additional condition, which we briefly recollect in the following
\begin{Assumptions}\label{Ass021}
The function $a$ satisfies Hypothesis $\ref{Ass0}$ or Hypothesis
$\ref{Ass01}$. Moreover,
\[
\frac{(x-x_0)a'(x)}{a(x)} \in
W^{1,\infty}(0,1),
\]
and if $K\geq \displaystyle \frac{1}{2}$ \eqref{dainfinito} holds.
\end{Assumptions}

\begin{Remark}
We underline the fact that in this subsection \eqref{Sigma}
is not necessary since all integrals and integrations by parts are
justified by the definition of $D(\mathcal A_2)$.
\end{Remark}

Now, we introduce the function $\varphi(t,x): =\Theta(t)\psi(x)$,
where $\Theta$ is defined as in \eqref{theta} and
\begin{equation}\label{c_11}
\psi(x) := d_1\left(\int_{x_0}^x \frac{y-x_0}{a(y)}e^{R(y-x_0)^2}dy-
d_2\right),
\end{equation}
with $R>0$, $d_2> \displaystyle
\max\left\{\frac{(1-x_0)^2e^{R(1-x_0)^2}}{(2-K)a(1)},
\frac{x_0^2e^{Rx_0^2}}{(2-K)a(0)}\right\}$ and $d_1>0$. A more
precise restriction on $d_1$ will be given in Chapter \ref{secobserv}, while the reason
for the choice of $d_2$ will be immediately clear: observe that, by
Lemma \ref{rem} and operating as in \eqref{defpsi}, we have that
\[
-d_1d_2\le\psi(x)<0 \quad \mbox{ for every }x\in[0,1].
\]

The basic result concerning Carleman estimates is the following
inequality, which is the counterpart of \cite[Theorem 3.1]{fm} or of Theorem \ref{Cor1} for
the non divergence case:
\begin{Theorem}\label{Cor11}
Assume Hypothesis  $\ref{Ass021}$. Then,
there exist two positive constants $C$ and $s_0$ such that every
solution $v$ of \eqref{1a} in
\begin{equation}\label{v1}
\mathcal{S}_2:=H^1\big(0, T;\cH^1_{\frac{1}{a}}(0,1)\big) \cap
L^2\big(0, T; \cH^2_{\frac{1}{a}}(0,1)\big)
\end{equation}
satisfies
\begin{equation}\label{car}
\begin{aligned}
&\int_{Q_T} \left(s\Theta (v_x)^2 + s^3 \Theta^3
\left(\frac{x-x_0}{a} \right)^2v^2\right)e^{2s\varphi}dxdt\\
&\le C\left(\int_{Q_T} h^{2}\frac{e^{2s\varphi}}{a}dxdt +
sd_1\int_0^T\left[\Theta e^{2s \varphi}(x-x_0)(v_x)^2
dt\right]_{x=0}^{x=1}\right)
\end{aligned}
\end{equation}
for all $s \ge s_0$, where $d_{1}$ is the constant introduced in
\eqref{c_11}.
\end{Theorem}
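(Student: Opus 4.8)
The plan is to run the argument of Theorem \ref{Cor1} inside the weighted space $L^2_{\frac{1}{a}}$ that is natural for $\mathcal A_2u=au_{xx}$. Set $w(t,x):=e^{s\varphi(t,x)}v(t,x)$; since $\psi<0$, $w$ inherits the regularity $w\in\mathcal S_2$, vanishes at $x=0,1$ and at $t=0,T$, and solves $L_sw=e^{s\varphi}h$ with $L_sw:=e^{s\varphi}L(e^{-s\varphi}w)$, $Lv:=v_t+av_{xx}$. Splitting $L_sw=L_s^+w+L_s^-w$ with $L_s^+w:=aw_{xx}-s\varphi_tw+s^2a(\varphi_x)^2w$ and $L_s^-w:=w_t-2sa\varphi_xw_x-sa\varphi_{xx}w$ as in the proof of Theorem \ref{mono1}, the elementary estimate \eqref{stimettanda} gives $2\langle L_s^+w,L_s^-w\rangle_{L^2_{\frac{1}{a}}(\tilde Q_T)}\le\|he^{s\varphi}\|^2_{L^2_{\frac{1}{a}}(\tilde Q_T)}$. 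Lemma \ref{LEMMA1ND1} applies verbatim with $\Phi$ replaced by $\varphi$: the point is that, by Proposition \ref{domain1} and by the new assumption $\frac{(x-x_0)a'}{a}\in W^{1,\infty}(0,1)$ in Hypothesis \ref{Ass021} (which makes $a\varphi_{xx}$ Lipschitz in $x$, so that $(a\varphi_{xx})_x$ is well defined), every integral and every integration by parts there is legitimate, and no analogue of \eqref{Sigma} is needed. This yields the splitting of $\langle L_s^+w,L_s^-w\rangle_{L^2_{\frac{1}{a}}}$ into distributed terms and boundary terms as in \eqref{D&BTnd1}.

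The heart of the proof is the lower bound for the distributed terms, the analogue of Lemmas \ref{lemma2ndWD1} and \ref{lemma2}. From $\psi_x=d_1\frac{x-x_0}{a}e^{R(x-x_0)^2}$ one computes $a\varphi_x=d_1\Theta(x-x_0)e^{R(x-x_0)^2}$, $(a\varphi_x)_x=d_1\Theta e^{R(x-x_0)^2}\bigl(1+2R(x-x_0)^2\bigr)$ and $a\varphi_{xx}=d_1\Theta e^{R(x-x_0)^2}\bigl(1-\frac{(x-x_0)a'}{a}+2R(x-x_0)^2\bigr)$, whence $a\varphi_{xx}+(a\varphi_x)_x\ge(2-K)d_1\Theta$ thanks to $(x-x_0)a'\le Ka$ and $K<2$. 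Hence the two nonnegative distributed terms bound from below a multiple of $s\int_{Q_T}\Theta(w_x)^2+s^3\int_{Q_T}\Theta^3\bigl(\frac{x-x_0}{a}\bigr)^2w^2$, and what remains is to absorb into them the three terms $-2s^2\int_{Q_T}\varphi_x\varphi_{xt}w^2$, $\frac{s}{2}\int_{Q_T}\frac{\varphi_{tt}}{a}w^2$ and $s\int_{Q_T}(a\varphi_{xx})_xww_x$. The first is controlled by $|\Theta\dot\Theta|\le c\Theta^{9/4}$ from \eqref{magtheta} and Young's inequality; the third by Young's inequality, Lemma \ref{hardy} and a suitable choice of $R$ and $s$ --- the boundedness of $(a\varphi_{xx})_x$ granted by the new hypothesis being essential here; the second, which is the delicate one, exactly as in the proof of Lemma \ref{lemma2}: since $-d_1d_2\le\psi<0$ and $|\ddot\Theta|\le c\Theta^{3/2}$, one bounds $\bigl|\frac{s}{2}\int_{Q_T}\frac{\ddot\Theta\psi}{a}w^2\bigr|$ by $Cs\int_{Q_T}\Theta^{3/2}\frac{w^2}{a}$, splits $\Theta^{3/2}\frac{1}{a}$ by Young into a factor absorbed in the cubic term and a factor of the form $\Theta\,q(x)$ with $q$ continuous and $q(x_0)=0$, and estimates $\int_0^1 q(x)w^2\,dx\le C\int_0^1(w_x)^2\,dx$ by the Hardy--Poincar\'e inequality (Proposition \ref{HP}) and Lemma \ref{rem}. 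The admissibility of the Hardy--Poincar\'e weight requires a ratio to be monotone on each side of $x_0$: when $K<\frac{1}{2}$ this follows from Lemma \ref{rem}, while for $\frac{1}{2}\le K<2$ it is exactly here that condition \eqref{dainfinito} enters, in full analogy with the range $K>\frac{4}{3}$ of the divergence case, see \eqref{hpappl}. Choosing $d_1$ small and $s$ large then gives the distributed estimate.

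For the boundary terms one argues as in Lemma \ref{lemma4nd1}: since $w(t,0)=w(t,1)=0$, $w(0^+,\cdot)=w(T^-,\cdot)=0$, $w(t,\cdot)\in\cH^2_{\frac{1}{a}}(0,1)$ (so $w_x(t,\cdot)\in H^1(0,1)\subset C([0,1])$) and $w_t(t,x)\to0$ as $x\to0,1$, all the boundary contributions in \eqref{D&BTnd1} vanish except $-s\int_0^T\Theta\bigl[a\psi'(w_x)^2\bigr]_{x=0}^{x=1}dt=-sd_1\int_0^T\Theta\bigl[(x-x_0)e^{R(x-x_0)^2}(w_x)^2\bigr]_{x=0}^{x=1}dt$, which is $\le0$. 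Combining this with the distributed estimate and \eqref{stimettanda} gives, for $s\ge s_0$, a Carleman inequality for $w$ whose left-hand side is $s\int_{Q_T}\Theta(w_x)^2+s^3\int_{Q_T}\Theta^3\bigl(\frac{x-x_0}{a}\bigr)^2w^2$ and whose right-hand side is $C\bigl(\int_{Q_T}h^2\frac{e^{2s\varphi}}{a}+sd_1\int_0^T\Theta[(x-x_0)(w_x)^2]_{x=0}^{x=1}\bigr)$, the bounded factor $e^{R(x-x_0)^2}$ at $x=0,1$ being absorbed into $C$. Finally I would undo the change of variables as at the end of the proof of Theorem \ref{mono}: from $v=e^{-s\varphi}w$ and $v_x=e^{-s\varphi}(w_x-s\Theta\psi'w)$ one gets $(v_x)^2e^{2s\varphi}\le C\bigl((w_x)^2+s^2\Theta^2\bigl(\frac{x-x_0}{a}\bigr)^2w^2\bigr)$, since $e^{2R(x-x_0)^2}$ is bounded on $[0,1]$, and at $x=0,1$, where $v=0$, one has $(w_x)^2=e^{2s\varphi}(v_x)^2$; substituting these into the previous inequality produces \eqref{car}.

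The step I expect to be the main obstacle, just as for Theorem \ref{Cor1}, is the lower bound for the distributed terms --- concretely, absorbing $\frac{s}{2}\int_{Q_T}\frac{\varphi_{tt}}{a}w^2$ (which forces the case distinction on $K$, the use of \eqref{dainfinito} when $K\ge\frac{1}{2}$, and the correct choice of Hardy--Poincar\'e weight and exponent) and $s\int_{Q_T}(a\varphi_{xx})_xww_x$ (which is exactly what the new hypothesis $\frac{(x-x_0)a'}{a}\in W^{1,\infty}(0,1)$ is designed to make tractable). Everything else is a long but routine transcription of computations already performed in Chapters \ref{seccarnondeg} and \ref{Carleman estimate}.
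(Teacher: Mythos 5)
Your overall architecture coincides with the paper's: the same change of unknown $w=e^{s\varphi}v$, the same splitting $L_s^{\pm}$, Lemma \ref{LEMMA1ND1}, the lower bound $2a\psi''+a'\psi'\ge (2-K)d_1$ coming from $(x-x_0)a'\le Ka$, the boundary reduction of Lemma \ref{lemma41}, and the return to $v$. The one step where your plan would not go through as written is the absorption of $\frac{s}{2}\int_{Q_T}\frac{\ddot\Theta\psi}{a}w^2$ when $K\ge\frac12$. You propose to treat it ``in full analogy with the range $K>\frac43$ of the divergence case'', i.e.\ by checking via \eqref{dainfinito} that the natural weight is admissible for Proposition \ref{HP}. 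But the analogy breaks down: after the Young splitting the weight needed here is $p(x)=|x-x_0|^{4/3}/a^{2/3}$ (the power of $a$ sits in the \emph{denominator}, whereas in \eqref{hpappl} it is $a^{1/3}$ in the numerator), so $p/|x-x_0|^q=|x-x_0|^{4/3-q+2\vartheta/3}\bigl(|x-x_0|^{\vartheta}/a\bigr)^{2/3}$ and \eqref{dainfinito} makes the factor $\bigl(|x-x_0|^{\vartheta}/a\bigr)^{2/3}$ monotone in the \emph{wrong} direction; using Lemma \ref{rem} instead forces $q\le\frac43-\frac{2K}{3}$, which is $\le 1$ precisely when $K\ge\frac12$. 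This is why the paper abandons Proposition \ref{HP} in that range and instead performs a two-parameter Young inequality (\eqref{primostep}, with exponents $r>2$ and $\alpha,\beta,\gamma$ constrained through $\vartheta$), in which \eqref{dainfinito} is used only for the pointwise comparisons $\frac1a\le\bar c\,x^{-2/r}|x-x_0|^{\beta(r-1)/r}a^{-\gamma(r-1)/r}$ and $|x-x_0|^{\beta}a^{-\gamma}\le\tilde c\bigl(\frac{|x-x_0|}{a}\bigr)^2$, while the singular piece $\int\Theta\,w^2/x^2$ is handled by the \emph{classical} Hardy inequality on $H^1_0(0,1)$. You need this (or some substitute) to close the case $K\ge\frac12$.

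Two smaller points. Lemma \ref{hardy} is available only under Hypothesis \ref{Ass01}, so in the weakly degenerate case the term $s\int_{Q_T}\Theta(a\psi'')'ww_x$ must be reduced, as in the paper, to the same $\int\Theta^{3/2}w^2/a$ bound (or to the classical Hardy/Poincar\'e inequality on $H^1_0$) rather than to Lemma \ref{hardy}. And no smallness of $d_1$ should be imposed: the absorption works for $s$ large for any fixed $d_1>0$, which is essential because $d_1$ is later required to be \emph{large} in \eqref{c_1magg}.
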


\subsection{Proof of Theorem $\ref{Cor11}$}

The proof of Theorem \ref{Cor11} follows the ideas of the proof of
\cite[Theorem 3.1]{fm} or Theorem \ref{Cor1}, but the non divergence
structure introduces several technicalities which were absent
before. We start as in the proof of Theorem \ref{mono1}: for
every $s> 0$ consider the function
\[
w(t,x) := e^{s \varphi (t,x)}v(t,x),
\]
where $v$ is any solution of \eqref{1a} in $\mathcal{S}_2$, so that
also $w\in\mathcal{S}_2$, since $v\in\mathcal{S}_2$ and $\varphi<0$.
Moreover, $w$ satisfies \eqref{1'nd1}, which we re--write as
\eqref{elles1}. Moreover, Lemma \ref{LEMMA1ND1} still holds also in
this case. We underline the fact that also in the degenerate case
all integrals and integrations by parts are justified by the
definition of $D(\mathcal A_2)$ and the choice of $\varphi$, while
in \cite{cfr} they were guaranteed by the choice of Dirichlet
boundary conditions at $x=0$, i.e. where their operator degenerates.
Thus we start with the analogue of Lemma \ref{lemma2ndWD1} in the
weakly and in the strongly degenerate case, which now gives the
following estimate:

\begin{Lemma}\label{lemma21}
Assume Hypothesis $\ref{Ass021}$. Then there exists a positive
constant $s_0$ such that for all $s \ge s_{0}$ the distributed terms
of \eqref{D&BTnd1} satisfy the estimate
\[
\begin{aligned}
&s\int_{Q_T}(a\varphi_{xx}+(a\varphi_x)_x)(w_x)^2dxdt + s^3
\int_{Q_T}(\varphi_x)^2(a\varphi_{xx}+(a\varphi_x)_x)w^2dxdt
\\
& -2 s^2\int_{Q_T} \varphi_x\varphi_{xt}w^2dxdt
 +\frac{s}{2}\int_{Q_T} \frac{\varphi_{tt}}{a}w^2dxdt
+s\int_{Q_T}(a\varphi_{xx})_{x}ww_x dxdt
\\&\ge
\frac{C}{2}s\int_{Q_T} \Theta (w_x)^2 dxdt +\frac{C^3}{2}s^3
\int_{Q_T}\Theta^3 \left(\frac{x-x_0}{a} \right)^2 w^2
dxdt,\end{aligned}
\]
for a universal positive constant $C$.
\end{Lemma}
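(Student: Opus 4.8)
The plan is to repeat the scheme of Lemma~\ref{lemma2ndWD1} and Lemma~\ref{lemma2}: insert the explicit form of $\varphi=\Theta\psi$ into the distributed terms of \eqref{D&BTnd1}, isolate two positive principal terms, and absorb all the remaining ones into them for $s$ large. First I would record, for the $\psi$ of \eqref{c_11},
\[
\psi'=d_1\frac{x-x_0}{a}e^{R(x-x_0)^2},\qquad a\psi'=d_1(x-x_0)e^{R(x-x_0)^2},\qquad (a\psi')'=d_1e^{R(x-x_0)^2}\bigl(1+2R(x-x_0)^2\bigr),
\]
\[
a\psi''=d_1e^{R(x-x_0)^2}\Bigl(1+2R(x-x_0)^2-\tfrac{(x-x_0)a'}{a}\Bigr),\qquad a\varphi_{xx}+(a\varphi_x)_x=\Theta\,d_1e^{R(x-x_0)^2}\Bigl(2+4R(x-x_0)^2-\tfrac{(x-x_0)a'}{a}\Bigr),
\]
together with $(\varphi_x)^2=\Theta^2 d_1^2\tfrac{(x-x_0)^2}{a^2}e^{2R(x-x_0)^2}$ and $(a\varphi_{xx})_x=\Theta(a\psi'')'$; substituting these rewrites the left-hand side of the estimate as an explicit sum of five integrals, as in the previous lemmas.

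Since $(x-x_0)a'\le Ka$ (Hypothesis~\ref{Ass0} or \ref{Ass01}) with $K<2$ and $e^{R(x-x_0)^2}\ge1$, one has $2+4R(x-x_0)^2-\tfrac{(x-x_0)a'}{a}\ge 2-K>0$; hence $s\int_{Q_T}(a\varphi_{xx}+(a\varphi_x)_x)(w_x)^2\ge(2-K)d_1\,s\int_{Q_T}\Theta(w_x)^2$ and $s^3\int_{Q_T}(\varphi_x)^2(a\varphi_{xx}+(a\varphi_x)_x)w^2\ge(2-K)d_1^3\,s^3\int_{Q_T}\Theta^3\tfrac{(x-x_0)^2}{a^2}w^2$ — these are precisely the two quantities in the statement, the exponential in \eqref{c_11} being inserted exactly to produce the weight $\bigl(\tfrac{x-x_0}{a}\bigr)^2$ and to keep both terms positive. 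It then remains to dominate, for $s\ge s_0$, the three remaining integrals.

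The term $-2s^2\int_{Q_T}\varphi_x\varphi_{xt}w^2=-2s^2\int_{Q_T}\Theta\dot\Theta(\psi')^2w^2$ is, by $|\Theta\dot\Theta|\le c\Theta^{9/4}\le c\Theta^3$ (see \eqref{magtheta}), bounded in modulus by $Cs^2\int_{Q_T}\Theta^3\tfrac{(x-x_0)^2}{a^2}w^2$, hence absorbed by the principal $s^3$ term once $s$ is large. The cross term $s\int_{Q_T}(a\varphi_{xx})_xww_x=s\int_{Q_T}\Theta(a\psi'')'ww_x$ is handled by Young's inequality: the assumption $\tfrac{(x-x_0)a'}{a}\in W^{1,\infty}(0,1)$ in Hypothesis~\ref{Ass021} makes $(a\psi'')'\in L^\infty(0,1)$, so the term is bounded by $\varepsilon s\int_{Q_T}\Theta(w_x)^2+C_\varepsilon s\int_{Q_T}\Theta w^2$; in the (SD) case $\tfrac{(x-x_0)^2}{a^2}$ is bounded below away from $0$ on $[0,1]$, so $\int_{Q_T}\Theta w^2\le c\int_{Q_T}\Theta^3 w^2\le C\int_{Q_T}\Theta^3\tfrac{(x-x_0)^2}{a^2}w^2$ is absorbed by the $s^3$ term, while in the (WD) case the elementary Poincar\'e inequality $\int_0^1 w^2\le C\int_0^1(w_x)^2$ (legitimate since $w(t,\cdot)\in H^1_0(0,1)$ because $w\in\mathcal S_2$) sends $\int_{Q_T}\Theta w^2$ into the principal $s$ term, the constants matching for a suitable choice of $\varepsilon$ (and, in the weakly degenerate case, of $d_1$).

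The delicate point — exactly as in Lemma~\ref{lemma2} — is $\tfrac{s}{2}\int_{Q_T}\tfrac{\varphi_{tt}}{a}w^2=\tfrac{s}{2}\int_{Q_T}\tfrac{\ddot\Theta\psi}{a}w^2$, because of the factor $\tfrac1a$; this is where the proof is hardest. Using $-d_1d_2\le\psi<0$ and $|\ddot\Theta|\le c\Theta^{3/2}$ it is $\ge -Cs\int_{Q_T}\tfrac{\Theta^{3/2}}{a}w^2$, and, by Young's inequality with exponents $\tfrac43$ and $4$,
\[
\frac{\Theta^{3/2}}{a}w^2\le\varepsilon\,\Theta\,\frac{w^2}{a^{2/3}|x-x_0|^{2/3}}+C_\varepsilon\,\Theta^3\frac{(x-x_0)^2}{a^2}w^2,
\]
so the last summand is absorbed by the principal $s^3$ term; for the first one I would apply the Hardy--Poincar\'e inequality of Proposition~\ref{HP} with a weight $p$, bounded on $[0,1]$, satisfying $\tfrac{p(x)}{(x-x_0)^2}\ge\tfrac{1}{a^{2/3}|x-x_0|^{2/3}}$ — e.g.\ $p=\bigl(\tfrac{(x-x_0)^2}{a}\bigr)^{2/3}$, bounded by Lemma~\ref{rem}.1 — for which $x\mapsto p/|x-x_0|^q$ has the monotonicity required by Proposition~\ref{HP} for some $q\in(1,2)$: this is checked directly via Lemma~\ref{rem}.1 when $K<\tfrac12$, and follows from \eqref{dainfinito} when $\tfrac12\le K<2$, which is exactly why \eqref{dainfinito} is imposed for $K\ge\tfrac12$ in Hypothesis~\ref{Ass021}. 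Then $\int_0^1\tfrac{w^2}{a^{2/3}|x-x_0|^{2/3}}\le C\int_0^1(w_x)^2$, so after integration in $t$ the first summand is $\le C\varepsilon s\int_{Q_T}\Theta(w_x)^2$, absorbed by the principal $s$ term for $\varepsilon$ small. Collecting all bounds, choosing first $\varepsilon$ small and then $s_0$ large, the claimed inequality follows, with $C$ depending on $2-K$, $d_1$, $d_2$, $R$, $\|\tfrac{(x-x_0)a'}{a}\|_{W^{1,\infty}(0,1)}$ and the Hardy--Poincar\'e constant.
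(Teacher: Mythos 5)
Your overall scheme is the paper's: the same explicit rewriting of the distributed terms, the same use of $2a\psi''+a'\psi'=d_1e^{R(x-x_0)^2}\bigl(2-\frac{(x-x_0)a'}{a}+4R(x-x_0)^2\bigr)\ge (2-K)d_1$ to isolate the two principal terms, the same absorption of the $s^2$ term via \eqref{magtheta}, and, for $K<\frac12$, the same Young/Hardy--Poincar\'e treatment of $\frac s2\int_{Q_T}\ddot\Theta\frac{\psi}{a}w^2$. The genuine gap sits exactly where the proof is hardest, namely $K\ge\frac12$. For your weight $p=\bigl(\frac{(x-x_0)^2}{a}\bigr)^{2/3}$ one has $\frac{p(x)}{|x-x_0|^q}=\bigl(\frac{|x-x_0|^{2-3q/2}}{a}\bigr)^{2/3}$, and the monotonicity required by Proposition \ref{HP} (nonincreasing on the left of $x_0$, nondecreasing on the right) comes from Lemma \ref{rem}.1 only if $2-\frac{3q}{2}\ge K$, i.e. $q\le\frac{4-2K}{3}$, which is compatible with $q>1$ precisely when $K<\frac12$. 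Condition \eqref{dainfinito} cannot rescue this: it asserts monotonicity of $a/|x-x_0|^{\vartheta}$, hence it gives quotients of the form $|x-x_0|^{\gamma}/a$ the \emph{opposite} monotonicity to the one Proposition \ref{HP} needs, so your claim that the hypothesis ``follows from \eqref{dainfinito} when $\frac12\le K<2$'' is false. This is precisely why the paper runs a completely different argument there: a two-step Young interpolation with exponents $r>2$ and auxiliary powers $\alpha,\beta,\gamma$ calibrated against $\vartheta$ through \eqref{dainfinito}, combined with the classical Hardy inequality on $H^1_0(0,1)$ instead of Proposition \ref{HP}. As written, your proof does not cover $K\ge\frac12$, i.e. the whole strongly degenerate case and part of the weakly degenerate one.

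A second, more easily repaired, flaw is the absorption of the cross term $s\int_{Q_T}\Theta(a\psi'')'ww_x$ in the (WD) case. After Young you are left with $C_\ve\, s\int_{Q_T}\Theta w^2$ with $C_\ve\sim\|(a\psi'')'\|_{\infty}^2/\ve$, and you push it into the principal $s\int_{Q_T}\Theta (w_x)^2$ term by the plain Poincar\'e inequality. Optimizing in $\ve$, the total coefficient to be absorbed is of order $\|(a\psi'')'\|_{\infty}\sqrt{C_P}\sim d_1\tilde C_R\sqrt{C_P}$, to be compared with the principal coefficient $(2-K)d_1$; both scale linearly in $d_1$, so neither $d_1$ nor $\ve$ helps, and the resulting smallness condition on $\tilde C_R\sqrt{C_P}$ fails for admissible $a$ with $\bigl\|\frac{(x-x_0)a'}{a}\bigr\|_{W^{1,\infty}}$ large. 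The paper avoids this by bounding $\Theta\le c\Theta^{3/2}$ and feeding $\int\Theta^{3/2}w^2$ into the same interpolation machinery used for the $\varphi_{tt}$ term, so that the unavoidably large constant multiplies $s\int_{Q_T}\Theta^3\bigl(\frac{x-x_0}{a}\bigr)^2w^2$ --- harmless against the $s^3$ principal term once $s\ge s_0$ --- while only an arbitrarily small constant lands on $s\int_{Q_T}\Theta(w_x)^2$. You already have all the tools for this fix on the table; you simply did not apply them to this term.
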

\begin{proof}
Using the definition of $\varphi$, the distributed terms of
$\displaystyle\int_{Q_T} \frac{1}{a}L^+_s w L^-_s w dxdt$
take the form
\[
\begin{aligned}
&\frac{s}{2}\int_{Q_T} \ddot{\Theta} \frac{\psi}{a} w^2 dxdt
-2s^2\int_{Q_T}\Theta\dot{\Theta}(\psi')^2w^2 dxd+ s \int_{Q_T}\Theta (2a\psi''+ a'
\psi')(w_x)^2 dxdt\\
&+ s^3\int_{Q_T}\Theta^3(2a\psi''+ a' \psi')(\psi')^2w^2
dxdt+s\int_{Q_T} \Theta(a\psi '')' ww_x dxdt.
\end{aligned}
\]
Because of the choice of $\psi(x)$, one has \[\displaystyle 2a(x)
\psi''(x) + a'(x) \psi' (x)= d_1e^{R(x-x_0)^2}\frac{2a(x)-a'(x)
(x-x_0)+4R(x-x_0)^2a(x)}{a(x)}.
\]
By Hypothesis \eqref{Ass0} or \eqref{Ass01}, we immediately find
\[
 2-\frac{(x-x_0)a'}{a}\ge 2-K>0\quad \text{a.e. } \; x\in[0,1];
\]
hence, for every $R>0$ we get
\[
  2-\frac{(x-x_0)a'}{a}+ 4R(x-x_0)^2\ge 2-K \quad \text{a.e. } \; x\in[0,1].
\]

Thus, since $e^{R(x-x_0)^2}$ is bounded and bounded away from 0 in
$[0,1]$, the distributed terms satisfy the estimate
\begin{equation}\label{aaaaa}
\begin{aligned}
\{D.T.\} &\ge \frac{s}{2}\int_{Q_T} \ddot{\Theta}
\frac{\psi}{a} w^2 dxdt -s^2C\int_{Q_T}|\Theta\dot{\Theta}|\left(\frac{x-x_0}{a}\right)^2w^2
dxdt\\
&+ s C\int_{Q_T}\Theta (w_x)^2 dxdt+ s^3C\int_{Q_T}\Theta^3\left(\frac{x-x_0}{a} \right)^2w^2
dxdt\\
&+s\int_{Q_T} \Theta(a\psi '')' ww_x
dxdt,
\end{aligned}
\end{equation}
where $C>0$ denotes some universal positive constant which may vary
from line to line.

By \eqref{magtheta}, we conclude that, for $s$ large enough,
\[
\begin{aligned}
s^2C\int_{Q_T}|\Theta \dot{\Theta}|\left(\frac{x-x_0}{a}
\right)^2 w^2 dxdt&\le cC s^2
\int_{Q_T}\Theta^3\left(\frac{x-x_0}{a} \right)^2w^2 dxdt\\
&\le \frac{C^3}{8}s^3\int_{Q_T}\Theta^3 \left(\frac{x-x_0}{a}
\right)^2w^2 dxdt.
\end{aligned}
\]
Moreover, by \eqref{magtheta} we get
\begin{equation}\label{zio1}
\begin{aligned}
\left|  \frac{s}{2}\int_{Q_T} \ddot{\Theta}\frac{\psi}{a}
w^2dxdt \right| &\le \frac{s}{2}c \int_0^T
\int_0^1\Theta^{3/2} \frac{-\psi}{a}w^2 dxdt \\
&\leq s\frac{d_1d_2}{2}c\int_0^T \int_0^1\Theta ^{3/2} \frac{w^2}{a} dxdt,
\end{aligned}
\end{equation}
by the very definition of $\psi$. In order to estimate the last integral, we distinguish the cases $K<\frac{1}{2}$ and $K\geq \frac{1}{2}$. In the former case, using the Young inequality, we get
\begin{equation}\label{sopra1}
\begin{aligned} &s\frac{d_1d_2}{2}c \int_0^1
\Theta^{3/2}\frac{w^2}{a} dx \\&=
s\frac{d_1d_2}{2}c\left|\int_0^1\left(\Theta\frac{w^2}{a^{2/3}|x-x_0|^{2/3}}\right)^{3/4}\left(\Theta^3
\left(\frac{x-x_0}{a}\right)^2 w^2\right)^{1/4}\right| \\
&\le 3s\frac{d_1d_2}{8}c \int_0^1
\Theta\frac{w^2}{a^{2/3}|x-x_0|^{2/3}}dx +
 s\frac{d_1d_2}{8}c \int_0^1\Theta^3
\left(\frac{x-x_0}{a} \right)^2w^2 dx.
\end{aligned}
\end{equation}

Now, we introduce the function
\[
p(x) =\frac{|x-x_0|^{4/3}}{a^{2/3}}=\left(\frac{|x-x_0|^2}{a(x)}\right)^{2/3}\to 0 \mbox{ as $x\to0$ by Lemma \ref{rem}},
\]
and we take $q=\frac{4}{3}-\frac{2}{3}K$. Then the function
\[
x\mapsto \frac{p(x)}{|x-x_0|^q}=\left(\frac{|x-x_0|^{K}}{a}\right)^{2/3}
\]
is nonincreasing on the left of $x=x_0$ and nondecreasing on the right of $x=x_0$ by Lemma \ref{rem}. Since $K<1/2$, we have that $q\in (1,2)$. Thus, using
the Hardy-Poincar\'{e} inequality (see Proposition \ref{HP}), one has
\begin{equation}\label{hpappl1}
\begin{aligned}
\int_0^1 \frac{w^2}{a^{2/3}|x-x_0|^{2/3}}dx &=\int_0^1 \frac{p(x)}{|x-x_0|^2}w^2dx \\
&\le C_{HP}\int_0^1 p (w_x)^2 dx \\
&\le  C_{HP}\max\left\{\frac{x_0^{4/3}}{a(0)^{2/3}},\frac{|1-x_0|^{4/3}}{a(1)^{2/3}}\right\} \int_0^1 (w_x)^2 dx,
\end{aligned}
\end{equation}
by Lemma \ref{rem}. Thus, by \eqref{sopra1} and \eqref{hpappl1}, we have that for $s$
large enough
\begin{equation}\label{quest}
\begin{aligned}
s\frac{d_1d_2}{2}c \int_0^1 \Theta^{3/2} \frac{w^2}{a}  dx \le
\frac{C}{4}s\int_0^1 \Theta  (w_x)^2 dx + \frac{C^3}{8}s^3
\int_0^1\Theta^3 \left(\frac{x-x_0}{a}\right)^2w^2 dx,
\end{aligned}
\end{equation}
for a positive constant $C$. Using \eqref{quest}, from
\eqref{zio1} we finally obtain
\begin{equation}\label{quasfin}
\begin{aligned}
\left|  \frac{s}{2}\int_{Q_T} \ddot{\Theta}\frac{\psi}{a}
w^2dxdt \right| &\le \frac{C}{4}s\int_{Q_T} \Theta  (w_x)^2
dxdt
\\&+ \frac{C^3}{4}s^3\int_{Q_T}\Theta^3
\left(\frac{x-x_0}{a}\right)^2w^2 dxdt.
\end{aligned}
\end{equation}

If $K\geq \displaystyle\frac{1}{2}$ we proceed as follows. We take $r>2$, $\gamma<2$ and $\alpha,\beta>0$ to be chosen later, and, by \eqref{magtheta} and the Young inequality, we get
\begin{equation}\label{primostep}
\begin{aligned}& \int_0^1
\Theta^{3/2}\frac{w^2}{a} dx \leq c \int_0^1\left(\Theta\frac{w^2}{x^2}\right)^{1/r}\left(\Theta^\alpha
\frac{|x-x_0|^\beta}{a^\gamma} w^2\right)^{1-1/r}dx\\
& \leq \frac{c}{r} \int_0^1\Theta\frac{w^2}{x^2}dx+\frac{r}{r-1}\int_0^1\Theta^\alpha
\frac{|x-x_0|^\beta}{a^\gamma} w^2dx,
\end{aligned}
\end{equation}
which holds true provided that:
\[
\alpha=\frac{3r-2}{2r-2}(< 3\mbox{ since }r>2),
\]
and
\[
\frac{1}{a}\leq \bar{c}\frac{1}{x^{2/r}}\frac{|x-x_0|^{\beta\frac{r-1}{r}}}{a^{\gamma\frac{r-1}{r}}},
\]
which, by \eqref{dainfinito}, is true if
\begin{equation}\label{ga2}
\gamma>\frac{r}{r-1}
\end{equation}
and
\[
\frac{\beta(r-1)}{\gamma(r-1)-r}\leq \vartheta.
\]
Notice that \eqref{ga2} is consistent with the requirement $\gamma<2$ since $r>2$.

Moreover, we shall clearly use the inequality
\[
\frac{|x-x_0|^\beta}{a^\gamma}\leq \tilde{c}\left(\frac{|x-x_0|}{a}\right)^2,
\]
which is true if
\[
\frac{2-\beta}{2-\gamma}\leq \vartheta.
\]
Hence, choosing $\beta$ and $\gamma$ satisfying the conditions above, from \eqref{primostep} and the classical Hardy inequality (recall that $\mathcal{H}_{\frac{1}{a}}(0,1)\subset H^1_0(0,1)$) we get
\begin{equation}\label{secondostep}
\int_0^1
\Theta^{3/2}\frac{w^2}{a} dx \leq c_1\int_0^1\Theta (w_x)^2dx+c_2\int_0^1\Theta^3\left(\frac{x-x_0}{a}\right)^2w^2dx
\end{equation}for some universal constant $c_1,c_2>0$. Hence, as before, \eqref{quasfin} also holds in this case, if $s$ is large enough.

Now, we consider the last term in \eqref{aaaaa}, i.e. $s\int_{Q_T} \Theta(a\psi '')' ww_x dxdt$.
Observe that, using the definition of $\psi$ and Hypothesis
\ref{Ass021}, we have
\[
\begin{aligned}
\|(a\psi '')' \|_{L^\infty(0,1)}&\le d_1e^R\left(4R^2+6R+2R\left\|\frac{(x-x_0)a'}{a}\right\|_{L^\infty(0,1)}\right.\\
&\left.+
\left\|\left(\frac{(x-x_0)a'}{a}\right)'\right\| _{L^\infty(0,1)}
\right):=C_R.
\end{aligned}
\]
Hence, proceeding as for \eqref{quest}, one has
\[
\begin{aligned}
& \left|s\int_{Q_T}\Theta (a\psi '')'  ww_x dxdt\right| \le \frac{1}{2} s\int_{Q_T}\Theta |(a\psi '')' |^2 w^2dxdt\\
& + \frac{1}{2}  s\int_{Q_T}\Theta  (w_x)^2 dxdt\\
& \le  \frac{1}{2}  s c\|(a\psi '')' \|^2_{L^\infty(0,1)}\int_{Q_T}\Theta^{3/2} w^2dxdt + \frac{1}{2}  s\int_{Q_T}\Theta (w_x)^2dxdt \\
& \le   \frac{C}{4}s\int_{Q_T} \Theta  (w_x)^2 dxdt +   s^3
\frac{C^3}{8} \int_{Q_T}\Theta^3
\left(\frac{x-x_0}{a}\right)^2w^2 dxdt.
\end{aligned}
\]
Summing up, we obtain
\[
\begin{aligned}
\{D.T.\}&\ge -\frac{C}{4}s\int_{Q_T} \Theta  (w_x)^2 dxdt -
\frac{C^3}{4}s^3\int_{Q_T}\Theta^3
\left(\frac{x-x_0}{a}\right)^2w^2 dxdt \\
& -\frac{C^3}{8}s^3\int_{Q_T}\Theta^3 \left(\frac{x-x_0}{a}
\right)^2w^2 dxdt
\\&
+ s C\int_{Q_T}\Theta (w_x)^2 dxdt+ s^3C\int_{Q_T}\Theta^3\left(\frac{x-x_0}{a} \right)^2w^2 dxdt\\&
-\frac{C}{4}s\int_{Q_T} \Theta  (w_x)^2 dxdt -
\frac{C^3}{8}s^3\int_{Q_T}\Theta^3(w_x)^2 dxdt
\\&
= \frac{C}{2}s\int_{Q_T} \Theta (w_x)^2 dxdt +\frac{C^3}{2}s^3
\int_{Q_T}\Theta^3 \left(\frac{x-x_0}{a} \right)^2 w^2 dxdt.
\end{aligned}
\]
\end{proof}

As for the boundary terms, similarly to Lemma \ref{lemma4nd1}, we
have the following result, whose proof parallels the one of Lemma
\ref{lemma4nd1} and is thus omitted.
\begin{Lemma}\label{lemma41}
Assume Hypothesis $\ref{Ass021}$. Then the boundary terms in
\eqref{D&BTnd1} reduce to
\[-sd_1\int_0^T\Theta(t)\Big[(x-x_0)e^{R(x-x_0)^2}(w_x)^2\Big]_{x=0}^{x=1}dt.
\]
\end{Lemma}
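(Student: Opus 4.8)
The plan is to take the list of boundary terms collected under the brace $\{\mathrm{B.T.}\}$ in the identity \eqref{D&BTnd1} (with $\Phi$ replaced by $\varphi=\Theta\psi$ and $(A,B)=(0,1)$), to check that every one of them vanishes except $-s\int_0^T[a\varphi_x(w_x)^2]_{x=0}^{x=1}\,dt$, and finally to evaluate that surviving term using the definition \eqref{c_11} of $\psi$. Throughout we use that $w=e^{s\varphi}v$ inherits the regularity of $v$, so $w\in\mathcal{S}_2$; in particular $w\in C([0,T];\cH^1_{\frac1a}(0,1))$, $w(t,\cdot)\in\cH^2_{\frac1a}(0,1)$ for a.e.\ $t$, $w$ satisfies the lateral conditions $w(t,0)=w(t,1)=0$, and $w(0^+,\cdot)=w(T^-,\cdot)=0$.

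First I would dispose of the terms evaluated between $t=0$ and $t=T$, namely $-\frac12\int_0^1[(w_x)^2]_{t=0}^{t=T}\,dx$ and $\frac s2\int_0^1\big[(s(\varphi_x)^2-\varphi_t/a)w^2\big]_{t=0}^{t=T}\,dx$. Since $\cH^1_{\frac1a}(0,1)\hookrightarrow H^1_0(0,1)$ and $w(0^+,\cdot)=w(T^-,\cdot)=0$ in $\cH^1_{\frac1a}(0,1)$, we also have $w_x(0^+,\cdot)=w_x(T^-,\cdot)=0$ in $L^2(0,1)$, so each bracket above, carrying a factor $(w_x)^2$ or $w^2$, vanishes. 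Equivalently, writing $w^2=e^{2s\varphi}v^2$ and recalling that $\Theta(t)\to+\infty$ as $t\to0^+,T^-$ while $\max_{[0,1]}\psi<0$, the integrands decay exponentially at $t=0,T$, which beats the polynomial growth of $\varphi_t=\dot\Theta\psi$ and $(\varphi_x)^2=\Theta^2(\psi')^2$; the integrability near $x_0$ needed to make these statements rigorous is exactly what $v\in C([0,T];L^2_{\frac1a}(0,1))$ together with Lemma \ref{rem} provide.

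Next come the terms evaluated between $x=0$ and $x=1$. The terms $-s\int_0^T[a\varphi_{xx}ww_x]_{x=0}^{x=1}\,dt$ and $-s^2\int_0^T\big[(sa(\varphi_x)^3-\varphi_x\varphi_t)w^2\big]_{x=0}^{x=1}\,dt$ contain a factor $w$, resp.\ $w^2$, hence vanish since $w(t,0)=w(t,1)=0$. For $\int_0^T[w_xw_t]_{x=0}^{x=1}\,dt$ I would argue exactly as in Lemma \ref{lemma4nd1}: since $w\in H^1(0,T;\cH^1_{\frac1a}(0,1))$, for a.e.\ $t$ one has $w_t(t,\cdot)\in\cH^1_{\frac1a}(0,1)\subset H^1_0(0,1)$, and moreover $|w_t(t,x)|\le\big(\int_0^1 w_{tx}(t,y)^2\,dy\big)^{1/2}\max\{\sqrt{x},\sqrt{1-x}\}\to0$ as $x\to0$ or $x\to1$; since also $w_x(t,\cdot)\in H^1(0,1)\subset C([0,1])$ (because $w(t,\cdot)\in\cH^2_{\frac1a}(0,1)$), the product $w_xw_t$ has a well-defined trace at $x=0,1$ equal to $0$, so this term is zero as well.

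It remains to compute $-s\int_0^T[a\varphi_x(w_x)^2]_{x=0}^{x=1}\,dt$. From \eqref{c_11} we have $\psi'(x)=d_1\frac{x-x_0}{a(x)}e^{R(x-x_0)^2}$, hence $a\varphi_x=\Theta\,a\psi'=d_1\Theta(x-x_0)e^{R(x-x_0)^2}$, which is continuous and bounded on $[0,1]$; since $w_x(t,\cdot)\in C([0,1])$ for a.e.\ $t$, the bracket $\big[(x-x_0)e^{R(x-x_0)^2}(w_x)^2\big]_{x=0}^{x=1}$ is well-defined, and the boundary terms reduce to $-sd_1\int_0^T\Theta(t)\big[(x-x_0)e^{R(x-x_0)^2}(w_x)^2\big]_{x=0}^{x=1}\,dt$, as claimed. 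The only delicate point, as in Lemma \ref{lemma4nd1}, is the passage to the \emph{spatial} boundary — that $w_x(t,0),w_x(t,1)$ are finite and $w_t(t,0)=w_t(t,1)=0$ — which is furnished by the embeddings $\cH^2_{\frac1a}(0,1)\subset\{u:u'\in H^1(0,1)\}\subset C^1([0,1])$ and $\cH^1_{\frac1a}(0,1)\subset H^1_0(0,1)$; the vanishing of every \emph{temporal} boundary contribution is by contrast a soft consequence of the exponential decay of $e^{2s\varphi}$ near $t=0,T$.
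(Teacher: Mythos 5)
Your proposal is correct and follows exactly the route the paper intends: the paper omits the proof of Lemma \ref{lemma41}, stating that it parallels Lemma \ref{lemma4nd1}, and your argument is precisely that adaptation — the temporal brackets vanish via $w(0^+,\cdot)=w(T^-,\cdot)=0$, the term $\int_0^T[w_xw_t]_{x=0}^{x=1}dt$ vanishes by the same trace estimate on $w_t$, the brackets carrying $w$ or $w^2$ vanish by the Dirichlet conditions, and the surviving term is evaluated from $a\psi'=d_1(x-x_0)e^{R(x-x_0)^2}$ using \eqref{c_11}. Nothing further is needed.
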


By Lemmas \ref{lemma21} and \ref{lemma41}, there exist $C>0$ and
$s_0>0$ such that all solutions $w$ of \eqref{1'nd1} satisfy, for
all $s \ge s_0$,
\begin{equation}\label{D&BT11}
\begin{aligned}
\int_{Q_T} \frac{1}{a}L^+_s w L^-_s w dxdt &\ge
Cs\int_{Q_T} \Theta (w_x)^2 dxdt\\&+ Cs^3
\int_{Q_T}\Theta^3 \left(\frac{x-x_0}{a}\right)^2 w^2 dxdt
\\&-sd_1\int_0^T\Theta(t)\Big[(x-x_0)e^{R(x-x_0)^2}(w_x)^2\Big]_{x=0}^{x=1}dt.
\end{aligned}
\end{equation}

Thus, by \eqref{stimettanda} and \eqref{D&BT11}, we obtain the next
Carleman inequality for $w$:
\[
\begin{aligned}
& s\int_{Q_T} \Theta (w_x)^2dxdt  + s^3
\int_{Q_T}\Theta^3 \left(\frac{x-x_0}{a}\right)^2w^2
dxdt\\
&\le C\left(\int_{Q_T} h^2 \frac{e^{2s\varphi}}{a}dxdt+
sd_1\int_0^{T} \left[\Theta
(x-x_0)e^{R(x-x_0)^2}(w_x)^2\right]_{x=0}^{x=1}dt \right)
\end{aligned}
\]
for all $s \ge s_0$.

Theorem \ref{Cor11} follows recalling the definition of $w$ and starting as in the end of the
proof of Theorem \ref{mono}.

\chapter{Observability inequalities and application to null
controllability}\label{secobserv}

In this chapter we assume that the control set $\omega$ satisfies
the following assumption:
\begin{Assumptions} \label{ipotesiomega}
The subset $\omega$ is such that
\begin{itemize}
\item it is an interval which contains the degeneracy point, more precisely:
\begin{equation}\label{omega1}
\omega=(\alpha,\beta) \subset (0,1) \mbox{ is such that $x_0 \in
\omega$}.
\end{equation}
or
\item it is an interval lying on one
side of the degeneracy point, more precisely:
\begin{equation}\label{omega}
\omega=(\alpha,\beta) \subset (0,1) \mbox{ is such that $x_0\not \in
\bar \omega$}.
\end{equation}
\end{itemize}
\end{Assumptions}
\section{The divergence case}
Now, we consider the problem in divergence form and we make the
following assumptions on the function $a$:
\begin{Assumptions}\label{Ass03}
Hypothesis $\ref{Ass02}$ is satisfied. Moreover,
if Hypothesis $\ref{Ass0}$ holds, we assume that there exist
two functions $\fg \in L^\infty_{\rm loc}([0,1]\setminus \{x_0\})$, $\fh \in W^{1,\infty}_{\rm loc}([0,1]\setminus \{x_0\}, L^\infty(0,1))$ and
two strictly positive constants $\fg_0$, $\fh_0$ such that $\fg(x) \ge \fg_0$ for a.e. $x$ in $[0,1]$ and
\begin{equation}\label{rieccola}
-\frac{a'(x)}{2\sqrt{a(x)}}\left(\int_x^B\fg(t) dt + \fh_0 \right)+ \sqrt{a(x)}\fg(x) =\fh(x,B)\quad \text{for a.e.} \; x,B \in [0,1]
\end{equation}
with $x<B<x_0$ or $x_0<x<B$.
\end{Assumptions}

\begin{Remark}\label{quellali}
Contrary to the non degenerate case, the identity in \eqref{rieccola} is assumed to hold with functions which are bounded only {\em far from} $x_0$. Indeed, \eqref{rieccola} will be applied in sets where $a$ is non degenerate and the corresponding identity given in Hypothesis \ref{ipoadebole} will be applied. For this reason, functions $\fg$ and $\fh$ can be easily found, once $a$ is given.
\end{Remark}

To the linear problem \eqref{linear} we associate the
homogeneous adjoint problem
\begin{equation}\label{h=0}
\begin{cases}
v_t +(av_x)_x= 0, &(t,x) \in  Q_T,
\\[5pt]
v(t,0)=v(t,1) =0, & t \in (0,T),
\\[5pt]
v(T,x)= v_T(x)\in L^2(0,1),
\end{cases}
\end{equation}
where $T>0$ is given. By the Carleman estimate in Theorem
\ref{Cor1}, we will deduce the following observability inequality
for both the weakly and the strongly degenerate cases:
\begin{Proposition}\label{obser.}
Assume Hypotheses $\ref{ipotesiomega}$ and $\ref{Ass03}$. Then there exists a positive constant $C_T$
such that every solution $v \in  C([0, T]; L^2(0,1)) \cap L^2 (0,T;
{\cal H}^1_a(0,1))$ of \eqref{h=0} satisfies
 \begin{equation}\label{obser1.}
\int_0^1v^2(0,x) dx \le C_T\int_0^T \int_{\omega}v^2(t,x)dxdt.
\end{equation}
\end{Proposition}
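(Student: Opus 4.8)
The plan is to derive the observability inequality \eqref{obser1.} from the degenerate Carleman estimate of Theorem \ref{Cor1} (applied on the part of $(0,1)$ surrounding $x_0$) together with the non--degenerate Carleman estimate of Theorem \ref{mono} (applied on the intervals where $a$ does not degenerate), after a preliminary reduction by dissipativity, and with Caccioppoli--type inequalities to dispose of the gradient terms that appear.

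\medskip
\noindent\textbf{Step 1 (reduction to an interior--time estimate).} Testing \eqref{h=0} with $v$ gives $\frac{d}{dt}\int_0^1 v^2\,dx=2\int_0^1 a\,v_x^2\,dx\ge0$, so $t\mapsto\|v(t)\|_{L^2(0,1)}^2$ is nondecreasing, whence $\tfrac T2\|v(0)\|_{L^2(0,1)}^2\le\int_{T/4}^{3T/4}\|v(t)\|_{L^2(0,1)}^2\,dt$. Thus \eqref{obser1.} follows once one proves $\int_{T/4}^{3T/4}\int_0^1 v^2\,dx\,dt\le C_T\int_0^T\int_\omega v^2\,dx\,dt$, and the regularity of $v$ needed to justify the traces below is provided by Theorem \ref{th-parabolic}.

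\medskip
\noindent\textbf{Step 2 (localisation and the auxiliary problems).} I would cover $[0,1]$ by subintervals: one subinterval $(A,B)$ that contains $x_0$ and, according to whether $x_0\in\omega$ or $x_0\notin\bar\omega$, one or two further subintervals on which $a$ is non--degenerate, every subinterval overlapping $\omega$ along a nonempty open set. On each of them one multiplies $v$ by a cut--off $\xi$ equal to $1$ on the part one keeps and vanishing, together with its first derivative, near the ``free'' endpoint, so that $z:=\xi v$ solves a problem $z_t+(az_x)_x=h_\xi$ with homogeneous Dirichlet conditions at both endpoints of the subinterval and with $h_\xi$ supported in $\omega$; this is exactly the content of the auxiliary problems built with cut--offs and reflections (Lemmas \ref{lemma3}, \ref{lemma3'} and \ref{lemma31}), the reflection across $x_0$ being used, when $x_0\in\omega$, so that the subinterval through $x_0$ can be taken inside $\omega$ (or so that the unfavourable contribution carried by $x_0$ is cancelled --- the sign of that contribution being precisely where the weakly and strongly degenerate cases diverge). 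One then applies Theorem \ref{Cor1} to the auxiliary function on the subinterval around $x_0$ and Theorem \ref{mono} to the auxiliary functions on the non--degenerate subintervals, and restricts the time integration to $[T/4,3T/4]$, where $\Theta$ and the exponential weights are bounded above and below. Since $a$ and $(x-x_0)^2/a$ are bounded away from $0$ on compact subsets of $(0,1)\setminus\{x_0\}$ by Lemma \ref{rem}, the left--hand sides of these Carleman estimates control $\int_{T/4}^{3T/4}\int v^2$ off a neighbourhood of $x_0$; on a neighbourhood of $x_0$, $\int v^2$ is either controlled directly because that neighbourhood lies in $\omega$, or else handled through the reflection argument.

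\medskip
\noindent\textbf{Step 3 (right--hand sides, and the main obstacle).} The source $h_\xi$ is supported in $\omega$ and involves $v^2$ and $v_x^2$; the gradient terms are removed by a Caccioppoli inequality (test the equation with $\eta^2 v$ for a cut--off $\eta$), yielding $\int_0^T\int_{\omega'}v_x^2\le C\int_0^T\int_\omega v^2$ for $\omega'$ compactly contained in $\omega$, while the pointwise traces $v(t,\cdot)^2$, $v_x(t,\cdot)^2$ at the interior cut--off points --- which all lie in $\omega$ --- are absorbed by a one--dimensional trace inequality followed again by Caccioppoli. The delicate point, which I expect to be the main obstacle, is the boundary term $sc_1\int_0^T[a\Theta e^{2s\varphi}(x-x_0)v_x^2]_{x=0}^{x=1}\,dt$ in Theorem \ref{Cor1}: since $v(t,0)=v(t,1)=0$ but $v_x(t,0)$, $v_x(t,1)$ do not vanish, this is a genuinely nonnegative quantity sitting on the right--hand side. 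I would deal with it by arranging that $0$ and $1$ never be ``free'' endpoints of the degenerate estimate (the cut--offs are chosen so that $z_x$ vanishes at the two endpoints of the subinterval through $x_0$) and by covering the pieces of $[0,1]$ adjacent to $x=0$ and $x=1$ with Theorem \ref{mono}, where the cut--off is taken to vanish at the interior endpoint; then the unfavourable boundary term of Theorem \ref{mono}, which is located at the \emph{left} endpoint of the subinterval, drops out, while the remaining boundary term sits at the outer endpoint with the favourable negative sign built into Theorem \ref{mono} and may simply be discarded. The careful bookkeeping of which endpoint carries which sign, for each position of $x_0$ relative to $\omega$, is the heart of the argument.

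\medskip
\noindent\textbf{Step 4 (conclusion).} Summing the contributions of all subintervals, bounding every $\omega$--supported term by the Caccioppoli and trace estimates of Step 3, discarding all favourable boundary terms, and finally fixing $s$ large, one obtains $\int_{T/4}^{3T/4}\int_0^1 v^2\,dx\,dt\le C_T\int_0^T\int_\omega v^2\,dx\,dt$, which combined with Step 1 gives \eqref{obser1.}.
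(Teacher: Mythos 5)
Your overall architecture (dissipativity reduction, cut--offs and reflections, degenerate plus non--degenerate Carleman estimates, Caccioppoli to absorb gradient terms, careful sign bookkeeping of the boundary terms) matches the paper's, but there is a genuine gap at the point where you pass from the Carleman estimates to the unweighted norm of $v$. Your Step 1 reduces the claim to bounding $\int_{T/4}^{3T/4}\int_0^1 v^2\,dx\,dt$, and in Step 2 you assert that near $x_0$ this quantity is ``either controlled directly because that neighbourhood lies in $\omega$, or else handled through the reflection argument.'' The second alternative fails: when $x_0\notin\bar\omega$, the reflection argument (which in the paper is a reflection across the boundary point $x=0$, not across $x_0$) only reproduces the estimate of Theorem \ref{Cor1}, whose zero--order term carries the weight $s^3\Theta^3\frac{(x-x_0)^2}{a}$. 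By Lemma \ref{rem} this weight tends to $0$ as $x\to x_0$ (since $K<2$), so no neighbourhood of $x_0$, however small, allows you to bound $\int v^2$ by that term. The missing ingredient is the Hardy--Poincar\'e inequality of Proposition \ref{HP}: the paper exploits the \emph{gradient} part $s\Theta\, a\,(v_x)^2$ of the Carleman estimate and converts $\int_0^1 a\,(v_x)^2\,dx$ into a lower bound for $\int_0^1\bigl(\frac{a}{(x-x_0)^2}\bigr)^{1/3}v^2\,dx$ with $p(x)=(a(x)|x-x_0|^4)^{1/3}$ (or $\max a\cdot|x-x_0|^{4/3}$ when $K\le 4/3$), the factor $\bigl(\frac{a}{(x-x_0)^2}\bigr)^{1/3}$ being bounded below away from zero by Lemma \ref{rem}. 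Without this step your argument cannot close in the case $x_0\notin\bar\omega$. (Your monotonicity of $t\mapsto\|v(t)\|_{L^2}^2$ is a legitimate variant of the paper's monotonicity of $t\mapsto\int_0^1 a(v_x)^2\,dx$, and the reduction can be completed by applying Hardy--Poincar\'e on each time slice $t\in[T/4,3T/4]$; but as written the step is absent and the claim you substitute for it is false.)

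A secondary point: the Carleman estimates of Theorems \ref{Cor1} and \ref{mono} apply to solutions in $\mathcal S_1$, which a solution of \eqref{h=0} with arbitrary final datum $v_T\in L^2(0,1)$ need not satisfy; Theorem \ref{th-parabolic} does not supply this for the backward problem. The paper first proves everything for $v_T\in D(\mathcal A_1^2)$ (the class $\mathcal W_1$) and then obtains Proposition \ref{obser.} by density, using the analyticity of the semigroup and convergence in $C([0,T];L^2(0,1))$. You should make this approximation step explicit rather than appealing to trace regularity.
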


Using the observability inequality \eqref{obser1.} and a standard
technique (e.g., see \cite[Section 7.4]{LRL}), one can prove the
null controllability result for the linear degenerate problem
\eqref{linear}, another fundamental result of this paper.
\begin{Theorem}\label{th3}
Assume Hypotheses $\ref{ipotesiomega}$ and $\ref{Ass03}$. Then, for
every $u_0 \in L^2 (0,1)$ there exists $h \in L^2(Q_T)$ such that
the solution $u$ of \eqref{linear} satisfies
\begin{equation*}
u(T,x)= 0 \ \text{ for every  } \  x \in [0, 1].
\end{equation*}
Moreover
\[
\int_0^T \int_0^1 h^2 dxdt \le C \int_0^1 u_0^2 dx,
\]
for some universal positive constant $C$.
\end{Theorem}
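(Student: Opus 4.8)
The plan is to derive Theorem \ref{th3} from the observability inequality \eqref{obser1.} (Proposition \ref{obser.}) by the classical penalized duality (Hilbert uniqueness) argument, exactly in the spirit of \cite[Section 7.4]{LRL}. Since \eqref{linear} with a control $h\chi_\omega$, $h\in L^2(Q_T)$, is well posed by Theorem \ref{th-parabolic}, only the construction of the control is at stake, and by linearity it suffices to drive an arbitrary $u_0\in L^2(0,1)$ to $0$ at time $T$. Fix $\varepsilon>0$ and consider, on $L^2(0,1)$, the functional
\[
J_\varepsilon(v_T):=\frac12\int_0^T\int_\omega v^2(t,x)\,dxdt+\varepsilon\,\|v_T\|_{L^2(0,1)}+\int_0^1 u_0(x)\,v(0,x)\,dx,
\]
where $v=v(\cdot;v_T)$ is the solution of the adjoint problem \eqref{h=0} with final datum $v_T$, which exists and depends linearly and continuously on $v_T$ by Theorem \ref{th-parabolic} applied to the time-reversed equation.

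First I would check that $J_\varepsilon$ is continuous, convex and coercive on $L^2(0,1)$. Continuity and convexity are immediate; coercivity is where \eqref{obser1.} enters: if $\|v_T^n\|_{L^2(0,1)}\to\infty$, writing $\tilde v^n$ for the adjoint solution with normalized datum $v_T^n/\|v_T^n\|_{L^2(0,1)}$, either $\liminf_n\int_0^T\int_\omega(\tilde v^n)^2\,dxdt>0$, so the quadratic part of $J_\varepsilon(v_T^n)$ grows faster than $\|v_T^n\|_{L^2(0,1)}$, or $\int_0^T\int_\omega(\tilde v^n)^2\,dxdt\to0$, and then \eqref{obser1.} forces $\|\tilde v^n(0,\cdot)\|_{L^2(0,1)}\to0$ so that the linear part is negligible; in both cases $J_\varepsilon(v_T^n)/\|v_T^n\|_{L^2(0,1)}\to\varepsilon>0$. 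Hence $J_\varepsilon$ admits a minimizer $v_T^\varepsilon$ (unique, by the strict convexity coming from unique continuation, which follows from the Carleman estimate of Theorem \ref{Cor1}); denote by $v^\varepsilon$ the corresponding adjoint solution, set $h_\varepsilon:=v^\varepsilon\chi_\omega$, and let $u_\varepsilon$ be the solution of \eqref{linear} with this control and $u_\varepsilon(0,\cdot)=u_0$.

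Next comes the duality identity: multiplying the equation of $u_\varepsilon$ by the adjoint solution $v$ with an arbitrary final datum $w_T$, integrating over $Q_T$, and using the boundary and degeneracy conditions built into $D(\mathcal A_1)={\cal H}^2_a(0,1)$ (Propositions \ref{characterization} and \ref{domain}), one obtains
\[
\int_0^T\int_\omega h_\varepsilon v\,dxdt=\int_0^1 u_\varepsilon(T,x)\,w_T(x)\,dx-\int_0^1 u_0(x)\,v(0,x)\,dx .
\]
Comparing this with the Euler--Lagrange relation for $J_\varepsilon$ at $v_T^\varepsilon$ (whose quadratic term contributes $\int_0^T\int_\omega v^\varepsilon v\,dxdt$, whose linear term contributes $\int_0^1 u_0\,v(0,\cdot)\,dx$, and whose penalty contributes a term bounded by $\varepsilon\|w_T\|_{L^2(0,1)}$), all but the $u_\varepsilon(T,\cdot)$ term cancel and one is left with $\big|\int_0^1 u_\varepsilon(T,x)w_T(x)\,dx\big|\le\varepsilon\|w_T\|_{L^2(0,1)}$ for every $w_T$, i.e.\ $\|u_\varepsilon(T,\cdot)\|_{L^2(0,1)}\le\varepsilon$. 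Moreover, from $J_\varepsilon(v_T^\varepsilon)\le J_\varepsilon(0)=0$ together with \eqref{obser1.} and Cauchy--Schwarz one gets the $\varepsilon$-independent bound $\int_0^T\int_\omega h_\varepsilon^2\,dxdt=\int_0^T\int_\omega (v^\varepsilon)^2\,dxdt\le 4C_T\int_0^1 u_0^2\,dx$.

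Finally I would let $\varepsilon\to0$: along a subsequence $h_\varepsilon\rightharpoonup h$ weakly in $L^2(Q_T)$, and by the linearity and continuity of the solution map $h\mapsto u$ and of $h\mapsto u(T,\cdot)$ (Theorem \ref{th-parabolic}) one has $u_\varepsilon\to u$ and $u_\varepsilon(T,\cdot)\rightharpoonup u(T,\cdot)$; since $\|u_\varepsilon(T,\cdot)\|_{L^2(0,1)}\le\varepsilon\to0$ this gives $u(T,\cdot)=0$, and weak lower semicontinuity of the norm yields $\int_0^T\int_0^1 h^2\,dxdt\le 4C_T\int_0^1 u_0^2\,dx$, which is the stated estimate with $C=4C_T$. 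The only genuinely delicate point is the validity of the duality identity in the degenerate setting — one must be sure that the integrations by parts pairing \eqref{linear} and \eqref{h=0} produce no spurious boundary or degeneracy contributions — but this is precisely what the domain characterizations of Chapter \ref{sec2} together with Theorem \ref{th-parabolic} provide, so the scheme goes through verbatim, as in the non-degenerate case.
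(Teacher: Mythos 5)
Your proposal is correct and is precisely the ``standard technique'' that the paper invokes (with reference to \cite[Section 7.4]{LRL}) without writing it out: the penalized duality argument that converts the observability inequality of Proposition \ref{obser.} into null controllability, with the control cost $C=4C_T$ coming from $J_\varepsilon(v_T^\varepsilon)\le J_\varepsilon(0)=0$ and the observability constant. The only point to keep in mind is that the duality identity should first be established for final data in $D(\mathcal A_1^2)$ (where the regularity of Theorem \ref{th-parabolic} justifies the pairing) and then extended by density, exactly as the paper itself does when passing from Lemma \ref{obser.regular} to Proposition \ref{obser.}.
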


We remark that Proposition \ref{obser.} has an immediate
application also in the case in which the control set $\omega$ is
the union of two intervals $\omega_i$, $i=1,2$ each of them lying on
one side of the degeneracy point. More precisely, we have the
following observability inequality, whose proof is straightforward:
\begin{Corollary}\label{corollario}
Assume Hypothesis $\ref{Ass03}$ and $\omega = \omega_1 \cup
\omega_2,$ where $\omega_i$, $i=1,2$ are intervals each of them
lying on one side of the degeneracy point, more precisely:
\begin{equation}\label{omega2}
\omega_i=(\lambda_i,\beta_i) \subset (0,1), \, i=1,2, \mbox{ and
$\beta_1 < x_0< \lambda_2$}.
\end{equation}
Then there exists a positive constant $C_T$ such that every solution
$v$ of \eqref{h=0} satisfies
\[
\int_0^1v^2(0,x) dx \le C_T\int_0^T \int_{\omega}v^2(t,x)dxdt.
\]
\end{Corollary}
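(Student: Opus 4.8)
The plan is to reduce the statement to the single-interval observability inequality already proved in Proposition \ref{obser.}, exploiting the fact that the assumption $\beta_1<x_0<\lambda_2$ forces each $\omega_i$ to lie strictly on one side of the degeneracy point. In particular, $\omega_1=(\lambda_1,\beta_1)\subset(0,1)$ satisfies $x_0\notin\bar\omega_1$, i.e. it fulfills Hypothesis \ref{ipotesiomega} in the form \eqref{omega} (and the same is true for $\omega_2$, with $x_0<\lambda_2$). Hypothesis \ref{Ass03} concerns only the coefficient $a$ and is unaffected by the choice of the control region, so it remains in force.

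First I would apply Proposition \ref{obser.} with the control interval $\omega_1$ in place of $\omega$: since $\omega_1$ meets \eqref{omega} and Hypothesis \ref{Ass03} holds, there is a constant $C_T>0$ such that every solution $v\in C([0,T];L^2(0,1))\cap L^2(0,T;{\cal H}^1_a(0,1))$ of \eqref{h=0} satisfies
\[
\int_0^1 v^2(0,x)\,dx \le C_T\int_0^T\int_{\omega_1} v^2(t,x)\,dxdt .
\]
Then, because $\omega_1\subseteq\omega=\omega_1\cup\omega_2$ and the integrand $v^2$ is nonnegative, the right-hand side is bounded above by $C_T\int_0^T\int_{\omega} v^2(t,x)\,dxdt$, which is exactly the asserted inequality. (Equivalently, one could run the same argument with $\omega_2$ and Proposition \ref{obser.} applied on the right of $x_0$.)

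There is essentially no obstacle here: the only thing to verify is that the hypotheses of Proposition \ref{obser.} are met by the sub-interval $\omega_1$, which is immediate from $\beta_1<x_0$, together with the trivial monotonicity of the integral under enlargement of the (nonnegative) integration domain. This is why the statement is labelled straightforward; the genuine content is entirely carried by the Carleman estimate of Theorem \ref{Cor1} and by Proposition \ref{obser.}.
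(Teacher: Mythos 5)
Your argument is correct and is precisely the ``straightforward'' reduction the paper has in mind: since $\beta_1<x_0$, the subinterval $\omega_1$ satisfies \eqref{omega}, so Proposition \ref{obser.} applies with $\omega_1$ in place of $\omega$, and monotonicity of the integral over the larger set $\omega_1\cup\omega_2$ gives the claim. Nothing further is needed.
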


As a consequence, one has the next null controllability result:
\begin{Theorem}\label{th3'}
Assume $\eqref{omega2}$ and Hypothesis $\ref{Ass03}$. Then, for
every $u_0 \in L^2 (0,1)$, there exists $h \in L^2(Q_T)$ such that
the solution $u$ of \eqref{linear} satisfies
\begin{equation*}
u(T,x)= 0 \ \text{ for every  } \  x \in [0, 1].
\end{equation*}
Moreover
\[
\int_0^T \int_0^1 h^2 dxdt \le C \int_0^1 u_0^2(x) dx,
\]
for some positive constant $C$.
\end{Theorem}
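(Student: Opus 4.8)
The plan is to obtain Theorem \ref{th3'} as a soft consequence of the observability inequality of Corollary \ref{corollario}, via the standard duality argument recalled in \cite[Section 7.4]{LRL}, exactly as one proves Theorem \ref{th3}. First I would note that, under \eqref{omega2} and Hypothesis \ref{Ass03}, Corollary \ref{corollario} applies, so every solution $v$ of the adjoint problem \eqref{h=0} satisfies
\[
\int_0^1 v^2(0,x)\,dx \le C_T\int_0^T\int_\omega v^2(t,x)\,dx\,dt .
\]
Moreover, by Theorem \ref{th-parabolic}, problem \eqref{linear} is well posed for $u_0\in L^2(0,1)$ and $h\in L^2(Q_T)$, while \eqref{h=0} is well posed backward in time, and the map $v_T\mapsto v$ is linear and continuous from $L^2(0,1)$ into $C([0,T];L^2(0,1))$.

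Next, fixing $u_0\in L^2(0,1)$ and $\ve>0$, I would introduce the penalized functional
\[
J_\ve(v_T):=\frac12\int_0^T\int_\omega v^2\,dx\,dt+\ve\|v_T\|_{L^2(0,1)}+\int_0^1 u_0(x)\,v(0,x)\,dx
\]
on $L^2(0,1)$, where $v$ denotes the solution of \eqref{h=0} with final datum $v_T$. The functional $J_\ve$ is continuous, strictly convex and, thanks to the observability inequality above, coercive; hence it admits a unique minimizer $v_T^\ve$, with associated adjoint solution $v^\ve$. Writing the Euler--Lagrange equation of $J_\ve$ at $v_T^\ve$ and using the duality identity between \eqref{linear} and \eqref{h=0}, one checks that $h_\ve:=v^\ve\chi_\omega\in L^2(Q_T)$ (hence supported in $\omega$) drives the solution $u^\ve$ of \eqref{linear} to a final state with $\|u^\ve(T)\|_{L^2(0,1)}\le\ve$. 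The coercivity estimate also yields the uniform bound $\int_0^T\int_\omega (v^\ve)^2\,dx\,dt\le C_T\|u_0\|_{L^2(0,1)}^2$, whence
\[
\|h_\ve\|_{L^2(Q_T)}^2\le C\,\|u_0\|_{L^2(0,1)}^2 .
\]

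Finally I would let $\ve\to 0^+$: up to a subsequence, $h_\ve\rightharpoonup h$ weakly in $L^2(Q_T)$, with $\|h\|_{L^2(Q_T)}^2\le C\|u_0\|_{L^2(0,1)}^2$. By the continuous dependence estimate \eqref{stima}, the solutions $u^\ve$ of \eqref{linear} converge to the solution $u$ associated with $h$, and in particular $u^\ve(T)\to u(T)$ in $L^2(0,1)$; since $\|u^\ve(T)\|_{L^2(0,1)}\le\ve\to 0$, we conclude $u(T,\cdot)\equiv 0$, which proves both assertions. As for difficulty: essentially none is left at this stage, since the degenerate Carleman estimate (Theorem \ref{Cor1}), the Caccioppoli-type estimates and the two-sided observability inequality (Corollary \ref{corollario}) have already been established; the only point to be careful about is that the configuration \eqref{omega2} is covered by Corollary \ref{corollario} rather than by Proposition \ref{obser.} directly, i.e. one splits $\omega=\omega_1\cup\omega_2$ with $\beta_1<x_0<\lambda_2$ and applies the one-sided result on each piece. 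Thus the argument is verbatim the one giving Theorem \ref{th3}, with Proposition \ref{obser.} replaced by Corollary \ref{corollario}.
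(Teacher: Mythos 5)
Your proposal is correct and follows exactly the route the paper intends: Theorem \ref{th3'} is stated as a direct consequence of the observability inequality in Corollary \ref{corollario} via the standard penalized duality (HUM-type) argument of \cite[Section 7.4]{LRL}, precisely as Theorem \ref{th3} follows from Proposition \ref{obser.}. The paper gives no further details, so your write-up of the minimization of $J_\ve$, the uniform bound on $h_\ve$, and the passage to the limit $\ve\to 0^+$ is a faithful expansion of the same argument.
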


\subsection{Proof of Proposition \ref{obser.}} In
this subsection we will prove, as a consequence of the Carleman
estimate established in Theorem \ref{Cor1}, the
observability inequality \eqref{obser1.}. For this purpose, we will
give some preliminary results. As a first step, we consider the
adjoint problem with more regular final--time datum
\begin{equation}\label{h=01}
\begin{cases}
v_t +(av_x)_{x}= 0, &(t,x) \in  Q_T,
\\[5pt]
v(t,0)=v(t,1) =0, & t \in (0,T),
\\[5pt]
v(T,x)= v_T(x) \,\in D({\cal A}_1^2),
\end{cases}
\end{equation}
where
\[
D({\cal A}_1^2) = \Big\{u \,\in \,D({\cal A}_1)\;\big|\; {\cal A}_1u \,\in
\,D({\cal A}_1) \;\Big\}
\]
and, we recall, ${\cal A}_1u:=(au_x)_x$. Observe that $D({\cal A}_1^2)$ is densely
defined in $D({\cal A}_1)$ (see, for example, \cite[Lemma 7.2]{b}) and
hence in $L^2(0,1)$. As in \cite{cfr}, \cite{cfr1} or \cite{f},
letting $v_T$ vary in $D({\cal A}_1^2)$, we define the following class
of functions:
\[
\cal{W}_1:=\Big\{v \in
C^1\big([0,T]; L^2(0,1)\big)\cap C\big([0,T]; D(\mathcal
A_1)\big)\,|\, v\text{ is a solution of \eqref{h=01}}\Big\}.
\]
Obviously (see, for example, \cite[Theorem 7.5]{b})
\[ \cal{W}_1\subset
C^1\big([0,T];\:{\cal H}^2_a(0,1)\big) \subset \mathcal{S}_1 \subset
\cal{U}_1,
\]
where, $\mathcal{S}_1$ is defined in \eqref{v} and
\[
\cal{U}_1:= C([0,T]; L^2(0,1)) \cap L^2(0, T; {\cal H}^1_a(0,1)).
\]

We start with the following Proposition, for whose proof we refer to
\cite[Proposition 4.2]{fm}, since also in this weaker setting that proof is still valid. We underline the fact that the
degeneracy point is allowed to belong even to the control set.

\begin{Proposition}[Caccioppoli's inequality]\label{caccio}
Assume Hypothesis $\ref{Ass0}$ or Hypothesis $\ref{Ass01}$. Let
$\omega'$ and $\omega$ two open subintervals of $(0,1)$ such that
$\omega'\subset \subset \omega \subset  (0,1)$ and $x_0 \not \in
\bar\omega'$. Let $\varphi(t,x)=\Theta(t)\Upsilon(x)$, where
$\Theta$ is defined in \eqref{theta} and
\[
\Upsilon \in C([0,1],(-\infty,0))\cap
C^1([0,1]\setminus\{x_0\},(-\infty,0))
\]
is such that
\begin{equation}\label{stimayx}
|\Upsilon_x|\leq \frac{c}{\sqrt{a}} \mbox{ in }[0,1]\setminus\{x_0\}
\end{equation}
for some $c>0$. Then, there exist two positive constants $C$ and
$s_0$ such that every solution $v \in \cal W_1$ of the adjoint problem
\eqref{h=01} satisfies
\[
   \int_{0}^T \int _{\omega'}   (v_x)^2e^{2s\varphi } dxdt
    \ \leq \ C \int_{0}^T \int _{\omega}   v^2  dxdt,
\]
for all $s\geq s_0$.
\end{Proposition}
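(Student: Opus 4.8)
\emph{Sketch of the approach.} The plan is to run a standard Caccioppoli-type energy estimate, localised by a cut-off adapted to $\omega'\subset\subset\omega$. First I would fix $\xi\in C^\infty_c(\omega)$ with $0\le\xi\le1$ and $\xi\equiv1$ on $\omega'$; since $x_0\notin\bar\omega'$ and $a$ is continuous and strictly positive on $[0,1]\setminus\{x_0\}$, the number $a_0:=\min_{\bar\omega'}a$ is strictly positive, and since $\Upsilon\in C([0,1],(-\infty,0))$ there is $\delta>0$ with $\Upsilon\le-\delta$ on $[0,1]$. The key step is to multiply the equation $v_t+(av_x)_x=0$ by $\xi^2e^{2s\varphi}v$ and integrate over $Q_T$. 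The boundary contributions at $x=0,1$ vanish because $\supp\xi\subset\omega\subset\subset(0,1)$, while those at $t=0,T$ vanish because $e^{2s\varphi(t,x)}=e^{2s\Theta(t)\Upsilon(x)}\le e^{-2s\delta\Theta(t)}\to0$ as $t\to0^+,T^-$ and $v\in C([0,T];L^2(0,1))$. Integrating by parts in $x$ in the second term and in $t$ in the first one arrives at the identity
\[
\int_{Q_T} a\,\xi^2e^{2s\varphi}(v_x)^2\,dxdt
= -\,s\int_{Q_T}\xi^2\varphi_t e^{2s\varphi}v^2\,dxdt
- \int_{Q_T} a\,v_x\,v\,(\xi^2e^{2s\varphi})_x\,dxdt .
\]

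Next I would expand $(\xi^2e^{2s\varphi})_x=2\xi\xi_x e^{2s\varphi}+2s\,\xi^2\varphi_x e^{2s\varphi}$ with $\varphi_x=\Theta\Upsilon_x$, and use the hypothesis $|\Upsilon_x|\le c/\sqrt a$ together with Young's inequality to absorb the $(v_x)^2$ parts into the left-hand side: writing $a\,v_x\,v\,\xi^2 2s\varphi_x e^{2s\varphi}=2s\Theta\bigl(\sqrt a\,\xi e^{s\varphi}v_x\bigr)\bigl(\sqrt a\,\Upsilon_x\,\xi e^{s\varphi}v\bigr)$ and $a\Upsilon_x^2\le c^2$, this term is bounded by $\ve\,a\xi^2e^{2s\varphi}(v_x)^2+\tfrac{c^2}{\ve}s^2\Theta^2\xi^2e^{2s\varphi}v^2$; similarly the $2\xi\xi_x$ term is bounded by $\ve\,a\xi^2e^{2s\varphi}(v_x)^2+\tfrac1\ve\,a\xi_x^2e^{2s\varphi}v^2$. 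Choosing $\ve$ small and recalling $|\varphi_t|=|\dot\Theta|\,|\Upsilon|\le C\Theta^{5/4}$ by \eqref{magtheta}, one is left with
\[
\tfrac12\int_{Q_T} a\xi^2e^{2s\varphi}(v_x)^2\,dxdt
\le C\int_{Q_T}\bigl(s\,\Theta^{5/4}+s^2\Theta^2\bigr)\xi^2e^{2s\varphi}v^2\,dxdt
+ C\int_{Q_T} a\,\xi_x^2\,e^{2s\varphi}v^2\,dxdt .
\]

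Then I would note that, for $s$ fixed, every weight multiplying $v^2$ on the right is bounded on $[0,T]\times[0,1]$: $a$ is bounded, $\supp\xi\cup\supp\xi_x\subset\omega$, and for any $k\ge0$ the map $t\mapsto\Theta(t)^k e^{2s\Theta(t)\Upsilon(x)}\le\Theta(t)^k e^{-2s\delta\Theta(t)}$ is bounded uniformly in $x$ since $\Theta\ge\Theta_{\min}>0$. Hence the right-hand side is $\le C_T\int_0^T\int_\omega v^2\,dxdt$. Restricting the left-hand integral to $\omega'$, where $\xi\equiv1$ and $a\ge a_0>0$, gives $\int_0^T\int_{\omega'}(v_x)^2e^{2s\varphi}\,dxdt\le\frac1{a_0}\int_{Q_T}a\xi^2e^{2s\varphi}(v_x)^2\,dxdt\le C_T\int_0^T\int_\omega v^2\,dxdt$, which is the asserted inequality.

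The step requiring the most care is the integration by parts in $x$ when $x_0\in\omega$, so that $a$ vanishes somewhere on $\supp\xi$ and $v(t,\cdot)$ need not belong to a Sobolev space of the whole interval $(0,1)$. Here one exploits that $v\in\cal{W}_1\subset C^1([0,T];{\cal H}^2_a(0,1))$, so that $av_x\in H^1(0,1)$ and $\xi^2e^{2s\varphi}v\in{\cal H}^1_a(0,1)$ — the bound $|\Upsilon_x|\le c/\sqrt a$ being precisely what guarantees $\sqrt a\,(\xi^2e^{2s\varphi}v)_x\in L^2(0,1)$ — whence a Green-type identity applies and justifies all the manipulations above. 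This is exactly the argument of \cite[Proposition 4.2]{fm}, which carries over verbatim to the present, less regular setting.
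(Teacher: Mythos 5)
Your argument is correct and is essentially the paper's own proof: the paper refers to \cite[Proposition 4.2]{fm} for the divergence case and carries out exactly this computation for the non-divergence analogue (Proposition \ref{caccio1}), namely differentiating $\int_0^1\xi^2e^{2s\varphi}v^2\,dx$ in time, integrating by parts, absorbing the gradient term via Cauchy--Schwarz/Young using $a\Upsilon_x^2\le c^2$, and bounding the remaining weights by $\Theta^k e^{-2s\delta\Theta}\le C$ uniformly in $s\ge s_0$. Your closing remark on justifying the integration by parts near $x_0$ via $v\in\mathcal{W}_1\subset C^1([0,T];\mathcal{H}^2_a(0,1))$ and the Green formula for $\mathcal{H}^2_a\times\mathcal{H}^1_a$ is precisely the point the paper relies on when it says the proof of \cite[Proposition 4.2]{fm} ``is still valid in this weaker setting''.
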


\begin{Remark}\label{prototipo}
Of course, our prototype for $\Upsilon$ are the functions $\psi$
defined in \eqref{c_1nd} or in \eqref{c_1}. Indeed, if $\psi$ is as
in \eqref{c_1}, then
\[
|\psi'(x)|=c_1\frac{|x-x_0|}{a(x)}=c_1\sqrt{\frac{|x-x_0|^2}{a(x)}}\frac{1}{\sqrt{a(x)}}\leq
c\frac{1}{\sqrt{a(x)}}
\]
by Lemma \ref{rem}. In the case of \eqref{c_1nd}, inequality
\eqref{stimayx} is obvious.
\end{Remark}

\begin{Remark}
Actually, in the proof of Proposition \ref{caccio}, only the
regularity on $a$ required in Hypothesis $\ref{Ass0}$ or Hypothesis
$\ref{Ass01}$ is used, and not the inequality $(x-x_0)a'\leq Ka$.
\end{Remark}

We shall also need the two following lemmas, that deal with the
different situations in which $x_0$ is inside or outside the control
region $\omega$. The statements of the conclusions are in fact the
same, however, we state the results in two separate lemmas, since
their applications are related to different situations and the
proofs, though inspired by the same ideas, are different. First, we
state both the results and then we will prove them.

\begin{Lemma}\label{lemma3}
Assume  \eqref{omega1} and Hypothesis $\ref{Ass03}$. Then there exist
two positive constants $C$ and $s_0$ such that every solution $v \in
\cal W_1$ of \eqref{h=01} satisfies, for all $s \ge s_0$,
\[
\int_{Q_T}\left( s \Theta a (v_x)^{2} + s^3 \Theta ^3
\frac{(x-x_0)^2}{a} v^{2}\right) e^{{2s\varphi}}  dxdt\le C
\int_0^T\int_{\omega}v^{2} dxdt.
\]
Here $\Theta$ and $\varphi$ are as in \eqref{theta} and \eqref{c_1},
respectively, with $c_1$ sufficiently large.
\end{Lemma}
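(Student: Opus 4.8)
The plan is to derive Lemma \ref{lemma3} from the Carleman estimate of Theorem \ref{Cor1} by a standard cut-off/localization argument that removes the uncontrollable boundary terms at $x=0$ and $x=1$ and absorbs the price of the cut-off into the control region $\omega$. First I would pick a smooth cut-off function $\xi\in C^\infty([0,1])$ with $0\le\xi\le1$, $\xi\equiv 1$ on a smaller interval $\omega'\subset\subset\omega$ still containing $x_0$ in its interior, and $\supp\xi\subset\omega$; set $\eta:=1-\xi$, so that $\eta\equiv 1$ near $x=0$ and near $x=1$, $\supp\eta\cap\{x_0\}=\emptyset$, and $\supp\eta_x\subset\omega\setminus\omega'$, a region where $a$ is bounded below away from $0$. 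Applying Theorem \ref{Cor1} directly to $v$ gives the interior weighted estimate plus the boundary term $sc_1\int_0^T[a\Theta e^{2s\varphi}(x-x_0)(v_x)^2]_{x=0}^{x=1}dt$; to kill it, I would apply the \emph{non-degenerate} Carleman estimate of Theorem \ref{mono} (in the form of Remark \ref{RemCarleman}, with $(a_1)$ and the identity \eqref{rieccola}) on a pair of intervals $(A_1,B_1)\subset\subset(0,x_0)$ and $(A_2,B_2)\subset\subset(x_0,1)$ chosen so that one endpoint of each sits at the boundary $\{0\}$ resp.\ $\{1\}$ and the other lands inside $\omega'$, and whose weight can be glued to $\varphi$ near the boundary. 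The function to feed into that non-degenerate estimate is $z:=\eta v$, which solves $z_t+(az_x)_x = -2a\eta_x v_x - (a\eta_x)_x v =: \tilde h$, a right-hand side supported in $\omega\setminus\omega'$.

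The main steps, in order: (i) fix $\xi,\eta$ and the auxiliary intervals as above; (ii) apply Theorem \ref{Cor1} to $v$ and observe that the only term not already in the desired form is the boundary term, which lives at $x=0,1$ where $v_x(t,0),v_x(t,1)$ appear; (iii) apply the non-degenerate Carleman estimate of Theorem \ref{mono} to $z=\eta v$ on $(A_i,B_i)$ — here the weight $\psi$ of the non-degenerate estimate must be chosen consistent with $\varphi$ so the boundary terms at $x=0$ and $x=1$ coincide (this is exactly the point where choosing the constant $c_1$ in \eqref{c_1} large is used: it makes $|\psi'(0)|$ and $|\psi'(1)|$ large enough to dominate the corresponding derivatives of the non-degenerate weight, giving the correct sign); (iv) add the two estimates: the boundary terms at $x=0,1$ cancel (or combine with a favourable sign), while the new boundary terms of the non-degenerate estimate are at interior points of $\omega'$ where $\Theta$ is finite and $a$ non-degenerate, hence harmless; (v) the right-hand sides now consist of $\int_{Q_T}h^2 e^{2s\varphi}dxdt$, which is $0$ since $v$ solves \eqref{h=01} with $h\equiv0$, plus $\int_0^T\int_{\omega\setminus\omega'}\tilde h^2 e^{2s\varphi}dxdt\le C\int_0^T\int_\omega((v_x)^2+v^2)e^{2s\varphi}dxdt$; (vi) the term $\int_0^T\int_{\omega\setminus\omega'}(v_x)^2 e^{2s\varphi}$ is disposed of by Caccioppoli's inequality, Proposition \ref{caccio} (applicable because $x_0\notin\overline{\omega\setminus\omega'}$ and our weight satisfies \eqref{stimayx} by Remark \ref{prototipo}), bounding it by $C\int_0^T\int_\omega v^2 dxdt$; (vii) finally, the remaining $\int_0^T\int_{\omega\setminus\omega'}v^2 e^{2s\varphi}\le C\int_0^T\int_\omega v^2 dxdt$ since $e^{2s\varphi}$ is bounded there. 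Collecting everything yields the claimed inequality for $s\ge s_0$.

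I expect the main obstacle to be step (iii): organizing the geometry of the weights and cut-offs so that the boundary contributions at $x=0$ and $x=1$ produced by Theorem \ref{Cor1} are exactly matched (with the right sign) by those produced by the non-degenerate Carleman estimate applied to $\eta v$. Concretely, near $x=1$ the degenerate boundary term is $-sc_1\Theta e^{2s\varphi}a(1)(1-x_0)(v_x)^2(t,1)$ (negative, so on the right-hand side of Theorem \ref{Cor1} after moving it, its sign is what matters), while Theorem \ref{mono} with $(a_1)$ contributes $+sr[a^{3/2}\Theta(\int_x^B\fg+\fh_0)(w_x)^2]$ evaluated at the same endpoint; one must verify that, with $\psi$ in \eqref{c_1} and $c_1$ large, these combine to something of a definite (non-positive, hence discardable) sign, and symmetrically at $x=0$. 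This is the role of the phrase ``$c_1$ sufficiently large'' in the statement. The analogous bookkeeping at $x=0$ is identical by symmetry. Everything else — the PDE satisfied by $\eta v$, the support properties, the absorption of lower-order terms for $s$ large using \eqref{magtheta}, and the passage from $\cal W_1$ to general solutions by density — is routine.
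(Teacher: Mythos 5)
Your overall architecture (degenerate Carleman estimate near $x_0$, non-degenerate Carleman estimate of Theorem \ref{mono} near the endpoints, Caccioppoli to kill $(v_x)^2$ on $\omega\setminus\omega'$, and a comparison of weights via ``$c_1$ large'') is the same as the paper's, but the way you handle the boundary terms contains a genuine gap. First, a structural difference: the paper never produces the boundary term of Theorem \ref{Cor1} at all, because it applies the degenerate estimate not to $v$ but to $\xi v$ with $\xi$ vanishing near $x=0,1$; you instead apply Theorem \ref{Cor1} to $v$ itself and must then cancel the term $sc_1\int_0^T[a\Theta e^{2s\varphi}(x-x_0)(v_x)^2]_{x=0}^{x=1}dt$, whose two contributions ($(1-x_0)>0$ at $x=1$ and $-( -x_0)=x_0>0$ at $x=0$) are both \emph{positive} on the right-hand side. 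At $x=1$ your cancellation can in principle be arranged, because the boundary term of Theorem \ref{mono} at the \emph{right} endpoint $B$ enters as $-sr\,a^{3/2}(B)\Theta\fh_0 e^{2s\Phi}(z_x)^2(t,B)$, i.e.\ with the favourable sign, and $c_1$ large forces $e^{2s\varphi(t,1)}\ll e^{2s\Phi(t,1)}$.

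The gap is at $x=0$. Your claim that ``the analogous bookkeeping at $x=0$ is identical by symmetry'' is false: the weight $\psi$ of Theorem \ref{mono} in case $(a_1)$ is monotone decreasing on $(A,B)$, so the boundary term $-sr[a^{3/2}\Theta(\int_x^B\fg+\fh_0)(w_x)^2]_{x=A}^{x=B}$ has the favourable (negative) sign only at $x=B$ and the \emph{unfavourable} (positive) sign at $x=A$. If you place an auxiliary interval $(0,B_1)$ with its left endpoint at the boundary, the non-degenerate estimate produces an uncontrollable term $+sr\,a^{3/2}(0)\Theta(\cdots)(v_x)^2(t,0)$ on the right-hand side — it does not cancel the bad term from Theorem \ref{Cor1}, it adds to it. The paper's proof needs an extra idea precisely here: it extends $v$ by \emph{odd reflection} across $x=0$ (and $a$ by even reflection), obtaining a problem on $(-1,1)$ that is non-degenerate on $(-\beta_1,\beta_1)$, and applies Theorem \ref{mono} to $Z=\rho W$ with a cut-off $\rho$ that kills $Z_x$ at \emph{both} endpoints $\pm\beta_1$, so that no boundary term survives; the oddness of $W$ then folds the right-hand side back into $\omega$. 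Without this reflection (or a mirror-image version of Theorem \ref{mono} with an increasing weight, which the paper does not provide and you do not construct), your step (iii)--(iv) fails on the left piece $[0,\lambda_1]$, and the lemma is not proved.
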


\begin{Lemma}\label{lemma3'}
Assume \eqref{omega} and Hypothesis $\ref{Ass03}$. Then there exist
two positive constants $C$ and $s_0$ such that every solution $v \in
\cal W_1$ of \eqref{h=01} satisfies, for all $s \ge s_0$,
\[
\int_{Q_T}\left( s \Theta a (v_x)^{2} + s^3 \Theta ^3
\frac{(x-x_0)^2}{a} v^{2}\right) e^{{2s\varphi}}  dxdt\le C
\int_0^T\int_{\omega}v^{2} dxdt.
\]
Here $\Theta$ and $\varphi$ are as in \eqref{theta} and \eqref{c_1},
respectively, with $c_1$ sufficiently large.
\end{Lemma}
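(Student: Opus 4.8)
The plan is to obtain the estimate from the degenerate Carleman estimate of Theorem \ref{Cor1} and then absorb the two boundary terms it produces into $\int_0^T\int_\omega v^2$. Since $v\in\cal{W}_1$ solves the equation in \eqref{h=01}, it solves \eqref{1} in divergence form with $h\equiv0$; hence Theorem \ref{Cor1} gives, for all $s\ge s_0$,
\[
\int_{Q_T}\Big(s\Theta a(v_x)^2+s^3\Theta^3\frac{(x-x_0)^2}{a}v^2\Big)e^{2s\varphi}dxdt\le C\big(B_0+B_1\big),
\]
where, recalling $\psi'(x)=c_1(x-x_0)/a$, the two boundary contributions are
\[
B_1:=sc_1(1-x_0)\int_0^T a(1)\Theta e^{2s\varphi(t,1)}(v_x)^2(t,1)\,dt,
\]
\[
B_0:=sc_1x_0\int_0^T a(0)\Theta e^{2s\varphi(t,0)}(v_x)^2(t,0)\,dt,
\]
both \emph{nonnegative}, i.e. on the ``wrong'' side; the whole task is thus to bound $B_0+B_1$ by $C\int_0^T\int_\omega v^2\,dxdt$. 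I would assume, without loss of generality, $0<x_0<\alpha<\beta<1$ (the mirror case is identical, and $x_0\in\omega$ is the content of Lemma \ref{lemma3}).

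For $B_1$, the boundary term on the same side as $\omega$, I would fix $\alpha<\lambda_1<\lambda_2<\beta$ and a cut-off $\xi\in C^\infty([0,1])$ with $0\le\xi\le1$, $\xi\equiv0$ on $[0,\lambda_1]$ and $\xi\equiv1$ on $[\lambda_2,1]$, and set $z:=\xi v$. Then $z$ solves the \emph{non degenerate} problem $z_t+(az_x)_x=\hat h$ on $(0,T)\times(\lambda_1,1)$ with $z(t,\lambda_1)=z(t,1)=0$ and $\hat h:=2a\xi'v_x+(a\xi')_xv$ supported in $(\lambda_1,\lambda_2)\subset\omega$, and one checks that $z$ lies in the space $\mathcal{V}_1$ on $(\lambda_1,1)$. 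On $(\lambda_1,1)$ Hypothesis \ref{Ass03} ensures that Hypothesis \ref{ipoadebole} holds — through the localized identity \eqref{rieccola} with $B=1$ in the weakly degenerate case, through $a\in W^{1,\infty}$ in the strongly degenerate case — so Theorem \ref{mono} applies to $z$, with a weight $\Phi^{(1)}=\Theta\psi^{(1)}$. Since $z_x(t,\lambda_1)=0$ and $z_x(t,1)=v_x(t,1)$, the boundary term of \eqref{3} vanishes at $x=\lambda_1$ and appears with a \emph{negative sign} at $x=1$: this is exactly why the negative integral in Theorem \ref{mono} is needed. Transposing that negative term and discarding the nonnegative left-hand side, one gets
\[
\int_0^T\Theta e^{2s\Phi^{(1)}(t,1)}(v_x)^2(t,1)\,dt\le\frac{C}{s}\int_0^T\int_{\lambda_1}^{\lambda_2}\hat h^2 e^{2s\Phi^{(1)}}dxdt\le\frac{C}{s}\int_0^T\int_{\lambda_1}^{\lambda_2}\big(v^2+(v_x)^2\big)dxdt.
\]
Choosing the free additive constant in $\psi^{(1)}$ so that $\psi^{(1)}(1)\ge\psi(1)$ (hence $e^{2s\varphi(t,1)}\le e^{2s\Phi^{(1)}(t,1)}$), this yields $B_1\le C\int_0^T\int_{\lambda_1}^{\lambda_2}(v^2+(v_x)^2)\,dxdt$, and Caccioppoli's inequality (Proposition \ref{caccio}) on $(\lambda_1,\lambda_2)\subset\subset\omega$ absorbs the $(v_x)^2$ term, giving $B_1\le C\int_0^T\int_\omega v^2\,dxdt$.

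The hard part is $B_0$, the boundary term on the far side of $x_0$: the cut-off scheme above fails, since there is no piece of $\omega$ between $0$ and $x_0$, and the contribution stays positive. Here I would use the reflection announced in the introduction: restrict $v$ to $(0,x_0)$ — where, in the strongly degenerate case, $(av)(t,x_0)=(av_x)(t,x_0)=0$, so this half decouples from the right one — and set $\hat v(t,x):=v(t,2x_0-x)$, $\hat a(x):=a(2x_0-x)$ (possibly after a preliminary reflection of the whole configuration about $x=1/2$), which turns $(0,x_0)$ into a problem with degeneracy at the left endpoint; a degenerate Carleman estimate on the reflected interval then controls $B_0$, once the observation is carried across $x_0$ by an auxiliary cut-off whose support meets $\omega$, combined with a further use of Proposition \ref{caccio} and of the already-proven bound for the left-hand side of Theorem \ref{Cor1}. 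I expect the genuine technical obstacles to be: $(i)$ the compatibility, on the various overlap regions, of the interior-degenerate weight $\varphi$ with the non degenerate (and boundary-degenerate) weights of the localized estimates — this is precisely what forces $c_1$ in \eqref{c_1} to be chosen large, as stated in the lemma; and $(ii)$ checking that the reflected function and all cut-off products remain in the function spaces ($\mathcal{S}_1$, $\mathcal{V}_1$) demanded by the Carleman estimates invoked. Once $B_0$ and $B_1$ are both controlled, substitution into the first displayed inequality completes the proof.
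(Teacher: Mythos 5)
Your overall architecture is different from the paper's: you apply Theorem \ref{Cor1} to $v$ itself and then try to absorb the two resulting sign-unfavourable boundary terms, whereas the paper never lets those terms appear — it applies the degenerate Carleman estimate only to cut-off functions ($\xi v$, and $\rho W$ after reflection) that vanish near $x=0$ and $x=1$, so every boundary term is either zero or has the good sign and is discarded. Your treatment of $B_1$ is plausible (and is a genuinely different use of Theorem \ref{mono}: the paper simply neglects the favourable boundary term at $x=1$ rather than exploiting it to estimate $(v_x)^2(t,1)$), but it contains a slip: after transposing the negative boundary term you drop the weight and claim a bound by $\frac{C}{s}\int_0^T\int_{\lambda_1}^{\lambda_2}(v^2+(v_x)^2)\,dxdt$. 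Proposition \ref{caccio} controls only the \emph{weighted} integral $\int_0^T\int_{\omega'}(v_x)^2e^{2s\varphi}\,dxdt$ — the vanishing of the weight at $t=0,T$ is what makes it true — so the unweighted $\int_0^T\int_{\lambda_1}^{\lambda_2}(v_x)^2$ is not controlled by $\int_0^T\int_\omega v^2$. You must keep $e^{2s\Phi^{(1)}}$ throughout and apply Proposition \ref{caccio} with $\Upsilon=\psi^{(1)}$, exactly as the paper does in \eqref{standa'}.

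The genuine gap is $B_0$, the boundary term at the endpoint lying on the far side of $x_0$ from $\omega$, and your sketch for it would not close. Restricting $v$ to $(0,x_0)$ and decoupling at $x_0$ is unavailable in the weakly degenerate case (the paper's Comment 1 states explicitly that the lack of a characterization of $D(\mathcal A_1)$ prevents splitting the system there), and even in the strongly degenerate case it is self-defeating: the decoupled problem on $(0,x_0)$, reflected about $x_0$ or not, is a boundary-degenerate problem containing \emph{no} piece of the observation region, so any Carleman estimate for it produces boundary or source terms that cannot be bounded by $\int_0^T\int_\omega v^2$ — the only channel through which $\omega$ sees $(0,x_0)$ is the coupling across $x_0$ that you have just severed. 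An auxiliary cut-off ``whose support meets $\omega$'' necessarily straddles $x_0$, returning you to the interior-degenerate situation you started from. The device that actually works, and that the paper uses, is an \emph{odd reflection of $v$ across the endpoint $x=0$} (not across $x_0$): the extension $W$ on $(-1,1)$, with $\tilde a(x)=a(|x|)$, is multiplied by a cut-off $\rho$ supported in $(-\gamma,\gamma)$ and equal to $1$ on $[-\lambda,\lambda]$, so that $Z=\rho W$ vanishes near $\pm\beta$ (no boundary terms in the Carleman estimate on $(-\beta,\beta)$) and the commutator $\tilde h$ is supported in $\pm(\lambda,\gamma)$; by the oddness of $W$ the contribution from $(-\gamma,-\lambda)$ equals that from $(\lambda,\gamma)\subset\omega$, which Caccioppoli then controls. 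Without this (or an equivalent mechanism) your first displayed inequality cannot be closed, since $B_0$ remains unestimated.
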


We underline the fact that for the proof of the previous lemmas a
crucial r\^{o}le will be played also by the Carleman estimate for
nondegenerate equations with nonsmooth coefficient proved in Theorem
\ref{mono}.

\begin{proof}[Proof of Lemma $\ref{lemma3}$]
By assumption, we can find two subintervals
$\omega_1=(\lambda_1,\beta_1)\subset (0, x_0),
\omega_2=(\lambda_2,\beta_2) \subset (x_0,1)$ such that $(\omega_1
\cup \omega_2) \subset \subset \omega \setminus \{x_0\}$. Now,
consider a smooth function $\xi:[0,1]\to[0,1]$ such that
\[
\xi(x)=\begin{cases} 0& x\in [0,\alpha],\\
1 & x\in [\lambda_1,\beta_2]\\
 0 &x\in [\beta,1],
\end{cases}
\]
and define $w:= \xi v$, where $v$ solves \eqref{h=01}.
Hence, $w$ satisfies
\begin{equation}\label{eq-w*}
\begin{cases}
w_t + (a  w_x) _x =( a \xi _x v )_x + \xi _x a v_x =:f,&
(t,x) \in(0, T)\times (0,1), \\
w(t,0)= w(t,1)=0, & t \in (0,T).
\end{cases}
\end{equation}
Applying Theorem \ref{Carleman} and using the fact that $w=0$ in a
neighborhood of $x=0$  and $x=1$, we have
\begin{equation}\label{car9}
\int_{Q_T}\Big( s \Theta  a (w_x)^2 + s^3 \Theta^3
   \frac{(x-x_0)^2}{a} w^2 \Big)
    e^{2s \varphi} \, dx dt
       \le C\int_{Q_T}e^{2s \varphi} f^2  dxdt
\end{equation}
for all $s \ge s_0$. Then, using the definition of $\xi$  and in
particular the fact that  $\xi_x$ and  $\xi_{xx}$ are supported
inside $\tilde \omega:= [\alpha, \lambda_1] \cup[ \beta_2, \beta]$, from Hypothesis \ref{Ass03}
we can write
\[
f^2= (( a \xi_x v )_x + a\xi_x  v_x)^2 \le C(
v^2+ (v_x)^2)\chi_{\tilde \omega}.
\]

Hence, applying Proposition \ref{caccio} and inequality
\eqref{car9}, we get
 \begin{equation}\label{stimacar}
\begin{aligned}
&\int_0^T\int_{\lambda_1}^{\beta_2}\left( s \Theta a (v_x)^{2} + s^3
\Theta ^3 \frac{(x-x_0)^2}{a} v^{2}\right) e^{{2s\varphi}} dxdt\\
&=\int_0^T\int_{\lambda_1}^{\beta_2}\Big( s \Theta a
(w_x)^2 + s^3 \Theta^3 \frac{(x-x_0)^2}{a} w^2 \Big)
e^{2s \varphi} \, dx dt\\
&\le \int_0^T \int_0^1\Big( s \Theta  a (w_x)^2 +
s^3 \Theta^3
\frac{(x-x_0)^2}{a} w^2 \Big) e^{2s \varphi} \, dx dt\\
& \le C  \int_0^T \int_{\tilde \omega}e^{2s \varphi}( v^2+
(v_x)^2)dxdt \le C \int_0^T \int_{\omega} v^2dxdt,
\end{aligned}
\end{equation}
for a positive constant $C$.

Now, we consider a smooth function $\tau: [0,1] \to [0,1]$ such that
\[
\tau (x)=
\begin{cases}
0 &     x \in \left[0,\frac{\lambda_2+ \beta_2}{2}\right
],\\
1& x\in[\beta_2 , 1].
\end{cases}\]

Define $z:= \tau v$, where $v$ is the solution of \eqref{h=01}. Then
$z$ satisfies \eqref{1}, with $h:=( a \tau_x v)_x + a\tau_x v_x$, $A= \lambda_2$ and $B=1$. Since $h$ is supported in
$\left[\frac{\lambda_2+ \beta_2}{2}, \beta_2\right]$, by Proposition
\ref{caccio}, Theorem \ref{mono} applied with $A=\lambda_2$, $B=1$
and Remark \ref{RemCarleman},
we get
\begin{equation}\label{lun}
\begin{aligned}
&\int_0^T\int_{\lambda_2}^1 s \Theta (z_x)^2e^{2s\Phi} dxdt
+\int_0^T\int_{\lambda_2}^1 s^3\Theta^3 z^2e^{2s\Phi} dxdt\\& \le c
\int_0^T\int_{\lambda_2}^1e^{2s\Phi} h^2 dxdt\le C \int_0^T
\int_{\tilde \omega_1} v^2dxdt + C \int_0^T \int_{\tilde
\omega_1}e^{2s\Phi}(v_x)^2dxdt\\&\le C \int_0^T \int_{\omega}
v^2dxdt,
\end{aligned}
\end{equation}
where $\tilde \omega_1 =(\lambda_2, \beta_2)$. Let us remark that
the boundary term in $x=1$ is nonpositive, while the one in
$x=\lambda_2$ is 0, so that they can be neglected in the classical
Carleman estimate.

Now, choose the constant $c_1$ in \eqref{c_1} so
that
\begin{equation}\label{c_1magg'}
c_1 \ge \begin{cases} \frac{\displaystyle  r\left[\int_
{\lambda_2}^1 \frac{1}{\sqrt{a(t)}} \int_t^1 \fg(s) dsdt +
\int_{\lambda_2}^1 \frac{\fh_0}{\sqrt{a(t)}}dt\right]+
\mathfrak{c}}{\displaystyle c_2-\frac{(1-x_0)^2}{a(1)(2-K)}}=:\Pi& \mbox{
in the (WD) case},\\
\frac{\displaystyle \mathfrak{c}-1}{\displaystyle
c_2-\frac{(1-x_0)^2}{a(1)(2-K)}} & \mbox{ in the (SD) case},
\end{cases}
\end{equation}
where $\mathfrak{c}$ is the constant appearing in \eqref{c_1nd}.
Then, by definition of $\varphi$, the choice of $c_1$ and by Lemma
\ref{rem}, one can prove that there exists a positive constant $k$,
for example
\[k = \max \left\{\max_{\left[\lambda_2,
1\right]}a,\frac{(1-x_0)^2}{a(1)}\right\},\] such that
\begin{equation}\label{prima'}
a(x) e^{2s\varphi(t,x)} \le k e^{2s\Phi(t,x)}
\end{equation}
and
\begin{equation}\label{seconda'}
\frac{(x-x_0)^2}{a(x)}e^{2s\varphi(t,x)} \le k e^{2s \Phi(t,x)}
\end{equation}
for every $(t,x) \in [0, T] \times \left[\lambda_2, 1\right]$. Note
that the value of $k$ can be immediately found by estimating the
coefficients of $e^{2s\varphi(t,x)}$ in \eqref{prima'} and
\eqref{seconda'}, once known that $e^{2s\varphi(t,x)}\leq
e^{2s\Phi(t,x)}$, using Lemma \ref{rem}. Finally, condition
\eqref{c_1magg'} is a sufficient one to get $e^{2s\varphi(t,x)}\leq
e^{2s\Phi(t,x)}$, and it can be found by using Lemma \ref{rem} and
rough estimates.

Thus, by \eqref{lun}, one has
\[
\begin{aligned}
&\int_0^T\int_{\lambda_2}^1 \Big(s \Theta a (z_x)^2
 + s^3 \Theta^3
      \frac{(x-x_0)^2}{a}z^2\Big) e^{2s\varphi}dxdt \\&
\le k \int_0^T\int_{\lambda_2}^1 s \Theta (z_x)^2e^{2s\Phi} dxdt
+k\int_0^T\int_{\lambda_2}^1 s^3\Theta^3 z^2e^{2s\Phi} dxdt\\&
      \le kC \int_0^T
\int_{\omega} v^2dxdt,
\end{aligned}
\]
for a positive constant $C$.
As a trivial consequence,
\begin{equation}\label{stimacar2}
\begin{aligned}
&\int_0^T\int_{\beta_2}^1 \Big(s \Theta a (v_x)^2 + s^3 \Theta^3
\frac{(x-x_0)^2}{a}v^2\Big) e^{2s\varphi} dxdt\\
&=\int_0^T\int_{\beta_2}^1 \Big(s \Theta a
(z_x)^2 + s^3 \Theta^3
\frac{(x-x_0)^2}{a}z^2\Big) e^{2s\varphi}dxdt\\
&\le \int_0^T\int_{\lambda_2}^1 \Big(s \Theta a
(z_x)^2 + s^3
\Theta^3\frac{(x-x_0)^2}{a}z^2\Big) e^{2s\varphi}dxdt\\
&\le \int_0^T \int_{\omega} v^2dxdt,
\end{aligned}
\end{equation}
for a positive constant $C$.

Thus \eqref{stimacar} and \eqref{stimacar2} imply
    \begin{equation}\label{carin0}
    \begin{aligned}
      \int_{0}^T \int _{\lambda_1}^{1}  \Big( s \Theta  a (v_x)^2 + s^3 \Theta^3 \frac{(x-x_0)^2}{a}  v^2 \Big)
      e^{2s \varphi } \, dx dt
     \le  C \int_{0}^T \int _{\omega}   v^2  dxdt,
    \end{aligned}
\end{equation}
for some positive constant $C$.

To complete the proof it is sufficient to prove a similar inequality
on the interval $[0,\lambda_1]$. To this aim, we perform a
reflection procedure introducing the functions
\begin{equation}\label{W}
W(t,x):= \begin{cases} v(t,x), & x \in [0,1],\\
-v(t,-x), & x \in [-1,0],
\end{cases}
\end{equation}
where $v$ solves \eqref{h=01}, and
\begin{equation}\label{tildea}
\tilde a(x):= \begin{cases} a(x), & x \in [0,1],\\
a(-x), & x \in [-1,0].
\end{cases}
\end{equation}
Then $W$ satisfies the problem
\begin{equation}\label{dispari}
\begin{cases}
W_t +(\tilde a W_x)_{x}= 0, &(t,x) \in  (0,T)\times (-1,1),
\\[5pt]
W(t,-1)=W(t,1) =0, & t \in (0,T).
\end{cases}
\end{equation}

As above, we introduce a smooth function $\rho:[-1,1]\to[0,1]$ such that
\[
\rho (x)=\begin{cases}
0& x\in
[-1,-\frac{\lambda_1+\beta_1}{2}],\\
1& x\in[-\lambda_1,\lambda_1],\\
0& \in [\frac{\lambda_1+\beta_1}{2},1].
\end{cases}
\]

Finally, set $Z:= \rho W$, where $W$ is the solution of
\eqref{dispari}. Then $Z$ satisfies \eqref{1}  with $h:=( \tilde a
\rho_x W)_x + \tilde a\rho_x W_x$.
Observe that $Z_x(t, -\beta_1)=Z_x(t, \beta_1)=0$.
Using Proposition \ref{caccio}, Theorem \ref{mono} with  $A=
-\beta_1$ and $B=\beta_1$, Remark \ref{RemCarleman},
 the definition
of $W$ and the fact that $h$ is supported in
$\left[-\frac{\lambda_1+ \beta_1}{2},-\lambda_1\right]
\cup\left[\lambda_1, \frac{\lambda_1+ \beta_1}{2}\right]$ give
\begin{equation}\label{lunga}
\begin{aligned}
&\int_0^T\int_{-\beta_1}^{\beta_1} s \Theta (Z_x)^2e^{2s\Phi} dxdt
+\int_0^T\int_{-\beta_1}^{\beta_1} s^3\Theta^3 Z^2e^{2s\Phi} dxdt \\
& \le C
\int_0^T\int_{-\beta_1}^{\beta_1}e^{2s\Phi} h^2 dxdt \\
&\le C \int_0^T
\int_{-\frac{\lambda_1+\beta_1}{2}}^{-\lambda_1}e^{2s\Phi}( W^2+
(W_x)^2)dxdt + C\int_0^T \int_{\lambda_1}^{\frac{\lambda_1+
\beta_1}{2}}e^{2s\Phi}(W^2+ (W_x)^2)dxdt\\
&\le C \int_0^T \int_{\lambda_1}^{\frac{\lambda_1+
\beta_1}{2}}e^{2s\Phi}( W^2+( W_x)^2)dxdt\\
& \leq C \int_0^T \int_{\lambda_1}^{\frac{\lambda_1+ \beta_1}{2}}
v^2dxdt+  C \int_0^T \int_{\lambda_1}^{\frac{\lambda_1+
\beta_1}{2}}e^{2s\Phi}(v_x)^2dxdt \le C \int_0^T \int_{\omega}
v^2dxdt,
\end{aligned}
\end{equation}
for some positive constants $C$, which we allow to vary from line to
line.
Now, define
\[
\tilde \varphi(t,x) := \Theta(t) \tilde \psi (x),
\]
where
\begin{equation}\label{tildepsi}
\tilde \psi(x) := \begin{cases}
\psi(x), & x \ge 0,\\
\displaystyle \psi(-x)= c_1\left[\int_{-x_0}^x \frac{t+x_0}{\tilde
a(t)}dt -c_2\right], & x <0.\end{cases}
\end{equation}
and choose the constant $c_1$ so that
\[
c_1 \ge \begin{cases}\max\left\{ \Pi,
 \frac{\displaystyle  r\left[\int_ {-\beta_1}^{\beta_1} \frac{1}{\sqrt{a(t)}}
\int_t^1 \fg(s) dsdt +
\int_{-\beta_1}^{\beta_1}\frac{\fh_0}{\sqrt{a(t)}}dt\right]+
\mathfrak{c}}{\displaystyle c_2-\frac{x_0^2}{a(0)(2-K)}}\right\} &
\mbox{ in the (WD) case},\\
\max\left\{\frac{\displaystyle \mathfrak{c}-1}{\displaystyle
c_2-\frac{(1-x_0)^2}{a(1)(2-K)}}, \frac{\displaystyle
\mathfrak{c}-1}{\displaystyle
c_2-\frac{x_0^2}{a(0)(2-K)}}\right\} & \mbox{ in the (SD) case}.
 \end{cases}
\]
Thus, by definition of $\tilde \varphi$, one can prove as before
that there exists a positive constant $k$, for example
\[
k = \max \left\{\max_{\left[-\beta_1, \beta_1\right]}\tilde
a,\frac{(x_0)^2}{ a(0)}\right\},
\]
such that
\[
\tilde a(x) e^{2s\tilde\varphi(t,x)} \le k e^{2s\Phi(t,x)}
\]and
\[ \frac{(x-x_0)^2}{\tilde a(x)}e^{2s\tilde\varphi(t,x)} \le k
e^{2s \Phi(t,x)}
\]
for every $(t,x) \,\in \,[0, T] \times \left[-\beta_1,
\beta_1\right]$. Thus, by \eqref{lunga}, one has
 \begin{equation}\label{stimacar20}
\begin{aligned}
& \int_0^T\int_{-\beta_1}^{\beta_1} \Big(s \Theta \tilde a (Z_x)^2+
s^3 \Theta^3  \frac{(x-x_0)^2}{\tilde a}Z^2\Big)
e^{2s\tilde\varphi}dxdt\\&
     \le k\int_0^T\int_{-\beta_1}^{\beta_1} s \Theta (Z_x)^2e^{2s\Phi} dxdt + k\int_0^T\int_{-\beta_1}^{\beta_1}
s^3\Theta^3Z^2e^{2s\Phi} dxdt \\
&\le kC \int_0^T \int_{\omega}v^2 dxdt.
\end{aligned}
\end{equation}
Hence, by \eqref{stimacar20} and the definition of $W$ and $Z$, we
get
\begin{equation}\label{car10}
\begin{aligned}
&\int_0^T\int_0^{\lambda_1}  \Big( s^3 \Theta^3
\frac{(x-x_0)^2}{a}v^2+s \Theta a (v_x)^2\Big) e^{2s\varphi}dxdt\\
&= \int_0^T\int_0^{\lambda_1}  \Big( s^3 \Theta^3
\frac{(x-x_0)^2}{a}W^2+s \Theta a (W_x)^2\Big) e^{2s\varphi} dxdt \\
&\le \int_0^T\int_{-\lambda_1}^{\lambda_1} \Big( s^3 \Theta^3
\frac{(x-x_0)^2}{\tilde a}W^2+s \Theta \tilde a (W_x)^2\Big) e^{2s\tilde\varphi} dxdt \\
&=\int_0^T\int_{-\lambda_1}^{\lambda_1} \Big( s^3
\Theta^3
\frac{(x-x_0)^2}{\tilde a}Z^2+s \Theta \tilde a (Z_x)^2\Big) e^{2s\tilde\varphi}dxdt \\
& \le \int_0^T\int_{-\beta_1}^{\beta_1} \Big( s^3
\Theta^3 \frac{(x-x_0)^2}{\tilde a}Z^2+s \Theta \tilde a
(Z_x)^2\Big)
e^{2s\tilde\varphi}dxdt \\
&\le C \int_0^T \int_{\omega} v^2dxdt,
\end{aligned}
\end{equation}
for a positive constant $C$.

Therefore, by \eqref{carin0} and \eqref{car10}, Lemma \ref{lemma3}
follows.
\end{proof}

\bigskip

\begin{proof}[Proof of Lemma $\ref{lemma3'}$]
The idea is quite similar to that of the proof of Lemma
\ref{lemma3}, so we will be faster in the calculations. Suppose that
$x_0 <\alpha$ (the proof is analogous if we assume that $\beta <
x_0$ with obvious adaptations); moreover, set $\lambda:=
\frac{2\alpha +\beta}{3}$ and $\gamma:= \frac{\alpha +2\beta}{3}$,
so that $\alpha<\lambda<\gamma<\beta$. Now,  fix $\tilde \alpha \in (\alpha, \lambda)$, $\tilde \beta \in (\gamma, \beta)$ and  consider a smooth
function $\xi:[0,1]\to[0,1]$ such that
\[
\xi(x)=\begin{cases} 0&x\in [0,\tilde\alpha],\\
1 & x\in[\lambda,\gamma],\\
0&x\in [\tilde\beta,1].
\end{cases}
\]
Then, define $w:= \xi v$, where $v$ is any fixed solution of
\eqref{h=01}, so that $w$ satisfies \eqref{eq-w*} with
\begin{equation}\label{f}
f^2= (( a \xi_x v )_x + a\xi_x  v_x)^2 \le C(
v^2+ (v_x)^2)\chi_{\hat \omega},
\end{equation}
where $\hat\omega=(\tilde \alpha, \lambda) \cup( \gamma,\tilde \beta)$, by Hypothesis \ref{Ass03}.

Applying Theorem \ref{Cor1} and using the fact that $w\equiv0$ in a
neighborhood of $x=0$  and $x=1$, we have
\begin{equation}\label{car9'}
\begin{aligned}
\int_0^T \int_0^1\Big( s \Theta a  (w_x)^2 + s^3 \Theta^3
\frac{(x-x_0)^2}{a}\ w^2 \Big) e^{2s \varphi} \, dx dt \le C
\int_0^T \int_0^1e^{2s \varphi} f^2  dxdt,
\end{aligned}
\end{equation}
for all $s \ge s_0$. Hence, we find
\begin{equation}\label{stimacar'}
\begin{aligned}
&\int_0^T\int_{\lambda}^{\gamma}\left( s \Theta a(v_x)^{2} + s^3
\Theta ^3 \frac{(x-x_0)^2}{a} v^{2}\right)
e^{{2s\varphi}} dxdt\\
&=\int_0^T\int_{\lambda}^{\gamma}\Big( s \Theta  a
(w_x)^2 + s^3 \Theta^3 \frac{(x-x_0)^2}{a} w^2 \Big)
e^{2s \varphi} \, dx dt\\
&\le \int_0^T \int_0^1\Big( s \Theta  a(w_x)^2 + s^3
\Theta^3 \frac{(x-x_0)^2}{a}w^2 \Big)
e^{2s \varphi} \, dx dt\\
& \mbox{ (by \eqref{car9} and \eqref{f})}\\
&\le C  \int_0^T \int_{\hat \omega}e^{2s \varphi}( v^2+
(v_x)^2)dxdt\\
& \mbox{ (by Proposition \ref{caccio} with $\vp=\Theta\psi$, since
$\hat \omega\subset \subset \omega$, and using the fact that}\\
& \mbox{ $e^{2s\varphi}$ is bounded)}\\
&\le C \int_0^T \int_{\omega} v^2 dxdt.
\end{aligned}
\end{equation}

Analogously, we define a smooth function $\tau: [0,1]
\to [0,1]$ such that
\[
\tau(x)=\begin{cases} 0& x\in [0,\lambda],\\
1& x\in [\gamma, 1].
\end{cases}
\]
Defining
$z:= \tau v$, then $z$ satisfies
\begin{equation}\label{eq-z*'}
\begin{cases}
z_t + (a  z_{x})_x= h,  &(t,x) \in(0,T)\times (\alpha,1)\\
z(t,\alpha)= z(t,1)=0, & t \in (0,T),
\end{cases}
\end{equation}
with $h:=( a \tau_x v )_x + a\tau_x  v_x$, which
is supported in $\tilde \omega =(\lambda, \gamma)$.

Observe that \eqref{eq-z*'} is a nondegenerate problem, hence, thanks to Remark \ref{RemCarleman}, we
can apply the classical
Carleman estimate \eqref{3} with $A=\alpha$ and $B=1$, obtaining
\[
\begin{aligned}
&\int_0^T\int_{\alpha}^1 s \Theta (z_x)^2e^{2s\Phi}
dxdt+ \int_0^T\int_{\alpha}^1 s^3\Theta^3 z^2e^{2s\Phi} dxdt  \\
& \le c\int_0^T\int_{\alpha}^1e^{2s\Phi} h^2 dxdt,
\end{aligned}
\]
where $r>0$, $s\geq s_0$ and $c>0$. Let us note that the boundary
term which appears in the original estimate is nonpositive and thus
is neglected.

Now, we use Proposition \ref{caccio}, getting as above
\begin{equation}\label{standa'}
\begin{aligned}
&\int_0^T\int_{\alpha}^1 s \Theta(z_x)^2e^{2s\Phi} dxdt+
\int_0^T\int_{\alpha}^1 s^3\Theta^3z^2e^{2s\Phi} dxdt \\
&\le C \int_0^T\int_{\tilde \omega}e^{2s\Phi}( v^2+ (v_x)^2)dxdt\le
C \int_0^T\int_{\tilde \omega} v^2dxdt+
C\int_0^T\int_{\tilde \omega}e^{2s\Phi}(v_x)^2dxdt \\
& \le C \int_0^T\int_{\omega} v^2dxdt.
\end{aligned}
\end{equation}

Now, choose the constant $c_1$ in \eqref{c_1} so
that
\[
c_1 \ge \begin{cases} \frac{\displaystyle  r\left[\int_ {\alpha}^1
\frac{1}{\sqrt{a(t)}} \int_t^1 \fg(s) dsdt + \int_{\alpha}^1
\frac{\fh_0}{\sqrt{a(t)}}dt\right]+ \mathfrak{c}}{\displaystyle
c_2-\frac{(1-x_0)^2}{a(1)(2-K)}} &\mbox{ in the (WD)
case},\\
\frac{\displaystyle \mathfrak{c}-1}{\displaystyle
c_2-\frac{(1-x_0)^2}{a(1)(2-K)}} &\mbox{ in the (SD) case}.
\end{cases}
\]
Then, by definition of $\varphi$ and the choice of $c_1$, one can
prove that there exists a positive constant $k$, for example
\[
k = \max \left\{\max_{\left[\alpha,
1\right]}a(x),\frac{(1-x_0)^2}{a(1)}\right\},
\]
such that
\[
a(x) e^{2s\varphi(t,x)} \le k e^{2s\Phi(t,x)}
\]
and
\[
\frac{(x-x_0)^2}{a(x)}e^{2s\varphi(t,x)} \le k e^{2s \Phi(t,x)}
\]
$\forall \,(t,x) \in [0, T] \times \left[\alpha, 1\right]$. Thus, by
\eqref{prima'} and \eqref{seconda'}, via \eqref{standa'}, we find
\[
\begin{aligned}
&\int_0^T\int_{\alpha}^1 \Big(s \Theta a (z_x)^2
 + s^3 \Theta^3
     \frac{(x-x_0)^2}{a}z^2\Big) e^{2s\varphi}dxdt \\
& \le k \int_0^T\int_{\alpha}^1 s \Theta (z_x)^2e^{2s\Phi} dxdt +k\int_0^T\int_{\alpha}^1 s^3\Theta^3 z^2e^{2s\Phi} dxdt\\
& \le C \int_0^T \int_{\omega} v^2dxdt,
\end{aligned}
\]
for a positive constant $C$ and $s$ large enough. Hence, by
definition of $z$ and by the inequality above, we get
\begin{equation}\label{stimacar2'}
\begin{aligned}
&\int_0^T\int_{\gamma}^1 \Big(s \Theta a (v_x)^2 + s^3 \Theta^3
\frac{(x-x_0)^2}{a}v^2\Big) e^{2s\varphi} dxdt\\
&=\int_0^T\int_{\gamma}^1 \Big(s \Theta a (z_x)^2 +
s^3 \Theta^3
\frac{(x-x_0)^2}{a}z^2\Big) e^{2s\varphi}dxdt\\
& \le \int_0^T\int_{\alpha}^1 \Big(s \Theta a
(z_x)^2 + s^3 \Theta^3 \frac{(x-x_0)^2}{a}z^2\Big) e^{2s\varphi}dxdt
\\
&\le C \int_0^T \int_{\omega} v^2dxdt,
\end{aligned}
\end{equation}
for a positive constant $C$ and for $s$ large enough.

Thus \eqref{stimacar'} and \eqref{stimacar2'} imply
\begin{equation}\label{carin0'}
\begin{aligned}
\int_0^T \int _{\lambda}^{1}  \Big( s \Theta  a (v_x)^2 + s^3
\Theta^3 \frac{(x-x_0)^2}{a}  v^2 \Big) e^{2s \varphi } \, dx dt \le
C \int_{0}^T \int _{\omega}   v^2  dxdt,
\end{aligned}
\end{equation}
for some positive constant $C$ and $s\geq s_0$.

To complete the proof it is sufficient to prove a similar inequality
for $x\in[0,\lambda]$. To this aim, we follow the reflection
procedure introduced in the proof of Lemma~\ref{lemma3}: consider
the functions $W$ and $\tilde a$ introduced in \eqref{W} and
\eqref{tildea}, so that $W$ satisfies \eqref{dispari}.

Now, consider a smooth function $\rho: [-1,1] \to [0,1]$ such that
\[
\rho(x)=
\begin{cases} 0 & x\in[-1,-\gamma],\\
1& x\in [-\lambda, \lambda],\\
0&x\in [\gamma,1],
\end{cases}
\]
and define $Z:= \rho W$; thus $Z$
satisfies
\begin{equation}\label{eq-Z*'}
\begin{cases}
Z_t + (\tilde aZ_x)_x=\tilde h,  &(t,x) \in (0,T)\times (-\beta,\beta),\\
Z(t,-\beta)= Z(t,\beta)=0, & t \in (0,T),
\end{cases}
\end{equation}
with $\tilde h=( \tilde a \rho_x W)_x + \tilde a\rho_x
W_x$, which is supported in
$\left[-\gamma,-\lambda\right] \cup\left[\lambda, \gamma\right]$.

Now, define $\tilde \varphi(t,x) := \Theta(t) \tilde \psi (x)$,
where $ \tilde \psi(x)$ is defined as in \eqref{tildepsi}. Using the
analogue of Theorem \ref{Cor1} on $(- \beta, \beta)$ in place of
$(0,1)$ and with $\varphi$ replaced by $\tilde \varphi$, the
equalities $Z_x(t, -\beta)=Z_x(t, \beta)=0$, and the definition of
$W$, we get
\[
\begin{aligned}
& \int_0^T\int_{-\beta}^{\beta}  \left(s\Theta \tilde a (Z_x)^2 +
s^3 \Theta^3
\frac{(x-x_0)^2}{\tilde a } Z^2\right)e^{2s\tilde\varphi}dxdt\\
& \le c
\int_0^T\int_{-\beta}^{\beta}e^{2s\tilde\varphi} \tilde h^{2}dxdt\\
&\le C \int_0^T \int_{-\gamma}^{-\lambda}(
W^2+ (W_x)^2)e^{2s \tilde \varphi}dxdt + C\int_0^T \int_{\lambda}^{\gamma}(W^2+ (W_x)^2)e^{2s\varphi}dxdt\\
&\mbox{(since $\tilde \psi (x)= \psi (-x)$ for $x <0$)}\\
& = 2C\int_0^T \int_{\lambda}^{\gamma}(W^2+
(W_x)^2)e^{2s\varphi}dxdt = 2C\int_0^T \int_{\lambda}^{\gamma}(v^2+
(v_x)^2)e^{2s\varphi}dxdt\\
& \mbox{ (by Propositions \ref{caccio}) }\\
& \le C \int_0^T \int_{\omega} v^2dxdt,
\end{aligned}
\]
for some positive constants $c$ and $C$ and $s$ large enough.

 Hence, by the definitions of $Z$, $W$ and $\rho$, and using the previous inequality one has
\begin{equation}\label{car10'}
\begin{aligned}
&\int_0^T\int_{0}^{\lambda}  \left(s\Theta a(v_x)^2 + s^3 \Theta^3
\frac{(x-x_0)^2}{a} v^2\right)e^{2s\varphi}dxdt\\
&= \int_0^T\int_{0}^{\lambda}  \left(s\Theta a (W_x)^2 + s^3
\Theta^3
\frac{(x-x_0)^2}{a} W^2\right)e^{2s\varphi}dxdt\\
&= \int_0^T\int_{0}^{\lambda}  \left(s\Theta
a(Z_x)^2 + s^3 \Theta^3
\frac{(x-x_0)^2}{a} Z^2\right)e^{2s\varphi}dxdt\\
&\le \int_0^T\int_{0}^{\beta}  \left(s\Theta
a(Z_x)^2 + s^3 \Theta^3
\frac{(x-x_0)^2}{a}Z^2\right)e^{2s\varphi}dxdt\\
&\le \int_0^T\int_{-\beta}^{\beta} \left(s\Theta
\tilde a (Z_x)^2 + s^3 \Theta^3
\frac{(x-x_0)^2}{\tilde a} Z^2\right)e^{2s\varphi}dxdt\\
&\le C \int_0^T \int_{\omega} v^2dxdt,
\end{aligned}
\end{equation}
for a positive constant $C$ and $s$ large enough. Therefore, by
\eqref{carin0'} and \eqref{car10'}, the conclusion follows.
\end{proof}

We shall also use the following
\begin{Lemma}\label{obser.regular}
Assume Hypotheses $\ref{ipotesiomega}$ and $\ref{Ass03}$. Then there
exists a positive constant $C_T$ such that every solution $v \in
\cal W_1$ of \eqref{h=01} satisfies
\[
\int_0^1v^2(0,x) dx \le C_T\int_0^T \int_{\omega}v^2(t,x)dxdt.
\]
\end{Lemma}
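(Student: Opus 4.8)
The plan is to derive the observability inequality for the regular-datum adjoint problem \eqref{h=01} as a standard consequence of the Carleman estimate of Theorem \ref{Cor1}, combined with the two cut-off/reflection Lemmas \ref{lemma3} and \ref{lemma3'}, a Caccioppoli inequality, and the dissipativity of the semigroup. First I would fix $v\in\cal W_1$, which by the inclusion $\cal W_1\subset\mathcal{S}_1$ is a legitimate test function for all the previous Carleman estimates. The heart of the argument is to bound $\int_0^1 v^2(0,x)\,dx$ by the weighted integral $\int_{Q_T}\left(s\Theta a (v_x)^2 + s^3\Theta^3\frac{(x-x_0)^2}{a}v^2\right)e^{2s\varphi}\,dxdt$ on a fixed time sub-interval, and then to control that weighted integral by $\int_0^T\int_\omega v^2\,dxdt$ using Lemma \ref{lemma3} when $x_0\in\omega$ (case \eqref{omega1}) or Lemma \ref{lemma3'} when $x_0\notin\bar\omega$ (case \eqref{omega}).

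The key steps, in order, would be the following. \textbf{Step 1 (energy monotonicity).} Since $v$ solves the backward adjoint equation $v_t+(av_x)_x=0$ with homogeneous Dirichlet data, multiplying by $v$ and integrating shows $t\mapsto\|v(t)\|_{L^2(0,1)}^2$ is nondecreasing (the boundary terms vanish and $-\int_0^1 a v_x^2\le 0$); hence $\|v(0)\|_{L^2}^2\le\|v(t)\|_{L^2}^2$ for all $t\in[0,T]$, and integrating over a fixed sub-interval, say $[T/4,3T/4]$, gives $\frac{T}{2}\|v(0)\|_{L^2(0,1)}^2\le\int_{T/4}^{3T/4}\|v(t)\|_{L^2(0,1)}^2\,dt$. \textbf{Step 2 (insert the weight).} On $[T/4,3T/4]\times(0,1)$ the function $\Theta$ is bounded above and below by positive constants and $\varphi=\Theta\psi$ satisfies $-c_1c_2\Theta_{\min}\le 2s\varphi\le -c_1c_2\Theta_{\max}<0$ for $t$ in this interval, since $\psi\le -c_1c_2<0$ (wait — actually $\psi\ge -c_1c_2$ and $\psi<0$, so $e^{2s\varphi}$ is bounded below by a positive constant depending on $s$ on the sub-interval); therefore $s^3\Theta^3\frac{(x-x_0)^2}{a}$ times $e^{2s\varphi}$ is bounded below by a positive constant on $[T/4,3T/4]\times(0,1)$... this fails at $x=x_0$. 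So instead I would use the standard trick: the weight $s^3\Theta^3\frac{(x-x_0)^2}{a}e^{2s\varphi}$ dominates a constant on $[T/4,3T/4]\times((0,1)\setminus I_\delta)$ for a small neighborhood $I_\delta$ of $x_0$, and on $I_\delta$ one uses the Hardy--Poincar\'e inequality of Proposition \ref{HP} (with $p=a$, valid since $(x-x_0)a'\le Ka$, $K<2$) to absorb $\int_{I_\delta}v^2$ into $\int_0^1 a v_x^2$, which in turn is dominated by $\frac{1}{s\Theta_{\min}}$ times $\int s\Theta a v_x^2 e^{2s\varphi}$ on the sub-interval. Combining, $\|v(0)\|_{L^2(0,1)}^2\le C(s)\int_{Q_T}\left(s\Theta a v_x^2+s^3\Theta^3\frac{(x-x_0)^2}{a}v^2\right)e^{2s\varphi}\,dxdt$. \textbf{Step 3 (apply the cut-off lemmas).} Now invoke Lemma \ref{lemma3} (if \eqref{omega1} holds) or Lemma \ref{lemma3'} (if \eqref{omega} holds), both applicable since $v\in\cal W_1$ and Hypothesis \ref{Ass03} holds, to bound the right-hand side by $C\int_0^T\int_\omega v^2\,dxdt$, provided $c_1$ is chosen large enough as those lemmas require; fixing $s=s_0$ absorbs the $s$-dependence into $C_T$.

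The main obstacle is Step 2: making sense of ``inserting the weight'' uniformly near the degeneracy point $x=x_0$, where $e^{2s\varphi}$ is fine but the factor $\frac{(x-x_0)^2}{a}$ may vanish (weak case) while $a$ vanishes. The clean way around this — which I would adopt — is \emph{not} to bound $\|v(0)\|_{L^2}^2$ directly by the degenerate-weighted quantity, but rather to first bound $\|v(0)\|_{L^2}^2$ by $\int_{T/4}^{3T/4}\left(\int_0^1 a v_x^2\,dx\right)dt$ via Step 1 together with the Hardy--Poincar\'e inequality (so $\int_0^1 v^2\le C_{HP}\int_0^1 a v_x^2$, legitimate because the boundary data are homogeneous and $\int_0^1 a v_x^2<\infty$ for $v\in\mathcal{S}_1$), and then to note that on the fixed sub-interval $[T/4,3T/4]$ we have $a v_x^2 \le \frac{1}{s_0\Theta_{\min}}\cdot\frac{1}{\min e^{2s_0\varphi}}\cdot s_0\Theta a v_x^2 e^{2s_0\varphi}$ pointwise, with $\min e^{2s_0\varphi}>0$ over $[T/4,3T/4]\times[0,1]$ since $\varphi$ is continuous and bounded. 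This reduces everything to the already-available inequality $\|v(0)\|_{L^2(0,1)}^2\le C\int_{Q_T}\left(s\Theta a v_x^2+s^3\Theta^3\frac{(x-x_0)^2}{a}v^2\right)e^{2s\varphi}\,dxdt$ with $s=s_0$ fixed, after which Step 3 closes the argument. I would end the proof by remarking that the passage from $\cal W_1$ to general $L^2$ data (i.e.\ Proposition \ref{obser.}) is then obtained by the usual density argument, which is the content of the subsequent development.
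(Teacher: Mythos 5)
Your overall architecture coincides with the paper's: a monotone energy quantity transfers the $t=0$ information to the slice $[T/4,3T/4]$, where $\Theta$ and $e^{2s\varphi}$ are bounded above and below so that the Carleman weight can be inserted for free, and then Lemma \ref{lemma3} or Lemma \ref{lemma3'} closes the estimate. The paper multiplies the equation by $v_t$ to get that $t\mapsto\int_0^1 a(v_x)^2\,dx$ is nondecreasing and performs the Hardy--Poincar\'e step once, at $t=0$; you multiply by $v$ to get that $t\mapsto\|v(t)\|^2_{L^2}$ is nondecreasing and perform the Hardy--Poincar\'e step on the time slice. These two variants are interchangeable.

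The genuine gap is your justification of the inequality $\int_0^1 v^2\,dx\le C\int_0^1 a(v_x)^2\,dx$ by invoking Proposition \ref{HP} with $p=a$. The hypothesis of Proposition \ref{HP} requires $p/|x-x_0|^q$ to be \emph{nonincreasing on the left} and \emph{nondecreasing on the right} of $x_0$ for some $q\in(1,2)$; the standing assumption $(x-x_0)a'\le Ka$ yields, via Lemma \ref{rem}, exactly the \emph{reverse} monotonicity (it is $|x-x_0|^\gamma/a$ that is nonincreasing/nondecreasing), and for the weakly degenerate prototype $a=|x-x_0|^\alpha$ with $\alpha<1$ no admissible $q$ exists, since one would need $q\le\alpha$. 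So the step as written fails. The inequality you need is nevertheless true, and the paper's proof contains the device you are missing: apply Proposition \ref{HP} with $p=(a|x-x_0|^4)^{1/3}$ when $K>4/3$ (the required monotonicity being supplied by \eqref{dainfinito}, with $q=(4+\vartheta)/3\in(1,2)$) and with $p=\max_{[0,1]}a\cdot|x-x_0|^{4/3}$ when $K\le 4/3$; then $p\le Ca$ and $p/(x-x_0)^2\ge\big(a/(x-x_0)^2\big)^{1/3}\cdot\mathrm{const}\ge C_3>0$ by Lemma \ref{rem}, which gives $C_3\int_0^1 v^2\le C_{HP}\,C\int_0^1 a(v_x)^2$. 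With this substitution your argument is complete; the reduction of general $L^2$ data to $\mathcal{W}_1$ by density is indeed deferred to Proposition \ref{obser.}, as you say.
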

\begin{proof}
Multiplying the equation of \eqref{h=01} by $v_t$ and integrating by
parts over $(0,1)$, one has
\[
\begin{aligned}
&0 = \int_0^1(v_t+ (av_x)_x)v_t dx= \int_0^1 (v_t^2+ (av_x)_xv_t )dx
= \int_0^1v_t^2dx + \left[av_xv_t \right]_{x=0}^{x=1} \\&-
\int_0^1av_xv_{tx} dx= \int_0^1v_t^2dx -
\frac{1}{2}\frac{d}{dt}\int_0^1a(v_x)^2
 \ge - \frac{1}{2}
\frac{d}{dt}\int_0^1 a(v_x)^2dx.
\end{aligned}
\]
Thus, the function $t \mapsto \int_0^1 a(v_x)^2 dx$ is increasing
for all $t \in [0,T]$. In particular,
$$
\int_0^1 av_x(0,x)^2dx \le \int_0^1av_x(t,x)^2dx \mbox{ for every
}t\in[0,T].
$$
Integrating the last inequality over $\displaystyle\left[\frac{T}{4},
\frac{3T}{4} \right]$, $\Theta$ being bounded therein, we find
\[
\begin{aligned}
\int_0^1a(v_x)^2(0,x) dx &\le
\frac{2}{T}\int_{\frac{T}{4}}^{\frac{3T}{4}}\int_0^1a(v_x)^2(t,x)dxdt\\&\le
C_T \int_{\frac{T}{4}}^{\frac{3T}{4}}\int_0^1s\Theta
a(v_x)^2(t,x)e^{2s\varphi}dxdt.
\end{aligned}
\]
Hence, by Lemma \ref{lemma3} or by Lemma \ref{lemma3'} and the
previous inequality, there exists a positive constant $C$ such that
\begin{equation}\label{stum}
\int_0^1a (v_x)^2(0,x) dx \le C \int_0^T \int_{\omega}v^2dxdt.
\end{equation}

Proceeding again as in the proof of Lemma \ref{lemma2} and applying
the Hardy- Poincar\'{e} inequality, by \eqref{stum}, one has
\[
\begin{aligned}
\int_0^1 \left(\frac{a}{(x-x_0)^2}\right)^{1/3}v^2(0,x)dx &\leq
\int_0^1 \frac{p}{(x-x_0)^2} v^2(0,x)dx  \\
&\le C_{HP} \int_0^1 p(v_x)^2(0,x) dx \\&\le \max\{C_1, C_2\}C_{HP}
\int_0^1a(v_x)^2(0,x) dx\\& \le C \int_0^T\int_{\omega}v^2dxdt,
\end{aligned}
\]
for a positive constant $C$. Here $p(x) = (a(x)|x-x_0|^4)^{1/3}$ if
$K > \displaystyle\frac{4}{3}$ or $\displaystyle p(x) =\max_{[0,1]}
a|x-x_0|^{4/3}$ otherwise,
$$
C_1:=
\max\left\{\displaystyle\left(\frac{x_0^2}{a(0)}\right)^{2/3},\displaystyle\left(\frac{(1-x_0)^2}{a(1)}\right)^{2/3}\right\},
$$
$C_2:=
\max\left\{\displaystyle\frac{x_0^{4/3}}{a(0)},\displaystyle\frac{(1-x_0)^{4/3}}{a(1)}\right\}$
and $C_{HP}$ is the Hardy-Poincar\'{e} constant, as before.

By Lemma \ref{rem}, the function $\displaystyle x\mapsto
\frac{a(x)}{(x-x_0)^2}$ is nondecreasing on $[0, x_0)$ and
nonincreasing on $(x_0,1]$; then
\[
\left(\frac{a(x)}{(x-x_0)^2}\right)^{1/3}\ge
C_3:=\min\left\{\left(\frac{a(1)}{(1-x_0)^2}\right)^{1/3},
\left(\frac{a(0)}{x_0^2}\right)^{1/3}\right\} >0.
\]
Hence
\[
C_3\int_0^1v(0,x)^2dx \le C \int_0^T\int_{\omega}v^2dxdt\] and the
thesis follows.
\end{proof}

\begin{proof}[Proof of Proposition $\ref{obser.}$] The proof is now standard, but we give it with some precise references: let $v_T \in L^2(0,1)$
and let $v$ be the solution of \eqref{h=0} associated to $v_T$.
Since $D({\cal A}_1^2)$ is densely defined in $L^2(0,1)$, there exists
a sequence $(v_T^n)_{n}\subset D({\cal A}_1^2)$ which converges to
$v_T$ in $L^2(0,1)$. Now, consider the solution $v_n$ associated to
$v_T^n$.

As shown in Theorem \ref{th-parabolic}, the semigroup generated by
$\mathcal A_1$ is analytic, hence $\mathcal A_1$ is closed (for example,
see \cite[Theorem I.1.4]{en}; thus, by \cite[Theorem II.6.7]{en},
we get that $(v_n)_n$ converges to a certain $v$ in $C([0,T];
L^2(0,1))$, so that
\[
\lim_{n \rightarrow + \infty} \int_0^1v_n^2(0,x) dx =
\int_0^1v^2(0,x)dx,
\]
and also
\[
\lim_{n \rightarrow + \infty}\int_0^T \int_{\omega}v_n^2dxdt =
\int_0^T \int_{\omega}v^2dxdt.
\]
But, by Lemma \ref{obser.regular} we know that
\[
\int_0^1v_n^2(0,x) dx \le C_T\int_0^T \int_{\omega}v_n^2dxdt.
\]
Thus Proposition \ref{obser.} is now proved.
\end{proof}

\section{The non divergence case}
 In this section we make the following assumptions on the degenerate
function $a$ (see also Remark \ref{quellali} concerning the divergence case):
\begin{Assumptions}\label{Ass031} Hypothesis $\ref{Ass021}$ is satisfied. Moreover,
if Hypothesis $\ref{Ass0}$ holds, then there exist
two functions $\fg \in L^\infty_{\rm loc}([0,1]\setminus \{x_0\})$, $\fh \in W^{1,\infty}_{\rm loc}([0,1]\setminus \{x_0\},L^\infty(0,1))$ and
two strictly positive constants $\fg_0$, $\fh_0$ such that $\fg(x) \ge \fg_0$ for a.e. $x$ in $[0,1]$ and
\begin{equation}\label{5.3'}
\frac{a'(x)}{2\sqrt{a(x)}}\left(\int_x^B\fg(t) dt + \fh_0 \right)+ \sqrt{a(x)}\fg(x) =\fh(x,B)\quad \text{for a.e.} \; x,B \in [0,1]
\end{equation}
with $x<B<x_0$ or $x_0<x<B$.
\end{Assumptions}

As for the case in divergence form, by the Carleman estimates given in Theorems \ref{mono1} and
\ref{Cor11}, we will deduce a fundamental observability inequality
for the homogeneous adjoint problem to \eqref{linear}, i.e.
\begin{equation}\label{h=00}
\begin{cases}
v_t +av_{xx}= 0, &(t,x) \in  Q_T,
\\[5pt]
v(t,0)=v(t,1) =0, & t \in (0,T),\\
v(T,x)= v_T(x)\in  L^2_{\frac{1}{a}}(0,1),
\end{cases}
\end{equation}
where $T>0$ is given. Such an observability inequality will hold
true both in the weakly and in the strongly degenerate cases, as the
next proposition shows.
\begin{Proposition}\label{obser.1}
Assume \eqref{omega} and Hypothesis $\ref{Ass031}$. Then there
exists a positive constant $C_T$ such that the solution $v \in
 C([0, T]; L^2_{\frac{1}{a}}(0,1)) \cap L^2 (0,T;
\cH^1_{\frac{1}{a}}(0,1))$ of \eqref{h=00} satisfies
\begin{equation}\label{obser1.1}
\int_0^1v^2(0,x)\frac{1}{a} dx \le C_T\int_0^T
\int_{\omega}v^2\frac{1}{a}dxdt.
\end{equation}
\end{Proposition}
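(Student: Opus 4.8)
The plan is to follow the template of the divergence case (Proposition~\ref{obser.}), replacing each tool by its non divergence counterpart; a pleasant simplification is that, unlike in the divergence setting, here the Hardy inequality directly relates $\int_0^1 v^2\frac{1}{a}\,dx$ and $\int_0^1(v_x)^2\,dx$. As a first step I would reduce to regular data. Since $\mathcal A_2$ generates a strongly continuous semigroup, $D(\mathcal A_2)$ is dense in $L^2_{\frac{1}{a}}(0,1)$, and so is $D(\mathcal A_2^2):=\{u\in D(\mathcal A_2):\mathcal A_2 u\in D(\mathcal A_2)\}$; hence it suffices to prove \eqref{obser1.1} for solutions $v$ of \eqref{h=00} with $v_T\in D(\mathcal A_2^2)$, which by Theorem~\ref{theorem_nondivergence} are regular enough to belong to the class $\mathcal S_2$ of \eqref{v1} where Theorem~\ref{Cor11} holds. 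The general case $v_T\in L^2_{\frac{1}{a}}(0,1)$ then follows by approximation in $L^2_{\frac{1}{a}}(0,1)$: the corresponding solutions converge in $C([0,T];L^2_{\frac{1}{a}}(0,1))$ because $\mathcal A_2$ is closed, and one passes to the limit in the inequality exactly as at the end of the proof of Proposition~\ref{obser.}.

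The core of the argument is a non divergence analogue of Lemma~\ref{lemma3'}, namely: under \eqref{omega} and Hypothesis~\ref{Ass031}, with $\psi$ as in \eqref{c_11} and $\varphi=\Theta\psi$, provided $d_1$ is large enough, every regular solution of \eqref{h=00} satisfies
\[
\int_{Q_T}\left(s\Theta(v_x)^2+s^3\Theta^3\left(\frac{x-x_0}{a}\right)^2v^2\right)e^{2s\varphi}\,dxdt\le C\int_0^T\int_\omega v^2\frac{1}{a}\,dxdt
\]
for all $s\ge s_0$. To obtain it I would argue as in Lemma~\ref{lemma3'}: assuming, say, $x_0<\alpha$, pick cut-off functions $\xi,\tau,\rho$ with the same supports as there and set $w:=\xi v$, $z:=\tau v$. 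Since $v_t+av_{xx}=0$, an elementary computation gives $w_t+aw_{xx}=a\xi_{xx}v+2a\xi_x v_x=:h$, supported in a set $\hat\omega\subset\subset\omega$ where $a$ is non degenerate, so $h^2\frac{1}{a}\le C(v^2+(v_x)^2)\chi_{\hat\omega}$; applying Theorem~\ref{Cor11} to $w$ and the non degenerate Carleman estimate of Theorem~\ref{mono1} (in the simplified form of Remark~\ref{RemCarleman1}) to $z$ on $(\alpha,1)$, and absorbing the localized terms $\int_{\hat\omega}(v_x)^2e^{2s\varphi}$, $\int_{\tilde\omega}(v_x)^2e^{2s\Phi}$ by a Caccioppoli inequality for \eqref{h=00} — the analogue of Proposition~\ref{caccio}, whose proof carries over since on the relevant subintervals $a$ is bounded away from $0$ and the extra first-order term in $v_t+(av_x)_x-a'v_x=0$ is harmless there — one controls the weighted energy of $v$ on $[\lambda,1]$. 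The estimate on the remaining interval $[0,\lambda]$, which contains $x_0$ and abuts the boundary $x=0$, is then obtained by the reflection of Lemma~\ref{lemma3'}: the odd and even extensions $W,\tilde a$ of \eqref{W}--\eqref{tildea} still satisfy $W_t+\tilde aW_{xx}=0$; after a further cut-off $Z:=\rho W$ one applies the analogue of Theorem~\ref{Cor11} on $(-\beta,\beta)$, where the extra degeneracy at $-x_0$ and the corner of the symmetric weight $\tilde\varphi$ at $x=0$ cause no trouble since $Z$ vanishes at $x=0$, and finally compares $e^{2s\tilde\varphi}$ with $e^{2s\Phi}$ through Lemma~\ref{rem}, with $d_1$ chosen large.

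Granted this estimate, the conclusion for regular $v$ is quick. Multiplying the equation of \eqref{h=00} by $v_t/a$ and integrating over $(0,1)$, using $v(t,0)=v(t,1)=0$ (hence $v_t(t,0)=v_t(t,1)=0$), one gets
\[
\int_0^1\frac{v_t^2}{a}\,dx=\frac{1}{2}\frac{d}{dt}\int_0^1(v_x)^2\,dx\ge 0 ,
\]
so that $t\mapsto\int_0^1(v_x)^2(t,x)\,dx$ is nondecreasing. Integrating $\int_0^1(v_x)^2(0,x)\,dx\le\int_0^1(v_x)^2(t,x)\,dx$ over $[T/4,3T/4]$, where $\Theta$ is bounded below away from $0$ and $\varphi$ is bounded, gives $\int_0^1(v_x)^2(0,x)\,dx\le C_T\int_{Q_T}s\Theta(v_x)^2e^{2s\varphi}\,dxdt$, and the displayed Carleman estimate bounds the right-hand side by $C\int_0^T\int_\omega v^2\frac{1}{a}\,dxdt$. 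Since $v(0,\cdot)\in\cH^1_{\frac{1}{a}}(0,1)$, the Hardy inequality $\int_0^1 v^2(0,x)\frac{1}{a}\,dx\le C\int_0^1(v_x)^2(0,x)\,dx$ (Lemma~\ref{hardy} in the strongly degenerate case, Proposition~\ref{h=h} in the weakly degenerate one) then yields \eqref{obser1.1} for $v_T\in D(\mathcal A_2^2)$, and the density argument of the first paragraph completes the proof.

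The step I expect to be the main obstacle is the non divergence analogue of Lemma~\ref{lemma3'}. The delicate points are, first, that the whole Carleman machinery (Lemma~\ref{LEMMA1ND1}, Lemma~\ref{lemma21}) must be run in non divergence form, which is precisely why the stronger regularity $(x-x_0)a'/a\in W^{1,\infty}(0,1)$ of Hypothesis~\ref{Ass021}--\ref{Ass031} and the restriction $x_0\notin\bar\omega$ of \eqref{omega} are imposed; and second, that Proposition~\ref{caccio}, being stated for the divergence form, cannot be quoted verbatim and must be re-proved for \eqref{h=00} to absorb the localized $(v_x)^2$ terms. Once these are settled, the cut-off and reflection bookkeeping is identical to that of Lemma~\ref{lemma3'}.
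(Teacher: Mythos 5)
Your proposal is correct and follows essentially the same route as the paper: the paper proves the key weighted Carleman-type bound for regular solutions (its Lemma \ref{lemma31}, obtained exactly by the cut-off/reflection scheme you describe, using Theorem \ref{Cor11}, Theorem \ref{mono1} with Remark \ref{RemCarleman1}, and the non divergence Caccioppoli inequality of Proposition \ref{caccio1}), then derives the observability inequality for $v_T\in D(\mathcal A_2^2)$ by the same $v_t/a$ multiplier and Hardy argument (Lemma \ref{obser.regular1}), and concludes by the same density argument. The only cosmetic difference is that in the weakly degenerate case the paper invokes the Hardy--Poincar\'e inequality of Proposition \ref{HP} with $p(x)=(x-x_0)^2/a(x)$ rather than the norm equivalence of Proposition \ref{h=h}, which is an equivalent step.
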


Using inequality \eqref{obser1.1} the following null controllability result for \eqref{linear} in non divergence form holds:
\begin{Theorem}\label{th31}
Assume  \eqref{omega} and Hypothesis $\ref{Ass031}$. Then, given $u_0
\in L^2_{\frac{1}{a}}(0,1)$, there exists $h \in L^2_{\frac{1}{a}}
(Q_T)$ such that the solution $u$ of \eqref{linear} satisfies
\begin{equation*}
u(T,x)= 0 \ \text{ for every  } \  x \in [0, 1].
\end{equation*}
Moreover
\[
\int_0^T \int_0^1 h^2 \frac{1}{a}dxdt \le C \int_0^1 u_0^2
\frac{1}{a}dx,
\]
for some positive constant $C$ independent of $u_0$.
\end{Theorem}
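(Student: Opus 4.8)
The plan is to derive Theorem~\ref{th31} from the observability inequality \eqref{obser1.1} of Proposition~\ref{obser.1} by the classical duality (penalised HUM) argument, exactly in the spirit of the divergence case (Theorem~\ref{th3}) and of \cite[Section~7.4]{LRL}. First I would record the duality pairing. If $u$ is the solution of \eqref{linear} in non divergence form with control $h$ supported in $\omega$ and datum $u_0\in L^2_{\frac1a}(0,1)$, and $v$ is the solution of the backward adjoint problem \eqref{h=00} with final datum $v_T$, then multiplying the equation for $u$ by $v/a$, integrating over $Q_T$, integrating by parts once in $t$ and twice in $x$ (legitimate thanks to the boundary conditions and to $u(t,\cdot)\in\cH^2_{\frac1a}(0,1)$, $v(t,\cdot)\in\cH^1_{\frac1a}(0,1)$ from Theorem~\ref{theorem_nondivergence}, together with Lemma~\ref{green}), and using $av_{xx}=-v_t$, one obtains
\[
\int_0^1\frac{u(T,x)v_T(x)}{a(x)}\,dx-\int_0^1\frac{u_0(x)v(0,x)}{a(x)}\,dx=\int_0^T\int_\omega\frac{h(t,x)\,v(t,x)}{a(x)}\,dx\,dt.
\]
Hence $u(T)\equiv 0$ is equivalent to producing $h$ with $\int_0^T\int_\omega hv/a=-\int_0^1 u_0 v(0)/a$ for every $v_T\in L^2_{\frac1a}(0,1)$; by the continuous dependence \eqref{stima2} and a density argument (approximating $v_T$ by elements of $D(\mathcal A_2^2)$, exactly as in the proof of Proposition~\ref{obser.}) it suffices to impose this for $v_T$ in a dense subspace, which is also what makes all the above integrations by parts rigorous.

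Next I would introduce, for $\varepsilon>0$, the penalised functional on $L^2_{\frac1a}(0,1)$,
\[
J_\varepsilon(v_T):=\frac12\int_0^T\int_\omega\frac{v^2}{a}\,dx\,dt+\varepsilon\|v_T\|_{L^2_{\frac1a}(0,1)}+\int_0^1\frac{u_0(x)v(0,x)}{a(x)}\,dx,
\]
where $v$ solves \eqref{h=00} with final datum $v_T$. Using Theorem~\ref{theorem_nondivergence} (in particular \eqref{stima2}), $v_T\mapsto\big(\int_0^T\int_\omega v^2/a\big)^{1/2}$ and $v_T\mapsto v(0)$ are continuous on $L^2_{\frac1a}(0,1)$, so $J_\varepsilon$ is continuous; it is strictly convex because the quadratic term is a positive definite form (if $\int_0^T\int_\omega v^2/a=0$ then $v(0)=0$ by \eqref{obser1.1} and hence $v\equiv 0$ by forward uniqueness, so $v_T=0$); and it is coercive because, estimating the cross term by Cauchy--Schwarz and \eqref{obser1.1}, one gets $J_\varepsilon(v_T)\ge \tfrac12 X^2-C\,X+\varepsilon\|v_T\|\ge \varepsilon\|v_T\|-C^2/2$ with $X:=\big(\int_0^T\int_\omega v^2/a\big)^{1/2}$ and $C$ depending only on $u_0$ and $C_T$. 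Therefore $J_\varepsilon$ attains its minimum at a unique $v_T^\varepsilon$.

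Then I would write the Euler--Lagrange equation of $J_\varepsilon$ at $v_T^\varepsilon$. With $v^\varepsilon$ the adjoint solution attached to $v_T^\varepsilon$, the choice $h^\varepsilon:=v^\varepsilon\chi_\omega$ gives, via the duality identity above, $\int_0^1 u^\varepsilon(T)\delta/a=-\varepsilon\,\langle v_T^\varepsilon,\delta\rangle_{L^2_{\frac1a}}/\|v_T^\varepsilon\|_{L^2_{\frac1a}}$ for all $\delta\in L^2_{\frac1a}(0,1)$, whence $\|u^\varepsilon(T)\|_{L^2_{\frac1a}(0,1)}\le\varepsilon$; moreover $J_\varepsilon(v_T^\varepsilon)\le J_\varepsilon(0)=0$ together with \eqref{obser1.1} yields $\|h^\varepsilon\|_{L^2_{\frac1a}(Q_T)}^2\le 4C_T\|u_0\|_{L^2_{\frac1a}(0,1)}^2$, a bound independent of $\varepsilon$. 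Extracting a weakly convergent subsequence $h^\varepsilon\rightharpoonup h$ in $L^2_{\frac1a}(Q_T)$, passing to the limit in the (linear) problem \eqref{linear} by continuous dependence, and letting $\varepsilon\to 0$ in the two previous bounds, I obtain $h\in L^2_{\frac1a}(Q_T)$ supported in $\omega$ with $u(T,\cdot)\equiv 0$ on $[0,1]$ and $\int_0^T\int_0^1 h^2/a\le C\int_0^1 u_0^2/a$, which is the claimed null controllability statement.

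As for the difficulty: the controllability machinery here is entirely standard, so the real content of the theorem lies upstream, in the Carleman estimates of Theorems~\ref{mono1} and \ref{Cor11} and in the observability inequality of Proposition~\ref{obser.1}. The only points requiring attention at this stage are the systematic bookkeeping of the degenerate weight $1/a$ in every pairing and inequality — so that all the integrals above are finite, which is guaranteed by Lemma~\ref{rem} and the description of $D(\mathcal A_2)$ in Proposition~\ref{domain1} — and the density argument allowing one to work with sufficiently regular adjoint data when justifying the integrations by parts, handled precisely as in the proof of Proposition~\ref{obser.}.
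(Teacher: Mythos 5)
Your argument is correct and is exactly the route the paper intends: the paper derives Theorem \ref{th31} from the observability inequality \eqref{obser1.1} of Proposition \ref{obser.1} by the standard duality/penalised-HUM technique (cited as \cite[Section 7.4]{LRL} in the divergence case), which you have simply written out in full, with the weight $1/a$ correctly tracked in the duality pairing, the functional $J_\varepsilon$, and the uniform bound $\|h^\varepsilon\|^2_{L^2_{1/a}(Q_T)}\le 4C_T\|u_0\|^2_{L^2_{1/a}(0,1)}$. No discrepancies worth noting.
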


We refer to Comment $2$ in Chapter \ref{sec7} to explain why in
Proposition \ref{obser.1} and Theorem \ref{th3} we consider only the
case in which the degeneracy point is {\sl outside} the control
region.
\medskip

As for the divergence case, a straightforward consequence of Proposition \ref{obser.1} and of
Theorem \ref{th31} are the following results, which are of interest
when the control region lies on both the two sides of the degeneracy
point.
\begin{Corollary}\label{dueparti}
Assume Hypothesis $\ref{Ass031}$ and \eqref{omega2}. Then there
exists a positive constant $C_T$ such that every solution $v \in
\cal W_2$ of \eqref{h=011} satisfies
\[
\int_0^1v^2(0,x) \frac{1}{a} dx \le C_T\int_0^T \int_{\omega}v^2(t,x)\frac{1}{a} dxdt.
\]
\end{Corollary}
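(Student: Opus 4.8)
The plan is to avoid producing any new Carleman or observability estimate from scratch, and instead to reduce the statement to Proposition~\ref{obser.1}, exploiting the fact that each of the two components of $\omega$ already lies strictly on one side of the degeneracy point. Concretely, I would argue as follows. By \eqref{omega2} we have $\omega_1=(\lambda_1,\beta_1)$, $\omega_2=(\lambda_2,\beta_2)$ with $\beta_1<x_0<\lambda_2$; in particular $\bar\omega_2=[\lambda_2,\beta_2]$, and since $x_0<\lambda_2$ we get $x_0\notin\bar\omega_2$. Hence $\omega_2$ is an admissible control set in the sense of \eqref{omega}, and Hypothesis~\ref{Ass031} (which does not involve $\omega$ at all) continues to hold verbatim. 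Therefore Proposition~\ref{obser.1}, applied with the control region $\omega_2$ in place of $\omega$, yields a constant $C_T>0$ such that every solution $v\in\cal W_2$ of \eqref{h=011} satisfies
\[
\int_0^1 v^2(0,x)\frac{1}{a}\,dx \;\le\; C_T\int_0^T\!\!\int_{\omega_2} v^2(t,x)\frac{1}{a}\,dxdt.
\]

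Then I would conclude by monotonicity of the integral: since $\omega_2\subset\omega_1\cup\omega_2=\omega$ and the weight $\frac{1}{a}$ is positive, the right-hand side above is bounded by $C_T\int_0^T\int_{\omega} v^2\frac{1}{a}\,dxdt$, which is exactly the claimed estimate. Of course one could equally well run the same argument with $\omega_1$, since $\beta_1<x_0$ forces $x_0\notin\bar\omega_1$ as well; the two choices lead to the same conclusion. If one prefers the statement for general weak solutions in $C([0,T];L^2_{\frac{1}{a}}(0,1))\cap L^2(0,T;\cH^1_{\frac{1}{a}}(0,1))$ rather than for $v\in\cal W_2$, exactly the same two lines apply, since Proposition~\ref{obser.1} is itself stated in that generality.

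There is no substantial obstacle in this proof — this is why it is announced as ``straightforward''. The only point deserving a line of justification is the verification that the hypotheses of Proposition~\ref{obser.1} are inherited by the one-sided sub-problem with control set $\omega_i$: this reduces to the elementary geometric observation that the ordering $\beta_1<x_0<\lambda_2$ places $x_0$ outside the closure of each $\omega_i$, so that \eqref{omega} holds, while the structural assumption \ref{Ass031} on $a$ is left untouched. I would not introduce cut-off functions or reflections here (in contrast with Lemmas~\ref{lemma3} and \ref{lemma3'}), as the restriction $\omega_2\subset\omega$ already does all the work once Proposition~\ref{obser.1} is available.
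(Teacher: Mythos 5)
Your proof is correct and is essentially the argument the paper intends: the Corollary is presented as a ``straightforward consequence'' of Proposition~\ref{obser.1}, obtained exactly by noting that $\beta_1<x_0<\lambda_2$ places $x_0$ outside $\bar\omega_i$ for each $i$, applying the one-sided observability inequality on one component, and enlarging the integration domain to all of $\omega$. Your remark that the hypotheses on $a$ are untouched and that the class $\cal W_2$ embeds into the class of solutions covered by Proposition~\ref{obser.1} disposes of the only points needing verification.
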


As a standard consequence one has the next null controllability
result:
\begin{Theorem}\label{th2pezzi}
Assume Hypothesis $\ref{Ass031}$ and $\eqref{omega2}$. Then, given
$u_0 \in L^2_{\frac{1}{a} } (0,1)$, there exists $h \in L^2_{\frac{1}{a} }(Q_T)$ such that the
solution $u$ of \eqref{linear} satisfies
\begin{equation*}
u(T,x)= 0 \ \text{ for every  } \  x \in [0, 1].
\end{equation*}
Moreover
\[
\int_0^T \int_0^1 h^2\frac{1}{a}  dxdt \le C \int_0^1
u_0^2(x)\frac{1}{a}  dx
\]
for some positive constant $C$.
\end{Theorem}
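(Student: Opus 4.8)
The plan is to derive the null controllability from the observability inequality of Corollary \ref{dueparti} by the classical penalization (Hilbert Uniqueness Method) argument, carried out in the weighted space $L^2_{\frac1a}(0,1)$, exactly as in the proof of Theorem \ref{th31} with $\omega$ now of the form $\omega_1\cup\omega_2$. Fix $u_0\in L^2_{\frac1a}(0,1)$. For $\varepsilon>0$ I would introduce the functional $J_\varepsilon:L^2_{\frac1a}(0,1)\to\R$ defined by
\[
J_\varepsilon(v_T):=\frac12\int_0^T\!\!\int_\omega v^2\,\frac1a\,dxdt+\varepsilon\|v_T\|_{L^2_{\frac1a}(0,1)}+\int_0^1 u_0(x)\,v(0,x)\,\frac1a\,dx,
\]
where $v$ is the solution of the adjoint problem \eqref{h=00} corresponding to the final datum $v_T$. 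By Theorem \ref{theorem_nondivergence} the map $v_T\mapsto v$ is linear and continuous from $L^2_{\frac1a}(0,1)$ into $C([0,T];L^2_{\frac1a}(0,1))\cap L^2(0,T;\cH^1_{\frac1a}(0,1))$, so $J_\varepsilon$ is well defined, continuous and strictly convex.

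The next step is coercivity of $J_\varepsilon$, which is precisely where Corollary \ref{dueparti} enters (extended to data $v_T\in L^2_{\frac1a}(0,1)$ by the same density argument as for Proposition \ref{obser.1}): it bounds the linear term through $\big|\int_0^1 u_0 v(0)/a\,dx\big|\le\|u_0\|_{L^2_{\frac1a}}\|v(0)\|_{L^2_{\frac1a}}\le C_T^{1/2}\|u_0\|_{L^2_{\frac1a}}\big(\int_0^T\!\int_\omega v^2/a\big)^{1/2}$, so that $J_\varepsilon(v_T)\to+\infty$ as $\|v_T\|_{L^2_{\frac1a}}\to+\infty$ and $J_\varepsilon$ admits a (unique) minimizer $v_T^\varepsilon$. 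Let $v^\varepsilon$ be the solution of \eqref{h=00} associated with $v_T^\varepsilon$ and set $h_\varepsilon:=v^\varepsilon\chi_\omega$. Using the optimality condition for $v_T^\varepsilon$ together with the duality identity obtained by multiplying \eqref{linear} by $v/a$ and integrating by parts --- which yields $\int_0^1 u_\varepsilon(T)v_T/a\,dx=\int_0^1 u_0 v(0)/a\,dx+\int_0^T\!\int_\omega h_\varepsilon v/a\,dxdt$ for every $v_T$, where $u_\varepsilon$ solves \eqref{linear} with control $h_\varepsilon$ --- one gets $\|u_\varepsilon(T)\|_{L^2_{\frac1a}(0,1)}\le\varepsilon$; testing the optimality condition against $v_T^\varepsilon$ itself and invoking Corollary \ref{dueparti} once more gives the uniform bound $\|h_\varepsilon\|_{L^2_{\frac1a}(Q_T)}^2\le C_T\|u_0\|_{L^2_{\frac1a}(0,1)}^2$.

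Finally I would pass to the limit $\varepsilon\to0$: the uniform bound provides a subsequence $h_\varepsilon\rightharpoonup h$ weakly in $L^2_{\frac1a}(Q_T)$ with $\|h\|_{L^2_{\frac1a}(Q_T)}^2\le C_T\|u_0\|_{L^2_{\frac1a}(0,1)}^2$ by weak lower semicontinuity; by the continuous dependence of solutions of \eqref{linear} on the source (Theorem \ref{theorem_nondivergence}), the corresponding solutions $u_\varepsilon$ converge to the solution $u$ of \eqref{linear} with control $h$, and letting $\varepsilon\to0$ in $\|u_\varepsilon(T)\|_{L^2_{\frac1a}}\le\varepsilon$ yields $u(T,\cdot)=0$, together with the stated estimate on $h$. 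Since all the substantial analysis --- the degenerate non smooth Carleman estimates of Theorems \ref{mono1} and \ref{Cor11}, the Caccioppoli inequality, the cut-off and reflection constructions, and the resulting observability of Corollary \ref{dueparti} --- is already at hand, the only point requiring some care here is the correct bookkeeping of the weight $\frac1a$ in the duality pairing and in the minimization; otherwise the argument is entirely routine and identical to the one used for Theorem \ref{th31}.
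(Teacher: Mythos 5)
Your proposal is correct and follows exactly the route the paper intends: Theorem \ref{th2pezzi} is presented there as a ``standard consequence'' of the observability inequality of Corollary \ref{dueparti}, obtained by the classical penalization/HUM argument carried out in $L^2_{\frac{1}{a}}$, which is precisely what you spell out (including the density extension from $D(\mathcal A_2^2)$ data and the correct placement of the weight $\frac{1}{a}$ in the duality identity). No discrepancy with the paper's (unwritten) proof.
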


\subsection{Proof of Proposition $\ref{obser.1}$}
As for the problem in divergence form, we will start giving some preliminary results for the following
homogeneous adjoint final--time problems having {\em more regular
final--time datum}:
\begin{equation}\label{h=011}
\begin{cases}
v_t +av_{xx}= 0, &(t,x) \in  Q_T,
\\
v(t,0)=v(t,1) =0, & t \in (0,T),
\\
v(T,x)= v_T(x) \,\in D(\mathcal A_2^2),
\end{cases}
\end{equation}
where, we recall, $\mathcal A_2u:=au_{xx}$ with
$D(\mathcal A_2)=\cH^2_{\frac{1}{a}}(0,1)$, and
\[
D(\mathcal A_2^2) = \Big\{u \,\in \,D(\mathcal A_2)\;\big|\; \mathcal A_2u \,\in \,D(\mathcal A_2) \;\Big\}.
\]
Observe that $D(\mathcal A_2^2)$ is densely defined in $D(\mathcal A_2)$ (see, for
example, \cite[Lemma VII.2]{b}) and hence in
$L^2_{\frac{1}{a}}(0,1)$. As in \cite{cfr}, \cite{cfr1} or \cite{f},
letting $v_T$ vary in $D(\mathcal A_2^2)$, we introduce the class of solutions
to \eqref{h=011}, i.e.
\[
\cal{W}_2:=\Big\{ v \in
C^1\big([0,T]; L^2_{\frac{1}{a}}(0,1)\big)\cap C\big([0,T]; D(\mathcal
A_2)\big)\,|\,v \text{ solves \eqref{h=011}}\Big\},
\]
with the obvious meaning that it is a class and not a set of one
function, since $v_T$ vary.

Obviously (see, for example, \cite[Theorem VII.5]{b})
\[
\cal{W}_2\subset C^1\big([0,T];\:\cH^2_{\frac{1}{a}}(0,1)\big)
\subset \mathcal{S}_2 \subset \cal{U}_2,
\]
where $\mathcal{S}_2$ is defined in \eqref{v1} and
\[
\cal{U}_2:= C([0,T]; L^2_{\frac{1}{a}}(0,1)) \cap L^2(0, T;
\cH^1_{\frac{1}{a}}(0,1)).
\]

Also in this case the Caccioppoli's inequality is crucial. In the non divergence case it reads as follows:
\begin{Proposition}[Caccioppoli's inequality]\label{caccio1}
Assume that either Hypothesis $\ref{Ass0}$ and \eqref{5.3'} or Hypothesis $\ref{Ass01}$ hold.
Let $\omega'$ and $\omega$ two open subintervals of $(0,1)$ such
that $\omega'\subset \subset \omega \subset  (0,1)$ and $x_0 \not
\in \overline{\omega}$. Let $\varphi(t,x)=\Theta(t)\Upsilon(x)$, where
$\Theta$ is defined in \eqref{c_1} and
\[
\Upsilon \in C([0,1],(-\infty,0))\cap
C^1([0,1]\setminus\{x_0\},(-\infty,0))
\]
satisfies \eqref{stimayx}. Then, there exist two positive constants
$C$ and $s_0$ such that every solution $v \in \cal W_2$ of the
adjoint problem \eqref{h=011} satisfies
\[
   \int_{0}^T \int _{\omega'}   (v_x)^2e^{2s\varphi } dxdt
    \ \leq \ C \int_{0}^T \int _{\omega}   v^2  dxdt  \ \leq \ C \int_{0}^T \int _{\omega}   v^2  \frac{1}{a}dxdt ,
\]
for all $s\geq s_0$.
\end{Proposition}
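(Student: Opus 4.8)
The plan is to run the classical interior (Caccioppoli) energy estimate, exactly in the spirit of the proof of Proposition~\ref{caccio} and of \cite[Proposition 4.2]{fm}, the essential point being that, since $x_0 \notin \overline{\omega}$, the coefficient $a$ is non degenerate on $\overline{\omega}$: there are $0<a_\omega\le a^\omega$ with $a_\omega\le a(x)\le a^\omega$ on $\overline{\omega}$. First I would fix a cut-off $\xi\in C^\infty_c(\omega)$ with $0\le\xi\le1$ and $\xi\equiv1$ on $\omega'$, and set $K_0:=\supp\xi\subset\subset\omega$; in particular $x_0\notin K_0$ and $a$ is non degenerate on $K_0$. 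Note also that, by Proposition~\ref{domain1}, a solution $v\in\cal W_2$ of \eqref{h=011} satisfies $v(t,\cdot)\in\cH^2_{\frac{1}{a}}(0,1)\subset H^2(0,1)$, so that the integrations by parts below are fully legitimate.

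Next I would multiply the equation $v_t+av_{xx}=0$ by $\ds\frac{\xi^2 v\,e^{2s\varphi}}{a}$ and integrate over $(0,1)$; since $a$ does not depend on $t$ and $\xi$ vanishes near $x=0,1$, integrating by parts in $x$ gives, for every $t\in(0,T)$,
\[
\frac{1}{2}\frac{d}{dt}\int_0^1 \frac{\xi^2 v^2}{a}\,e^{2s\varphi}\,dx + \int_0^1 \xi^2 (v_x)^2 e^{2s\varphi}\,dx = s\int_0^1 \frac{\xi^2 v^2 \varphi_t}{a}\,e^{2s\varphi}\,dx + \int_0^1 \big(2\xi\xi_x v + 2s\xi^2 v\varphi_x\big) v_x e^{2s\varphi}\,dx .
\]
By the Young inequality the last integral is bounded in absolute value by $\ds\frac{1}{2}\int_0^1 \xi^2(v_x)^2 e^{2s\varphi}\,dx + C\int_0^1\big(\xi_x^2 + s^2\xi^2\varphi_x^2\big)v^2 e^{2s\varphi}\,dx$, so that half of the gradient term on the left can be absorbed, leaving
\[
\frac{1}{2}\frac{d}{dt}\int_0^1 \frac{\xi^2 v^2}{a}\,e^{2s\varphi}\,dx + \frac{1}{2}\int_0^1 \xi^2 (v_x)^2 e^{2s\varphi}\,dx \le \int_0^1\Big( s\frac{\xi^2|\varphi_t|}{a} + C\xi_x^2 + Cs^2\xi^2\varphi_x^2\Big)e^{2s\varphi}v^2\,dx .
\]

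Then I would integrate this in time over $(0,T)$. The time–boundary contribution $\big[\tfrac12\int_0^1 \xi^2 v^2 e^{2s\varphi}/a\,dx\big]_{t=0}^{t=T}$ vanishes: writing $\varphi=\Theta\psi$ with, say, $c_0:=\min_{[0,1]}(-\Upsilon)>0$ and $\Theta(t)\to+\infty$ as $t\to0^+,T^-$, one has $e^{2s\varphi}\le e^{-2sc_0\Theta}\to0$, and since $v\in C^1([0,T];L^2_{\frac{1}{a}}(0,1))$ dominated convergence yields the claim. Moreover, on $K_0\times[0,T]$ each coefficient of $v^2$ is bounded by a constant depending only on the (fixed) parameter $s$: indeed $1/a\le 1/a_\omega$ on $K_0$, $\xi_x^2 e^{2s\varphi}\le\|\xi_x\|_\infty^2$, while $|\varphi_t|=|\dot\Theta|\,|\Upsilon|\le C\Theta^{5/4}$ and $|\varphi_x|=\Theta|\Upsilon_x|\le (c/\sqrt{a_\omega})\,\Theta$ on $K_0$ by \eqref{stimayx} and \eqref{magtheta}, and $\Theta^k e^{-2sc_0\Theta}$ is bounded on $[0,T]$ for every $k\ge0$. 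Hence $\ds\int_0^T\!\!\int_0^1 \xi^2(v_x)^2 e^{2s\varphi}\,dxdt\le C\int_0^T\!\!\int_{K_0} v^2\,dxdt\le C\int_0^T\!\!\int_\omega v^2\,dxdt$; restricting the left-hand side to $\omega'$, where $\xi\equiv1$, gives the first stated inequality. The second one is immediate, since $a$ is bounded on $[0,1]$: $\ds\int_0^T\!\!\int_\omega v^2\,dxdt=\int_0^T\!\!\int_\omega a\cdot\frac{v^2}{a}\,dxdt\le\big(\max_{[0,1]}a\big)\int_0^T\!\!\int_\omega \frac{v^2}{a}\,dxdt$.

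The only genuinely delicate points are the rigorous justification that $t\mapsto\int_0^1 \xi^2 v^2 e^{2s\varphi}/a\,dx$ is absolutely continuous with the stated vanishing endpoint values (handled via the strong decay of $e^{2s\varphi}$ near $t=0,T$ together with the $C^1$-in-time regularity of $v$) and the uniform-in-time bounds on the weight factors; these are exactly as in \cite[Proposition 4.2]{fm}, so I expect the main ``obstacle'' to be merely this bookkeeping rather than any new idea. Let me also stress that, just as noted after Proposition~\ref{caccio}, neither the structural identity \eqref{5.3'} nor the inequality $(x-x_0)a'\le Ka$ enters the argument: only the regularity of $a$ and the separation of $\omega$ from the degeneracy point are used.
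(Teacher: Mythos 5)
Your argument is correct and follows essentially the same route as the paper's proof: a weighted Caccioppoli estimate obtained by testing the equation against $\xi^2 v e^{2s\varphi}$ (up to a factor $1/a$), integrating by parts, absorbing half of the gradient term, using that $e^{2s\varphi}\to 0$ as $t\to 0^+,T^-$ to kill the time--boundary contribution, using $|s\varphi_t e^{2s\varphi}|\le C$ and \eqref{stimayx} for the zero--order terms, and finishing with $v^2=a\cdot v^2/a\le (\max a)\,v^2/a$. Two remarks. First, there is a sign slip in your displayed identity: since the adjoint equation is the \emph{backward} equation $v_t+av_{xx}=0$, the gradient term enters with the opposite sign (one has $\frac12\frac{d}{dt}\int \xi^2 v^2 e^{2s\varphi}/a\,dx-\int\xi^2(v_x)^2e^{2s\varphi}\,dx=\dots$); this is immaterial, because after integrating in $t$ the $\frac{d}{dt}$ term drops out and only the absolute value of the right-hand side is estimated, so the absorption argument goes through unchanged. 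Second, your variant differs from the paper's in one substantive (and advantageous) detail: by multiplying by $\xi^2 ve^{2s\varphi}/a$ your principal term is $\int\xi^2(v_x)^2e^{2s\varphi}$ and the integration by parts never produces $a'$, whereas the paper works with $\int\xi^2 a(v_x)^2e^{2s\varphi}$ and completes the square with $\xi e^{s\varphi}\sqrt a\,v_x$, which forces it to bound $(a')^2/a$ on $\omega$ --- and it is precisely there that the paper invokes \eqref{5.3'} (needed because under Hypothesis \ref{Ass0} the coefficient is only $W^{1,1}$, so $a'$ need not be bounded on $\omega$ otherwise). So your closing claim that \eqref{5.3'} is never used is true for \emph{your} version of the proof but not for the paper's; your weighting actually removes the only place where that hypothesis enters, which is a small but genuine improvement.
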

Observe that we require $x_0 \not \in \bar
\omega$, since in the applications below the control region $\omega$
is assumed to satisfy \eqref{omega}. Moreover, as in Remark
\ref{prototipo}, one can prove that our prototype for $\Upsilon$ is
the function $\psi$ defined in \eqref{c_11}.

\begin{proof}[Proof of Proposition \ref{caccio1}]
The proof is an adaptation of the one of \cite[Proposition 4.2]{fm},
so we will skip some details. Let us consider a smooth function
$\xi: [0,1] \to \Bbb R$ such that
\[
\begin{cases}
0 \leq \xi (x)  \leq 1 &  \text{for all } x \in [0,1], \\
\xi (x) = 1   &   x \in \omega', \\
\xi (x)=0 &     x \in [\, 0, 1 ]\setminus \omega.
\end{cases}
\]
Hence, by the very definition of $\vp$, we have
\[
    \begin{aligned}
    0 &= \int _0 ^T \frac{d}{dt} \left(\int _0 ^1 \xi ^2 e^{2s\varphi}
    v^2dx\right)dt
    = \int_{Q_T}\big(2s \xi ^2  \varphi _t e^{2s\varphi} v^2 + 2 \xi ^2
    e^{2s\varphi} vv_t \big)dxdt\\
    & \mbox{(since $v$ solves \eqref{h=011} and has homogeneous boundary conditions)}
   \\
    &= 2\int_{Q_T} s\xi^2 \varphi _t e^{2s\varphi} v^2 dxdt+  2\int_{Q_T}( \xi
    ^2e^{2s\varphi}a)_xvv_xdxdt\\
& +  2\int_{Q_T}  \xi
    ^2e^{2s\varphi}a(v_x)^{2}dxdt.
\end{aligned}
    \]
Therefore, by definition of $\xi$, the previous identity gives
\[
    \begin{aligned}
    &2\int_0^T \int_{\omega}\xi^2 e^{2s\varphi} a (v_x)^2dxdt
=- 2 \int_0^T \int_{\omega}s \xi ^2 \varphi _t e^{2s\varphi} v^2dxdt
\\
&  -2\int_0^T \int_{\omega} \left(\xi^2e^{2s\varphi}a\right)_x\frac{\xi
e^{s\varphi}\sqrt{a}}{\xi e^{s\varphi}\sqrt{a}}\:vv_x\:dxdt
   \\
&\mbox{(by the Cauchy--Schwarz inequality)}\\
 &
 \le - 2 \int_0^T \int_{\omega}s \xi^2 \varphi _t e^{2s\varphi} v^2dxdt
    +\int_0^T \int_{\omega}\left( \xi e^{s\varphi}\sqrt{a}  v_x \right) ^2dxdt
  \\&  + \int_0^T \int_{\omega}\left(\frac{( \xi ^2e^{2s\varphi} a)_x}{\xi
    e^{s\varphi}  \sqrt{a} }v \right)^2dxdt
    \\
   &= - 2 \int_0^T \int_{\omega} s \xi^2\varphi _t e^{2s\varphi} v^2dxdt
 + \int_0^T \int_{\omega} \xi ^2 e^{2s\varphi} a (v_x)^2 dxdt
  \\&+ 4\int_0^T \int_{\omega}[\left(\xi
e^{s\vp}\sqrt{a}\right)_x]^2v^2dxdt.
    \end{aligned}
\]
Thus
\[
\begin{aligned}
\int_0^T \int_{\omega}\xi ^2 e^{2s\varphi} a (v_x)^2dxdt &\le - 2
\int_0^T \int_{\omega}s \xi^2 \varphi _t e^{2s\varphi} v^2dxdt
    \\&+ 4\int_0^T \int_{\omega}[\left(\xi
e^{s\psi}\sqrt{a}\right)_x]^2v^2dxdt.
\end{aligned}
\]
Since $x_0 \not \in \overline{\omega'}$, then
\[
\begin{aligned}
&\inf_{\omega '}a(x)\int_0^{T}\int _{\omega '} e^{2s\varphi}
(v_x)^2dxdt \le \int_0^T \int_{\overline{\omega'}} \xi^2
e^{2s\varphi} a (v_x)^2dxdt\\
&\le \int_0^T \int_{\omega}
\xi ^2 e^{2s\varphi} a (v_x)^2dxdt\\
&\le - 2 \int_0^T \int_{\omega}s \xi^2 \varphi_t e^{2s\varphi}
v^2dxdt
 + 4\int_0^T \int_{\omega}[\left(\xi
e^{s\vp}\sqrt{a}\right)_x]^2v^2dxdt.
  \end{aligned}
\]

As in \cite[Proposition 4.2]{fm}, one can show that $s\varphi_te^{2s\varphi}$ is uniformly
bounded if $s\geq s_0>0$, since $\Upsilon$ is strictly negative, a
rough estimate being
\[
|s\vp_te^{2s\varphi}|\leq cs|\Upsilon|\Theta^{5/4}e^{2s\Upsilon \Theta}\leq c\frac{1}{s_0^{5/4}(-\max
\Upsilon)^{5/4}}.
\]
On the other hand, $\left(\xi
e^{s\vp}\sqrt{a}\right)_x$ can be estimated by
\[
C\left(e^{2s\vp}+s^2(\vp_x)^2e^{2s\vp}+ e^{2s\vp}\frac{(a')^2}{a}\right).
\]

Of course, $e^{2s\vp}<1$, $\displaystyle\frac{(a')^2}{a}$ exists and is bounded in $\omega$ since $x_0\not \in \bar \omega$ and \eqref{5.3'} holds with Hypothesis \ref{Ass0} or Hypothesis \ref{Ass01} is in force, while $s^2(\vp_x)^2e^{2s\vp}$ can be
estimated with
\[
\frac{c}{(-\max \Upsilon)^2}(\Upsilon_x)^2 \leq \frac{c}{a}
\]
by \eqref{stimayx}, for some constants $c>0$ (see \cite[Proposition 4.2]{fm}).
Hence, there exists a
positive constant $C$ such that
\[
\begin{aligned}
- 2 &\int_0^T \int_{\omega}s \xi^2 \varphi_t e^{2s\varphi}
v^2dxdt
 + 4\int_0^T \int_{\omega}[\left(\xi
e^{s\vp}\sqrt{a}\right)_x]^2v^2dxdt\\
 &  \le C\int_0 ^T \int _{\omega} v^2 dxdt \le C \int_0 ^T \int _{\omega} v^2 \frac{1}{a}dxdt,
   \end{aligned}
\]
and the claim follows.
\end{proof}

\begin{Lemma}\label{lemma31}
Assume \eqref{omega} and Hypothesis $\ref{Ass031}$. Then there exist
two positive constants $C$ and $s_0$ such that every solution $v \in
\cal W_2$ of \eqref{h=011} satisfies
\[
\int_{Q_T}\left( s \Theta  (v_{x})^{2} + s^3 \Theta ^3
\left(\frac{x-x_0}{a}\right)^2 v^{2}\right) e^{{2s\varphi}}  dxdt\le
C \int_0^T\int_{\omega}v^{2}\frac{1}{a} dxdt
\]
for all $s \ge s_0$ and $d_1$ sufficiently large.
\end{Lemma}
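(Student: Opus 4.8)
The plan is to follow the structure of the proof of Lemma~\ref{lemma3'}, replacing the divergence Carleman estimates by their non divergence analogues --- Theorem~\ref{Cor11} in the degenerate region and Theorem~\ref{mono1} (with the exponential factors dropped, as in Remark~\ref{RemCarleman1}) in the non degenerate one --- and the Caccioppoli inequality by Proposition~\ref{caccio1}. By symmetry one may assume $x_0<\alpha$ (the case $\beta<x_0$ is analogous), and set $\lambda:=\frac{2\alpha+\beta}{3}$, $\gamma:=\frac{\alpha+2\beta}{3}$, so that $x_0<\alpha<\lambda<\gamma<\beta$. Note that the weight $\psi$ of \eqref{c_11} satisfies \eqref{stimayx} by Lemma~\ref{rem}, exactly as in Remark~\ref{prototipo}, hence is an admissible weight for Proposition~\ref{caccio1}.

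First I would cover the central strip $[\lambda,\gamma]$: choosing a cut-off $\xi$ equal to $1$ on $[\lambda,\gamma]$ and to $0$ near $x=0$ and $x=1$, the function $w:=\xi v$ solves \eqref{1a} with right-hand side $f=a\xi''v+2a\xi'v_x$ supported in some $\hat\omega\subset\subset\omega\setminus\{x_0\}$; applying Theorem~\ref{Cor11} to $w$ (whose boundary terms vanish since $w\equiv0$ near the endpoints), using that $a$ is bounded and bounded away from $0$ on $\hat\omega$ so that $f^2/a\le C(v^2+v_x^2)\chi_{\hat\omega}$, absorbing the gradient term by Proposition~\ref{caccio1} and bounding $v^2\le(\max_\omega a)\,v^2/a$, one gets the desired estimate localized to $[\lambda,\gamma]$. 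Next, for $[\gamma,1]$, since $x_0<\alpha$ the coefficient $a$ is non degenerate on $[\alpha,1]$ and satisfies Hypothesis~\ref{ipoadebole} there (cf. Remark~\ref{quellali}); with a cut-off $\tau$ vanishing on $[0,\lambda]$ and equal to $1$ on $[\gamma,1]$, $z:=\tau v$ solves a non degenerate equation on $(\alpha,1)$ with source supported in $(\lambda,\gamma)$ and $z(t,\alpha)=0$, so Theorem~\ref{mono1} with $A=\alpha$, $B=1$ applies --- its boundary term having the favorable sign at $x=1$ and vanishing at $x=\alpha$, hence being discarded --- and together with Proposition~\ref{caccio1} it yields the same estimate on $[\gamma,1]$ but carrying the non degenerate Carleman weight of Chapter~\ref{seccarnondeg} (call it $\Phi$) in place of $e^{2s\varphi}$. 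Then, exactly as in \eqref{c_1magg'}, one takes $d_1$ in \eqref{c_11} large enough that $e^{2s\varphi}\le e^{2s\Phi}$ on $[0,T]\times[\alpha,1]$; since $(x-x_0)^2/a$ is bounded there, one finds $k>0$ with $e^{2s\varphi}\le ke^{2s\Phi}$ and $((x-x_0)/a)^2e^{2s\varphi}\le ke^{2s\Phi}$, converting the $\Phi$-estimate into a $\varphi$-estimate on $[\gamma,1]$. Combining with the central strip gives the inequality on $[\lambda,1]$.

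For $x\in[0,\lambda]$ I would use the odd reflection: put $W(t,x)=v(t,x)$ for $x\ge0$ and $W(t,x)=-v(t,-x)$ for $x<0$, and $\tilde a(x)=a(|x|)$ as in \eqref{W}--\eqref{tildea}. Since $v(t,0)=0$ forces $v_t(t,0)=0$, one checks that $W$ lies in the appropriate class and solves $W_t+\tilde a W_{xx}=0$ on $(-1,1)$ with $W(t,\pm1)=0$. Taking a cut-off $\rho$ equal to $1$ on $[-\lambda,\lambda]$ and to $0$ near $\pm\beta$, $Z:=\rho W$ solves a problem on $(-\beta,\beta)$ with source supported in $[-\gamma,-\lambda]\cup[\lambda,\gamma]$ and $Z_x(t,\pm\beta)=0$; applying the analogue of Theorem~\ref{Cor11} on $(-\beta,\beta)$ with the symmetric weight $\tilde\varphi=\Theta\tilde\psi$ of \eqref{tildepsi} (so that $\tilde\psi(x)=\psi(-x)$ for $x<0$ and the reflected integrals fold back onto $(\lambda,\gamma)$), together with Proposition~\ref{caccio1} and a further enlargement of $d_1$ ensuring $e^{2s\tilde\varphi}\le e^{2s\Phi}$ on $[-\beta,\beta]$, one controls the corresponding integral on $[0,\lambda]$ by $C\int_0^T\int_\omega v^2\,a^{-1}\,dxdt$. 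Patching the three regions gives the lemma.

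The step I expect to be the main obstacle is the gluing together with the reflection: on the one hand one must establish the auxiliary degenerate Carleman estimate on the symmetric interval $(-\beta,\beta)$ for the coefficient $\tilde a$, which vanishes at both $\pm x_0$, by repeating on each half the argument behind Theorem~\ref{Cor11} with the symmetric weight and with the boundary terms annihilated by $\rho$; on the other hand one must choose the single constant $d_1$ large enough for \emph{all} the weight comparisons $e^{2s\varphi}\le e^{2s\Phi}$ and $e^{2s\tilde\varphi}\le e^{2s\Phi}$ to hold simultaneously on the relevant subintervals, this being precisely what makes the three local estimates assemblable. A secondary (though purely computational) source of care, absent in the divergence case, is that Theorem~\ref{Cor11} carries the weight $1/a$ on its right-hand side, so every error term produced by the cut-offs $\xi,\tau,\rho$ has to be checked to be supported away from $x_0$ and therefore absorbable --- via the boundedness of $a$ there and via Proposition~\ref{caccio1} --- into $\int_0^T\int_\omega v^2\,a^{-1}\,dxdt$.
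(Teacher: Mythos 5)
Your proposal follows essentially the same route as the paper's proof: the same three-region decomposition ($[\lambda,\gamma]$ via Theorem \ref{Cor11} and a cut-off, $[\gamma,1]$ via Theorem \ref{mono1} with the weight comparison $e^{2s\varphi}\le k e^{2s\Phi}$ obtained by enlarging $d_1$, and $[0,\lambda]$ via odd reflection and the analogue of Theorem \ref{Cor11} on $(-\beta,\beta)$ with the symmetric weight $\tilde\varphi$), with the same use of Proposition \ref{caccio1} to absorb the cut-off error terms. The only deviation is the ``further enlargement of $d_1$'' to force $e^{2s\tilde\varphi}\le e^{2s\Phi}$ on $[-\beta,\beta]$, which is superfluous (and harmless): in the reflection step the degenerate estimate is applied directly with the weight $\tilde\varphi$, and since $\tilde\psi(x)=\psi(-x)$ the reflected integrals fold back onto $(\lambda,\gamma)$ without any comparison with the non degenerate weight $\Phi$.
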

\begin{proof}
Suppose that $x_0 <\alpha$ (the proof is similar if we assume that
$\beta < x_0$ with simple adaptations); moreover, set $\lambda:=
\frac{2\alpha +\beta}{3}$ and $\gamma:= \frac{\alpha +2\beta}{3}$,
so that $\alpha<\lambda<\gamma<\beta$. Now, fix $\tilde \alpha \in (\alpha, \lambda)$ and $\tilde \beta \in (\gamma, \beta)$ and consider a smooth
function $\xi: [0,1] \to \Bbb R$ such that
\[\begin{cases}
    0 \leq \xi (x)  \leq 1, &  \text{ for all } x \in [0,1], \\
    \xi (x) = 1 ,  &   x \in [\lambda, \gamma],\\
    \xi (x)=0, &     x \in [0,1]\setminus (\tilde \alpha, \tilde \beta).
    \end{cases}
\]

Define $w:= \xi v$, where $v$ is any fixed solution of
\eqref{h=011}. Hence, neglecting the final--time datum (of no
interest in this context), $w$ satisfies
\[
\begin{cases}
w_t + a  w_{xx} = a (\xi _{xx} v + 2\xi _x  v_x )=:f,&
(t,x) \in(0, T)\times (0,1), \\
w(t,0)= w(t,1)=0, & t \in (0,T).
\end{cases}
\]

Applying Theorem \ref{Cor11} and using the fact that $w\equiv0$ in a
neighborhood of $x=0$  and $x=1$, we have
\begin{equation}\label{car91}
\begin{aligned}
\int_0^T \int_0^1\Big( s \Theta   (w_x)^2 + s^3 \Theta^3
   \left(\frac{x-x_0}{a}\right)^2\ w^2 \Big)
    e^{2s \varphi} \, dx dt
       \le C\int_{Q_T}\frac{e^{2s \varphi}}{a} f^2  dxdt,
\end{aligned}
\end{equation}
for all $s \ge s_0$. Then, using the definition of $\xi$  and in
particular the fact that  $\xi_x$ and  $\xi_{xx}$ are supported in
$\hat \omega$, where  $\hat \omega:= (\tilde \alpha, \lambda) \cup( \gamma,
\tilde\beta) $, we can write
\begin{equation}\label{fa}
\frac{f^2}{a}= a(\xi _{xx} v +2 \xi _x v_x)^2 \le C( v^2+
(v_x)^2)\chi_{\hat \omega}.
\end{equation}

Hence, we find
\begin{equation}\label{stimacar1}
\begin{aligned}
&\int_0^T\int_{\lambda}^{\gamma}\left( s \Theta (v_x)^{2} + s^3
\Theta ^3 \left(\frac{x-x_0}{a}\right)^2 v^{2}\right)
e^{{2s\varphi}} dxdt\\
&=\int_0^T\int_{\lambda}^{\gamma}\Big( s \Theta   (w_x)^2 + s^3
\Theta^3 \left(\frac{x-x_0}{a}\right)^2 w^2 \Big)
e^{2s \varphi} \, dx dt\\
&\le\int_{Q_T}\Big( s \Theta  (w_x)^2 + s^3 \Theta^3
\left(\frac{x-x_0}{a} \right)^2w^2 \Big)
e^{2s \varphi} \, dx dt\\
& \mbox{ (by \eqref{car91} and \eqref{fa})}\\
&\le C  \int_0^T \int_{\hat \omega}e^{2s \varphi}( v^2+
(v_x)^2)dxdt\\
& \mbox{ (by Proposition \ref{caccio1} with $\vp=\Theta\psi$, since
$\hat\omega\subset \subset \omega$, and using the fact that}\\
&\mbox{ $ae^{2s\varphi}$ is bounded)}\\
&\le C \int_0^T \int_{\omega} \frac{v^2}{a} xdt,
\end{aligned}
\end{equation}

Now, consider a smooth function $\eta: [0,1] \to \Bbb R$ such that
\[
\begin{cases}
    0 \leq \eta (x)  \leq 1, &  \text{ for all } x \in [0,1], \\
   \eta (x) = 1 ,  &   x \in [\gamma , 1],\\
  \eta (x)=0, &     x \in \left[0,\lambda\right],
\end{cases}
\]
and define $z:= \eta v$. Then $z$ satisfies
\begin{equation}\label{eq-z*}
\begin{cases}
z_t + a  z_{xx}= h,  &(t,x) \in(0,T)\times (\alpha,1)\\
z(t,\alpha)= z(t,1)=0, & t \in (0,T),
\end{cases}
\end{equation}
with $h:=a (\eta _{xx} v + 2\eta _x  v_x)\in L^2\big((0,T)\times
(\alpha, 1)\big)$. Observe that \eqref{eq-z*} is non degenerate,
since $x\in (\alpha,1)$.

Moreover, since the problem is {\em non degenerate},
we can apply Theorem \ref{mono1} with $A= \alpha$, $B=1$ and Remark
\ref{RemCarleman1}, obtaining
\[
\int_0^T\int_\alpha^1 \left(s\Theta (z_x)^2 + s^3 \Theta^3
 z^2\right)e^{2s\Phi}dxdt\le C\int_0^T\int_\alpha^1 h^{2}e^{2s\Phi}dxdt,
\]
for $s\geq s_0$. Let us note that the boundary term which appears in
the original estimate is nonpositive and thus is neglected.

Now, we use the analogue of \eqref{fa} for $h$, Proposition
\ref{caccio1} and, recalling what the support of $\eta$ is, we get
\begin{equation}\label{standa}
\begin{aligned}
&\int_0^T\int_\alpha^1 \left(s\Theta (z_x)^2 + s^3 \Theta^3
 z^2\right)e^{2s\Phi}dxdt \\
&\le C \int_0^T\int_{\tilde \omega}e^{2s\Phi}( v^2+ (v_x)^2)dxdt\le
C \int_0^T\int_{\tilde \omega} v^2dxdt+ C\int_0^T\int_{\tilde
\omega}e^{2s\Phi}(v_x)^2dxdt \\& \le C \int_0^T\int_{\omega}\frac{
v^2}{a}dxdt
\end{aligned}
\end{equation}
where $\tilde \omega =(\lambda, \gamma)$.

Now, choose the constant $d_1$ in \eqref{c_11} so that
\begin{equation}\label{c_1magg}
d_1 \ge \begin{cases} \frac{\displaystyle  r\left[\int_ {\alpha}^1
\frac{1}{\sqrt{a(t)}} \int_t^1 \fg(\tau) d\tau dt + \int_{\alpha}^1
\frac{\fh_0}{\sqrt{a(t)}}dt\right]+ \mathfrak{c}}{\displaystyle
d_2-\frac{(1-x_0)^2e^{R(1-x_0)^2}}{a(1)(2-K)}} & \mbox{
in the (WD) case},\\
\frac{\displaystyle \mathfrak{c}-1}{\displaystyle
d_2-\frac{(1-x_0)^2e^{R(1-x_0)^2}}{a(1)(2-K)}} & \mbox{ in the (SD) case}.
\end{cases}
\end{equation}
Then, by definition of $\varphi$ and the choice of $d_1$, one can
prove that there exists a positive constant $k$, for example
\[
k = \max \left\{1,\left(\frac{1-x_0}{a(1)}\right)^2\right\},
\]
such that
\begin{equation}\label{prima}
e^{2s\varphi(t,x)} \le k e^{2s\Phi(t,x)}
\end{equation}
and
\begin{equation}\label{seconda}
\left(\frac{x-x_0}{a(x)}\right)^2e^{2s\varphi(t,x)} \le k e^{2s
\Phi(t,x)}
\end{equation}
$\forall \,(t,x) \in [0, T] \times \left[\alpha, 1\right]$.

Thus, by \eqref{prima} and \eqref{seconda}, via \eqref{standa}, we
find
\[
\begin{aligned}
&\int_0^T\int_{\alpha}^1 \Big(s \Theta (z_x)^2
 + s^3 \Theta^3
     \left( \frac{x-x_0}{a}\right)^2z^2\Big) e^{2s\varphi}dxdt \\
& \le k \int_0^T\int_{\alpha}^1 s \Theta (z_x)^2e^{2s\Phi} dxdt +k\int_0^T\int_{\alpha}^1 s^3\Theta^3 z^2e^{2s\Phi} dxdt\\
& \le C \int_0^T \int_{\omega} \frac{v^2}{a}dxdt,
\end{aligned}
\]
for a positive constant $C$ and $s$ large enough. Hence, by
definition of $z$ and by the inequality above, we get
\begin{equation}\label{stimacar21}
\begin{aligned}
&\int_0^T\int_{\gamma}^1 \Big(s \Theta  (v_x)^2 + s^3 \Theta^3
\left(\frac{x-x_0}{a}\right)^2v^2\Big) e^{2s\varphi} dxdt\\
&=\int_0^T\int_{\gamma}^1 \Big(s \Theta  (z_x)^2 + s^3 \Theta^3
\left(\frac{x-x_0}{a}\right)^2z^2\Big) e^{2s\varphi}dxdt\\
& \le \int_0^T\int_{\alpha}^1 \Big(s \Theta  (z_x)^2 + s^3 \Theta^3
\left( \frac{x-x_0}{a}\right)^2z^2\Big) e^{2s\varphi}dxdt
\\
&\le C \int_0^T \int_{\omega} \frac{v^2}{a}dxdt,
\end{aligned}
\end{equation}
for a positive constant $C$ and for $s$ large enough.

Thus \eqref{stimacar1} and \eqref{stimacar21} imply
\begin{equation}\label{carin01}
\begin{aligned}
\int_0^T \int _{\lambda}^{1}  \Big( s \Theta  (v_x)^2 + s^3 \Theta^3
\left(\frac{x-x_0}{a}\right)^2  v^2 \Big) e^{2s \varphi } \, dx dt
\le C \int_{0}^T \int _{\omega}   \frac{v^2 }{a} dxdt,
\end{aligned}
\end{equation}
for some positive constant $C$ and $s\geq s_0$. To complete the
proof it is sufficient to prove a similar inequality for
$x\in[0,\lambda]$. To this aim, we follow a reflection procedure
already introduced in \cite{fm}, considering the functions
\[
W(t,x):= \begin{cases} v(t,x), & x \in [0,1],\\
-v(t,-x), & x \in [-1,0]
\end{cases}
\]
and
\[
\tilde a(x):= \begin{cases} a(x), & x \in [0,1],\\
a(-x), & x \in [-1,0],
\end{cases}
\]
so that $W$ satisfies the problem
\[
\begin{cases}
W_t +\tilde a W_{xx}= 0, &(t,x) \in  (0,T)\times (-1,1),\\
W(t,-1)=W(t,1) =0, & t \in (0,T).
\end{cases}
\]

Now, consider a cut off function $\rho: [-1,1] \to \Bbb R$ such that
\[\begin{cases}
0 \leq \rho (x)  \leq 1, &  \text{ for all } x \in [-1,1], \\
\rho (x) = 1 ,  &   x \in [-\lambda, \lambda],\\
\rho (x)=0, &     x \in \left[-1,-\gamma\right ]\cup
\left[\gamma,1\right],
\end{cases}
\]
and define $Z:= \rho W$. Then $Z$ satisfies
\begin{equation}\label{eq-Z*}
\begin{cases}
Z_t + \tilde aZ_{xx}=\tilde h,  &(t,x) \in (0,T)\times (-\beta,\beta),\\
Z(t,-\beta)= Z(t,\beta)=0, & t \in (0,T),
\end{cases}
\end{equation}
where $\tilde h=\tilde a\rho_{xx}W+2\tilde a\rho_xW_x$. Now, defining
$\tilde \varphi(t,x) := \Theta(t) \tilde \psi (x)$, with
\[
\tilde \psi(x) := \begin{cases}
\psi(x), & x \ge 0,\\
\displaystyle \psi(-x)= d_1\left[\int_{-x_0}^x \frac{t+x_0}{\tilde
a(t)}e^{R(t+x_0)^2}dt-d_2\right], & x <0,\end{cases}
\]
we use the analogue of Theorem \ref{Cor11} on $(- \beta,
\beta)$ in place of $(0,1)$ and with $\varphi$ replaced by $\tilde
\varphi$. Moreover, using the fact that $Z_x(t, -\beta)=Z_x(t,
\beta)=0$, the definition of $W$ and the fact that $\rho$ is
supported in $\left[-\gamma,-\lambda\right] \cup\left[\lambda,
\gamma\right]$, we get
\[
\begin{aligned}
& \int_0^T\int_{-\beta}^{\beta}  \left(s\Theta (Z_x)^2 + s^3
\Theta^3
\left(\frac{x-x_0}{\tilde a} \right)^2Z^2\right)e^{2s\tilde\varphi}dxdt\\
& \le c
\int_0^T\int_{-\beta}^{\beta} \tilde h^{2}\frac{e^{2s\tilde\varphi}}{\tilde a}dxdt\\
&\le C \int_0^T \int_{-\gamma}^{-\lambda}(
W^2+ (W_x)^2)e^{2s \tilde \varphi}dxdt + C\int_0^T \int_{\lambda}^{\gamma}(W^2+ (W_x)^2)e^{2s\varphi}dxdt\\
&\mbox{(since $\tilde \psi (x)= \psi (-x)$, for $x <0$)}\\
&= 2C\int_0^T \int_{\lambda}^{\gamma}(W^2+ (W_x)^2)e^{2s\varphi}dxdt
= 2C\int_0^T \int_{\lambda}^{\gamma}(v^2+
(v_x)^2)e^{2s\varphi}dxdt\\
& \mbox{ (by Propositions \ref{caccio1}) }\\
& \le C \int_0^T \int_{\omega} \frac{v^2}{a}dxdt,
\end{aligned}
\]
for some positive constants $c$ and $C$ and $s$ large enough.

 Hence, by the definitions of $Z$, $W$ and $\rho$, and using the previous inequality one has
\begin{equation}\label{car101}
\begin{aligned}
&\int_0^T\int_{0}^{\lambda}  \left(s\Theta (v_x)^2 + s^3 \Theta^3
\left(\frac{x-x_0}{a} \right)^2v^2\right)e^{2s\varphi}dxdt\\
&= \int_0^T\int_{0}^{\lambda}  \left(s\Theta (W_x)^2 + s^3 \Theta^3
\left(\frac{x-x_0}{a} \right)^2W^2\right)e^{2s\varphi}dxdt\\
&= \int_0^T\int_{0}^{\lambda}  \left(s\Theta (Z_x)^2 + s^3 \Theta^3
\left(\frac{x-x_0}{a} \right)^2Z^2\right)e^{2s\varphi}dxdt\\
&\le \int_0^T\int_{0}^{\beta}  \left(s\Theta (Z_x)^2 + s^3 \Theta^3
\left(\frac{x-x_0}{a} \right)^2Z^2\right)e^{2s\varphi}dxdt\\
&\le \int_0^T\int_{-\beta}^{\beta}  \left(s\Theta (Z_x)^2 + s^3
\Theta^3
\left(\frac{x-x_0}{a} \right)^2Z^2\right)e^{2s\tilde \varphi}dxdt\\
&\le C \int_0^T \int_{\omega} \frac{v^2}{a}dxdt,
\end{aligned}
\end{equation}
for a positive constant $C$ and $s$ large enough. Therefore, by
\eqref{carin01} and \eqref{car101}, the conclusion follows.
\end{proof}

We are now ready to prove the observability inequality in the case
of a regular final--time datum:
\begin{Lemma}\label{obser.regular1}
Assume \eqref{omega} and Hypothesis $\ref{Ass031}$. Then there
exists a positive constant $C_T$ such that every solution $v \in
\cal W_2$ of \eqref{h=011} satisfies
 \[
\int_0^1v^2(0,x) \frac{1}{a}dx \le C_T\int_0^T
\int_{\omega}v^2\frac{1}{a}dxdt.
\]
\end{Lemma}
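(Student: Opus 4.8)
The plan is to follow the blueprint of the divergence-case result, Lemma \ref{obser.regular}, replacing the divergence energy identity by its non divergence analogue and the Hardy--Poincar\'e argument (with an ad hoc weight $p$) by the weighted Hardy inequality of Lemma \ref{hardy} in the (SD) case, respectively by Proposition \ref{h=h} combined with the classical Poincar\'e inequality in the (WD) case. First I would establish a monotonicity property of a natural energy. Since $v\in\cal{W}_2$, one has $v(t,\cdot)\in\cH^2_{\frac{1}{a}}(0,1)$ (so $v_x(t,\cdot)\in H^1(0,1)$) and $v_t(t,\cdot)\in\cH^1_{\frac{1}{a}}(0,1)\subset H^1_0(0,1)$, so all the quantities below are well defined and the boundary terms vanish. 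Multiplying the equation $v_t+av_{xx}=0$ of \eqref{h=011} by $v_t/a$ and integrating by parts over $(0,1)$ yields
\[
0=\int_0^1\frac{v_t^2}{a}\,dx+\int_0^1 v_{xx}v_t\,dx=\int_0^1\frac{v_t^2}{a}\,dx+\big[v_xv_t\big]_{x=0}^{x=1}-\int_0^1 v_xv_{tx}\,dx=\int_0^1\frac{v_t^2}{a}\,dx-\frac{1}{2}\frac{d}{dt}\int_0^1(v_x)^2\,dx,
\]
hence $\frac{d}{dt}\int_0^1(v_x)^2\,dx=2\int_0^1\frac{v_t^2}{a}\,dx\ge 0$. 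Therefore $t\mapsto\int_0^1(v_x)^2\,dx$ is nondecreasing on $[0,T]$, and in particular $\int_0^1(v_x)^2(0,x)\,dx\le\int_0^1(v_x)^2(t,x)\,dx$ for every $t\in[0,T]$.

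Next I would integrate this inequality over $\left[\frac{T}{4},\frac{3T}{4}\right]$, where $\Theta$ is bounded and bounded away from $0$ and, since $\varphi=\Theta\psi$ with $-d_1d_2\le\psi<0$, the weight $s\Theta e^{2s\varphi}$ is bounded below by a positive constant; this produces a constant $C_T>0$ with
\[
\int_0^1(v_x)^2(0,x)\,dx\le\frac{2}{T}\int_{\frac{T}{4}}^{\frac{3T}{4}}\int_0^1(v_x)^2(t,x)\,dxdt\le C_T\int_0^T\int_0^1 s\Theta(v_x)^2e^{2s\varphi}\,dxdt.
\]
Applying Lemma \ref{lemma31} (valid for $s\ge s_0$ with $d_1$ large enough) to the right-hand side then gives
\[
\int_0^1(v_x)^2(0,x)\,dx\le C\int_0^T\int_\omega v^2\frac{1}{a}\,dxdt
\]
for some $C>0$. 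Finally, since $v(0,\cdot)\in\cH^1_{\frac{1}{a}}(0,1)$, the weighted Hardy inequality of Lemma \ref{hardy} (when Hypothesis \ref{Ass01} holds) or the equivalence $\cH^1_{\frac{1}{a}}(0,1)=H^1_0(0,1)$ of Proposition \ref{h=h} together with the Poincar\'e inequality (when Hypothesis \ref{Ass0} holds) gives $\int_0^1 v^2(0,x)\frac{1}{a}\,dx\le C\int_0^1(v_x)^2(0,x)\,dx$, and combining with the previous display yields the claim.

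The routine content here is the energy identity; the two points that deserve care are the justification of the integration by parts — which is exactly what the inclusion $\cal{W}_2\subset C^1([0,T];L^2_{\frac{1}{a}}(0,1))\cap C([0,T];\cH^2_{\frac{1}{a}}(0,1))$ provides, since then $v_x(t,\cdot)\in H^1(0,1)$ and $v_t(t,\cdot)\in H^1_0(0,1)$ so that the bracket term vanishes — and the availability of the Hardy-type inequality $\int_0^1\frac{w^2}{a}\,dx\le C\int_0^1(w_x)^2\,dx$ on $\cH^1_{\frac{1}{a}}(0,1)$ in \emph{both} the weakly and strongly degenerate cases. This last ingredient is precisely what distinguishes the non divergence setting, where the weight $1/a$ appears naturally in the observability norm, from the divergence one of Lemma \ref{obser.regular}, where the passage from $\int a(v_x)^2(0,\cdot)$ to $\int v^2(0,\cdot)$ required instead a Hardy--Poincar\'e inequality with a tailored weight and a lower bound on $(a/|x-x_0|^2)^{1/3}$.
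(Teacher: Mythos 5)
Your proof is correct and follows the same structure as the paper's: the energy identity obtained by multiplying by $v_t/a$, the monotonicity of $t\mapsto\int_0^1(v_x)^2\,dx$, the integration over $\left[\frac{T}{4},\frac{3T}{4}\right]$, and the application of Lemma \ref{lemma31} are all exactly the paper's steps, as is the use of Lemma \ref{hardy} in the strongly degenerate case. The only point where you diverge is the final Hardy-type inequality in the weakly degenerate case: the paper applies the weighted Hardy--Poincar\'e inequality of Proposition \ref{HP} with the weight $p(x)=\frac{(x-x_0)^2}{a(x)}$ and $q=2-K$ (checking the monotonicity hypothesis via Lemma \ref{rem} and bounding $p$ by $C_1=\max\left\{\frac{x_0^2}{a(0)},\frac{(1-x_0)^2}{a(1)}\right\}$), whereas you invoke the norm equivalence $\cH^1_{\frac{1}{a}}(0,1)=H^1_0(0,1)$ of Proposition \ref{h=h} together with the classical Poincar\'e inequality. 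Both routes deliver $\int_0^1 v^2(0,x)\frac{1}{a}\,dx\le C\int_0^1(v_x)^2(0,x)\,dx$; yours is slightly more economical since it reuses an already-stated characterization rather than constructing an ad hoc weight, while the paper's version is self-contained within the Hardy--Poincar\'e framework it uses elsewhere and makes the constant explicit.
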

\begin{proof}
Multiplying the equation of \eqref{h=011} by $\displaystyle
\frac{v_t}{a}$ and integrating by parts over $(0,1)$, one has
\[
\begin{aligned}
&0 = \int_0^1(v_t+ av_{xx})\frac{v_t}{a} dx=
\int_0^1\frac{(v_t)^2}{a}dx + \left[v_xv_t \right]_{x=0}^{x=1}-
\int_0^1v_xv_{tx} dx \\&= \int_0^1\frac{(v_t)^2}{a}dx -
\frac{1}{2}\frac{d}{dt}\int_0^1(v_x)^2
 \ge - \frac{1}{2}
\frac{d}{dt}\int_0^1 (v_x)^2dx.
\end{aligned}
\]
Thus, the function $t \mapsto \int_0^1 (v_x)^2 dx$ is nondecreasing
for all $t \in [0,T]$. In particular, $\int_0^1 (v_x)^2(0,x)dx \le
\int_0^1(v_x)^2(t,x)dx$. Integrating the last inequality over
$\displaystyle \left[\frac{T}{4}, \frac{3T}{4} \right]$, we find
\[
\begin{aligned}
\int_0^1(v_x)^2(0,x) dx &\le
\frac{2}{T}\int_{\frac{T}{4}}^{\frac{3T}{4}}\int_0^1(v_x)^2dxdt\\&\le
C_T \int_{\frac{T}{4}}^{\frac{3T}{4}}\int_0^1\Theta
(v_x)^2e^{2s_0\varphi}dxdt.
\end{aligned}
\]
Hence, by Lemma \ref{lemma31}, there exists a positive constant $C$
such that
\[
\int_0^1 (v_x)^2(0,x) dx \le C \int_0^T
\int_{\omega}\frac{v^2}{a}dxdt.
\]
First, in the strongly degenerate case, by Lemma \ref{hardy}, there
exists a positive constant $C>0$ such that
\[
\int_0^1 v^2(t,x)\frac{1}{a}dx \le C \int_0^1 (v_x)^2(t,x) dx,
\]
for all $t \in [0, T]$. Thus, from the previous two inequalities, we
get
\[
\int_0^1v^2(0,x) \frac{1}{a}dx \le C\int_0^T
\int_{\omega}v^2\frac{1}{a}dxdt
\]
for a positive constant $C$, and the conclusion follows.

In the weakly degenerate case, proceeding as in the proof of Lemma
\ref{lemma21} and applying the Hardy--Poincar\'{e} inequality of
Proposition \ref{HP}, one has
\[
\begin{aligned}
\int_0^1 v^2(0,x)\frac{1}{a}dx &= \int_0^1 \frac{p(x)}{(x-x_0)^2}
v^2(0,x)dx  \\&\le C_{HP} \int_0^1 p(x)(v_x)^2(0,x) dx \\&\le
C_1C_{HP} \int_0^1(v_x)^2(0,x) dx \le C
\int_0^T\int_{\omega}\frac{v^2}{a}dxdt,
\end{aligned}
\]
for a positive constat $C$.  Here $p(x) = \displaystyle
\frac{(x-x_0)^2}{a(x)}$, $C_{HP}$ is the Hardy-Poincar\'{e} constant
and $C_1:=\displaystyle \max\left\{ \frac{x_0^2}{a(0)},
\frac{(1-x_0)^2}{a(1)}\right \}$. Observe that the function $p$
satisfies the assumptions of Proposition \ref{HP} (with $q= 2-K$)
thanks to Lemma \ref{rem}. Hence, also in this case, the conclusion
follows.
\end{proof}

Using Lemma \ref{obser.regular1} and proceeding as in the proof of
Proposition \ref{obser.}, one can prove Proposition \ref{obser.1}.

\chapter{Linear and Semilinear Extensions}\label{secsemilinear}
In this chapter we will extend the global null controllability
result proved in the previous chapter to the linear problem
\begin{equation}\label{linear_c}
\begin{cases}
u_t - \mathcal A u + c(t,x)u =h(t,x) \chi_{\omega}(x), & (t,x) \in (0,T) \times (0,1),\\
u(t,1)=u(t,0)=0, & t \in (0,T),\\
u(0,x)=u_0(x),& x \in (0,1),
\end{cases}
\end{equation}
where $u_0 \in X$,  $h \in L^2(0,T;X)$, $c \in L^\infty(Q_T)$,
$\omega$ is as in \eqref{omega1} or in \eqref{omega}.
We recall that $X$ is $L^2(0,1)$ in the divergence case and $L^2_{\frac{1}{a}}(0,1)$ in the non divergence one.
Concerning
$a$, we assume that it satisfies Hypothesis \ref{Ass02} or Hypothesis \ref{Ass021} in order to
prove the Carleman estimates in Corollary \ref{cor_c}, and Hypothesis
\ref{Ass03} or Hypothesis \ref{Ass031} to prove the observability inequalities in Propositions
\ref{6.1} and \ref{6.2}. Observe that the well-posedness of \eqref{linear_c}
follows by \cite[Theorems 4.1, 4.3]{fggr}. As for the previous case, the
global null controllability of \eqref{linear_c} follows in a
standard way from an observability inequality for the solution of
the associated adjoint problem
\begin{equation}\label{h=0_c}
\begin{cases}
v_t +\mathcal A v- cv= 0, &(t,x) \in  (0, T)\times (0, 1),\\
v(t,1)=v(t,0)=0, & t \in (0,T),\\
v(T)= v_T \in L^2(0,1).
\end{cases}
\end{equation}
To obtain an observability inequality like the one in Proposition
\ref{obser.} or in Proposition \ref{obser.1}, the following Carleman estimate, corollary of Theorem
\ref{Cor1} and Theorem \ref{Cor11}, is crucial. For this, consider the problem
      \begin{equation}\label{adjoint0_c}
      \begin{cases}
      v_t + \mathcal A v -cv =h, & (t,x) \in (0,T) \times (0,1),\\
      v(t,1)=v(t,0)=0, &  t \in (0,T)
          \end{cases}
\end{equation}
and denote with $\mathcal S$ the space $\mathcal S_1$ if we consider the divergence case and $\mathcal S_2$ if we consider the non divergence one.
\begin{Corollary}\label{cor_c}
Assume Hypothesis $\ref{Ass02}$ or Hypothesis $\ref{Ass021}$. Then, there exist two
positive constants $C$ and $s_0$, such that every solution $v$ in $
\mathcal{S}$ of \eqref{adjoint0_c} satisfies, for all $s \ge s_0$,
\[
\begin{aligned}
&\int_{Q_T} \left(s\Theta a(v_x)^2 + s^3 \Theta^3
\frac{(x-x_0)^2}{a} v^2\right)e^{2s\varphi}dxdt\\
&\le C\left(\int_{Q_T} h^{2}e^{2s\varphi}dxdt+
sc_1\int_0^T\left[a\Theta e^{2s \varphi}(x-x_0)(v_x)^2
dt\right]_{x=0}^{x=1}\right),
\end{aligned}
\]
if Hypothesis $\ref{Ass02}$ holds and
\[
\begin{aligned}
&\int_{Q_T} \left(s\Theta (v_x)^2 + s^3 \Theta^3
\left(\frac{x-x_0}{a} \right)^2v^2\right)e^{2s\varphi}dxdt\\
&\le C\left(\int_{Q_T} h^{2}\frac{e^{2s\varphi}}{a}dxdt +
sd_1\int_0^T\left[\Theta e^{2s \varphi}(x-x_0)(v_x)^2
dt\right]_{x=0}^{x=1}\right),
\end{aligned}
\] if Hypothesis $\ref{Ass021}$ is in force.
Here $c_{1}$ and $d_{1}$ are the constants introduced in \eqref{c_1} and \eqref{c_11}, respectively.
\end{Corollary}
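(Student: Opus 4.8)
The plan is to regard the potential $c$ as a perturbation of the source and then to reabsorb the resulting extra term by means of the Carleman estimates already established in Theorem~\ref{Cor1} (divergence case) and Theorem~\ref{Cor11} (non divergence case), combined with the Hardy--Poincar\'e inequality of Proposition~\ref{HP}. First I would rewrite the equation in \eqref{adjoint0_c} as $v_t+(av_x)_x=h+cv=:\bar h$ in the divergence case, and as $v_t+av_{xx}=h+cv=:\bar h$ in the non divergence case. Since $c\in L^\infty(Q_T)$ and $v\in\mathcal S$, we have $\bar h\in L^2(Q_T)$ (resp. $\bar h\in L^2_{\frac1a}(Q_T)$), so that $v$ is a genuine solution of \eqref{1} in $\mathcal{S}_1$ (resp. of \eqref{1a} in $\mathcal{S}_2$) with source $\bar h$, and Theorem~\ref{Cor1} (resp. Theorem~\ref{Cor11}) applies verbatim. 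The boundary terms appearing there involve only $v_x$ at $x=0,1$, hence they are untouched by the replacement $h\rightsquigarrow\bar h$; the only new contribution comes from the distributed source term, through $\int_{Q_T}\bar h^2e^{2s\varphi}\le 2\int_{Q_T}h^2e^{2s\varphi}+2\|c\|_{L^\infty(Q_T)}^2\int_{Q_T}v^2e^{2s\varphi}$ and the analogous inequality with the weight $\frac1a$ in the non divergence case.

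It therefore remains to absorb the term $2C\|c\|_{L^\infty(Q_T)}^2\int_{Q_T}v^2e^{2s\varphi}\,dxdt$ (resp. $2C\|c\|_{L^\infty(Q_T)}^2\int_{Q_T}\frac{v^2}{a}e^{2s\varphi}\,dxdt$) into the left-hand side. The delicate point is that the spatial weights $\frac{(x-x_0)^2}{a}$ and $\left(\frac{x-x_0}{a}\right)^2$ vanish at $x_0$, so a pointwise comparison fails near the degeneracy point and one must pass through a weighted inequality; I expect this reabsorption near $x_0$ to be the only genuinely non-routine step. Set $w:=e^{s\varphi}v$, which, since $\psi<0$, belongs to the same space as $v$ (as already noted in the proofs of Theorems~\ref{Cor1} and \ref{Cor11}) and satisfies $w(t,0)=w(t,1)=0$. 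In the divergence case I would apply Proposition~\ref{HP} with $p(x)=|x-x_0|^2$ --- admissible, since $\frac{p(x)}{|x-x_0|^q}=|x-x_0|^{2-q}$ has the required monotonicity for every $q\in(1,2)$ --- to get $\int_0^1 w^2\,dx\le C_{HP}\int_0^1|x-x_0|^2(w_x)^2\,dx\le C_{HP}M\int_0^1 a(w_x)^2\,dx$, where $M:=\max_{[0,1]}\frac{(x-x_0)^2}{a}<\infty$ by Lemma~\ref{rem}.1. In the non divergence case I would instead use Proposition~\ref{HP} with $p(x)=\frac{(x-x_0)^2}{a}$ and $q=2-K$ in the weakly degenerate case (the monotonicity of $\frac{p}{|x-x_0|^q}=\frac{|x-x_0|^K}{a}$ again following from Lemma~\ref{rem}.1) and Lemma~\ref{hardy} directly in the strongly degenerate case, obtaining $\int_0^1\frac{w^2}{a}\,dx\le C\int_0^1(w_x)^2\,dx$.

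Finally I would undo the change of unknown: from $w_x=e^{s\varphi}(v_x+s\varphi_x v)$ and the explicit form of $\psi$ in \eqref{c_1}, \eqref{c_11} --- which yields $a(\varphi_x)^2=c_1^2\Theta^2\frac{(x-x_0)^2}{a}$ in the divergence case and $(\varphi_x)^2\le C_R\, d_1^2\,\Theta^2\left(\frac{x-x_0}{a}\right)^2$ in the non divergence case, $C_R$ depending only on $R$ --- it follows that $a(w_x)^2\le 2ae^{2s\varphi}(v_x)^2+2c_1^2s^2\Theta^2\frac{(x-x_0)^2}{a}e^{2s\varphi}v^2$, and likewise for $(w_x)^2$. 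Integrating in $t$ and using that $\Theta$ is bounded below by a positive constant $\Theta_0$ on $[0,T]$, each term so produced is controlled by $\frac{1}{s\Theta_0}$ times one of the two terms on the left-hand side of the asserted inequality. Hence, enlarging $s_0$ suitably --- now depending also on $C$, $C_{HP}$, $M$ (resp. on the constant of Lemma~\ref{hardy}), $\|c\|_{L^\infty(Q_T)}$, $c_1$ or $d_1$, and $T$ --- the perturbation is absorbed by one half of the left-hand side, and the claimed estimates follow with a new constant $C$. Everything else is bookkeeping with the powers of $s$ and $\Theta$.
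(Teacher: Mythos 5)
Your proposal is correct and follows essentially the same route as the paper: rewrite the equation with $\bar h:=h+cv$, invoke Theorems \ref{Cor1} and \ref{Cor11} verbatim (the boundary terms being unaffected), and absorb the extra term $\|c\|^2_{L^\infty(Q_T)}\int_{Q_T}v^2e^{2s\varphi}$ (resp.\ with weight $\tfrac1a$) by applying the Hardy--Poincar\'e inequality of Proposition \ref{HP} to $w=e^{s\varphi}v$ and enlarging $s_0$. The only cosmetic difference is your choice of weight $p(x)=|x-x_0|^2$ in the divergence case, where the paper instead retraces the splitting of \eqref{sopra}; both yield the same bound $\int_0^1w^2\,dx\le C\int_0^1a(w_x)^2\,dx+Cs^2\Theta^2\int_0^1\frac{(x-x_0)^2}{a}w^2\,dx$ and the same absorption.
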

\begin{proof}
     Rewrite the equation of
    \eqref{adjoint0_c} as $ v_t + (av_x)_x = \bar{h}, $ where $\bar{h}
    := h + cv$.
    Hence $\bar h^2 \le 2h^2 + 2 \|c\|_{L^\infty(Q_T)}v^2$. Now, we will distinguish between the divergence and the non divergence case.

    {\it Divergence case.} If Hypothesis \ref{Ass02} holds, then, applying Theorem \ref{Cor1}, there exists
    two positive constants $C$ and $s_0 >0$, such that
    \begin{equation}\label{fati1_c}
    \begin{aligned}
  &\int_{Q_T} \left(s\Theta a(v_x)^2 + s^3 \Theta^3
\frac{(x-x_0)^2}{a} v^2\right)e^{2s\varphi}dxdt\\
&\le C\left(\int_{Q_T} \bar{h}^{2}e^{2s\varphi}dxdt+
sc_1\int_0^T\left[a\Theta e^{2s \varphi}(x-x_0)(v_x)^2
dt\right]_{x=0}^{x=1}\right)
    \\
    &\le
  C\left(\int_{Q_T} h^2e^{2s\varphi}dxdt
   +\int_{Q_T}
    e^{2s\varphi}v^2dxdt+
sc_1\int_0^T\left[a\Theta e^{2s \varphi}(x-x_0)(v_x)^2
dt\right]_{x=0}^{x=1}\right)
    \end{aligned}
    \end{equation}
    for all $s \ge s_0$.
Applying the Hardy-Poincar\'e inequality (see Proposition \ref{HP})
to $w(t,x) := e^{s\varphi(t,x)} v(t,x)$ and proceeding as in
\eqref{sopra}, recalling that $0<\inf \Theta\leq \Theta\leq c
\Theta^2$, one has
    \[
    \begin{aligned}
    \int_0^1 e^{2s\varphi} v^2 dx &= \int_0^1
w^2 dx\le C \int_0^1 a (w_x)^2 dx + \frac{s}{2}\int_0^1
\frac{(x-x_0)^2}{a}w^2 dx\\&\leq C\Theta\int_0^1
ae^{2s\varphi}(v_x)^2 dx+ C\Theta^3s^2 \int_0^1
    e^{2s\varphi}v^2 \frac{(x-x_0)^2}{a}dx.
    \end{aligned}
    \]
    Using this last inequality in \eqref{fati1_c}, we have
    \begin{equation}\label{fati2_c}
    \begin{aligned}
  &\int_{Q_T} \left(s\Theta a(v_x)^2 + s^3 \Theta^3
\frac{(x-x_0)^2}{a} v^2\right)e^{2s\varphi}dxdt\\
   & \le  C\left(\int_{Q_T} h^2e^{2s\varphi}dxdt
   +\int_{Q_T} \Theta
ae^{2s\varphi}(v_x)^2 dxdt
   \right.\\& \left. +s^2\int_{Q_T} e^{2s\varphi}\Theta^3
 \frac{(x-x_0)^2}{a}v^2dxdt+
sc_1\int_0^T\left[a\Theta e^{2s \varphi}(x-x_0)(v_x)^2
dt\right]_{x=0}^{x=1}\right)
   \end{aligned}
    \end{equation}
    for a positive constant $C$.
Hence, for all $s \ge s_0$, where $s_0$ is assumed sufficiently
large, the claim follows.

{\it Non divergence case.} If Hypothesis \ref{Ass021} holds, then, applying Theorem \ref{Cor11}, there exist two positive constants $C$ and $s_0$ such that
\begin{equation}\label{fati11_c}
\begin{aligned}
&\int_{Q_T} \left(s\Theta (v_x)^2 + s^3 \Theta^3
\left(\frac{x-x_0}{a} \right)^2v^2\right)e^{2s\varphi}dxdt\\
&\le C\left(\int_{Q_T}\bar h^{2}\frac{e^{2s\varphi}}{a}dxdt +
sd_1\int_0^T\left[\Theta e^{2s \varphi}(x-x_0)(v_x)^2
dt\right]_{x=0}^{x=1}\right)\\
    &\le
  C\left(\int_{Q_T} h^2\frac{e^{2s\varphi}}{a}dxdt
   +\int_{Q_T}
    v^2\frac{e^{2s\varphi}}{a}dxdt+
sd_1\int_0^T\left[\Theta e^{2s \varphi}(x-x_0)(v_x)^2
dt\right]_{x=0}^{x=1}\right)
\end{aligned}
\end{equation}
for all $s \ge s_0$. Applying again the Hardy-Poincar\'e inequality
to $w:= e^{s\varphi} v$, setting $p(x) = \displaystyle
\frac{(x-x_0)^2}{a(x)}$, using Lemma \ref{rem} and proceeding as in Lemma \ref{obser.regular1}, one has
\[
\begin{aligned}
\int_0^1 w^2\frac{1}{a}dx &= \int_0^1 \frac{p(x)}{(x-x_0)^2}
w^2dx  \le C_{HP} \int_0^1 p(x)(w_x)^2 dx \le
C \int_0^1(w_x)^2 dx \\&\le
  C\left(\int_{Q_T} \Theta
e^{2s\varphi}(v_x)^2 dxdt
+s^2\int_{Q_T} \Theta^3e^{2s\varphi}
 \left(\frac{x-x_0}{a}\right)^2v^2dxdt\right),
\end{aligned}
\]
for a positive constat $C$ (we recall that $ C_{HP}$ is the Hardy-Poincar\'{e} constant). Finally, using the previous inequality in \eqref{fati11_c}, one has
\[
\begin{aligned}
&\int_{Q_T} \left(s\Theta (v_x)^2 + s^3 \Theta^3
\left(\frac{x-x_0}{a} \right)^2v^2\right)e^{2s\varphi}dxdt\\
&\le
  C\left(\int_{Q_T} h^2\frac{e^{2s\varphi}}{a}dxdt
   +\int_{Q_T} \Theta
e^{2s\varphi}(v_x)^2 dxdt \right. \\
&\left.
+s^2\int_{Q_T} \Theta^3e^{2s\varphi}
 \left(\frac{x-x_0}{a}\right)^2v^2dxdt+
sd_1\int_0^T\left[\Theta e^{2s \varphi}(x-x_0)(v_x)^2
dt\right]_{x=0}^{x=1}\right)
\end{aligned}
\]
As for the divergence case, choosing $s_0$ sufficiently
large, the claim follows.

\end{proof}

As a consequence of the previous corollary, one can deduce an
observability inequality for the homogeneous adjoint problem
\eqref{h=0_c}. In fact, without loss of generality we can assume
that $c \ge 0$ (otherwise one can reduce the problem to this case
introducing $\tilde v:= e^{-\lambda t}v$ for a suitable $\lambda$).
Using this assumption we can prove that the analogous of Lemma
\ref{lemma3}, of Lemma \ref{lemma3'} and of Lemma \ref{lemma31} still hold true. Thus,
as before, one can prove the following observability inequalities:
\begin{Proposition}\label{6.1}
Assume Hypotheses $\ref{ipotesiomega}$ and $\ref{Ass03}$. Then there
exists a positive constant $C_T$ such that every solution  $v \in
C([0, T]; L^2(0,1)) \cap L^2 (0,T; {\cal H}^1_a(0,1))$  of
\eqref{h=0_c} satisfies
 \begin{equation}\label{obser_c}
\int_0^1v^2(0,x) dx \le C_T\int_0^T \int_{\omega}v^2(t,x)dxdt .
\end{equation}
\end{Proposition}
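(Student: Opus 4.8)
The plan is to follow the proof of Proposition~\ref{obser.} almost verbatim, the only new feature being the zero order term $cv$. \textbf{Reduction to $c\ge0$.} Since $c\in L^\infty(Q_T)$, setting $\tilde v(t,x):=e^{-\lambda t}v(t,x)$ with $\lambda:=\|c\|_{L^\infty(Q_T)}$ turns \eqref{h=0_c} into a problem of the same form for $\tilde v$, with $c$ replaced by $\tilde c:=c+\lambda\ge0$, still in $L^\infty$; since $\tilde v(0,\cdot)=v(0,\cdot)$ and $0\le \tilde v^2\le v^2$ on $[0,T]$, an observability inequality for $\tilde v$ yields one for $v$ (with constant multiplied by $e^{2\lambda T}$). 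So from now on assume $c\ge0$.

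\textbf{The Carleman--cut-off estimate.} The core is the analogue of Lemma~\ref{lemma3} (when $\omega$ satisfies \eqref{omega1}) or of Lemma~\ref{lemma3'} (when $\omega$ satisfies \eqref{omega}):
\[
\int_{Q_T}\Big(s\Theta a(v_x)^2+s^3\Theta^3\tfrac{(x-x_0)^2}{a}v^2\Big)e^{2s\varphi}\,dxdt\le C\int_0^T\!\!\int_\omega v^2\,dxdt,\qquad s\ge s_0 .
\]
Its proof is that of Lemma~\ref{lemma3}/\ref{lemma3'} with three bookkeeping changes. First, on the interval containing the degeneracy one uses Corollary~\ref{cor_c} in place of Theorem~\ref{Cor1} (applied to $w:=\xi v$, whose equation $w_t+(aw_x)_x-cw=(a\xi_x v)_x+a\xi_x v_x=:f$ has right-hand side \emph{not} involving $c$, and whose boundary terms vanish). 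Second, on the nondegenerate subintervals, writing $z:=\tau v$ one gets $z_t+(az_x)_x=h+cz$ with $h$ supported in the cut-off region; applying Theorem~\ref{mono} (see Remark~\ref{RemCarleman}) with right-hand side $h+cz$ produces an extra term $C\int z^2 e^{2s\Phi}$, which is absorbed into $s^3\int\Theta^3 z^2 e^{2s\Phi}$ for $s$ large, $\Theta$ being bounded below on $[0,T]$. Third, the Caccioppoli inequality (Proposition~\ref{caccio}) still holds for the adjoint equation with potential: the identity $0=\frac{d}{dt}\int \xi^2 e^{2s\varphi}v^2$ now produces the additional contribution $2\int \xi^2 e^{2s\varphi}cv^2$, bounded by $C\int_\omega v^2$. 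With these adjustments the cut-off/reflection scheme of Lemma~\ref{lemma3}/\ref{lemma3'} goes through unchanged.

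\textbf{From the estimate to observability.} As in Lemma~\ref{obser.regular}, first take $v\in\cal W_1$ (final datum in $D(\mathcal A_1^2)$). Multiplying \eqref{h=0_c} by $v_t$ and integrating on $(0,1)$ gives, with $E(t):=\int_0^1 a(v_x)^2\,dx$,
\[
\tfrac12 E'(t)=\int_0^1 v_t^2\,dx-\int_0^1 cvv_t\,dx\ge -\tfrac{\|c\|_{L^\infty}^2}{2}\int_0^1 v^2\,dx\ge -C_0E(t),
\]
the last inequality coming from the Hardy--Poincar\'e inequality (Proposition~\ref{HP}) applied exactly as at the end of the proof of Lemma~\ref{obser.regular}, which yields $\int_0^1 v^2\le C\int_0^1 a(v_x)^2$. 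Hence $t\mapsto e^{C_0t}E(t)$ is nondecreasing, so $E(0)\le e^{C_0T}E(t)$ for $t\in[0,T]$. Integrating over $[T/4,3T/4]$, where $\Theta$ and $e^{2s\varphi}$ are bounded below, and using the estimate above,
\[
E(0)\le \frac{2e^{C_0T}}{T}\int_{T/4}^{3T/4}E(t)\,dt\le C_T\int_{Q_T} s\Theta a(v_x)^2 e^{2s\varphi}\,dxdt\le C\int_0^T\!\!\int_\omega v^2\,dxdt .
\]
A final application of Proposition~\ref{HP} (again as in Lemma~\ref{obser.regular}) gives $\int_0^1 v^2(0,x)\,dx\le CE(0)$, hence \eqref{obser_c} for $v\in\cal W_1$; the general case follows by density of $D(\mathcal A_1^2)$ in $L^2(0,1)$ and continuity of the solution map in $C([0,T];L^2(0,1))$, as in the proof of Proposition~\ref{obser.}. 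The only delicate point is the second bookkeeping change above: absorbing the zero order contributions into the Carleman left-hand side, which works precisely because $\Theta^{3}$ is bounded below away from $0$ on $[0,T]$.
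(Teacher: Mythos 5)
Your proposal follows the paper's own route essentially verbatim: reduce to $c\ge 0$ by an exponential rescaling, rerun the cut-off/reflection scheme of Lemmas \ref{lemma3} and \ref{lemma3'} with Corollary \ref{cor_c} in place of Theorem \ref{Cor1} (treating $cz$ as a source on the nondegenerate pieces and absorbing it since $\Theta$ is bounded below), adapt the Caccioppoli inequality, and conclude via the energy/Hardy--Poincar\'e argument of Lemma \ref{obser.regular} and density. The only blemish is a sign slip in the rescaling: with $\tilde v=e^{-\lambda t}v$ and $\lambda=\|c\|_{L^\infty(Q_T)}>0$ the new potential is $c-\lambda$, so you should take $\tilde v=e^{\lambda t}v$ to get $c+\lambda\ge 0$ --- which is consistent with the factor $e^{2\lambda T}$ you record, but then your claim $\tilde v^2\le v^2$ should be dropped.
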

\begin{Proposition}\label{6.2}
Assume Hypothesis $\ref{Ass031}$ and \eqref{omega}. Then there
exists a positive constant $C_T$ such that the solution $v \in
 C([0, T]; L^2_{\frac{1}{a}}(0,1)) \cap L^2 (0,T;
\cH^1_{\frac{1}{a}}(0,1))$ of \eqref{h=0_c} satisfies
\begin{equation}\label{obser_c1}
\int_0^1v^2(0,x)\frac{1}{a} dx \le C_T\int_0^T
\int_{\omega}v^2\frac{1}{a}dxdt.
\end{equation}
\end{Proposition}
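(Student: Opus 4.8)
The plan is to reproduce, step by step, the proof of Proposition \ref{obser.1}, replacing everywhere the degenerate Carleman estimate of Theorem \ref{Cor11} by its perturbed version, Corollary \ref{cor_c}. As a preliminary reduction, note that we may assume $c\geq 0$: if $v$ solves \eqref{h=0_c}, then $\tilde v(t,x):=e^{-\lambda t}v(t,x)$ solves the same problem with $c$ replaced by $c+\lambda\geq 0$, where $\lambda:=\|c\|_{L^\infty(Q_T)}$, and \eqref{obser_c1} for $v$ is equivalent, up to the harmless factor $e^{2\lambda T}$, to the same inequality for $\tilde v$. Henceforth $c\geq 0$. Moreover, by the density argument used in the proof of Proposition \ref{obser.}, it suffices to prove \eqref{obser_c1} for solutions with regular final datum $v_T\in D(\mathcal A_2^2)$, i.e. for $v$ in the analogue of the class $\mathcal W_2$: indeed $D(\mathcal A_2^2)$ is dense in $L^2_{\frac{1}{a}}(0,1)$, and since $c\in L^\infty(Q_T)$ the evolution family associated with \eqref{h=0_c} is a bounded perturbation of the analytic semigroup of Theorem \ref{theorem_nondivergence}, so the solution depends continuously on $v_T$ in $C([0,T];L^2_{\frac{1}{a}}(0,1))$ and the inequality passes to the limit.

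First I would establish the analogue of Lemma \ref{lemma31} for the perturbed problem: there exist $C,s_0>0$ such that every regular solution $v$ of \eqref{h=0_c} satisfies, for all $s\geq s_0$ and $d_1$ large,
\[
\int_{Q_T}\Big(s\Theta(v_x)^2+s^3\Theta^3\Big(\frac{x-x_0}{a}\Big)^2v^2\Big)e^{2s\varphi}\,dxdt\le C\int_0^T\int_\omega v^2\frac1a\,dxdt .
\]
The proof copies that of Lemma \ref{lemma31}: one introduces the same cut-off functions $\xi,\eta$ and, after an odd reflection across $x=0$, the cut-off $\rho$, so that $w=\xi v$, $z=\eta v$ and $Z=\rho W$ solve, on $(0,1)$, $(\alpha,1)$ and $(-\beta,\beta)$ respectively, problems of the form \eqref{adjoint0_c} with source terms supported in a neighbourhood of the control region where $a$ is non-degenerate. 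To $w$ and $Z$ one applies Corollary \ref{cor_c} instead of Theorem \ref{Cor11}; the key point is that Corollary \ref{cor_c} is already a Carleman inequality for the operator $v\mapsto v_t+av_{xx}-cv$, so the lower-order term $cw$ (resp. $cZ$) has been absorbed once and for all. On the non-degenerate interval $(\alpha,1)$ one applies Theorem \ref{mono1} to $z$, writing $cz$ as part of the source and absorbing $\int_0^T\!\!\int_\alpha^1\Theta e^{2s\Phi}c^2z^2\,dxdt$ into the left-hand side for $s$ large, exactly as in the passage \eqref{sd}--\eqref{sd1}. Caccioppoli's inequality, Proposition \ref{caccio1}, is used unchanged, since its proof only needs $v_t+av_{xx}=cv$ with $c\in L^\infty$, the local regularity of $\fg,\fh$ away from $x_0$, and the weight estimate \eqref{stimayx}; the weight comparisons \eqref{prima}--\eqref{seconda} and the choice \eqref{c_1magg} of $d_1$ are untouched.

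Next I would prove the observability inequality for regular data. Multiplying the equation of \eqref{h=0_c} by $v/a$, integrating over $(0,1)$ and using Green's formula (Lemma \ref{green}) with the homogeneous boundary conditions, one gets
\[
\frac12\frac{d}{dt}\int_0^1\frac{v^2}{a}\,dx=\int_0^1(v_x)^2\,dx+\int_0^1\frac{c\,v^2}{a}\,dx\ \geq\ 0 ,
\]
because $c\geq 0$; hence $t\mapsto\int_0^1 v^2(t,x)/a\,dx$ is nondecreasing on $[0,T]$, so $\int_0^1 v^2(0,x)/a\,dx\le\frac{2}{T}\int_{T/4}^{3T/4}\!\!\int_0^1 v^2(t,x)/a\,dxdt$. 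On $[T/4,3T/4]$ the Hardy inequality of Lemma \ref{hardy} (strongly degenerate case) or the Hardy--Poincar\'e inequality of Proposition \ref{HP} with $p(x)=(x-x_0)^2/a(x)$ and $q=2-K$ (weakly degenerate case) gives $\int_0^1 v^2(t,x)/a\,dx\le C\int_0^1(v_x)^2(t,x)\,dx$; and since $\Theta$ and $e^{2s\varphi}$ are bounded above and below there, this last quantity is controlled by $\int_{Q_T}s\Theta(v_x)^2e^{2s\varphi}\,dxdt$, which by the Lemma \ref{lemma31}-analogue is $\le C\int_0^T\int_\omega v^2/a\,dxdt$. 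This proves \eqref{obser_c1} for $v_T\in D(\mathcal A_2^2)$, and the density step of the first paragraph concludes. The step I expect to be delicate is precisely this energy/regularity estimate together with the density passage: the potential term destroys the monotonicity of $t\mapsto\int(v_x)^2$ that is exploited in the unperturbed Lemma \ref{obser.regular1}, so one must control it \emph{before}, not after, invoking the Carleman estimate — the reduction to $c\geq 0$ and the use of the $v/a$ multiplier rather than the $v_t/a$ one is what makes this clean (following the $v_t/a$ multiplier literally would instead force one to absorb an extra global term $\int_0^T\int v^2/a$) — and one must check that the perturbed non-autonomous problem keeps enough regularity for data in $D(\mathcal A_2^2)$ to legitimise the Carleman estimate and the integrations by parts.
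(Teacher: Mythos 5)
Your proposal is correct, and its overall architecture is exactly the one the paper intends for Proposition \ref{6.2}: reduction to $c\ge 0$ via $\tilde v=e^{-\lambda t}v$, the analogue of Lemma \ref{lemma31} obtained by rerunning the cut-off/reflection argument with Corollary \ref{cor_c} in place of Theorem \ref{Cor11} (and absorbing $cz$ into the source on the non-degenerate piece for $s$ large, as in \eqref{sd}--\eqref{sd1}), then an energy estimate plus Hardy--Poincar\'e, and finally the density passage from $D(\mathcal A_2^2)$. The paper compresses all of this into ``thus, as before, one can prove\dots'', so the one place where you genuinely diverge from the implied proof is the energy step. The route suggested by Lemma \ref{obser.regular1} is the multiplier $v_t/a$ and the monotonicity of $t\mapsto\int_0^1(v_x)^2\,dx$; as you correctly note, the cross term $\int_0^1 cvv_t/a\,dx$ has no sign even for $c\ge 0$, so that monotonicity must be recovered by Cauchy--Schwarz, Hardy and a Gronwall absorption of the form
\[
\frac{d}{dt}\int_0^1(v_x)^2\,dx\ \ge\ -C\int_0^1(v_x)^2\,dx,
\]
which still yields $\int_0^1(v_x)^2(0,x)\,dx\le e^{CT}\int_0^1(v_x)^2(t,x)\,dx$ and lets the original scheme go through. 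Your alternative --- multiplying by $v/a$ so that $t\mapsto\int_0^1 v^2/a\,dx$ is itself nondecreasing when $c\ge 0$, and then applying Lemma \ref{hardy} (SD case) or Proposition \ref{HP} with $p=(x-x_0)^2/a$ (WD case) at each $t\in[T/4,3T/4]$ rather than only at $t=0$ --- uses the sign of $c$ exactly where it is available and dispenses with the Gronwall step. Both routes terminate in the same bound of $\int_{T/4}^{3T/4}\int_0^1\Theta(v_x)^2e^{2s\varphi}\,dx\,dt$ by the Carleman-type estimate, so they prove the same inequality; yours is the more economical write-up of the step the paper leaves implicit.
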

Using \eqref{obser_c} and \eqref{obser_c1} one can prove that the analogous of Theorems
\ref{th3}, \ref{th31} still hold for \eqref{linear_c}. We underline the fact
that also Corollaries \ref{corollario}, \ref{dueparti} and Theorems \ref{th3'}, \ref{th2pezzi} still
hold for \eqref{h=0_c} and \eqref{linear_c}, respectively.
\bigskip

Finally, the controllability result can be extended to a semilinear
degenerate parabolic equation of the type
\begin{equation}
\label{nl}
\begin{cases}
u_t - \mathcal Au  + f(t,x,u)  =h(t,x) \chi_{\omega}(x), & \ (t,x) \in (0,T) \times (0,1), \\
u(t,1)=u(t,0)=0, & \  t \in (0,T),\\
u(0,x)=u_0(x) , & \  x \in (0,1),
\end{cases}
\end{equation}
where $u_0 \in X$,  $h \in L^2(0,T;X)$ and $\omega$ satisfies
Hypothesis \ref{ipotesiomega}. However, we can treat this semilinear
problem only in the (WD) case, since we need to use the following
compactness result:
\begin{Theorem}[\cite{fggr}, Theorems 5.4, 5.5]\label{compact_generale}
Suppose that $a\in  C[0,1]$ and $a^{-1}\in L^1(0,1)$. Then the spaces
\[
H^1(0, T;L^2(0,1)) \cap L^2(0, T; {\cal H}^2_a(0,1))
\; \text{
 and }\;
 H^1(0, T;L^2_{\frac{1}{a}}(0,1)) \cap L^2(0, T;
H^2_{\frac{1}{a}}(0,1))\]
are compactly
imbedded in
\[L^2(0, T; {\cal H}^1_a(0,1))\cap C(0,T; L^2(0,1))\; \text{
 and }\; L^2(0, T;
H^1_{\frac{1}{a}}(0,1)) \cap C(0,T; L^2_{\frac{1}{a}}(0,1)),\] respectively.
\end{Theorem}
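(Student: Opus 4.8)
The statement is a compactness result for Bochner spaces, so the natural route is the Aubin--Lions--Simon lemma, once the underlying \emph{stationary} compact embeddings of the weighted spaces are in hand. Concretely, the plan is: (i) prove $\mathcal H^2_a(0,1)\hookrightarrow\hookrightarrow\mathcal H^1_a(0,1)\hookrightarrow\hookrightarrow L^2(0,1)$, together with the non-divergence analogue $\mathcal H^2_{\frac1a}(0,1)\hookrightarrow\hookrightarrow\mathcal H^1_{\frac1a}(0,1)\hookrightarrow\hookrightarrow L^2_{\frac1a}(0,1)$; (ii) feed these chains into Aubin--Lions--Simon to get relative compactness in $L^2(0,T;\mathcal H^1_a(0,1))$ (resp. $L^2(0,T;\mathcal H^1_{\frac1a}(0,1))$); (iii) upgrade to compactness in $C([0,T];L^2(0,1))$ (resp. $C([0,T];L^2_{\frac1a}(0,1))$) by an Arzel\`a--Ascoli argument in time.

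\textbf{The stationary embeddings.} This is where the hypotheses $a\in C[0,1]$ and $a^{-1}\in L^1(0,1)$ are used. Since $a^{-1}\in L^1$ we have $a^{-1/2}\in L^2(0,1)$, so for $u\in\mathcal H^1_a(0,1)$ the estimate $|u(x)-u(y)|\le(\int_y^x a^{-1})^{1/2}\|\sqrt a\,u'\|_{L^2}$ and absolute continuity of $t\mapsto\int_0^t a^{-1}$ give, via Arzel\`a--Ascoli, $\mathcal H^1_a(0,1)\hookrightarrow\hookrightarrow C([0,1])\hookrightarrow L^2(0,1)$. For $\mathcal H^2_a(0,1)\hookrightarrow\hookrightarrow\mathcal H^1_a(0,1)$: if $(u_n)$ is bounded in $\mathcal H^2_a(0,1)$ then $(au_n')$ is bounded in $H^1(0,1)$ (note $au_n'=\sqrt a\,(\sqrt a u_n')$ and $a\le\max_{[0,1]}a$), hence by Rellich a subsequence converges in $C([0,1])$ to some $g$; then $\sqrt a\,u_n'=a^{-1/2}(au_n')\to a^{-1/2}g$ in $L^2(0,1)$, and dominated convergence (dominating function $Ma^{-1}\in L^1$) identifies the $L^2$-limit of $u_n$ with the primitive of $g/a$, so $u_n\to u$ in $\mathcal H^1_a(0,1)$. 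In the non-divergence case, which occurs only in the weakly degenerate regime under these hypotheses, Proposition \ref{h=h} identifies $\mathcal H^2_{\frac1a}(0,1)=H^2(0,1)\cap H^1_0(0,1)$ and $\mathcal H^1_{\frac1a}(0,1)=H^1_0(0,1)$ with equivalent norms, so classical Rellich yields $\mathcal H^2_{\frac1a}\hookrightarrow\hookrightarrow\mathcal H^1_{\frac1a}$, while $H^1_0(0,1)\hookrightarrow\hookrightarrow C([0,1])$ together with $a^{-1}\in L^1$ gives $\int_0^1|u_n-u|^2/a\le\|u_n-u\|_{\infty}^2\int_0^1 a^{-1}\to0$, i.e. $\mathcal H^1_{\frac1a}\hookrightarrow\hookrightarrow L^2_{\frac1a}$.

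\textbf{Assembling the time-dependent statement.} Let $(u_n)$ be bounded in $H^1(0,T;L^2(0,1))\cap L^2(0,T;\mathcal H^2_a(0,1))$. It is bounded in $L^2(0,T;\mathcal H^2_a(0,1))$ with $u_n'$ bounded in $L^2(0,T;L^2(0,1))\subset L^1(0,T;L^2(0,1))$, so by Aubin--Lions--Simon applied to $\mathcal H^2_a\hookrightarrow\hookrightarrow\mathcal H^1_a\hookrightarrow L^2(0,1)$, a subsequence converges strongly in $L^2(0,T;\mathcal H^1_a(0,1))$ to some $u$; in particular in $L^2(0,T;L^2(0,1))$, so along a further subsequence $u_n(t)\to u(t)$ in $L^2(0,1)$ for $t$ in a full-measure set $E$. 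The family is equi-H\"older-$1/2$ in $C([0,T];L^2(0,1))$, since $\|u_n(t)-u_n(s)\|_{L^2}\le|t-s|^{1/2}\|u_n'\|_{L^2(0,T;L^2(0,1))}$ with the last norm bounded; an equicontinuous family converging on a dense set converges uniformly, the uniform limit being continuous and a.e. equal to $u$, hence equal to the continuous representative of $u$. Thus $u_n\to u$ in $C([0,T];L^2(0,1))$, and this proves compactness in $L^2(0,T;\mathcal H^1_a(0,1))\cap C([0,T];L^2(0,1))$. The non-divergence statement is obtained by repeating this argument verbatim with $L^2(0,1)$, $\mathcal H^1_a$, $\mathcal H^2_a$ replaced by $L^2_{\frac1a}(0,1)$, $\mathcal H^1_{\frac1a}$, $\mathcal H^2_{\frac1a}$, using the embeddings from the previous paragraph.

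\textbf{Main obstacle.} The only nontrivial point is the second paragraph, i.e. the weighted compact embeddings $\mathcal H^2_a(0,1)\hookrightarrow\hookrightarrow\mathcal H^1_a(0,1)$ and its non-divergence counterpart: the degeneracy of $a$ prevents a direct appeal to Rellich, and one must exploit $a^{-1}\in L^1$ (through $a^{-1/2}\in L^2$ and absolute continuity of the integral) to recover equicontinuity and to pass limits in the weighted derivative terms. Once these embeddings are available, the passage to the Bochner-space statement is the standard Aubin--Lions--Simon plus Arzel\`a--Ascoli argument and presents no difficulty.
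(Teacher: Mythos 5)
The paper does not prove this theorem itself: it is imported verbatim from \cite{fggr} (Theorems 5.4 and 5.5), so there is no internal proof to compare against. Your overall architecture (stationary weighted compact embeddings, then Aubin--Lions--Simon, then an Arzel\`a--Ascoli upgrade in time via the equi-H\"older-$\frac12$ bound $\|u_n(t)-u_n(s)\|\le |t-s|^{1/2}\|\partial_t u_n\|_{L^2(0,T;\cdot)}$) is the natural and correct one, and your treatment of the divergence case is sound: the chain $\mathcal H^1_a\hookrightarrow\hookrightarrow C([0,1])$ via $|u(x)-u(y)|\le\left(\int_y^x a^{-1}\right)^{1/2}\|\sqrt a\, u'\|_{L^2}$, and $\mathcal H^2_a\hookrightarrow\hookrightarrow\mathcal H^1_a$ via Rellich applied to the genuinely $H^1$-bounded family $(au_n')$ followed by the $a^{-1}\in L^1$ domination, both work.

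There is, however, a genuine gap precisely at the step you single out as the main obstacle, in the non-divergence case. You claim that Proposition \ref{h=h} identifies $\mathcal H^2_{\frac1a}(0,1)$ with $H^2(0,1)\cap H^1_0(0,1)$ \emph{with equivalent norms} and then invoke classical Rellich. The identification is only algebraic: the norm on $\mathcal H^2_{\frac1a}(0,1)$ carries $\|au''\|^2_{L^2_{\frac1a}}=\int_0^1 a\,(u'')^2\,dx$, not $\int_0^1(u'')^2\,dx$, and these are not equivalent when $a$ vanishes (for $a(x)=|x-x_0|^{1/2}$, truncations of $u''=|x-x_0|^{-1/2}$ stay bounded in the weighted norm while blowing up in $H^2$). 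Consequently a bounded sequence in $\mathcal H^2_{\frac1a}$ need not be bounded in $H^2$, and classical Rellich gives nothing. The embedding $\mathcal H^2_{\frac1a}\hookrightarrow\hookrightarrow\mathcal H^1_{\frac1a}$ is nevertheless true, and the repair is exactly the device you already use in the divergence case, applied one derivative up: for $u\in\mathcal H^2_{\frac1a}$ one has $|u'(x)-u'(y)|\le\left(\int_y^x a^{-1}\right)^{1/2}\|\sqrt a\,u''\|_{L^2}$, and since $u(0)=u(1)=0$ forces $u'(\xi)=0$ at some interior $\xi$ by Rolle, a bounded set in $\mathcal H^2_{\frac1a}$ is bounded and equicontinuous in $C([0,1])$ at the level of $u'$, hence precompact in $L^2(0,1)$; combined with the (correct) norm equivalence $\mathcal H^1_{\frac1a}\simeq H^1_0$, this yields the compact embedding. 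With that substitution the rest of your argument goes through.
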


Also in this case, we assume that $a$ satisfies the degeneracy conditions stated in Hypothesis $\ref{Ass0}$. Moreover, in order to prove observability inequalities analogous to those proved in Chapter \ref{secobserv}, we assume that
the conditions assumed therein are satisfied. More precisely, we suppose that:
\[
\begin{aligned}
&\mbox{Hypothesis \ref{Ass03} holds in the divergence case}\\
& \mbox{Hypothesis \ref{Ass031} holds in the non divergence case.}
\end{aligned}
\]

Concerning $f:Q_T\times \mathbb R \to \mathbb R$ we make the
following assumptions:
\begin{itemize}
\item
$f$ is a Carath\'eodory function, i.e.
\[
\begin{aligned}
&\mbox{the map $(t,x) \mapsto f(t,x,q)$ is
measurable for all $q \in \mathbb R$ and}\\
&\mbox{the map $q\mapsto f(t,x,q)$ is continuous for a.e. $(t,x)\in Q_T$};
\end{aligned}
\]
\item
$f(t, x, 0)=0$ for a.e. $(t,x)  \in Q_T$;
\item $f_q(t,x,q)$ exists for a.e. $(t,x)\in Q_T$;
\item $f_q$ is a Carath\'{e}odory function;
\item there exists $C >0$ such that
\[  |f_q(t,x,q)| \le C
\]
for a.e. $(t,x) \in Q_T$ and for every
$q\in\mathbb{R} $.
\end{itemize}

To prove a null controllability result for \eqref{nl} one can use,
as in \cite{acf} or in \cite{cf}, a fixed point method, considering
a suitable sequence of linear problem associated to the semilinear
one, apply a related approximate null controllability property for
the linear case, and pass to the limit. We omit the details, which
are now standard, just stating the following
\begin{Theorem}
Under the assumptions above, problem \eqref{nl} is globally null
controllable.
\end{Theorem}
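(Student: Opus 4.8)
The plan is to establish global null controllability of the semilinear problem \eqref{nl} in the (WD) case by a Schauder fixed point argument, following the scheme introduced in \cite{fz} for nondegenerate equations and adapted to the degenerate setting in \cite{acf} and \cite{cf}. First I would linearize: for a given $z\in L^2(Q_T)$ define the potential
\[
g_z(t,x):=\int_0^1 f_q\big(t,x,\sigma z(t,x)\big)\,d\sigma,
\]
so that $f(t,x,z)=g_z(t,x)\,z$ by the assumptions $f(t,x,0)=0$ and the existence of $f_q$; note $\|g_z\|_{L^\infty(Q_T)}\le C$ uniformly in $z$. Then consider the linear control problem obtained by replacing $f(t,x,u)$ with $g_z(t,x)\,u$, which is exactly a problem of the form \eqref{linear_c} with potential $c=g_z\in L^\infty(Q_T)$ bounded by $C$ independently of $z$. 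By Proposition \ref{6.1} (divergence case) or Proposition \ref{6.2} (non divergence case) the associated adjoint problem satisfies an observability inequality, and by the standard duality/penalization argument (as in the proof of Theorem \ref{th3}) there exists a null control $h_z\in L^2(0,T;X)$ such that the solution $u_z$ of the linearized problem satisfies $u_z(T,\cdot)=0$, together with the control cost bound $\|h_z\|_{L^2(0,T;X)}^2\le C_\ast\|u_0\|_X^2$, where $C_\ast$ depends only on $T$, $\omega$, $a$ and the uniform bound $C$ on the potential, hence \emph{not} on $z$.

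Next I would set up the fixed point map. Define $\Lambda:L^2(Q_T)\to L^2(Q_T)$ by $\Lambda(z):=u_z$, where $u_z$ is chosen as the controlled solution corresponding to the minimal-norm control $h_z$ (which is unique, so $\Lambda$ is well defined). Using the uniform control cost estimate and the energy estimates \eqref{stima}--\eqref{stima1} (or \eqref{stima2}--\eqref{stima3} in the non divergence case), all $u_z$ lie in a fixed ball $\mathcal B$ of
\[
H^1\big(0,T;L^2(0,1)\big)\cap L^2\big(0,T;\mathcal H^2_a(0,1)\big)
\]
(respectively the non divergence analogue), whose radius depends only on $\|u_0\|_X$ and the data. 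Thus $\Lambda$ maps the closed convex set $\mathcal B\cap L^2(Q_T)$ into itself. The key point where the (WD) restriction enters is compactness: by Theorem \ref{compact_generale}, which requires $a^{-1}\in L^1(0,1)$ — true precisely in the weakly degenerate case by Lemma \ref{rem}.2 — the space above is compactly imbedded in $L^2(0,T;\mathcal H^1_a(0,1))\cap C([0,T];L^2(0,1))\subset L^2(Q_T)$. Hence $\mathcal B$ is relatively compact in $L^2(Q_T)$ and $\Lambda(\mathcal B)$ is precompact.

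It then remains to check continuity of $\Lambda$ on $L^2(Q_T)$. If $z_n\to z$ in $L^2(Q_T)$, then, up to a subsequence, $z_n\to z$ a.e., so by the Carath\'eodory property of $f_q$ and dominated convergence $g_{z_n}\to g_z$ in, say, $L^p(Q_T)$ for every $p<\infty$ while staying bounded in $L^\infty$; a standard argument on the adjoint states and the minimal-norm controls (the observability constant being uniform) yields $h_{z_n}\to h_z$ weakly in $L^2(0,T;X)$ and then $u_{z_n}\to u_z$ strongly in $L^2(Q_T)$ by the compact imbedding, identifying the limit as $\Lambda(z)$; since the limit is independent of the subsequence, the whole sequence converges, giving continuity. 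Schauder's fixed point theorem then produces $u^\ast\in\mathcal B$ with $\Lambda(u^\ast)=u^\ast$, i.e. $u^\ast$ solves \eqref{nl} with control $h_{u^\ast}$ and satisfies $u^\ast(T,\cdot)=0$. I expect the main obstacle to be the continuity step, specifically the uniform-in-$z$ nature of the observability constant and the control cost (needed so that the controls do not blow up along the iteration) and the careful passage to the limit in the adjoint problems; the degeneracy makes the required energy and Carleman estimates delicate, but all of them have already been provided in Chapter \ref{secobserv} and Chapter \ref{secsemilinear}, so no genuinely new estimate is needed — which is why the details are omitted.
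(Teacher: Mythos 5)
Your proposal follows essentially the same route the paper intends: the paper gives no details for this theorem, merely invoking the fixed point method of \cite{fz}, \cite{acf} and \cite{cf} together with the uniform (in the potential) observability inequalities of Propositions \ref{6.1} and \ref{6.2} and the compactness of Theorem \ref{compact_generale} --- which is exactly where the restriction $\frac{1}{a}\in L^1(0,1)$, i.e.\ the (WD) case, enters --- and your sketch fleshes out precisely these ingredients. The only cosmetic difference is that the paper phrases the scheme via approximate null controllability plus a passage to the limit rather than via minimal-norm exact controls for each linearized problem, which does not change the substance of the argument.
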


\chapter{Final Comments}\label{sec7}

{\bf Comment $1$.} If $\omega=\omega_1 \cup \omega_2$, $\omega_i$
intervals, with $\omega_1\subset\subset (0,x_0)$,
$\omega_2\subset\subset (x_0,1)$, and $x_0 \not \in
\overline{\omega}$, the global null controllability for
\eqref{linear} follows by \cite[Theorem 4.1]{acf} when $\mathcal A = \mathcal A_1$ and by \cite[Theorem 4.5]{cfr} when $\mathcal A = \mathcal A_2$, at least in the
strongly degenerate case and if the initial datum is more regular.
Indeed, in this case, given $u_0\in {\cal H}^1_a(0,1)$ or $u_0\in
\cH^1_{\frac{1}{a}}(0,1)$, $u$ is a
solution of \eqref{linear} if and only if the restrictions of $u$ to
$[0, x_0)$ and to $(x_0,1]$, $u_{|_{[0,x_0)}}$ and
$u_{|_{(x_0,1]}}$, are solutions to
\begin{equation}\label{me1}
\begin{cases}
u_t - \mathcal Au   =h(t,x) \chi_{\omega_1}(x), & \ (t,x) \in (0,T) \times (0,x_0), \\
    u(t,0)=0, & \  t \in (0,T),\\
    \begin{cases}(au_x)(t,x_0)=0, & \text{in the divergence case} \\
    u(t,x_0)=0, &\text{in the non divergence case}, \end{cases} & \ t \in (0, T),\\
    u(0,x)=u_0(x)_{|_{[0,x_0)}},
\end{cases}
\end{equation}
and
\begin{equation}\label{me2}
\begin{cases}
u_t -  \mathcal Au   =h(t,x) \chi_{\omega_2}(x), & \ (t,x) \in (0,T) \times (x_0,1), \\
u(t,1)=0, & \  t \in (0,T),\\
  \begin{cases}(au_x)(t,x_0)=0, & \text{in the divergence case} \\
    u(t,x_0)=0, &\text{in the non divergence case}, \end{cases}&\  t \in (0, T),\\
u(0,x)=u_0(x)_{|_{(x_0,1]}},
\end{cases}
\end{equation}
respectively. This fact is implied by the characterization of the
domains of $\mathcal A_1$ and $\mathcal A_2$ given in Propositions \ref{domain}, \ref{domain1} and by
the Regularity Theorems \ref{th-parabolic}, \ref{theorem_nondivergence} when the initial datum is
more regular. On the other hand if $u_0$ is only of class
$L^2(0,1)$ or $L^2_{\frac{1}{a}}(0,1)$, the solution is not sufficiently regular to verify the
additional condition at $(t,x_0)$ and this procedure cannot be
pursued. 

Moreover, in the weakly degenerate case, the lack of
characterization of the domains of $\mathcal A_1$ and $\mathcal A_2$ doesn't let us
consider a decomposition of the system in two disjoint systems like
\eqref{me1} and \eqref{me2}, in order to apply the results of
\cite{acf} and \cite{cfr}, not even in the case of a regular initial datum.

For this reason, using observability inequalities and Carleman
estimates, in Chapter \ref{secobserv} we have proved a null controllability
result both in the (WD) and (SD) cases, also in the case of a
control region of the form $\omega=\omega_1\cup \omega_2$ as above.

\noindent{\bf Comment $2$.} It is well known that observability
inequalities for the adjoint homogeneous problem imply the validity
of null controllability results for the original parabolic problem.
In fact, as a corollary of the observability inequalities, we give
the associated null controllability results for \eqref{linear}
providing an estimate of the control function $h$, see Theorems
\ref{th3}, \ref{th3'}, \ref{th31} and \ref{th2pezzi}.

Of course, null controllability results can be obtained also in
other ways, but the approach with observability inequalities is very
general and permits to cover all possible situations. For example,
if $x_0\in \omega$, one could think to obtain the null
controllability result directly by a localization argument based on
cut-off functions, as in the non degenerate case. But we now show
that, at least in the divergence case, this is {\em not always
possible} in presence of a weakly degenerate $a$, but only in the
(SD) case. Indeed, assume that the degenerate point $x_0$ belongs to
the control region $\omega$, consider $0 < r' < r$ with $(x_0 -
r,x_0 +r) \subset \omega$, the cut-off functions $\phi_i \in
C^\infty ([0,1])$, $i=0,1,2,$ defined as
\[
\phi_1(x) :=
\begin{cases}
0, & x \; \in [x_0 -r', 1], \\
1, & x \; \in [0, x_0  - r],
\end{cases}
\quad \phi_2(x) :=
\begin{cases}
0, & x \; \in [0,x_0+r'], \\
1, & x \; \in [x_0+r,1],
\end{cases}
\]
and $\phi_0= 1-\phi_1-\phi_2$. Then, given an initial condition $u_0
\in L^2(0,1)$, by classical controllability results in the
nondegenerate case, there exist two control functions $h_1 \in
L^2((0,T) \times \big[\omega \cap (0,x_0 - r'))\big]$ and $h_2\in
L^2((0,T) \times \big[\omega \cap(x_0 + r',1)\big])$, such that the
corresponding solutions $v_1$ and $v_2$ of the parabolic problems
analogous to \eqref{linear} in the domains $(0,T) \times (0,x_0 -
r')$ and $(0,T) \times (x_0 + r',1)$, respectively, satisfy
$v_1(T,x) = 0$ for all $x \in (0,x_0 - r')$ and $v_2(T,x) = 0$ for
all $x \in (x_0 + r',1)$ with
\[
\int_0^T\int_0^{x_0-r'}h_1^2dxdt\leq C\int_0^T\int_0^{x_0-r'}
u_0^2dxdt
\]
and
\[
\int_0^T\int_{x_0+r'}^1h_2^2dxdt\leq C\int_0^T\int_{x_0+r'}^1
u_0^2dxdt
\]
for some constant $C$.

Now, let $v_0$ be the solution of the analogous of problem
\eqref{linear} in divergence form in the domain
$(0,T)\times(x_0-r,x_0 +r)$ without control, and with the same
initial condition $u_0$. Finally, define the function
\begin{equation}\label{falsau}
u(t,x) = \phi_1(x)v_1(t,x)+\phi_2(x)v_2(t,x) + \frac{T- t}{ T}
\phi_0(x)v_0(t,x).
\end{equation}
Then, $u(T,x) = 0$ for all $x\in (0,1)$ and $u$ satisfies problem
\eqref{linear} in the domain $Q_T$ with
\[
\begin{aligned}
h&= \phi_1h_1\chi_{\omega \cap (0, x_0-r')} + \phi_2 h_2\chi_{\omega
\cap (x_0+r',1)} -\frac{1}{T}\phi_0 v_0 -\phi_1'av_{1,x}
-\phi_2'av_{2,x} \\& -\phi_0'\frac{T- t}{ T}av_{0,x}
-\left(\phi_1'av_1+\phi_2'av_2+ \phi_0'\frac{T- t}{ T}av_0\right)_x.
\end{aligned}
\]
We strongly remark that in the (WD) case this function {\em is not}
in $L^2\big((0,T)\times \omega\big)$ since the degenerate function
in this case is only $W^{1,1}(0,1)$, so that the problem fails to be
controllable in the Hilbert space $L^2$. For this reason, we think
that our approach via Carleman estimates can be extremely
interesting also to prove null controllability results, which could
not be obtained in other ways.

On the other hand, using the previous technique, one can prove that \eqref{linear} in {\em non divergence} form is global null controllable if $x_0$ belongs to the control region $\omega$. Being the observability inequality equivalent to the null controllability, it is superfluous to obtain the first inequality as a consequence of Carleman estimate. For this reason we have proved Proposition \ref{obser.1} only when $x_0$ does not belong to the control region $\omega$.
\medskip

\noindent{\bf Comment $3$.} Finally, let us conclude with a remark
on the fact that in the definition of degeneracy (both weak and
strong) we admit only that $K \in (0,2)$. This technical assumption,
which is essential to prove Lemmas \ref{lemma2} and \ref{lemma21}, was already
introduced, for example, in \cite{cfr} or in \cite{cfr1}, with the
following motivation: if $K\ge 2$ and the degeneracy occurs at the
boundary of the domain, the problem fails to be null controllable on
the whole interval $[0,1]$  (see, e.g., \cite{cfr}, \cite{cfv}),
and in this case the only property that can be expected is the {\em
regional null} controllability (see also \cite{cf},
\cite{cfv1} and \cite{fm}). Let
us briefly show that the same phenomenon appears, for example, in the non divergence case,
inspired by \cite[Remark 4.6]{cfr}. Indeed, let us introduce the
following variant of a classical change of variables:
\[
X=\begin{cases} \displaystyle x_0-\int_x^{x_0}\frac{1}{y^{K/2}}dy & \mbox{ if }0< x\leq x_0,\\
\displaystyle x_0+\int_{x_0}^x \frac{1}{\sqrt{a(1+x_0-y)}}dy &\mbox{
if }x_0<x<1,
\end{cases}
\]
so that $(0,1)$ is stretched to $(-\infty,\infty)$, and
$U(t,X)=a^{-1/4}(x)u(t,x)$, where $u$ solves \eqref{linear}. Now,
take the reference function $a(x)=|x-x_0|^K$ with $K>2$, so that we
find that $U$ solves a nondegenerate heat equation of the form
$U_t-U_{xx}+b(X)U=\tilde h\chi_{\tilde\omega}$, where $\tilde \omega$
is a bounded domain compactly contained both in $(-\infty,0)$ and in
$(0,\infty)$. Adapting a result of \cite{mizu}, the new equation is
not controllable, see \cite[Remark 4.6]{cfr}.

In particular,
proceeding as in \cite{cfv}, \cite{cfv1}, one can prove  that if
$a\in W^{1, \infty}(0,1)$, $a^{-1}\not \in L^1(0,1)$ and the control
set $\omega$ is an interval $\omega=(\alpha,\beta)$ lying on one
side of $x_0$, for every $\lambda, \gamma \in (0,1)$ such that
\[
\begin{aligned}
 &0\leq \alpha <\lambda< \beta < x_0<1 \quad(\mbox{if } x_0 > \beta)\\
 \mbox{ or }\; &0 <x_0 < \alpha< \gamma <\beta \leq 1 \quad (\mbox{if }
x_0 < \alpha),
\end{aligned}
\]
there exists $h \in L^2(0,T; H)$ so that the solution $u$ of
\eqref{linear} (or \eqref{linear_c}) satisfies
\begin{equation}\label{PBM-1}
u(T,x)= 0 \ \text{ for every } x \in [0,\lambda] \; (\mbox{if } x_0
> \beta) \mbox{ or for every }x\in [\gamma,1] \;(\mbox{if } x_0 < \alpha).
\end{equation}
Moreover, there exists a positive constant $C_T$ such that
\[
\int_{Q_T} h^2dxdt \le C_T \int_0^1u_0^2 dx,
\]
in the divergence case and
\[
\int_{Q_T} h^2\frac{1}{a}dxdt \le C_T \int_0^1u_0^2\frac{1}{a}
dx,
\]
in the non divergence one.

As pointed out in \cite{cfv}, \cite{cfv1}, we note that the global
null controllability is a stronger property than \eqref{PBM-1}, in
the sense that the former is automatically preserved with time. More
precisely, if $u(T,x) = 0$ for all $x \in [0,1]$ and if we stop
controlling the system at time $T$, then for all $t\geq T$, $u(t,x)
= 0$ for all $x \in [0,1]$. On the contrary, regional null
controllability is a weaker property: in general, \eqref{PBM-1} is
no more preserved with time if we stop controlling at time $T$.
Thus, it is important to improve the previous result, as shown in
\cite{cfv} or in \cite{cfv1}, proving that the solution can be
forced to vanish identically on $[0,\lambda] \; (\mbox{if } x_0
> \beta) \mbox{ or in } [\gamma,1] \;(\mbox{if } x_0 < \alpha)$ during a given time interval $(T,T')$, i.e. that the
solution is {\em persistent regional null controllable}.

These results can be extended also in our situation, i.e. with an
interior degeneracy, for the problem
\begin{equation}\label{linear_b}
\begin{cases}
u_t - \mathcal A u + c(t,x)u +b(t,x)u_x=h(t,x) \chi_{\omega}(x), & (t,x) \in Q_T,\\
u(t,1)=u(t,0)=0,& t \in (0,T), \\
u(0,x)=u_0(x),& x\in (0,1),
\end{cases}
\end{equation}
where $u_0$,  $h$, $\omega$, $a$ and $c$ are as before, while $b \in
L^\infty(Q_T)$ and $|b(t,x)| \le C\sqrt{a(x)}$ for a positive
constant $C$. Observe that, in this case, the well-posedness of
\eqref{linear_b} follows by \cite[Theorems 4.1 and 4.2]{fggr}, and the
persistent regional null controllability follows by using cut-off
functions, adapting the technique developed in \cite{fv} or in
\cite{cfv1}.

\appendix
\chapter{Rigorous derivation of Lemma \ref{LEMMA1ND1}}\label{secA}

Here we show that all integrations by parts used in the proof of
Lemma \ref{LEMMA1ND1} are well justified, both in the non degenerate
and in the degenerate case. We will not prove all integration by
parts, which can be treated in similar ways, so we just consider the
(probably more involved) term
\[
2\int_{Q_T}a^2(\varphi_x)^3 w w_x dxdt =  \int_0^T
\int_0^1a^2(\varphi_x)^3 (w^2)_x dxdt,
\]
where, we recall $w \in \mathcal V_1$, and $\vp$ stands for $\Phi$ in
the non degenerate case. First, let us note that such an integral is
always well defined. Indeed, in the non degenerate case,
$w(\varphi_x)^3=w\Theta^3 (\psi_x)^3\in L^\infty(Q_T)$, since $w=
e^{s\Theta \psi}v$ with $v\in \mathcal V_1\subset L^\infty(Q_T)$,
while $\int_A^B a(w_x)^2\in L^\infty(0,T)$ and $a^{3/2}\in
L^\infty(A,B)$.

On the other hand, in the degenerate cases we have that
\[
a^2(\varphi_x)^3 w
w_x=\left(\frac{(x-x_0)^2}{a}\right)^{3/2}\Theta^3 w\sqrt{a} w_x.
\]
First, by Lemma \ref{rem}.1, we have that the map $x\mapsto
\frac{(x-x_0)^2}{a}$ is bounded. Then, as for the (WD) case, by
definition of ${\mathcal S}_1$, we immediately find that
$\sqrt{a}w_x\in L^2(Q_T)$ and $\sqrt{a}(x-x_0)^3,\Theta^3w^3\in
L^\infty(Q_T)$, so that the integral is well defined. Finally, in
the (SD) case we have that $\sqrt{a}w_x, \Theta^3w\in L^2(Q_T)$ and
$\sqrt{a}(x-x_0)^3\in L^\infty(Q_T)$.

Since the considerations in the degenerate case are more general,
from now on we shall confine to this case; hence we shall prove the
version of Lemma \ref{LEMMA1ND1} just for the degenerate case.

Thus, for any sufficiently small $\delta >0$, we get
\begin{equation}\label{integration}
\begin{aligned}
&\int_{Q_T} a^2(\vp_x)^3 (w^2)_x dx dt=
\int_0^T\int_0^{x_0-\delta}a^2(\vp_x)^3 (w^2)_x dx dt\\
& + \int_0^T\int_{x_0-\delta}^{x_0+ \delta}a^2(\vp_x)^3 (w^2)_x dx
dt+\int_0^T \int_{x_0+ \delta}^1a^2(\vp_x)^3 (w^2)_x dx dt\\
& =\int_0^T[(a^2(\vp_x)^3 w^2)(x_0- \delta) - (a^2(\vp_x)^3
w^2)(0)]dt \\
&-\int_0^T\int_0^{x_0-\delta}(a^2(\vp_x)^3)_x w^2dx dt +
\int_0^T\int_{x_0- \delta}^{x_0+\delta}a^2(\vp_x)^3 (w^2)_x dx dt  \\
&+ \int_0^T[(a^2(\vp_x)^3 w^2)(1) -
(a^2(\vp_x)^3 w^2)(x_0+ \delta)]dt  \\
&-\int_0^T\int_{x_0+ \delta}^1 (a^2(\vp_x)^3 )_xw^2dx dt\\
&= \int_0^T(a^2(\vp_x)^3 w^2)(x_0- \delta)dt - \int_0^T(a^2(\vp_x)^3
w^2)(x_0+ \delta)dt\\
& -\int_0^T\int_0^{x_0-\delta}(a^2(\vp_x)^3 )_xw^2dx dt + \int_0^T\int_{x_0- \delta}^{x_0+\delta}a^2(\vp_x)^3 (w^2)_x dx  dt \\
&  - \int_0^T\int_{x_0+ \delta}^1 (a^2(\vp_x)^3 )_xw^2 dx dt,
\end{aligned}
\end{equation}
since the functions $a^2(\cdot)(\vp_x)^3(t, \cdot)$, $w^2(t, \cdot)$
belong to $H^1(0,x_0-\delta)\cap H^1(x_0+\delta,1)$ for a.e. $t \in
(0,T)$ and $w(t,0)=w(t,1)=0$. Now, we prove that
\[
\begin{aligned}
&\lim_{\delta \rightarrow 0} \int_0^T\int_0^{x_0- \delta}
(a^2(\vp_x)^3
)_xw^2  dx = \int_0^T\int_0^{x_0} (a^2(\vp_x)^3 )_xw^2  dx, \\
&\lim_{\delta \rightarrow 0} \int_0^T\int_{x_0+ \delta}^1
(a^2(\vp_x)^3 )_xw^2  dx = \int_0^T\int_{x_0}^1 (a^2(\vp_x)^3 )_xw^2
dx
\end{aligned}
\]
and
\begin{equation}\label{delta2}
\lim_{\delta \rightarrow 0} \int_0^T\int_{x_0- \delta}^{x_0+ \delta}
a^2(\vp_x)^3 (w^2)_x dx =0.
\end{equation}
Toward this end, observe that
\begin{equation}\label{delta1'}
\int_0^T\int_0^{x_0- \delta} (a^2(\vp_x)^3 )_xw^2 dx =
\int_0^T\int_0^{x_0}(a^2(\vp_x)^3 )_xw^2 dx - \int_0^T\int_{x_0-
\delta}^{x_0} (a^2(\vp_x)^3 )_xw^2 dx\end{equation} and
\begin{equation}\label{delta1''}
\int_0^T\int_{x_0+ \delta}^1(a^2(\vp_x)^3 )_xw^2 dx=
\int_0^T\int_{x_0}^1 (a^2(\vp_x)^3 )_xw^2 dx -
\int_0^T\int_{x_0}^{x_0+ \delta} (a^2(\vp_x)^3 )_xw^2 dx.
\end{equation}
We notice that the identities above are justified by the fact that
$(a^2(\vp_x)^3 )_xw^2\in L^1(Q_T)$. Indeed, by Lemma \ref{rem}.1
applied with $\gamma=2$, we immediately have
\[
\begin{aligned}
|(a^2(\vp_x)^3 )_xw^2| &= |c_1^3 \Theta^3 \left(
\frac{(x-x_0)^3}{a}\right)_x w^2 |\\
&= \left|c_1^3 \Theta^3   \frac{3(x-x_0)^2}{a} w^2 -
 c_1^3 \Theta^3  \frac{a'(x) (x-x_0)^3}{a^2} w^2\right|\\
& \leq 3c_1^3 \max\left\{\frac{x_0^2}{a(0)}, \frac{(1-x_0)^2}{a(1)}
\right\} \Theta^3w^2 +c_1^3\Theta^3 \left|
\frac{(x-x_0)^3a'}{a^2}\right|w^2.
 \end{aligned}
\]
Now, in the (WD) case we have that
$\Theta^3w^2=\Theta^3e^{2s\vp}v^2\in L^\infty(Q_T)$, while
\[
\Theta^3  \left|\frac{(x-x_0)^3a'}{a^2} \right|w^2=\Theta^3
|a'|\left(\frac{|x-x_0|^{3/2}}{a}\right)^{2}w^2\leq c \Theta^3
|a'|w^2
\]
by Lemma \ref{rem}.1 applied with $\gamma=3/2>K$, and where $c$ is a
positive constant. At this point, $a'\in L^1(Q_T)$, while
$\Theta^3w^2\in L^\infty(Q_T)$, and the claim follows. In the (SD)
case for $K\leq 3/2$, from the previous inequality we have that
$\Theta^3w^2=\Theta^3e^{2s\vp}v^2\in L^2(Q_T)$, while $a'\in
L^\infty(Q_T)$, and again this is enough. For the (SD) case when
$K\in (3/2,2)$, we observe that
\[
\Theta^3  \left|\frac{(x-x_0)^3a'}{a^2} \right|w^2=\Theta^3
e^{2s\varphi}\left(\frac{|x-x_0|^\vartheta}{a}\right)^2|x-x_0|^{3-2\vartheta}|a'|v^2.
\]
By the last requirement in condition \eqref{dainfinito} and from
\eqref{Sigma}, since $\Theta^3 e^{2s\varphi}\in L^\infty(Q_T)$ and
$v^2\in L^1(Q_T)$, also this case is finished (recall Remark
\ref{remark5}), and \eqref{delta1'} and \eqref{delta1''} are
justified.

Thus, for any $\epsilon> 0$, by the absolute continuity of the
integral, there exists $\delta:= \delta(\epsilon) >0 $ such that
\[
\left|\int_0^T \int_{x_0- \delta}^{x_0} (a^2(\vp_x)^3 )_xw^2  dxdt
\right| < \epsilon,
\]
\[
\left|\int_0^T\int_{x_0- \delta}^{x_0+
\delta}a^2(\vp_x)^3(w^2)_xdxdt\right| < \epsilon,
\]
\[
\left|\int_0^T\int_{x_0}^{x_0+ \delta} (a^2(\vp_x)^3 )_xw^2  dxdt
\right| < \epsilon.
\]
Now, take such a $\delta$ in \eqref{integration}. Thus, $\epsilon$
being arbitrary,
\[
\begin{aligned}
&\lim_{\delta \rightarrow 0} \int_0^T\int_{x_0-
\delta}^{x_0}(a^2(\vp_x)^3 )_xw^2 dxdt = \lim_{\delta \rightarrow 0}
\int_0^T\int_{x_0- \delta}^{x_0+ \delta} a^2(\vp_x)^3(w^2)_x dxdt\\
& = \lim_{\delta \rightarrow 0}\int_0^T\int_{x_0}^{x_0+
\delta}(a^2(\vp_x)^3 )_xw^2 dxdt =0.
\end{aligned}
\]
The previous limits, \eqref{delta1'}, \eqref{delta1''}, together
with the integrability conditions proved above, imply
\[
\lim_{\delta \rightarrow 0} \int_0^T\int_0^{x_0-
\delta}(a^2(\vp_x)^3 )_xw^2 dxdt = \int_0^T\int_0^{x_0}(a^2(\vp_x)^3
)_xw^2  dxdt
\]
and
\[
\lim_{\delta \rightarrow 0} \int_0^T\int_{x_0+ \delta}^1au'v' dx dt=
\int_0^T\int_{x_0}^1(a^2(\vp_x)^3 )_xw^2 dxdt.
\]

In order to conclude the proof of the desired result, it is
sufficient to prove that
\begin{equation}\label{delta3}
\lim_{\delta \rightarrow 0} \int_0^T(a^2(\varphi_x)^3 w^2)(x_0-
\delta) dt=\lim_{\delta \rightarrow 0} \int_0^T(a^2(\varphi_x)^3
w^2)(x_0+ \delta)dt,
\end{equation}
and in particular they are 0, as it follows from the identity
\[
a^2(\varphi_x)^3 w^2=c_1^3 \Theta^3 e^{2s\vp}(x-x_0)^3a v^2.
\]
Indeed, in the (WD) case \eqref{delta3} follows from the fact that
$v$ is absolutely continuous in $Q_T$ and from Lebesgue's Theorem,
since the map $(t,x)\mapsto \Theta^3(t) e^{2s\vp(t,x)}$ is bounded;
in the (SD) case we use the characterization of the domain given by
Proposition \ref{characterization}.

The other integrations by parts in Lemma \ref{LEMMA1ND1} are easier
and can be proved proceeding as above.

\end{document}